\newtheorem{prop}{Proposition}[section]
\newtheorem{lem}[prop]{Lemma}
\newtheorem{thm}[prop]{Theorem}
\newtheorem{cor}[prop]{Corollary}
\theoremstyle{remark}
\newtheorem{remar}[prop]{Remark}
\theoremstyle{definition}
\newtheorem{defi}[prop]{Definition}
\DeclareMathAlphabet{\mathpzc}{OT1}{pzc}{m}{it}
\DeclareMathOperator{\Aut}{Aut}
\DeclareMathOperator{\End}{End}
\DeclareMathOperator{\Hom}{Hom}
\DeclareMathOperator{\Ind}{Ind}
\DeclareMathOperator{\Sym}{Sym}
\DeclareMathOperator{\GL}{GL}
\DeclareMathOperator{\SL}{SL}
\DeclareMathOperator{\Ker}{Ker}
\DeclareMathOperator{\WD}{WD}
\DeclareMathOperator{\Gal}{Gal}
\DeclareMathOperator{\soc}{soc}
\DeclareMathOperator{\tr}{tr}
\DeclareMathOperator{\Irr}{Irr}
\DeclareMathOperator{\Spec}{Spec}
\DeclareMathOperator{\mSpec}{m-Spec}
\DeclareMathOperator{\id}{id}
\DeclareMathOperator{\Mod}{Mod}
\DeclareMathOperator{\Sp}{Sp}
\DeclareMathOperator{\Ext}{Ext}
\DeclareMathOperator{\dt}{det}
\DeclareMathOperator{\Ban}{Ban}
\DeclareMathOperator{\dualcat}{\mathfrak C}
\DeclareMathOperator{\Ord}{Ord}
\newcommand{\Indu}[3]{\Ind_{#1}^{#2}{#3}}
\newcommand{\Q}{\mathbb{Q}}
\newcommand{\Qp}{\mathbb {Q}_p}
\newcommand{\Zp}{\mathbb{Z}_p}
\newcommand{\Qpbar}{\overline{\mathbb{Q}}_p}
\newcommand{\NN}{\mathbb N}
\newcommand{\Eins}{\mathbf 1}
\newcommand{\ZZ}{\mathbb Z}
\newcommand{\VV}{\mathbf V}
\newcommand{\CC}{\mathbb C}
\newcommand{\QQ}{\mathbb Q}
\newcommand{\RR}{\mathbb R}
\newcommand{\mm}{\mathfrak m}
\newcommand{\st}{\mathrm{st}}
\newcommand{\wP}{\widetilde{P}}
\newcommand{\wE}{\widetilde{E}}
\newcommand{\OO}{\mathcal O}
\newcommand{\TT}{\mathbb T}
\newcommand{\gal}{G_{\Qp}}
\newcommand{\ad}{\mathrm {ad}}
\DeclareMathOperator{\wtimes}{\widehat{\otimes}}
\newcommand{\cV}{\check{\mathbf{V}}}
\DeclareMathOperator{\Ad}{Ad}
\newcommand{\BB}{\mathfrak B}
\newcommand{\nn}{\mathfrak n}
\newcommand{\md}{\mathrm m}
\newcommand{\pp}{\mathfrak p}
\newcommand{\br}[1]{\llbracket #1\rrbracket}
\newcommand{\qq}{\mathfrak{q}}
\newcommand{\sm}{\mathrm{sm}}
\newcommand{\pro}{\mathrm{pro}}
\newcommand{\adm}{\mathrm{adm}}
\newcommand{\alg}{\mathrm{alg}}
\newcommand{\cont}{\mathrm{cont}}
\newcommand{\ladm}{\mathrm{l.adm}}
\newcommand{\wB}{\mathrm{B}}
\newcommand{\rhobar}{\bar{\rho}}
\newcommand{\lfin}{\mathrm{l.fin}}
\newcommand{\univ}{\mathrm{univ}}
\newcommand{\wt}{\mathbf w} 
\newcommand{\cris}{\mathrm{cr}}
\DeclareMathOperator{\LLL}{LL}
\DeclareMathOperator{\Spf}{Spf}
\newcommand{\AfF}{\mathbb{A}_F^f}
\newcommand{\inv}{\mathrm{inv}}
\newcommand{\Nm}{\mathrm{Nm}}
\newcommand{\cyc}{\mathrm{cyc}}
\newcommand{\Frob}{\mathrm{Frob}}
\newcommand{\sigmabar}{\bar{\sigma}}
\newcommand{\psibar}{\bar{\psi}}
\newcommand{\tsigma}{\tilde{\sigma}}
\newcommand{\tEins}{\tilde{\Eins}}
\newcommand{\rhobarss}{\rhobar^{\mathrm{ss}}}
\newcommand{\ps}{\mathrm{ps}}
\newcommand{\CNL}{\mathrm{CNL}}
\newcommand{\gm}{(\widehat{\mathbb G}_m[2])^t}
\title[On $2$-dimensional $2$-adic Galois representations]{On $2$-dimensional $2$-adic Galois representations of local and global fields}
\author{Vytautas Pa\v{s}k\={u}nas}
\date{\today.}
\begin{document} 

\maketitle
\begin{abstract} We describe the generic blocks in  the category of smooth locally admissible mod $2$ representations  of $\GL_2(\QQ_2)$. 
As an application we  obtain new cases of Breuil--M\'ezard and Fontaine--Mazur conjectures for $2$-dimensional $2$-adic Galois representations. 
\end{abstract}
\tableofcontents

\section{Introduction}\label{intro}

Let $p$ be a prime and let $L$ be a finite extension of $\QQ_p$ with the ring of integers $\OO$ and uniformizer $\varpi$. We prove the following modularity lifting theorem. 

\begin{thm}\label{intro1} Assume that $p=2$. Let $F$ be a totally real field where $2$ is totally split, let $S$ be a finite set of places of $F$ containing all the places above 
$2$ and all the infinite places and let 
$$\rho:G_{F,S}\rightarrow \GL_2(\OO)$$
be a continuous representation of the Galois group of the maximal extension of $F$ unramified outside $S$. Suppose that 
\begin{itemize}
\item[(i)] $\rhobar: G_{F,S}\overset{\rho}{\rightarrow} \GL_2(\OO)\rightarrow \GL_2(k)$ is modular with non-solvable image;
\item[(ii)] For $v\mid 2$, $\rho|_{G_{F_v}}$ is potentially semi-stable with distinct Hodge--Tate weights;
\item[(iii)] $\det \rho$ is totally odd; 
\item[(iv)] for $v\mid 2$, $\rhobar|_{G_{F_v}}\not\cong \bigl ( \begin{smallmatrix} \chi & \ast\\ 0 & \chi\end{smallmatrix}\bigr )$ for any character $\chi: G_{F_v}\rightarrow k^{\times}$.
\end{itemize}
Then $\rho$ is modular. 
\end{thm}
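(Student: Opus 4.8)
The plan is to deduce Theorem~\ref{intro1} from an $R=\TT$ theorem by the Taylor--Wiles--Kisin patching method, with the local-at-$2$ input supplied by the understanding of the blocks of $\Mod^{\mathrm{l.adm}}_{\GL_2(\QQ_2)}(\OO)$ developed in the body of the paper. First I would reduce to the case where $\rho$ is, up to twist, the $2$-adic realization of a Hilbert modular form and $\rhobar$ is as in hypothesis (i); by solvable base change (Langlands--Arthur for $\GL_2$ over totally real fields) one may also enlarge $F$, so I would replace $F$ by a suitable solvable totally real extension in which every place above $2$ splits completely and is such that $\rhobar|_{G_{F_v}}$ becomes as tame/generic as the arguments require, while preserving that $\rhobar$ has non-solvable image (this uses that the image is non-solvable, hence survives solvable base change). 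The oddness hypothesis (iii) guarantees that the relevant spaces of automorphic forms are computed by the cohomology of a definite quaternion algebra (or, after a further base change, of a Shimura curve), so I can set up a patched module $M_\infty$ over a patched deformation ring $R_\infty$ acting on it.

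\smallskip

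The key local step is at the places $v\mid 2$. Hypothesis (iv) says $\rhobar|_{G_{F_v}}$ is not a twist of a very ramified or unramified extension of a character by itself — in block-theoretic terms, $\rhobar|_{G_{F_v}}$ lies in a \emph{generic} block, precisely the blocks whose module-theoretic structure is described earlier in the paper. I would feed that description into the Taylor--Wiles patching to identify the patched module $M_\infty$, as a module over the local deformation ring $R_v^{\square}$ at each $v\mid 2$, with (a direct sum of copies of) the Pa\v{s}k\={u}nas-style Cohen--Macaulay module coming from the universal deformation of $\rhobar|_{G_{F_v}}$ inside its block; concretely one checks that $M_\infty$ is a faithfully flat, maximal Cohen--Macaulay module over $R_\infty$ supported on the expected crystalline/semistable components, using that the cycles predicted by Breuil--M\'ezard (the new cases of which are obtained in the paper) match the Hilbert--Samuel multiplicities of the patched module. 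Hypothesis (ii), distinct Hodge--Tate weights, ensures that $\rho|_{G_{F_v}}$ defines a point on one of these potentially semistable components, so it lies in the support of $M_\infty$. Since $M_\infty$ is faithful over $R_\infty$ and $R_\infty$ acts through its reduced, equidimensional quotient on which $M_\infty$ is supported, the $\OO$-point of $R_\infty/\mathfrak{a}$ given by $\rho$ (for $\mathfrak{a}$ the patching ideal) lies in the closure of the automorphic locus, which is exactly the statement that $\rho$ is modular.

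\smallskip

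Concretely, the proof runs: (1) base change reductions and a potential automorphy / solvable descent argument to put ourselves in the definite-quaternionic setting with $\rhobar$ satisfying (i) and with the local conditions at $2$ generic; (2) construct $R_\infty$ and $M_\infty$ by patching, recording the local deformation rings $R_v^\square$ for $v\in S$ — at $v\nmid 2$ these are the usual ones, at $v\mid 2$ they are the framed potentially semistable deformation rings with the prescribed Hodge type; (3) invoke the block theory of the paper to prove that $M_\infty$ is maximal Cohen--Macaulay and, in fact, that its support is all of $\Spec R_\infty$ (equivalently, that the Breuil--M\'ezard multiplicities computed automorphically equal the geometric ones just established), which gives $M_\infty$ faithful; (4) conclude that $\rho$, defining a point of $\Spec R_\infty$ that is automatically in the support of $M_\infty$ by (ii), is modular, and transfer back along the base change from step (1) using that modularity descends through solvable extensions for representations with non-solvable residual image. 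The main obstacle is step (3): one must show that the generic-block structure — in particular the precise socle/cosocle filtration and the projective generators — forces the patched module to have the expected Hilbert--Samuel multiplicity on \emph{every} component of the potentially semistable deformation ring, including the non-crystalline and the non-generic-weight components at $p=2$ where the representation theory of $\GL_2(\QQ_2)$ is genuinely more delicate than for $p>2$; this is exactly where the new description of the generic blocks, rather than any formal patching input, does the work, and where the restriction in hypothesis (iv) is essential.
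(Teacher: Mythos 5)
Your overall strategy is the paper's strategy (Kisin's): reduce by solvable base change to a minimal, definite-quaternionic situation, patch, and close the numerical criterion using the Breuil--M\'ezard input proved in the local part. But as written, your step (3) has a genuine gap: you assert that ``the cycles predicted by Breuil--M\'ezard \ldots match the Hilbert--Samuel multiplicities of the patched module'' without supplying any mechanism for computing the automorphic side. The Breuil--M\'ezard statement by itself only bounds $e(R^{\psi,\square}_S(\sigma)/\varpi)$ in terms of the multiplicities $e(R^{\xi,\square}_S(\tsigma_i)/\varpi)$ of the parallel weight $2$ rings (Remark \ref{need_bound}); to convert this into the equality $e(M_\infty(\sigma)/\varpi)=2^t r\, e(R^{\psi,\square}_S(\sigma)/\varpi)$ one needs an \emph{anchor}: the fact that in the small weights ($\sigma_v\in\{\tEins,\tilde{\st}\}$ for $v\mid 2$) the support of the patched module already meets every component, i.e.\ an already-proven modularity lifting theorem in the Barsotti--Tate/weight $2$ case. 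This is exactly what \S\ref{small_weights} supplies, by invoking Khare--Wintenberger \cite{KW} and Kisin \cite{kisin_serre2} (Proposition \ref{main_small}, Corollaries \ref{D_unram_1}, \ref{small_OK}), together with the filtration of $\overline{\sigma}$ by irreducibles and the compatible patching of all the $M_\infty(\tsigma_i)$ simultaneously (Proposition \ref{big_patch}). Without this external global input the multiplicity count cannot be closed, and no amount of block theory replaces it; the block theory enters only through the purely local inequality \eqref{to_show_local}. Relatedly, your description of ``identifying $M_\infty$ with the Pa\v{s}k\={u}nas-style Cohen--Macaulay module'' is not what happens: the local module $M(\Theta)$ is never compared to $M_\infty$; only the numerical Breuil--M\'ezard consequence is used.

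A second, smaller but real, issue is that you treat the patching as routine. For $p=2$ the patched ring is \emph{not} formally smooth over the completed tensor product of local deformation rings: following \cite{KW} one obtains two rings $R'_\infty(\sigma)\twoheadrightarrow R_\infty(\sigma)$ with a free action of $\gm$ and an invariant ring $R^{\inv}_\infty(\sigma)$, and all multiplicity comparisons must be carried out over $R^{\inv}_\infty(\sigma)$ (Lemmas \ref{torsor_fix}--\ref{nail}). Moreover the patched module is not generically free of rank one ($r=2^{|S\setminus(\Sigma\cup\{v\mid 2\infty\})|}$ in Lemma \ref{still_to_do}), so one cannot argue as in \cite{kisinfm}; instead one needs regularity of $R_\infty(\sigma)$ at automorphic points, which is deduced from purity of the Weil--Deligne representations attached to Hilbert modular forms, and then Auslander--Buchsbaum (Lemmas \ref{regular_point}, \ref{regular_point2}). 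These are not cosmetic adjustments; they are where the $p=2$ global argument genuinely differs from the $p>2$ one, and a complete proof must address them.
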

Kisin  \cite{kisinfm}, Emerton \cite{lg} have proved an analogous theorem for $p>2$.  Our proof follows the  strategy of Kisin. 
We patch automorphic forms on definite quaternion algebras and deduce the theorem from a weak form of Breuil--M\'ezard conjecture, which 
we prove for all $p$ under some technical assumptions on the residual representation of $G_{\Qp}$, see Theorems \ref{weak_bm}, \ref{split_bm}, which force us to assume (iv) in the Theorem. 

The Breuil--M\'ezard conjecture is proved by employing a formalism developed in \cite{mybm}, where analogous result is proved 
under the assumption that  $p\ge 5$ and the residual representation has scalar endomorphisms.   We can prove the result for primes $2$ and $3$ by better understanding  the smooth representation theory of $G:=\GL_2(\Qp)$ in characteristic $p$: in the local part of the paper we extend 
the results of \cite{cmf} to the generic blocks, when $p$ is $2$ and $3$, which we will now describe.

Let $\Mod^{\sm}_G(\OO)$ be the category of smooth $G$-representation on $\OO$-torsion modules. We fix a continuous 
character $\psi: \Qp^{\times}\rightarrow \OO^{\times}$ and let $\Mod^{\mathrm{\ladm}}_{G, \psi}(\OO)$  be the full subcategory of $\Mod^{\sm}_G(\OO)$, consisting of representations, on which the centre of $G$ acts by the character $\psi$ and 
which are equal to the union of their admissible subrepresentations. The categories $\Mod^{\sm}_G(\OO)$ and $\Mod^{\mathrm{l.adm}}_{G, \psi}(\OO)$ are abelian, see \cite[Prop.2.2.18]{ord1}. A finitely generated smooth admissible 
representation of $G$ with a central character is of finite length, \cite[Thm.2.3.8]{ord1}. This makes $\Mod^{\mathrm{l.adm}}_{G, \psi}(\OO)$ into a locally finite category. Gabriel \cite{gabriel} has proved that a locally finite category decomposes into a direct product of indecomposable subcategories as follows. 

Let $\Irr_G^{\adm}$ be the set of irreducible  representation in $\Mod^{\mathrm{l.adm}}_{G, \psi}(\OO)$. We define an equivalence relation $\sim$ on the  set of irreducible  representation $\Irr_G^{\adm}$ in $\Mod^{\mathrm{l.adm}}_{G, \psi}(\OO)$: $\pi\sim \tau$, if there exists a sequence in  $\Irr_G^{\adm}$,
$\pi=\pi_1, \pi_2, \ldots, \pi_n=\tau$, such that for each $i$ one of the following holds: 1) $\pi_i\cong \pi_{i+1}$;
2) $\Ext^1_G(\pi_i, \pi_{i+1})\neq 0$; 3) $\Ext^1_G(\pi_{i+1}, \pi_{i})\neq 0$. We have a canonical decomposition:
 \begin{equation}\label{blocksdecompose}
 \Mod^{\mathrm{l.adm}}_{G, \psi}(\OO)\cong \prod_{\BB\in \Irr_G^{\adm}/\sim} \Mod^{\mathrm{l.adm}}_{G,\psi}(\OO)[\BB],
\end{equation}
where $\Mod^{\mathrm{l.adm}}_{G,\psi}(\OO)[\BB]$ is the full subcategory of  $\Mod^{\mathrm{l.adm}}_{G, \psi}(\OO)$ consisting of representations, with all irreducible subquotients in $\BB$. A block is an  equivalence class of $\sim$. 

For a block $\BB$ let $\pi_{\BB}=\oplus_{\pi\in \BB} \pi$, let $\pi_{\BB}\hookrightarrow J_{\BB}$ be an injective envelope of $\pi_{\BB}$ and let $E_{\BB}:=\End_G(J_{\BB})$. 
Then $J_{\BB}$ is an injective generator for $\Mod^{\mathrm{l.adm}}_{G,\psi}(\OO)[\BB]$, $E_{\BB}$ is  a pseudo-compact ring and the functor $\kappa\mapsto \Hom_G(\kappa, J_{\BB})$ induces an anti-equivalence of
categories between   $\Mod^{\mathrm{l.adm}}_{G,\psi}(\OO)[\BB]$ and the category of right pseudo-compact $E_{\BB}$-modules. 
The inverse functor is given by $\md \mapsto (\md\wtimes_{E_{\BB}} J_{\BB}^{\vee})^{\vee}$, where $\vee$ denotes the Pontryagin dual, see
\cite[\S IV.4]{gabriel}. The main result of \cite{cmf} computes the rings $E_{\BB}$ for each block $\BB$ and describes the Galois representation of $G_{\Qp}$ obtained by applying the Colmez's functor to  $J_{\BB}$ under the assumption $p\ge 5$ or $p\ge 3$ depending on the block $\BB$. 

If $\pi\in \Irr_G^{\adm}$ then one may show that after extending scalars $\pi$ is isomorphic to a finite direct sum of absolutely irreducible 
representations of $G$. It has been proved in \cite{blocks} that the blocks containing an absolutely irreducible representation are given by the following:
\begin{itemize} 
\item[(i)] $\BB=\{\pi\}$ with $\pi$ supersingular;
\item[(ii)] $\BB=\{(\Indu{B}{G}{\chi_1\otimes\chi_2\omega^{-1}})_{\sm},
(\Indu{B}{G}{\chi_2\otimes\chi_1\omega^{-1}})_{\sm}\}$ with $\chi_2 \chi_1^{-1}\neq \omega^{\pm 1}, \Eins$;
\item[(iii)] $p>2$ and $\BB=\{ (\Indu{B}{G}{\chi\otimes\chi\omega^{-1}})_{\sm}\}$;
\item[(iv)] $p=2$ and $\BB=\{\Eins,  \Sp\}\otimes\chi\circ \det$;
\item[(v)] $p\ge 5$ and $\BB=\{\Eins, \Sp, (\Indu{B}{G}{\omega\otimes \omega^{-1}})_{\sm}\}\otimes \chi\circ \det$;
\item[(vi)] $p=3$ and $\BB=\{\Eins, \Sp, \omega\circ \det, \Sp\otimes \omega\circ \det\}\otimes \chi\circ \det$;
\end{itemize}
where $\chi, \chi_1, \chi_2: \Qp^{\times}\rightarrow k^{\times}$ are smooth characters, $\omega: 
\Qp^{\times}\rightarrow k^{\times}$ is the character $\omega(x)= x|x| \pmod{\varpi}$ and we view $\chi_1\otimes \chi_2$ as a character 
of the subgroup of upper-triangular matrices $B$ in $G$, which sends $\bigl(\begin{smallmatrix} a & b \\ 0 & d\end{smallmatrix} \bigr)$
to $\chi_1(a) \chi_2(d)$. An absolutely irreducible representation $\pi$ is supersingular if it is not a  subquotient of a principal series 
representation, they have been classified by Breuil in \cite{breuil1}, and $\Sp$ denotes the Steinberg representation.  

To each block above one may attach a semi-simple $2$-dimensional $k$-representation $\rhobarss$ of $G_{\Qp}$: in case (i) 
$\rhobarss$ is absolutely irreducible, and such that Colmez's functor $\VV$, see \S \ref{colmez_functor}, maps $\pi$ to $\rhobarss$, 
in case (ii) $\rhobarss=\chi_1\oplus \chi_2$, in cases (iii) and (iv) $\rhobarss= \chi\oplus \chi$, in cases (v) and (vi) $\rhobarss=\chi\oplus 
\chi\omega$, where we consider characters of $G_{\Qp}$ as characters of $\Qp^{\times}$ via local class field theory, normalized so that 
uniformizers correspond to geometric Frobenii. We note that the determinant of $\rhobarss$ is equal to $\psi \varepsilon$ modulo $\varpi$, where $\varepsilon$ is the $p$-adic cyclotomic character and $\omega$ is its reduction modulo $\varpi$.

\begin{thm}\label{intro_V} If $\BB=\{\pi\}$ with $\pi$ supersingular (so that $\rhobarss$ is irreducible) then $E_{\BB}$ is naturally isomorphic to the quotient  of the universal deformation ring of $\rhobarss$ parameterizing deformations with determinant $\psi \varepsilon$ and 
$\VV(J_{\BB})^{\vee}(\psi \varepsilon)$  is a tautological deformation of $\rhobarss$ to $E_{\BB}$.
\end{thm}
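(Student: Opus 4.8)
The plan is to follow the strategy of \cite{cmf}: use Colmez's functor $\VV$ to translate the computation of $E_{\BB}$ into the deformation theory of $\rhobarss$, and to isolate the extra input needed when $p\in\{2,3\}$. After a finite extension of $\OO$ we may assume $\pi$ is absolutely irreducible (the general case following by descent); then $\End_G(\pi)=k$, the block $\BB=\{\pi\}$ has a single simple object, and $E_{\BB}$ is a local pseudo-compact $\OO$-algebra with residue field $k$. Using the anti-equivalence recorded in the excerpt (equivalently its dual, the category of pseudo-compact $E_{\BB}$-modules, in which $\pi^{\vee}$ corresponds to the unique simple module $k$), one checks that for every Artinian local $\OO$-algebra $A$ with residue field $k$ the deformations of $\pi$ to $A$ inside $\Mod^{\ladm}_{G,\psi}(\OO)$ are classified by $\Hom^{\mathrm{loc}}_{\OO\text{-}\Alg}(E_{\BB},A)$; that is, the deformation functor $\Def_\pi$ of $\pi$ is pro-represented by $E_{\BB}$. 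This step is a consequence of the abelian-category formalism and is insensitive to $p$.

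Next I would apply $\VV$. It is exact and, by the properties recalled in \S\ref{colmez_functor}, sends $\pi$ to $\rhobarss$ up to duality and the twist by $\psi\varepsilon$. Given a deformation $\pi_A$ of $\pi$ to $A$ with central character $\psi$, reducing modulo the powers of $\mathfrak m_A$ and inducting on length — all Jordan--H\"older factors of $\pi_A$ being $\pi$ — shows that $\VV(\pi_A)^{\vee}(\psi\varepsilon)$ is free of rank $2$ over $A$ and reduces to $\rhobarss$, hence is a deformation of $\rhobarss$ to $A$ with determinant $\psi\varepsilon$. Thus $\VV$ defines a natural transformation from $\Def_\pi$ to the functor $A\mapsto\{\text{deformations of }\rhobarss\text{ to }A\text{ with determinant }\psi\varepsilon\}$, which is pro-represented by the quotient $R^{\psi\varepsilon}_{\rhobarss}$ of the universal deformation ring of $\rhobarss$ cutting out the determinant $\psi\varepsilon$. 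By the representability of the previous step and Yoneda this transformation is the same datum as a local $\OO$-algebra homomorphism $\varphi\colon R^{\psi\varepsilon}_{\rhobarss}\to E_{\BB}$, and unwinding the identifications, the deformation of $\rhobarss$ to $E_{\BB}$ classified by $\varphi$ is precisely $\VV(J_{\BB})^{\vee}(\psi\varepsilon)$.

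It remains to show $\varphi$ is an isomorphism, for which it suffices to show the above natural transformation is bijective on every Artinian $A$. Injectivity amounts to recovering a deformation $\pi_A$ functorially from $\VV(\pi_A)$: this is exactly where the absolute irreducibility of $\rhobarss$ is used, Colmez's functor admitting a quasi-inverse on the subcategory of $G_{\Qp}$-representations generated by $\rhobarss$ and its $\OO$-deformations, built on the $(\varphi,\Gamma)$-module side and recalled in \S\ref{colmez_functor}. Surjectivity says every deformation of $\rhobarss$ with determinant $\psi\varepsilon$ is of the form $\VV(\pi_A)^{\vee}(\psi\varepsilon)$; it follows from the same inverse construction once one checks the resulting smooth $G$-representation is locally admissible, has central character $\psi$, and lies in $\BB$. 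Granting this, $\varphi$ is an isomorphism and, by construction, $\VV(J_{\BB})^{\vee}(\psi\varepsilon)$ corresponds under it to $\mathrm{id}_{E_{\BB}}$; that is, it is the tautological deformation of $\rhobarss$ to $E_{\BB}$.

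The main obstacle is carrying out the last paragraph uniformly for $p\in\{2,3\}$, where several vanishing statements available for $p\ge 5$ in \cite{cmf} degenerate. For $p=2$ in particular one has $\omega=1$, so $\rhobarss\cong\rhobarss\otimes\omega$ and the trace pairing on $\ad\rhobarss$ is degenerate (the scalars lie in $\ad^{0}\rhobarss$, and $\ad\rhobarss$ does not split off its scalars as a $G_{\Qp}$-module); consequently the comparison of obstruction spaces $\Ext^{2}_{\Mod^{\ladm}_{G,\psi}(\OO)}(\pi,\pi)$ and $H^{2}(G_{\Qp},\ad^{0}\rhobarss)$, as well as the flatness and smoothness inputs, must be verified directly rather than by the formal arguments valid for large $p$. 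This is tractable because for $p=2$ there is, up to twist, essentially one supersingular $\rhobarss$, so the relevant Galois cohomology and the structure of $\VV(J_{\BB})$ over $E_{\BB}$ can be made explicit; nonetheless this case analysis is where the real difficulty lies.
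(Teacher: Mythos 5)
Your overall framing (pro-representability of the deformation functor of $\pi$ by $E_{\BB}$, transport through $\VV$, comparison with the determinant-fixed deformation ring of $\rhobarss$) matches the skeleton of the paper's argument, but the two steps you treat as routine are exactly where the content lies, and the way you propose to close the argument does not work. First, pro-representability of $\Def_{\pi}$ on commutative Artinian test rings by $E_{\BB}$ is not ``a consequence of the abelian-category formalism insensitive to $p$'': it requires knowing that $E_{\BB}$ is \emph{commutative}. For $p\ge 5$ this was proved in \cite{cmf} by a tangent-space and obstruction computation which breaks down for $p=2,3$ (the ring $R^{\psi}$ need not be formally smooth, and the required control of an $\Ext^1$ inside an $\Ext^2$ is out of reach). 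The paper replaces this by Proposition \ref{commutative}: the unconstrained ring $\wE$ is commutative by \cite[Cor.\,2.22]{PCD} (a characteristic-zero capture argument resting on Berger--Breuil), and $E_{\BB}$ is its quotient by a centrally generated ideal. Without that input your first paragraph has no content.

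Second, and more seriously, your mechanism for proving the comparison map is an isomorphism --- a quasi-inverse to Colmez's functor on deformations to Artinian rings --- is not available and is precisely what the paper is written to avoid. No such quasi-inverse is ``recalled in \S\ref{colmez_functor}''; building $\GL_2(\Qp)$-representations out of Galois data is the hard functional-analytic part of \cite{colmez} and \cite{PCD}, and the paper points out that even the main theorem of \cite{PCD} would only give a bijection on closed points of the generic fibres, hence at best an identification of $R^{\psi}$ with the maximal reduced quotient of $E_{\BB}$, leaving open whether $E_{\BB}$ is reduced. The actual argument runs the map in the other direction and in two stages: (a) surjections $R\twoheadrightarrow E_{\BB}\twoheadrightarrow R^{\psi}$, the first by Kisin's injectivity of $\cV$ on $\Ext^1$, the second using that $R^{\psi}$ is reduced and $\OO$-torsion free (Chenevier for $p=2$, B\"ockle for $p=3$) together with the realizability of every determinant-$\psi\varepsilon$ point by \cite{surjectivity}; and (b) injectivity of $\varphi\colon E_{\BB}\twoheadrightarrow R^{\psi}$ by showing, for each locally algebraic type $V$, that $E_{\BB}/\ann M(\Theta)$ is reduced and identified via $\varphi$ with a crystabeline quotient of $R^{\psi}$ (this is where Berger--Breuil's irreducibility of universal unitary completions and multiplicity one enter), and then that the intersection of these annihilators is zero --- the capture argument of \S\ref{capture}. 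Your closing paragraph correctly senses that $p\in\{2,3\}$ is the crux, but the difficulty is not resolved by ``verifying vanishing statements directly''; it is resolved by this density-of-crystabeline-points argument, which your proposal does not contain.
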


We also obtain an analogous result for block in (ii), see Theorem \ref{block_split}. Let $R^{\ps}$ be the deformation ring 
parameterizing all the $2$-dimensional determinants, in the sense of \cite{che_det}, lifting $(\tr \rhobarss, \det \rhobarss)$, and let 
$R^{\ps, \psi}$ be the quotient of $R^{\ps}$ parameterizing those, which have determinant $\psi \varepsilon$. 

\begin{thm}\label{intro_centre} Assume that the block $\BB$ is given by (i) or (ii) above. Then the centre of the category 
$\Mod^{\ladm}_{G, \psi}(\OO)[\BB]$ is naturally isomorphic to $R^{\ps, \psi}$.
\end{thm}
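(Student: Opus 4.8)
The plan is to reduce the computation of the centre of the block to the Galois side using the anti-equivalence with modules over $E_{\BB}$. First I would recall the general fact that the centre of an abelian category that is anti-equivalent to the category of right pseudo-compact modules over a pseudo-compact ring $E_{\BB}$ is canonically the centre $Z(E_{\BB})$ of that ring, since Morita theory (in the pseudo-compact setting of Gabriel) identifies natural transformations of the identity functor with central elements. Thus it suffices to compute $Z(E_{\BB})$ in cases (i) and (ii). In case (i), Theorem~\ref{intro_V} already tells us that $E_{\BB}$ is naturally isomorphic to the quotient of the universal deformation ring of $\rhobarss$ with fixed determinant $\psi\varepsilon$; since $\rhobarss$ is absolutely irreducible this deformation ring is commutative, so $Z(E_{\BB})=E_{\BB}$, and I would need to identify this fixed-determinant deformation ring with $R^{\ps,\psi}$. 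That identification is the statement that for an absolutely irreducible residual representation, framed/unframed deformations and deformations of the associated determinant (pseudocharacter) all coincide — a result going back to Carayol/Nyssen/Rouquier on the equivalence between representations and pseudo-representations when the residual object is absolutely irreducible — together with the compatibility of the determinant condition.

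For case (ii) the block is $\{\pi_1,\pi_2\}$ with $\pi_i$ the two non-isomorphic principal series with $\rhobarss=\chi_1\oplus\chi_2$, $\chi_1\chi_2^{-1}\neq\omega^{\pm1},\Eins$, and here $E_{\BB}$ is genuinely non-commutative (it is a $2\times 2$ matrix-type algebra over local rings, reflecting the two idempotents coming from $\pi_1,\pi_2$). So the real work is to compute its centre. I would use the explicit description of $E_{\BB}$ established in Theorem~\ref{block_split} (the analogue of the $p\ge5$ computation in \cite{cmf}): one knows the $\Ext^1$ and $\Ext^2$ groups between $\pi_1$ and $\pi_2$, hence the quiver-with-relations presentation of $E_{\BB}$, and from the presentation one reads off the centre. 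The expected shape is that $Z(E_{\BB})$ is the subring of the product of the two complete local endomorphism rings $\End_G(J_{\pi_i})$ cut out by a gluing condition along the "$\Ext$-arrows", and this subring should match $R^{\ps,\psi}$: indeed $R^{\ps,\psi}$ parameterizing determinants lifting $(\tr(\chi_1\oplus\chi_2),\det(\chi_1\oplus\chi_2))$ with fixed determinant is exactly the fibre product of the two character deformation rings $\OO\br{\text{def of }\chi_i}$ over the common determinant, by Chenevier's description of $2$-dimensional determinants with reducible residual trace \cite{che_det}. So the strategy is: (a) present $E_{\BB}$ via the quiver; (b) compute $Z(E_{\BB})$ as the appropriate fibre product of local rings; (c) on the Galois side, compute $R^{\ps,\psi}$ as the same fibre product; (d) exhibit a natural map inducing the isomorphism, the naturality coming from the action of the centre on $\VV(J_{\BB})$ via Colmez's functor, which sends the central operators to scalar endomorphisms of the associated deformation.

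A cleaner route for the naturality, which I would actually carry out, is to construct the map $R^{\ps,\psi}\to Z(E_{\BB})$ directly: the Colmez functor $\VV$ applied to $J_{\BB}$ produces an $E_{\BB}$-module-with-Galois-action whose associated determinant is a deformation of $(\tr\rhobarss,\det\rhobarss)$ with determinant $\psi\varepsilon$, hence classified by a ring homomorphism $R^{\ps,\psi}\to E_{\BB}$; one checks its image is central (because the determinant/pseudocharacter only sees the trace, which is $G$-equivariant, forcing the image to commute with all of $E_{\BB}$). Then one shows this map is an isomorphism onto $Z(E_{\BB})$ by comparing with the explicit presentations in cases (i) and (ii) — in case (i) it is Theorem~\ref{intro_V} essentially verbatim, and in case (ii) one matches the fibre-product descriptions. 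The main obstacle I anticipate is case (ii): one must compute $E_{\BB}$ precisely enough (including the relations, not just the underlying graded pieces) to pin down its centre, and separately verify that Chenevier's determinant ring $R^{\ps,\psi}$ is the expected fibre product of the two twisted-character deformation rings over the determinant — the genericity hypothesis $\chi_1\chi_2^{-1}\neq\omega^{\pm1},\Eins$ is what makes both computations clean (no extra $\Ext$-classes, no reducible-but-indecomposable deformations creating nilpotents beyond the expected ones), and getting the bookkeeping of the $\psi\varepsilon$-twist consistent on both sides is the fiddly part.
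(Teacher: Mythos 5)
Your first two steps match the paper: the identification of the centre of the block with the centre of $E_{\BB}$ is exactly Gabriel's Morita-type statement (Corollaire 5 after Thm.~IV.4 of \cite{gabriel}), and case (i) is handled as you say, via Theorem \ref{intro_V} plus the identification $R^{\ps}\cong R_{\rho}$ for absolutely irreducible $\rho$. One caveat there: since $p=2=\dim\rho$, the classical Carayol/Nyssen/Rouquier equivalence between representations and pseudocharacters does not apply ($d!$ must be invertible); you must use Chenevier's determinants, i.e.\ \cite[Thm.~2.22, Ex.~3.4]{che_det}, which is what the paper does. For case (ii) your route also diverges from the paper's in a way worth noting: the paper does \emph{not} compute $E_{\BB}$ by an $\Ext$-quiver calculation on the $\GL_2(\Qp)$ side (the introduction explains that this computation is intractable for $p=2,3$). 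Instead it uses \cite[Lem.~8.10]{cmf} to transfer the problem to the Galois side, $E_{\BB}\cong \End^{\cont}_{G_{\Qp}}(\rho_1^{\univ}\oplus\rho_2^{\univ})$, and then quotes the computation of this endomorphism ring from \cite{versal}, which yields the generalized matrix algebra presentation of Theorem \ref{block_split} and, with it, the centre.

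The genuine gap is in your step (c): the claim that $R^{\ps,\psi}$ ``is exactly the fibre product of the two character deformation rings over the common determinant'' is false, and the comparison you propose would therefore fail. Determinants lifting $\tr(\chi_1\oplus\chi_2)$ need not be reducible: the generic fibre of $R^{\ps,\psi}$ contains all the absolutely irreducible lifts with the given residual semisimplification (these are precisely the points appearing in Corollary \ref{lan_bij}). The fibre product you describe is only the reducible locus, i.e.\ $R^{\ps,\psi}$ modulo the reducibility ideal, which by Theorem \ref{block_split} is generated by a regular element $c$; so your model of $R^{\ps,\psi}$ is a proper quotient of the correct ring, of strictly smaller dimension. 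Correspondingly, the centre of $E_{\BB}$ is not a fibre product of the two corner rings $e_{\chi_i}E_{\BB}e_{\chi_i}$ glued along the $\Ext$-arrows in the way you suggest: each corner ring is already a free $R^{\ps,\psi}$-module of rank one (indeed $e_{\chi_i}E_{\BB}e_{\chi_i}=\End_{\dualcat(\OO)}(P_i)\cong R_i^{\psi}\cong R^{\ps,\psi}$), and the centre is the diagonally embedded copy of $R^{\ps,\psi}$, with the off-diagonal pieces free of rank one and $\tilde{\Phi}_{12}\tilde{\Phi}_{21}=ce_{\chi_1}$. Your ``cleaner route'' via a natural map $R^{\ps,\psi}\to Z(E_{\BB})$ coming from the determinant of $\cV(P_{\BB})$ is the right idea and is essentially how \cite{versal} and Corollary \ref{trace_special} proceed (one must verify a Cayley--Hamilton identity for $\cV(P_{\BB})$ over $R^{\ps,\psi}$), but proving it is an isomorphism requires the correct structure theory of $E_{\BB}$ over $R^{\ps,\psi}$, not the fibre-product picture.
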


We view this Theorem as an analogue of Bernstein-centre for this category. Theorems \ref{intro_V}, \ref{intro_centre} are new if $p=2$ and 
if $p=3$ and $\BB=\{\pi\}$ with $\pi$ supersingular. Together with the results of \cite{cmf} this covers all the blocks except for 
those in (iv) and (vi) above.

One also has a decomposition similar to \eqref{blocksdecompose} for the category  $\Ban^{\adm}_{G, \psi}(L)$ of admissible unitary $L$-Banach space representations of $G$, on which the center of $G$ acts by $\psi$, see \S \ref{section_blocks}. An admissible unitary $L$-Banach space representation $\Pi$ lies in $\Ban^{\adm}_{G, \psi}(L)[\BB]$ if and only if all the irreducible subquotients of the reduction modulo 
$\varpi$ of a unit ball in $\Pi$ modulo $\varpi$ lie in $\BB$.  An irreducible $\Pi$ is \textit{ordinary} if it is a subquotient of  a unitary parabolic induction of a unitary character. Otherwise
it is called \textit{non-ordinary}.

\begin{cor}\label{padicLL} Assume that the block $\BB$ is given by (i) or (ii) above.  Colmez's Montreal functor $\Pi\mapsto \cV(\Pi)$ induces a bijection between the isomorphism classes of 
\begin{itemize}
\item absolutely irreducible non-ordinary $\Pi\in \Ban^{\adm}_{G, \psi}(L)[\BB]$;
\item  absolutely irreducible $\tilde{\rho}:\gal\rightarrow \GL_2(L)$, such that $\det \tilde{\rho}=\psi\varepsilon$ and the semi-simplification of the reduction modulo $\varpi$ of a $\gal$-invariant  $\OO$-lattice in $\tilde{\rho}$ is isomorphic to $\rhobarss$.
\end{itemize}
\end{cor}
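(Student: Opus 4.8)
The plan is to reduce the corollary to a statement about modules over $E_{\BB}$, using the anti-equivalence underlying \eqref{blocksdecompose} together with its Banach-space analogue from \S\ref{section_blocks}, and then to match the two sides using that Colmez's functor $\cV$ on $L$-Banach space representations is built from the functor $\VV$ on the profinite side. Recall from \S\ref{section_blocks} that $\Pi\in\Ban^{\adm}_{G,\psi}(L)[\BB]$ has, for a choice of unit ball $\Theta\subset\Pi$, a functorially attached finitely generated $E_{\BB}$-module $M(\Theta)$ from which $\Pi$ is recovered; admissibility of $\Pi$ makes $M(\Theta)[1/p]$ finite-dimensional over $L$, and $\Pi$ is absolutely irreducible exactly when $M(\Theta)[1/p]$ is a simple $E_{\BB}[1/p]$-module with endomorphism algebra $L$. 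Thus isomorphism classes of absolutely irreducible $\Pi\in\Ban^{\adm}_{G,\psi}(L)[\BB]$ correspond bijectively to isomorphism classes of such simple modules, and it remains to identify the latter, through $\cV$, with the Galois representations in the statement.

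\emph{The Galois side.} In case (i), Theorem~\ref{intro_V} identifies $E_{\BB}$ with the universal deformation ring of $\rhobarss$ with fixed determinant $\psi\varepsilon$, so a simple $E_{\BB}[1/p]$-module with endomorphism algebra $L$ is the same as a closed point $x$ of $\Spec E_{\BB}[1/p]$ with residue field $L$. Since $\rhobarss$ is absolutely irreducible, such an $x$ is precisely an isomorphism class of absolutely irreducible $\tilde\rho\colon\gal\to\GL_2(L)$ with $\det\tilde\rho=\psi\varepsilon$: the semisimplified reduction of a $\gal$-stable lattice in such a $\tilde\rho$ is automatically $\rhobarss$, every such $\tilde\rho$ is irreducible (otherwise its reduction would be a sum of characters), and conversely any absolutely irreducible $\tilde\rho$ with these properties gives back such an $x$; moreover in block (i) every irreducible Banach representation is automatically non-ordinary, since a unitary parabolic induction of a unitary character reduces to a smooth principal series, whose Jordan--H\"older factors are never supersingular. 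In case (ii) we argue through the centre: by Theorem~\ref{intro_centre} the ring $R^{\ps,\psi}$ is the centre of $\Mod^{\ladm}_{G,\psi}(\OO)[\BB]$, hence acts on every $E_{\BB}$-module, so a simple $E_{\BB}[1/p]$-module is supported at a closed point of $\Spec R^{\ps,\psi}[1/p]$, i.e.\ at a $2$-dimensional determinant over $L$ deforming $(\tr\rhobarss,\det\rhobarss)$ with determinant $\psi\varepsilon$. By the theory of determinants \cite{che_det} such a determinant is either the sum of two characters --- in which case the irreducible Banach representations supported there are the ordinary ones --- or irreducible, in which case it equals $(\tr\tilde\rho,\det\tilde\rho)$ for a unique absolutely irreducible $\tilde\rho\colon\gal\to\GL_2(L)$ with $\det\tilde\rho=\psi\varepsilon$ and residual semisimplification $\rhobarss$. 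Theorem~\ref{block_split} gives enough control over $E_{\BB}$ above this irreducible locus to ensure a unique simple $E_{\BB}[1/p]$-module over each such central point, so on the non-ordinary part $\Pi\mapsto\tilde\rho$ is a bijection of sets.

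\emph{The functor $\cV$ realises this bijection.} Colmez's functor on $\Ban^{\adm}_{G,\psi}(L)[\BB]$ is obtained from $\VV$ by applying it to the reductions $\Theta/\varpi^n$ of a unit ball, passing to the inverse limit, and inverting $p$. Because $\VV$ is exact and compatible with the functor $\md\mapsto(\md\wtimes_{E_{\BB}}J_{\BB}^\vee)^\vee$ that inverts the anti-equivalence, one obtains a natural isomorphism identifying $\cV(\Pi)$ with the base change of the ``big'' Galois representation $\VV(J_{\BB})^\vee(\psi\varepsilon)$ of Theorems~\ref{intro_V} and~\ref{block_split}, viewed over $E_{\BB}$, along the action of $E_{\BB}$ on $M(\Theta)$ --- concretely $\cV(\Pi)\cong (M(\Theta)\wtimes_{E_{\BB}}\VV(J_{\BB})^\vee)[1/p](\psi\varepsilon)$, up to the appropriate duality. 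In case (i) this says $\cV(\Pi)$ is the specialisation of the tautological deformation at the point $x$ supporting $M(\Theta)[1/p]$; in case (ii) it says $(\tr\cV(\Pi),\det\cV(\Pi))$ is the determinant at the supporting central point, hence on the non-ordinary locus $\cV(\Pi)$ is the associated irreducible $\tilde\rho$. Together with the previous paragraph this yields the corollary: injectivity because $\Pi$ is recovered from $M(\Theta)$, which is determined by its $L$-rational support together with the structure of $E_{\BB}$; surjectivity because, starting from $\tilde\rho$, one forms the corresponding closed point of $\Spec E_{\BB}[1/p]$, takes the simple module $\md$ over it ($\md=L$ in case (i), the one supplied by Theorem~\ref{block_split} in case (ii)), and lets $\Pi$ be the associated admissible unitary $L$-Banach space representation via the functor of \S\ref{section_blocks}, which by the displayed compatibility is absolutely irreducible, non-ordinary, lies in $\BB$, and satisfies $\cV(\Pi)\cong\tilde\rho$.

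The main obstacle is exactly the compatibility just invoked: showing that the two incarnations of Colmez's functor fit together, i.e.\ that $\cV$ on $\Ban^{\adm}_{G,\psi}(L)[\BB]$ factors through $\VV$ on the profinite side via $M(\Theta)$ and the Galois representation attached to the block in Theorems~\ref{intro_V} and~\ref{block_split}; together with, in case (ii), the supplementary input from Theorem~\ref{block_split} that $E_{\BB}[1/p]$ is ``as commutative as possible'' over the irreducible locus of $\Spec R^{\ps,\psi}[1/p]$ --- a matrix algebra over the residue field at each such point --- which is what upgrades ``$\Pi$ has the right central support'' to ``$\Pi$ is determined by $\tilde\rho$''. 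The remaining ingredients --- the dictionary between $\Ban^{\adm}_{G,\psi}(L)[\BB]$ and finite-length $E_{\BB}[1/p]$-modules, the passage between $G$-stable lattices and their generic fibres, and the deformation-theoretic reading of closed points --- are routine given the results already quoted.
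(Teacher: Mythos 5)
Your argument is correct and is essentially the paper's own proof: the paper defers to \cite[Thm.~11.4]{cmf}, whose argument is exactly what you reconstruct --- the anti-equivalence between finite-length objects of $\Ban^{\adm}_{G,\psi}(L)[\BB]$ and finite-length $E_{\BB}[1/p]$-modules, the identification of $E_{\BB}$ via Theorems \ref{intro_V} and \ref{block_split} (including the matrix-algebra structure of $\widehat{E}_{\BB,\nn}$ over the irreducible locus in case (ii)), and the compatibility $\cV(\Pi(\md))\cong\md\otimes_E\cV(P)$, which the paper records just before \eqref{cv_Pi_kappa_n}. The one step you assert without argument --- that the absolutely irreducible Banach space representations supported over the reducible locus of $\Spec R^{\ps,\psi}[1/p]$ are exactly the ordinary ones --- is supplied by Proposition \ref{specialize}.
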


A stronger result, avoiding the assumption on $\BB$,  is proved in \cite{PCD}. However, our proof of Corollary \ref{padicLL} avoids the hard $p$-adic functional analysis, which is used to construct representations of $\GL_2(\Qp)$ out of a $2$-dimensional representations of $G_{\Qp}$ via the theory of $(\varphi, \Gamma)$-modules by Colmez in \cite{colmez}, which plays the key role in \cite{PCD}.

It might be possible,  given the global part of this paper, and the results of \cite{versal}, which computes  various deformation rings, when $p=2$, to prove Theorem \ref{intro1} by repeating the arguments of Kisin in \cite{kisinfm}. We have not checked this. However, our original goal was to prove Theorems \ref{intro_V} and \ref{intro_centre}; Theorem \ref{intro1} came out as a bonus at the end. 

\subsection{Outline of the paper.} The paper has two largely independent parts: a local one and a global one. We will review each of them 
individually by carefully explaining, which arguments are new. 
\subsubsection{Local part} To fix ideas let assume that $\BB=\{\pi\}$ with $\pi$ supersingular. Let $\rhobar=\VV(\pi)$, let  $R_{\rhobar}$ 
be the universal deformation ring of $\rhobar$ and $R^{\psi}_{\rhobar}$ be the quotient of $R_{\rhobar}$ parameterizing deformations with determinant $\psi\varepsilon$. We follow the strategy  outlined 
in \cite[\S 5.8]{cmf}. We show that $J_{\BB}^{\vee}$ is the universal deformation of $\pi^{\vee}$ and  $E_{\BB}$ is the universal deformation ring by verifying that hypothesis (H0)-(H5), made in \cite[\S3]{cmf}, hold. In \cite[\S 3.3]{cmf} we have developed a criterion to check that the ring $E_{\BB}$ is commutative. To apply this criterion, one needs the ring $R^{\psi}_{\rhobar}$ to be formally smooth and to control the 
image of some $\Ext^1$-group in some $\Ext^2$-group. The first condition does not hold if $p=2$ and if  $p=3$ and $\rhobar \cong \rhobar \otimes\omega$. Even if $p=3$ and $\rhobar\not\cong \rhobar \otimes \omega$, so that the ring is formally smooth, to check the second 
condition is a computational nightmare. In \cite{PCD} we found a different characteristic $0$ argument to get around this. The key input is the 
result of Berger--Breuil \cite{bb}, which says that if a locally algebraic principal series representation admits a $G$-invariant norm, then its completion is irreducible, and $\pi$ occurs in the reduction modulo $\varpi$ with multiplicity one. We deduce from \cite[Cor. 2.22]{PCD} that 
the ring $E_{\BB}$ is commutative. The argument of Kisin \cite{kisin} shows that $\VV(J_{\BB})^{\vee}(\psi \varepsilon)$ is a deformation of $\rhobar$ to $E_{\BB}$ and we have surjections $R_{\rhobar}\twoheadrightarrow E_{\BB}\twoheadrightarrow R^{\psi}_{\rhobar}$.  

To prove Theorem \ref{intro_V} we have to show that the surjection $\varphi: E_{\BB}\twoheadrightarrow R^{\psi}_{\rhobar}$ is an isomorphism. The proof of this claim is new and is carried out in \S\ref{main_local}.
Corollary \ref{padicLL} is then a formal consequence of this isomorphism. If $p\ge 5$ then $R^{\psi}_{\rhobar}$ is formally smooth  and the claim is proved by a calculation on tangent spaces in \cite{cmf}.
This does not hold if $p=2$ or $p=3$ and $\rhobar\cong \rhobar\otimes \omega$. 
  We also note that even if we admit the main result of \cite{PCD} (which we don't), we would only get that $\varphi$ induces a bijection on maximal spectra of the generic fibres of the rings. 
From this one could deduce that the map induces an isomorphism between the maximal reduced quotient of $E_{\BB}$ and $R^{\psi}_{\rhobar}$
and it is not at all clear that $E_{\BB}$ is reduced. However, by using techniques of \cite{mybm} we can show that certain quotients $E_{\BB}/\mathfrak a$ are reduced and identify them with crystabeline deformation rings of $\rhobar$ via $\varphi$. Again the argument uses the results of Berger--Breuil \cite{bb} in a crucial way.  Further, we show that 
the intersection of all such ideals in $E_{\BB}$ is zero, which allows us to conclude. 
A similar  argument using density  appears in \cite[\S 2.4]{PCD}, however we have to work a bit more here, because we fix a central character, see \S \ref{capture}. Theorem \ref{intro_V} implies immediately 
that $\det \cV(\Pi)=\psi\varepsilon$ for all $\Pi\in \Ban^{\adm}_{G, \psi}(L)[\BB]$. This  is proved directly in \cite{PCD} without any restriction on $\BB$, and is the most technical part of that paper.  

Once we have Theorem \ref{intro_V}, the Breuil--M\'ezard conjecture is proved the same way as in \cite{mybm}, see \S \ref{section_bm}. If $\BB$ is the block containing  two generic principal series representations, so that $\rhobarss=\chi_1\oplus \chi_2$, with $\chi_1\chi_2^{-1}\neq \Eins, \omega^{\pm 1}$ then we prove the Breuil--M\'ezard conjecture for both non-split extensions 
$\bigl ( \begin{smallmatrix} \chi_1 & \ast \\ 0 & \chi_2\end{smallmatrix}\bigr)$ and $\bigl ( \begin{smallmatrix} \chi_1 & 0 \\ \ast  & \chi_2\end{smallmatrix}\bigr)$ and deduce the conjecture in the split case from this in a companion paper \cite{versal}, following an 
idea of Hu--Tan \cite{hu_tan}. We formulate and prove the Breuil--M\'ezard conjecture in the language of cycles, as introduced by 
Emerton--Gee in \cite{eg}. All our arguments are local, except that if the inertial type extends to an irreducible representation of the 
Weil group $W_{\Qp}$ of $\Qp$, the description of locally algebraic vectors in the Banach space representations relies on a global input of Emerton \cite[\S 7.4]{lg}. Dospinescu's  \cite{dospinescu} results on locally algebraic vectors in extensions of Banach space representations of $G$ are also crucial in this case. 
 
\subsubsection{Global part} As already explained an analogue of Theorem \ref{intro1} has been proved by Kisin if $p>2$. 
Moreover, if $p=2$ and  $\rho|_{G_{F_v}}$ is semi-stable with Hodge--Tate weights  $(0,1)$ for all $v\mid 2$ then the 
Theorem has been proved by Khare--Wintenberger \cite{KW} and Kisin \cite{kisin_serre2} in their work on Serre's conjecture.  We use their
results as an input in our proof.

The strategy of the proof is the same as in \cite{kisinfm}.  By base change arguments, which are the same as  in \cite{KW}, \cite{kisin_moduli}, \cite{kisin_serre2}, see \S \ref{modularity},  we reduce ourselves to a situation, where the ramification of $\rho$ and $\rhobar$ outside $2$ 
is minimal and $\rhobar$ comes from an automorphic form on a definite quaternion algebra.  
We patch automorphic forms on definite quaternion algebras and deduce the theorem from a weak form of Breuil--M\'ezard conjecture, which is proved in the local part of the paper. The assumption (iv) in Theorem \ref{intro1} comes from the local part of the paper. 

Let us explain some differences with \cite{kisinfm}. If $p>2$ then the patched ring is formally smooth over a completed tensor product 
of local deformation rings. This implies that the patched ring is reduced, equidimensional, $\OO$-flat and its Hilbert--Samuel multiplicity 
is equal to the product of Hilbert--Samuel multiplicities of the local deformation rings.  For $p=2$ we modify the patching argument 
used in \cite{kisinfm} following  Khare--Wintenberger \cite{KW}. This gives us two patched rings and the passage between them and the completed tensor
product of local rings is not as straightforward as before. To overcome this we use an idea, which appears in Errata to \cite{kisinfm} published in \cite{gee_kisin}.  If $\rho_f$ is a Galois representation associated to a Hilbert modular form lifting $\rhobar$ and $v$ is a place of $F$ above $p$ then one knows from  \cite{blasius}, \cite{KM74}, \cite{Sai09}  that the Weil--Deligne representation associated to $\rho|_{G_{F_v}}$ is pure. Kisin shows that this implies that the point on the generic fibre of the potentially semi-stable deformation ring, defined by $\rho_f|_{G_{F_v}}$, cannot lie on the intersection of two irreducible components, and hence is regular. Using this we show that the localization of patched rings at the prime ideal defined by $\rho_f$ is regular, and we are in a position to use Auslander--Buchsbaum theorem, see
Lemma \ref{nail}, Proposition \ref{equivalent_cond_new}.
As explained in \cite{gee_kisin}, this observation enables us to deal with cases, when the patched module is not generically free of rank $1$
over the patched ring, which was the case in the original paper \cite{kisinfm}. In particular, we don't add any Hecke operators at places above $2$ and we don't use \cite[Lem 4.11]{ddt}.  

As a part of his proof Kisin uses the description by Gee \cite{gee_prescribed} of Serre weights for $\rhobar$, which is available only for $p>2$. 
We determine Serre weights for $\rhobar$, when $p=2$ in \S \ref{small_weights} under the assumption (iv) of Theorem \ref{intro1}. As in \cite{gee_prescribed} the main input is a modularity lifting theorem, which in our case is the theorem proved by Khare--Wintenberger \cite{KW} and Kisin \cite{kisin_serre2}. We do this by a characteristic $0$ argument, where Gee argues in characteristic $p$, see \S \ref{small_weights}.

The modularity lifting theorems for $p=2$ proved by Kisin in \cite{kisin_serre2} and more recently by Thorne in  \cite{thorne}, do not require $2$ to split completely in the totally real field $F$, but they put a more restrictive hypothesis on $\rho|_{G_{F_v}}$ for $v\mid 2$. Kisin assumes that $\rho|_{G_{F_v}}$ for all $v\mid 2$ is potentially crystalline with Hodge--Tate weights equal $(0,1)$ for every embedding $F_v\hookrightarrow \overline{\mathbb Q}_2$ and $F_v= \QQ_2$ if  $\rho|_{G_{F_v}}$ is ordinary. Thorne removes this last assumption, but requires instead that $\rhobar|_{G_{F_v}}$ has to be non-trivial for at least one $v\mid \infty$.
We need $2$ to split completely in $F$ in order to apply the results on the $p$-adic Langlands correspondence, which is currently available only for $\GL_2(\Qp)$.   

\subsubsection{Acknowledgements} The local part  was written at about the same time as \cite{PCD}, and it would not have happened  if not for the competitive nature of the correspondence I have had with Gabriel Dospinescu at the time. I thank Ga\"{e}tan Chenevier for the correspondence on $2$-dimensional $2$-adic determinants. 
The global part owes a great deal to the work 
of Khare--Winteberger \cite{KW} and Kisin \cite{kisinfm}, \cite{kisin_serre2}, \cite{kisin_moduli} as will be obvious to the reader. I thank Mark Kisin and Jean-Pierre 
Wintenberger for answering my questions about their  work. I would like to especially thank Toby Gee, for his explanations of 
Taylor--Wiles--Kisin patching method and for pointing out   the right places in the literature to me. I thank Lennart Gehrmann, Jochen Heinloth and Shu Sasaki 
for a number of stimulating discussions. I thank Gabriel Dospinescu, Matthew Emerton, Toby Gee, Jack Thorne for their comments on the earlier draft.  I thank Patrick Allen for pointing out an error in the earlier draft, and for subsequent correspondence, which led to a fix. I thank the referees for their careful reading of the paper.

\section{Local part}

\subsection{Capture}\label{capture}
Let $K$ be a pro-finite group with an open pro-$p$ group. Let $\OO\br{K}$ be the completed group algebra, and 
let $\Mod^{\pro}_K(\OO)$ be the category of compact linear-topological $\OO\br{K}$-modules.  Let $\psi:Z(K)\rightarrow \OO^{\times}$ be a continuous character. 
We let $\Mod^{\pro}_{K, \psi}(\OO)$ be the full subcategory of $\Mod^{\pro}_K(\OO)$, such that $M\in\Mod^{\pro}_K(\OO)$ lies in $\Mod^{\pro}_{K, \psi}(\OO)$ if and only if 
$Z(K)$ acts on $M$ via $\psi^{-1}$. Let $\{V_i\}_{i\in I}$ be a family of  continuous representations 
of $K$ on finite dimensional $L$-vector spaces, and let $M\in \Mod^{\mathrm{pro}}_K(\OO)$. 

\begin{defi}\label{def_capture}
We say 
that $\{V_i\}_{i\in I}$ \textit{captures  $M$} if the smallest quotient
$M\twoheadrightarrow Q$, such that
 $\Hom_{\OO\br{K}}^{\cont}(Q, V_i^*)\cong \Hom_{\OO\br{K}}^{\cont}(M, V_i^*)$ for all $i\in I$ is equal to  $M$. 
\end{defi}

We let $c:=\bigl( \begin{smallmatrix} -1 & 0 \\ 0 & -1\end{smallmatrix}\bigr )$ and note that the centre of $\SL_2(\Zp)$ is equal to 
$\{1, c\}$.

\begin{lem}\label{torsion_free_quotient} If $K=\SL_2(\Zp)$ then $\OO\br{K}/(c-1)$ and $\OO\br{K}/(c+1)$ are $\OO$-torsion free.
\end{lem}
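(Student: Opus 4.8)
The plan is to use that $c$ generates the centre $Z:=Z(K)=\{1,c\}$ of $K$ (so $c$ is central in $\OO\br{K}$), and that $\OO\br{K}$ is free over the group algebra $\OO[Z]=\OO[c]/(c^2-1)$ of the order-two group $Z$, in the pseudo-compact sense. To see the freeness, write $\OO\br{K}=\varprojlim_n \OO[K/K_n]$ for a descending chain of open normal subgroups $K_n\trianglelefteq K$ with $\bigcap_n K_n=1$. For $n$ large $Z\cap K_n=1$, so $Z$ acts freely by left translation on $K/K_n$, and a choice of representatives for the orbit set $Z\backslash(K/K_n)$ exhibits $\OO[K/K_n]$ as a free left $\OO[Z]$-module on $Z\backslash(K/K_n)$. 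Choosing these representatives compatibly in $n$ (equivalently, a continuous section of $K\twoheadrightarrow Z\backslash K$, which exists as $Z$ is finite) and passing to the limit gives an isomorphism of left $\OO[Z]$-modules
$$\OO\br{K}\;\cong\;\OO[Z]\,\widehat\otimes_{\OO}\,N,\qquad N:=\varprojlim_n\OO[Z\backslash(K/K_n)],$$
with $\OO[Z]$ acting through the first factor and $N$ the free pseudo-compact $\OO$-module on the profinite set $Z\backslash K$.

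Granting this, the lemma follows. In $\OO[Z]=\OO[c]/(c^2-1)$ the element $c^2-1$ already lies in $(c-1)$ and in $(c+1)$, so $\OO[Z]/(c-1)\cong\OO$ and $\OO[Z]/(c+1)\cong\OO$. Reducing the displayed isomorphism modulo $(c-1)$, respectively $(c+1)$, therefore yields
$$\OO\br{K}/(c-1)\;\cong\;N\;\cong\;\OO\br{K}/(c+1),$$
and $N$ is $\OO$-torsion free, being the inverse limit of the free — hence $\OO$-torsion free — modules $\OO[Z\backslash(K/K_n)]$, and an inverse limit of $\OO$-torsion-free modules is $\OO$-torsion free. (Alternatively, once $\OO\br{K}$ is known to be flat over $\OO[Z]$ one need not even identify the quotient: tensoring $0\to\OO\xrightarrow{\varpi}\OO\to\OO/\varpi\to0$ — viewed as $\OO[Z]$-modules with $c$ acting by $+1$, resp.\ by $-1$ — with $\OO\br{K}$ over $\OO[Z]$ shows $\varpi$ acts injectively on $\OO\br{K}\otimes_{\OO[Z]}\OO=\OO\br{K}/(c\mp1)$.)

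The only non-formal input is the freeness of $\OO\br{K}$ over $\OO[Z]$, and inside the argument the point that reduction modulo $(c\mp1)$ commutes with the inverse limit defining $\OO\br{K}$; this is the step I would be careful with. It comes down to checking that the systems $\bigl((c\mp1)\OO[K/K_n]\bigr)_n$ have surjective transition maps, so that $\varprojlim$ is exact on $0\to(c\mp1)\OO[K/K_n]\to\OO[K/K_n]\to\OO[Z\backslash(K/K_n)]\to0$ and $\varprojlim_n(c\mp1)\OO[K/K_n]=(c\mp1)\OO\br{K}$; this follows from surjectivity and $\OO[Z]$-linearity of $\OO[K/K_{n+1}]\twoheadrightarrow\OO[K/K_n]$ together with the fact that multiplication by $c\mp1$ on the free $\OO[Z]$-module $\OO[K/K_n]$ has image $(c\mp1)\OO[K/K_n]$ and kernel $(c\pm1)\OO[K/K_n]$, whose transition maps are again surjective. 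I would also record that for $p$ odd there is nothing to prove: then $2\in\OO^{\times}$, the idempotents $\tfrac{1\pm c}{2}$ split off $\OO\br{K}/(c-1)$ and $\OO\br{K}/(c+1)$ as direct summands of the $\OO$-torsion-free ring $\OO\br{K}$; the genuine case is $p=2$, which the argument above handles uniformly.
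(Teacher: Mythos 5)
Your argument is correct and is essentially the paper's own proof: both rest on the observation that for $K_n$ small enough the central element $c$ acts freely on $K/K_n$, so $\OO[K/K_n]$ is free over $\OO[Z]$, whence $\OO[K/K_n]/(c\mp 1)$ is $\OO$-free, and one passes to the limit. Your extra care about exactness of the inverse limit (surjective transition maps on the kernels) and the remark that $p$ odd is trivial are fine refinements of the same proof, not a different route.
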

\begin{proof} If $K_n$ is an open normal subgroup of $K$, such that the image of $c$ in $K/K_n$ is non-trivial,
  then $\OO[K/K_n]$ is a free $\OO[Z]$-module, where $Z$ is the centre of $K$. This implies that 
$\OO[K/K_n]/(c\pm 1)$ is a free $\OO$-module and by passing to the limit we obtain the assertion.
\end{proof}

\begin{lem}\label{partition} Let $K=\SL_2(\Zp)$, $Z$ the centre of $K$  and let $\{V_i\}_{i\in I}$ be a family which captures $\OO\br{K}$, such that each $V_i$ has a central character. Let $I^+$, $I^-$ be subsets of $I$ consisting of $i$, such that
 $c$ acts on $V_i$ by $1$ and by $-1$ respectively. Let $\psi: Z \rightarrow L^{\times}$ be a character. If $\psi(c)=1$ then $I^{+}$ captures every projective object in $\Mod^{\pro}_{K, \psi}(\OO)$.
If $\psi(c)=-1$ then $I^{-}$ captures every projective object in $\Mod^{\pro}_{K, \psi}(\OO)$.
\end{lem}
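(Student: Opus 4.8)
The plan is to reduce the statement, by the usual structure theory of projective pseudo-compact modules, to capturing a single module — the projective generator $\OO\br{K}/(c-1)$ — and then to dispose of that by a short computation in $\OO\br{K}$ which is exactly what Lemma \ref{torsion_free_quotient} is there to feed.

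First I would record two preliminary reductions. Since each $V_i$ carries a central character $Z\to L^\times$ and $c^2=1$, the element $c$ acts on $V_i$, hence on its dual $V_i^*$, by $+1$ or $-1$, so $I=I^+\sqcup I^-$. Assuming $\psi(c)=1$ (the case $\psi(c)=-1$ being the mirror image, with $c-1\leftrightarrow c+1$ and $I^+\leftrightarrow I^-$ throughout), the centre $Z$ of $K$ acts on objects of $\Mod^{\pro}_{K,\psi}(\OO)$ via $\psi^{-1}$, hence $c$ acts as the identity, so $\Mod^{\pro}_{K,\psi}(\OO)$ is the category of compact $\OO\br{K}/(c-1)$-modules; its projective generator is $\OO\br{K}/(c-1)$ and, as for any pseudo-compact ring, every projective object is a direct summand of a product of copies of it. Unwinding Definition \ref{def_capture}, a family $\{V_j\}$ captures $M$ precisely when the continuous $\OO\br{K}$-linear maps $M\to V_j^*$ separate the points of $M$, and this property obviously passes to submodules, to arbitrary products, and to direct summands. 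So it is enough to show that $\{V_i\}_{i\in I^+}$ captures $\OO\br{K}/(c-1)$. I would also observe here that for $i\in I^-$ every continuous $\OO\br{K}$-linear map $f\colon\OO\br{K}/(c-1)\to V_i^*$ vanishes — $c$ acts by $1$ on the source and by $-1$ on $V_i^*$, so $f(m)=f(cm)=cf(m)=-f(m)$, forcing $2f(m)=0$ in an $L$-vector space — so it is in fact equivalent to show that $\{V_i\}_{i\in I}$ captures $\OO\br{K}/(c-1)$.

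The main step, and the only place I expect genuine work, is the identity $\ann_{\OO\br{K}}(c+1)=(c-1)$. The inclusion $\supseteq$ is immediate from $(c-1)(c+1)=c^2-1=0$; for $\subseteq$ I would take $\lambda$ with $\lambda(c+1)=0$, reduce modulo $(c-1)$ — where $c\equiv 1$, hence $c+1\equiv 2$ — to obtain $2\bar\lambda=0$ in $\OO\br{K}/(c-1)$, and then invoke Lemma \ref{torsion_free_quotient} together with $2\neq 0$ in $\OO$ to conclude $\bar\lambda=0$, i.e. $\lambda\in(c-1)$. Granting this, multiplication by the central element $c+1$ realizes $\OO\br{K}/(c-1)$ as the closed $\OO\br{K}$-submodule $(c+1)\OO\br{K}$ of $\OO\br{K}$ (the map is a continuous bijection between compact modules, hence a homeomorphism).

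Finally I would assemble the pieces: $\{V_i\}_{i\in I}$ captures $\OO\br{K}$ by hypothesis, so the continuous maps $\OO\br{K}\to V_i^*$ ($i\in I$) separate the points of $\OO\br{K}$, hence also those of the submodule $(c+1)\OO\br{K}\cong\OO\br{K}/(c-1)$; therefore $\{V_i\}_{i\in I}$, equivalently (by the remark in paragraph two) $\{V_i\}_{i\in I^+}$, captures $\OO\br{K}/(c-1)$. With the reduction of paragraph two this yields the lemma for $\psi(c)=1$, and the $\psi(c)=-1$ case is identical after interchanging $c-1$ with $c+1$ and $I^+$ with $I^-$. The only subtlety I expect to have to watch is the topological bookkeeping — that the relevant homomorphisms are continuous, that $(c+1)\OO\br{K}$ is a closed submodule, and that "captures" really is the point‑separation condition — but nothing deeper: the arithmetic content is entirely concentrated in the annihilator computation, which is also the sole place where Lemma \ref{torsion_free_quotient} and the hypothesis on $\OO$ are used.
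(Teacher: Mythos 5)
Your proof is correct, but the central mechanism is genuinely different from the paper's. The paper dualizes to the Banach space $\Pi=\Hom^{\cont}_{\OO}(\OO\br{K},L)$ and invokes the criterion of \cite[Lem.~2.10]{PCD} (for $\OO$-torsion-free $M$, capture is equivalent to density of the image of the evaluation map $\bigoplus_i V_i\otimes\Hom_K(V_i,\Pi)\to\Pi$): since $2$ is invertible in $L$, $\Pi$ splits into $c$-eigenspaces $\Pi^{\pm}$, only the $V_i$ with the matching eigenvalue of $c$ contribute to each summand, and Lemma \ref{torsion_free_quotient} is what allows the density statement for $\Pi^{\pm}$ to be translated back into capture of $\OO\br{K}/(c\mp1)$. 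You instead stay entirely on the compact side: you unwind Definition \ref{def_capture} into the point-separation statement $\bigcap_{i,\phi}\Ker\phi=0$ (which is the correct reading, and is how capture is manipulated in Lemmas \ref{equal_kernels} and \ref{last}), note that this property passes to closed submodules, products and direct summands, and then realize $\OO\br{K}/(c-1)$ as the closed submodule $(c+1)\OO\br{K}$ of $\OO\br{K}$ via the annihilator computation $\ann_{\OO\br{K}}(c+1)=(c-1)\OO\br{K}$ --- which is where Lemma \ref{torsion_free_quotient} and the fact that $2\neq 0$ in $\OO$ enter for you. The two arguments share the bookends: the eigenvalue argument showing that maps to $V_i^*$ for $i$ in the wrong half of the family vanish identically on $\OO\br{K}/(c-\psi(c))$, and the standard fact that projectives in $\Mod^{\pro}_{K,\psi}(\OO)$ are direct summands of products of copies of $\OO\br{K}/(c-\psi(c))$. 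What your route buys is that the middle step becomes elementary and self-contained, avoiding the Banach-space duality altogether; what the paper's route buys is that it is an immediate formal consequence of the capture formalism already set up in \cite{PCD}. The topological points you flag do go through: $(c+1)\OO\br{K}$ is the continuous image of a compact module, hence closed, and the induced continuous bijection from the compact module $\OO\br{K}/(c-1)$ onto it is a homeomorphism.
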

\begin{proof}  If $M\in \Mod^{\pro}_K(\OO)$ is $\OO$-torsion free then $I$ captures $M$ if and only if the image of the evaluation map 
$\bigoplus_{i\in I} V_i \otimes \Hom_K(V_i, \Pi)\rightarrow \Pi$ is dense, where $\Pi=\Hom^{\cont}_{\OO}(M, L)$ is the Banach space representation of $K$ 
with the topology induced by the supremum norm, \cite[Lem.2.10]{PCD}. Let $\Pi=\Hom^{\cont}_{\OO}(\OO\br{K}, L)$ and $\Pi^{\pm}:=\Hom^{\cont}_{\OO}(\OO\br{K}/(c\pm 1),  L)$. Since $\Pi=\Pi^+ \oplus \Pi^-$, and 
 $\{V_i\}$ captures $\OO\br{K}$, we deduce that the image of the evaluation map $\bigoplus_{i\in I} V_i \otimes \Hom_K(V_i, \Pi^{\pm})\rightarrow \Pi^{\pm}$ is dense. If $i\in I^{+}$ then $c$ acts trivially on $V_i$ and so 
$\Hom_K(V_i, \Pi^-)=0$. This implies that the image of $\bigoplus_{i\in I^{+}} V_i \otimes \Hom_K(V_i, \Pi^{+})\rightarrow \Pi^{+}$ is dense.  Using Lemma \ref{torsion_free_quotient}  we deduce that $I^{+}$ captures $\OO\br{K}/(c-1)$. 
A similar argument shows that $I^-$ captures $\OO\br{K}/(c+1)$. Every projective object in $\Mod^{\pro}_{K, \psi}(\OO)$ can be realized as a direct summand of a product of some copies of $\OO\br{K}/(c-\psi(c))$, which implies 
the assertion, see the proof of \cite[Lem. 2.11]{PCD}.
\end{proof}

\begin{lem}\label{what_i_want} Let $K=\SL_2(\Zp)$, $Z$ the centre of $K$, $\psi: Z\rightarrow L^{\times}$ a character, $V$ a continuous representation of $K$ on a finite dimensional $L$-vector space with a central character $\psi_V$. 
If $\psi(c)=\psi_V(c)$ then $\{V\otimes \Sym^{2a} L^2\}_{a\in \NN}$ captures every projective object in  $\Mod^{\pro}_{K, \psi}(\OO)$. If $\psi(c)=-\psi_V(c)$ then $\{V\otimes \Sym^{2a+1} L^2\}_{a\in \NN}$
 captures every projective object in  $\Mod^{\pro}_{K, \psi}(\OO)$.
\end{lem}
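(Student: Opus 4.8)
The plan is to reduce Lemma \ref{what_i_want} to Lemma \ref{what_i_want}'s immediate predecessor, Lemma \ref{partition}, by exhibiting the family $\{V\otimes \Sym^{2a}L^2\}_{a\in\NN}$ (resp. $\{V\otimes \Sym^{2a+1}L^2\}_{a\in\NN}$) as a family which captures $\OO\br{K}$ and whose members have the appropriate central characters. Recall that $c$ acts on $\Sym^{n}L^2$ by $(-1)^n$, so on $V\otimes\Sym^{2a}L^2$ the element $c$ acts by $\psi_V(c)$, and on $V\otimes\Sym^{2a+1}L^2$ it acts by $-\psi_V(c)$. Thus under the hypothesis $\psi(c)=\psi_V(c)$ the whole family $\{V\otimes\Sym^{2a}L^2\}_{a\in\NN}$ consists of representations lying in the ``$I^+$'' part relative to $\psi$ (i.e. $c$ acts by $\psi(c)$), and symmetrically in the case $\psi(c)=-\psi_V(c)$. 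So once we know this family captures $\OO\br{K}$, Lemma \ref{partition} applied with $I$ the index set of the family and $I^+$ (resp. $I^-$) equal to all of $I$ gives exactly the assertion about projective objects in $\Mod^{\pro}_{K,\psi}(\OO)$.

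It therefore remains to prove that $\{V\otimes\Sym^{a}L^2\}_{a\in\NN}$ captures $\OO\br{K}$ for $K=\SL_2(\Zp)$; splitting this family according to the parity of $a$ and using that $\OO\br K$ is $\OO$-torsion free together with the density criterion of \cite[Lem.2.10]{PCD} will then give capture of $\OO\br{K}/(c-1)$ by the even part and of $\OO\br{K}/(c+1)$ by the odd part, exactly as in the proof of Lemma \ref{partition}; but in fact the cleaner route is to feed the full family $\{V\otimes\Sym^{a}L^2\}_{a\in\NN}$ directly into Lemma \ref{partition} as the ``$\{V_i\}_{i\in I}$'' there. So the real content is:

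\emph{Claim.} For any continuous finite-dimensional $L$-representation $V$ of $K=\SL_2(\Zp)$, the family $\{V\otimes\Sym^{a}L^2\}_{a\in\NN}$ captures $\OO\br{K}$.

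To prove the claim I would first note that $\{\Sym^a L^2\}_{a\in\NN}$ captures $\OO\br{K}$: this is essentially the statement that the locally algebraic vectors $\bigcup_a \bigl(L\br{K}\otimes_L \Sym^a(L^2)^*\bigr)$, equivalently the algebraic functions, are dense in the space of continuous functions $\mathcal{C}(K,L)$, which holds by the Stone--Weierstrass theorem (or by Amice/Mahler-type density of polynomial functions on $\Zp$-points of an affine algebraic group), translated through the density criterion of \cite[Lem.2.10]{PCD}. To pass from $\{\Sym^a L^2\}$ to $\{V\otimes\Sym^a L^2\}$, observe that $V$ is a quotient of a finite direct sum $W$ of representations of the form $\Sym^{b}L^2 \otimes (\det)^{c}$ — more carefully, every finite-dimensional algebraic representation of $\SL_2$ is a direct sum of $\Sym^b L^2$'s, and a general continuous $V$ becomes algebraic after restriction to a suitable open subgroup, but one can avoid this subtlety by working directly with the density statement: the $K$-subrepresentation of $\mathcal{C}(K,L)$ generated by all matrix coefficients of the $V\otimes\Sym^a L^2$ contains all matrix coefficients of $\Sym^a L^2$ for every $a$ (take $V\otimes V^*\to L$ the evaluation, or use that $V$ contains the trivial representation after tensoring with $V^*$), which are already dense. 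Concretely: since $V\otimes V^*$ surjects onto the trivial representation, $\Sym^a L^2$ is a subquotient of $V\otimes V^*\otimes \Sym^a L^2$, and $V^*\otimes\Sym^aL^2$ decomposes as a sum of $\Sym^b L^2$'s (up to twist by $\det$, which is trivial on $\SL_2$); hence $\mathrm{Hom}_K(\Sym^a L^2,\Pi)$ nonzero forces some $\mathrm{Hom}_K(V\otimes\Sym^b L^2,\Pi)$ nonzero, so density for the family $\{\Sym^a L^2\}$ propagates to density for $\{V\otimes\Sym^a L^2\}$.

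The main obstacle I anticipate is the bookkeeping in the last step — making precise that a continuous (not necessarily algebraic) representation $V$ can be handled by the Clebsch--Gordan decomposition $\Sym^bL^2\otimes\Sym^{b'}L^2\cong\bigoplus_{j}\Sym^{b+b'-2j}L^2$ after restricting to the derived group, and ensuring no parity is lost (which is automatic here since Clebsch--Gordan preserves the parity of the top weight modulo $2$, matching the behaviour of $c$). Once the claim is established, the rest is a direct citation of Lemma \ref{partition} with the parity split built into the choice of $I^+$ or $I^-$, and there is nothing further to compute.
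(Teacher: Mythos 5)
Your proposal follows the paper's own proof essentially verbatim: the paper quotes \cite[Prop.~2.12]{PCD} for the fact that $\{\Sym^a L^2\}_{a\in \NN}$ captures $\OO\br{K}$, leaves the upgrade to $\{V\otimes \Sym^a L^2\}_{a\in \NN}$ as an exercise, and then concludes by Lemma \ref{partition} exactly as you do (with the same parity bookkeeping for the action of $c$). The only caution is that your ``concrete'' Clebsch--Gordan decomposition of $V^*\otimes\Sym^a L^2$ into $\Sym^b L^2$'s is only valid for algebraic $V$, whereas the lemma allows arbitrary continuous $V$ (and is applied to smooth types), so for the general case you must rely on the density argument you sketch first --- namely that the matrix coefficients of $V$ have no common zero on $K$, so multiplying the dense algebra of algebraic functions by them stays dense --- which does close the gap.
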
 
\begin{proof} Proposition 2.12 in \cite{PCD} implies that $\{\Sym^a L^2\}_{a\in \NN}$ captures $\OO\br{K}$. We leave it as exercise for the reader to check that this implies that $\{V\otimes \Sym^a L^2\}_{a\in \NN}$
also captures $\OO\br{K}$.  The assertion follows from Lemma \ref{partition}.
\end{proof} 
 
 \begin{lem}\label{equal_kernels} Let $M\in \Mod^{\pro}_{\GL_2(\Zp), \psi}(\OO)$ and let $V$ be a continuous representation of $K$ on a finite dimensional $L$-vector space with a central character $\psi$. Then 
$$\bigcap_{\phi} \Ker \phi= \bigcap_{\xi, \eta} \Ker \xi,$$
where the first intersection is taken over all $\phi\in \Hom_{\OO\br{\SL_2(\Zp)}}^{\cont}(M, V^*)$ and the second intersection is taken over all  characters  $\eta: \Zp^{\times}\rightarrow L^{\times}$ with $\eta^2=1$
and all $\xi\in  \Hom_{\OO\br{\GL_2(\Zp)}}^{\cont}(M, (V\otimes \eta\circ \det)^*)$.
\end{lem}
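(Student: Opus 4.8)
The plan is to establish the two inclusions separately; the inclusion $\bigcap_{\phi}\Ker\phi\subseteq\bigcap_{\xi,\eta}\Ker\xi$ is essentially formal, while the reverse one carries the content. For the formal direction I would simply observe that $\det$ is trivial on $\SL_2(\Zp)$, so the restriction of $(V\otimes\eta\circ\det)^{*}$ to $\SL_2(\Zp)$ is just $V^{*}$; hence every $\xi$ occurring on the right hand side, viewed as a continuous $\SL_2(\Zp)$-equivariant map $M\to V^{*}$, is among the $\phi$, and its kernel already contains $\bigcap_{\phi}\Ker\phi$.

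For the reverse inclusion the idea is to take an arbitrary $\phi\in\Hom^{\cont}_{\OO\br{\SL_2(\Zp)}}(M,V^{*})$ and decompose it into $\GL_2(\Zp)$-isotypic pieces, each of which will turn out to be one of the $\xi$. Put $W:=\Hom^{\cont}_{\OO\br{\SL_2(\Zp)}}(M,V^{*})$ and equip it with the conjugation action $(g\cdot f)(m):=g\,f(g^{-1}m)$ of $\GL_2(\Zp)$. Since the elements of $W$ are $\SL_2(\Zp)$-equivariant, $\SL_2(\Zp)$ acts trivially, so the action factors through $\det\colon\GL_2(\Zp)\twoheadrightarrow\Zp^{\times}$. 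The crucial point — and the step I expect to require the most care — is that it further factors through the \emph{finite} group $\Delta:=\Zp^{\times}/(\Zp^{\times})^{2}$: indeed every square $a^{2}\in(\Zp^{\times})^{2}$ is $\det$ of the scalar matrix $\bigl(\begin{smallmatrix} a & 0 \\ 0 & a\end{smallmatrix}\bigr)$, and a short computation using that $M$ and $V$ both have central character $\psi$ shows that such scalar matrices act trivially on $W$. Heuristically, this matching of central characters is exactly what allows one to ``average $\phi$ over $\GL_2(\Zp)/\SL_2(\Zp)$'' even though that quotient is infinite.

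Granting this, I would then split $W=\bigoplus_{\bar\eta\in\widehat{\Delta}}W^{\bar\eta}$ by means of the idempotents $e_{\bar\eta}=|\Delta|^{-1}\sum_{\delta\in\Delta}\bar\eta(\delta)^{-1}\delta$, which make sense because $|\Delta|$ (a power of $2$) is invertible in $L$; one checks that each $e_{\bar\eta}\phi$ remains continuous, being a finite sum of continuous maps, and $\SL_2(\Zp)$-equivariant, since $\SL_2(\Zp)$ is normal. Unwinding the definitions, a character $\bar\eta$ of $\Delta$ is the same datum as a character $\eta\colon\Zp^{\times}\to L^{\times}$ with $\eta^{2}=1$, and $f\in W^{\bar\eta}$ means precisely that $f(gm)=\eta(\det g)\,g\,f(m)$ for all $g\in\GL_2(\Zp)$, i.e. (using $\eta^{-1}=\eta$) $W^{\bar\eta}=\Hom^{\cont}_{\OO\br{\GL_2(\Zp)}}(M,(V\otimes\eta\circ\det)^{*})$, with the same underlying subspace of $M$ killed. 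Finally, for the given $\phi$ I write $\phi=\sum_{\bar\eta}e_{\bar\eta}\phi$ with each summand among the $\xi$ of the statement, whence $\bigcap_{\xi,\eta}\Ker\xi\subseteq\bigcap_{\bar\eta}\Ker(e_{\bar\eta}\phi)\subseteq\Ker\phi$; as $\phi$ was arbitrary, $\bigcap_{\xi,\eta}\Ker\xi\subseteq\bigcap_{\phi}\Ker\phi$, which together with the first inclusion proves the lemma.

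One can repackage the middle steps via Frobenius reciprocity: with $Z$ the centre of $\GL_2(\Zp)$ one has $\Hom^{\cont}_{\OO\br{\SL_2(\Zp)}}(M,V^{*})\cong\Hom^{\cont}_{\OO\br{\GL_2(\Zp)}}(M,\coind_{\SL_2(\Zp)Z}^{\GL_2(\Zp)}V^{*})$ — the passage through $\SL_2(\Zp)Z$ being harmless because the central characters of $M$ and $V^{*}$ coincide — and $\coind_{\SL_2(\Zp)Z}^{\GL_2(\Zp)}V^{*}\cong\bigoplus_{\eta^{2}=1}(V\otimes\eta\circ\det)^{*}$ since $\GL_2(\Zp)/\SL_2(\Zp)Z\cong\Delta$ is finite. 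This produces the decomposition of $W$ directly, after which one concludes as above.
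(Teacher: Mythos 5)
Your proof is correct and, especially in its final paragraph, is essentially the paper's own argument: the paper proves the lemma via the chain of isomorphisms $\Hom^{\cont}_{\OO\br{\SL_2(\Zp)}}(M,V^*)\cong\Hom^{\cont}_{\OO\br{Z\SL_2(\Zp)}}(M,V^*)\cong\Hom^{\cont}_{\OO\br{\GL_2(\Zp)}}(M,V^*\otimes\Indu{Z\SL_2(\Zp)}{\GL_2(\Zp)}{\Eins})\cong\bigoplus_{\eta}\Hom^{\cont}_{\OO\br{\GL_2(\Zp)}}(M,V^*\otimes\eta\circ\det)$, which is exactly your Frobenius-reciprocity repackaging. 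Your explicit idempotent decomposition is just a hands-on rendering of the same finite-group decomposition, with the matching of central characters playing the same role in both.
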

\begin{proof} Let $Z$ be the center of $\GL_2(\Zp)$. Determinant induces the isomorphism 
 $\GL_2(\Zp)/ Z \SL_2(\Zp)\cong \Zp^{\times}/(\Zp^{\times})^2$, which is a cyclic group of order $2$ if $p\neq 2$, and a product of cyclic groups of order 
$2$ if $p=2$. Hence, $\Indu{Z\SL_2(\Zp)}{\GL_2(\Zp)}{\Eins}\cong \bigoplus \eta\circ \det$, where the sum is taken over all characters $\eta$ with $\eta^2=1$. The isomorphism
\begin{displaymath}
\begin{split} 
\Hom_{\OO\br{\SL_2(\Zp)}}^{\cont}(M, V^*)&\cong \Hom_{\OO\br{Z\SL_2(\Zp)}}^{\cont}(M, V^*)\\ &\cong  \Hom_{\OO\br{\GL_2(\Zp)}}^{\cont}(M, V^*\otimes \Indu{Z\SL_2(\Zp)}{\GL_2(\Zp)}{\Eins})\\
&\cong \bigoplus_{\eta} \Hom_{\OO\br{\GL_2(\Zp)}}^{\cont}(M, V^*\otimes \eta\circ \det)
\end{split}
\end{displaymath}
implies the assertion.
\end{proof}

\begin{lem}\label{last} Let $M\in \Mod^{\pro}_{\GL_2(\Zp), \psi}(\OO)$ and let $\{V_i\}_{i\in I}$ be a family of  continuous representations of $K$ on  finite dimensional $L$-vector spaces with a central character $\psi$. 
If $\{V_i|_{\SL_2(\Zp)}\}_{i\in I}$ captures $M|_{\SL_2(\Zp)}$ then $\{V_i\otimes\eta\circ\det\}_{i\in I, \eta}$ captures $M$, where $\eta$ runs over all characters $\eta: \Zp^{\times}\rightarrow L^{\times}$ with $\eta^2=1$.
\end{lem}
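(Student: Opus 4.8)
The plan is to reduce the ``capturing'' property for $M$ as a $\GL_2(\Zp)$-representation to the already-established ``capturing'' property for $M$ as an $\SL_2(\Zp)$-representation, using the smallest-quotient characterization in Definition \ref{def_capture} together with the comparison of Hom-spaces provided by Lemma \ref{equal_kernels}. First I would observe that by definition, $\{V_i \otimes \eta\circ\det\}_{i\in I,\eta}$ captures $M$ exactly when the smallest quotient $M \twoheadrightarrow Q$ for which all the maps $\Hom^{\cont}_{\OO\br{\GL_2(\Zp)}}(Q, (V_i\otimes\eta\circ\det)^*) \to \Hom^{\cont}_{\OO\br{\GL_2(\Zp)}}(M, (V_i\otimes\eta\circ\det)^*)$ are isomorphisms is $M$ itself; equivalently, the intersection over all $i,\eta$ and all $\xi \in \Hom^{\cont}_{\OO\br{\GL_2(\Zp)}}(M, (V_i\otimes\eta\circ\det)^*)$ of $\Ker\xi$ is zero (since passing to the smallest such quotient amounts to quotienting by this intersection of kernels, the standard reformulation of capture used e.g. in the proof of Lemma \ref{partition}).

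Next I would apply Lemma \ref{equal_kernels} with $V = V_i$ for each $i\in I$: it gives $\bigcap_{\xi,\eta}\Ker\xi = \bigcap_{\phi}\Ker\phi$, where $\phi$ ranges over $\Hom^{\cont}_{\OO\br{\SL_2(\Zp)}}(M, V_i^*)$. Intersecting over all $i\in I$ as well, the total intersection of kernels on the $\GL_2(\Zp)$-side equals the total intersection of kernels $\bigcap_{i\in I}\bigcap_{\phi}\Ker\phi$ on the $\SL_2(\Zp)$-side. But the hypothesis that $\{V_i|_{\SL_2(\Zp)}\}_{i\in I}$ captures $M|_{\SL_2(\Zp)}$ says precisely that this latter intersection is zero. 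Hence the intersection of kernels on the $\GL_2(\Zp)$-side is zero, which is exactly the assertion that $\{V_i\otimes\eta\circ\det\}_{i\in I,\eta}$ captures $M$.

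The one point that needs a little care is the translation between the smallest-quotient formulation of Definition \ref{def_capture} and the ``intersection of kernels is zero'' formulation: one must check that the smallest quotient $Q$ through which the relevant Hom-spaces are unchanged is exactly $M/(\bigcap \Ker\xi)$, and that this is compatible with taking the union of the families indexed by $i$ and $\eta$. This is a formal manipulation with kernels of continuous maps to the (finite-dimensional, hence of finite length) objects $(V_i\otimes\eta\circ\det)^*$ in the pro-category, and is essentially the same bookkeeping already used in the proof of Lemma \ref{partition} via \cite[Lem.2.10]{PCD}; I do not expect any genuine obstacle here. The main (modest) subtlety is simply ensuring that Lemma \ref{equal_kernels} is applied uniformly in $i$ and that the resulting index set on the $\GL_2(\Zp)$-side is exactly $I \times \{\eta : \eta^2 = 1\}$.
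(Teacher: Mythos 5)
Your proposal is correct and follows essentially the same route as the paper: the paper's proof is exactly "combine Lemma \ref{equal_kernels} with \cite[Lem.2.7]{PCD}", where the cited lemma of \cite{PCD} supplies precisely the identification of the smallest quotient in Definition \ref{def_capture} with $M/\bigcap\Ker\xi$ that you flag as the one point needing care. No gaps.
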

\begin{proof} The assertion follows from Lemma \ref{equal_kernels} and \cite[Lem.2.7]{PCD}.
\end{proof}

\begin{prop}\label{capture_zeta} Let $K=\GL_2(\Zp)$, $Z$ the centre of $K$ and $\psi: Z\rightarrow L^{\times}$ a continuous character. There is a smooth irreducible representation $\tau$ of $K$, 
which is a  type for a Bernstein component, which contains a principal series representation, but  does not contain a special series representation, such that 
$$\{\tau\otimes \Sym^{a} L^2 \otimes \eta'\circ \det\}_{a\in \NN, \eta'}$$ captures every projective object in $\Mod^{\pro}_{K, \psi}(\OO)$, where for each $a\in \NN$, $\eta'$ runs over all continuous characters $\eta': \Zp^{\times}\rightarrow L^{\times}$ such that $\tau\otimes \Sym^{a} L^2 \otimes \eta'\circ \det$ has central character $\psi$.
\end{prop}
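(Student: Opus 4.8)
The plan is to bootstrap the desired statement from Lemma~\ref{what_i_want} (applied over $\SL_2(\Zp)$) together with Lemma~\ref{last} (to upgrade from $\SL_2(\Zp)$ to $\GL_2(\Zp)$), once we have produced a suitable type $\tau$. First I would recall the structure of Bernstein components for $\GL_2(\Qp)$: the principal series components are of the form $[\rL, \lambda]$ where $\rL$ is the diagonal torus and $\lambda$ a character, and by the theory of Bushnell--Kutzko (or the explicit description of types for $\GL_2$) each such component has a type $\tau$ on $K=\GL_2(\Zp)$, namely a character of $\Zp^{\times}\times\Zp^{\times}$ inflated through the reduction $K\twoheadrightarrow \GL_2(\Fp)$ suitably, so that $\tau$ is a smooth irreducible $K$-representation realizable over a finite extension of $\Qp$, hence over $L$ after enlarging $L$. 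The key point is to choose $\tau$ so that its central character $\psi_\tau$ satisfies a prescribed parity condition at $c=\bigl(\begin{smallmatrix}-1&0\\0&-1\end{smallmatrix}\bigr)$; since $c$ maps to $-1\in\Fp^\times$ only when $p\neq 2$ and to $1$ when $p=2$, and since we have freedom in twisting $\tau$ by characters of $\det$, we can arrange $\psi_\tau(c)=\psi(c)$ — or in fact realize both parities among the twists of $\tau$, which is what the presence of the running character $\eta'$ will absorb.

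Next I would apply Lemma~\ref{what_i_want} with this $V=\tau$ (restricted to $\SL_2(\Zp)$; note $\tau|_{\SL_2(\Zp)}$ still has a central character $\psi_\tau|_{\{1,c\}}$). That lemma tells us that either $\{\tau\otimes\Sym^{2a}L^2\}_{a\in\NN}$ or $\{\tau\otimes\Sym^{2a+1}L^2\}_{a\in\NN}$ captures every projective object of $\Mod^{\pro}_{\SL_2(\Zp),\psi}(\OO)$, according to whether $\psi(c)=\psi_\tau(c)$ or $\psi(c)=-\psi_\tau(c)$. Either way, the family $\{\tau\otimes\Sym^a L^2\}_{a\in\NN}$, over all $a$ of the appropriate single parity, captures projectives over $\SL_2(\Zp)$; and a fortiori the larger family over all $a\in\NN$ does. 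Then Lemma~\ref{last} promotes this to a capturing family over $K=\GL_2(\Zp)$ for projective objects in $\Mod^{\pro}_{K,\psi}(\OO)$, at the cost of allowing twists by the finitely many quadratic characters $\eta$ of $\Zp^\times$. Reindexing, $\{\tau\otimes\Sym^a L^2\otimes\eta\circ\det\}_{a,\eta}$ captures every projective object. Finally, to match the exact statement, I would discard those $(a,\eta')$ for which $\tau\otimes\Sym^a L^2\otimes\eta'\circ\det$ fails to have central character $\psi$: since capturing a projective object $M\in\Mod^{\pro}_{K,\psi}(\OO)$ only ever uses continuous homomorphisms $M\to W^*$ for $W$ with central character $\psi^{-1}$ on $M$ — equivalently $\Hom$ vanishes identically when the central characters are incompatible — removing those members of the family changes nothing, so the restricted family still captures.

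The one genuinely substantive point, and the main obstacle, is the \emph{construction of $\tau$ with the required properties simultaneously}: it must (a) be a type for a Bernstein component of $\GL_2(\Qp)$ whose component contains a principal series but no twist of Steinberg, (b) be defined over $L$ (after a harmless finite extension), and (c) have the flexibility, via twisting by $\det$-characters, to realize the central character $\psi$. Requirement (a) is where one must be a little careful: one wants a \emph{regular} principal series type, i.e. $\tau$ inducing from a character $\lambda=\chi_1\otimes\chi_2$ of the torus with $\chi_1\neq\chi_2$, precisely so that the associated component $[\rL,\lambda]$ contains no special (Steinberg-type) representation; a short argument with the geometric lemma / Jacquet modules, or simply citing the classification of Bernstein components for $\GL_2$, confirms that such a component consists only of (unitary twists of) irreducible principal series. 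Requirement (c) is an easy counting point: $\det$-twists move the central character of $\tau$ through a coset of the characters of $\Zp^\times$ that are trivial on $(\Zp^\times)^2$ composed with squaring, and choosing $\chi_1,\chi_2$ appropriately one hits $\psi$ on $Z$. The verification that the captured-quotient condition is unaffected by restricting the index set (the last paragraph above) is routine from Definition~\ref{def_capture} once one observes $\Hom^{\cont}_{\OO\br K}(M,W^*)=0$ whenever the $Z$-actions clash.
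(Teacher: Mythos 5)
Your overall strategy is the paper's own: capture over $\SL_2(\Zp)$ via Lemma \ref{what_i_want} and climb back to $\GL_2(\Zp)$ via Lemma \ref{last}, with $\tau$ a regular principal series type. But there is a genuine gap at the application of Lemma \ref{last}. That lemma (through Lemma \ref{equal_kernels}) requires every member of the family to have central character equal to $\psi$ on the \emph{whole} centre $Z\cong\Zp^{\times}$ of $\GL_2(\Zp)$, not merely the right sign at $c$. The members of your family $\{\tau\otimes\Sym^{a}L^2\}_{a}$ have central characters $\psi_\tau\cdot x^{a}$, which are pairwise distinct as $a$ varies, so at most one of them can equal $\psi$ and Lemma \ref{last} does not apply to the family, nor to its members one at a time. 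Your closing "discard" step cannot repair this: the problem is not that the family is too large, but that the hypothesis of the lemma is violated before any discarding takes place. The fix is to normalize the central characters \emph{first}: replace $\tau\otimes\Sym^{2a}L^2$ by $\tau\otimes\Sym^{2a}L^2\otimes\det^{-a}\otimes\eta_0\circ\det$. This leaves the restriction to $\SL_2(\Zp)$ unchanged, so Lemma \ref{what_i_want} still applies, and every member now has central character $\psi_\tau\eta_0^{2}$.

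For this common central character to equal $\psi$ one needs $\psi\psi_\tau^{-1}$ to be the square of a continuous character, and this is the second point you pass over too quickly. The paper begins by writing $\psi=\chi\eta_0^{2}$ with $\chi$ smooth and \emph{non-trivial} (using that $1+p\Zp$, resp.\ $1+4\ZZ_2$, is free pro-$p$ of rank $1$) and takes $\tau=\Indu{J}{K}{(\chi\otimes\Eins)}$, so that $\psi_\tau=\chi$; the non-triviality of $\chi$ is precisely what forces the Bernstein component to contain no special series. Your "easy counting point" conflates these two requirements, and for $p=2$ the existence of such a decomposition is not automatic since not every continuous character of $\ZZ_2^{\times}$ is a square. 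Once the family is normalized as above, the conclusion follows because the normalized family is a subfamily of the one in the statement and capturing is preserved under enlarging a family; no discarding is needed (though your observation that members with clashing central character contribute zero homomorphisms is correct).
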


\begin{proof} If $p\neq 2$ (resp.~$p=2$) then $1+p\Zp$ (resp.~$1+4\mathbb{Z}_2$) is a free pro-$p$ group of rank $1$. Using this one may show that there is a smooth, non-trivial  character $\chi: \Zp^{\times}\rightarrow L^{\times}$, and 
a continuous character $\eta_0: \Zp^{\times}\rightarrow L^{\times}$, such that $\psi=\chi \eta_0^2$. Let $e$ be the smallest integer such that $\chi$ is trivial on $1+p^e \Zp$. Let $J=\bigl(\begin{smallmatrix} \Zp^{\times} & \Zp \\ p^e \Zp & \Zp^{\times}\end{smallmatrix}\bigr)$, and let $\chi\otimes \Eins: J \rightarrow L^{\times}$  be the character, which sends $\bigl(\begin{smallmatrix} a & b \\ c & d\end{smallmatrix} \bigr)\mapsto \chi(a)$. 
The representation $\tau:=\Indu{J}{K}{(\chi\otimes \Eins)}$ is irreducible  and is a type. More precisely, for an irreducible smooth $\overline{L}$-representation $\pi$ of $G=\GL_2(\Qp)$,  $\Hom_K(\tau, \pi)\neq 0$ if and only if 
$\pi\cong (\Indu{B}{G}{\psi_1\otimes \psi_2})_{\sm}$, where $B$ is a Borel subgroup and $\psi_1|_{\Zp^{\times}}=\chi$ and $\psi_2|_{\Zp^{\times}}=\Eins$, see \cite[A.2.2]{henniart}. The central character of $\tau$ is equal to $\chi$. We claim that the family
$\{ \tau \otimes \Sym^{2a} L^2 \otimes (\det)^{-a} \otimes \eta \eta_0 \circ \det\}_{a\in \NN, \eta}$, where $\eta$ runs over all the characters with $\eta^2=1$, captures every projective object in $\Mod^{\pro}_{K, \psi}(\OO)$.  If $M\in \Mod^{\pro}_{K, \psi}(\OO)$ is projective then 
$M|_{\SL_2(\Zp)}$ is projective in $\Mod^{\pro}_{\SL_2(\Zp), \psi}(\OO)$, \cite[Prop. 2.1.11]{ord2}. Lemma \ref{what_i_want} implies that the family captures $M|_{\SL_2(\Zp)}$.
Since each representation in the family has central character equal to $\chi \eta_0^2=\psi$, the claim follows from Lemma \ref{last}. Since the family of representations appearing in the  claim 
is a subfamily of the representations appearing in the Proposition, the claim implies the Proposition.
\end{proof}

\subsection{The image of Colmez's Montreal functor}\label{imageCMF} 

Let $G=\GL_2(\Qp)$, $K=\GL_2(\Zp)$, $B$ the subgroup of upper-triangular matrices in $G$, let $T$ be the 
subgroup of diagonal matrices and let $Z$ be the centre of $G$.
We make no assumption on the prime $p$. We fix  a continuous character $\psi: Z\rightarrow \OO^{\times}$. 

Let $\Mod^{\pro}_G(\OO)$ be the category of profinite augmented representations of $G$, \cite[2.1.6]{ord1}. Pontryagin 
duality $\pi\mapsto \pi^{\vee}:=\Hom^{\cont}_{\OO}(\pi, L/\OO)$ induces an anti-equivalence of categories between 
$\Mod^{\sm}_G(\OO)$ and   $\Mod^{\pro}_G(\OO)$, \cite[(2.2.8)]{ord1}. Let $\Mod^{\ladm}_{G}(\OO)$ be the full subcategory 
of $\Mod^{\sm}_G(\OO)$ consisting of locally admissible, \cite[(2.2.17)]{ord1}, representations of $G$ and let 
$\Mod^{\ladm}_{G, \psi}(\OO)$ be the full subcategory of $\Mod^{\ladm}_{G}(\OO)$ consisting of those representations  on which $Z$ acts by the character $\psi$. Let $\dualcat(\OO)$ be the full subcategory of $\Mod^{\pro}_G(\OO)$ anti-equivalent to $\Mod^{\ladm}_{G, \psi}(\OO)$ via the Pontryagin duality. For  $\pi_1, \pi_2$ in  $\Mod^{\ladm}_{G, \psi}(\OO)$ we let $\Ext^i_{G, \psi}(\pi_1, \pi_2)$
be the Yoneda $\Ext$ group computed in   $\Mod^{\ladm}_{G, \psi}(\OO)$.

Let $\pi\in \Mod^{\ladm}_{G, \psi}(\OO)$ be  absolutely irreducible and  either supersingular, \cite{bl}, \cite{breuil1},  or principal series representations $\pi\cong (\Indu{B}{G}{\chi_1\otimes\chi_2\omega^{-1}})_{\sm}$, for  some smooth characters $\chi_1, \chi_2: \Qp^{\times}\rightarrow k^{\times}$, such that $\chi_1\chi_2^{-1} \neq \omega^{\pm 1}, \Eins$. This hypothesis ensures that $\pi':=(\Indu{B}{G}{\chi_2\otimes\chi_1\omega^{-1}})_{\sm}$ is also absolutely irreducible and $\pi\not\cong \pi'$.  Let $P\twoheadrightarrow \pi^{\vee}$ be 
a projective envelope of $\pi^{\vee}$ in $\dualcat(\OO)$ and let $E=\End_{\dualcat(\OO)}(P)$. Then $E$ is naturally a topological ring with a unique maximal ideal and 
residue field $k=\End_{\dualcat(\OO)}(\pi^{\vee})$, see \cite[\S 2]{cmf}.

\begin{prop}\label{residual} If $\pi$ is supersingular then $k\wtimes_E P\cong \pi^{\vee}$. If $\pi$ is principal series then $k\wtimes_E P\cong \kappa^{\vee}$, where $\kappa$ is the 
unique non-split extension $0\rightarrow \pi\rightarrow \kappa\rightarrow \pi'\rightarrow 0$.
\end{prop}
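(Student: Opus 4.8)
The plan is to compute $k\wtimes_E P$ by identifying it with the maximal quotient of $P$ (equivalently, of the corresponding representation $\pi^{\vee}$-envelope dualized) whose irreducible subquotients lie in the block $\BB$ and which is, in the language of pseudo-compact modules, "semisimple relative to $E/\mm_E$". Concretely, $k\wtimes_E P = P/\mm_E P$, so dually $(k\wtimes_E P)^{\vee}$ is the largest subrepresentation of the injective envelope $J_{\BB}$ killed by $\mm_E$; since $E=\End_{\dualcat(\OO)}(P)=\End_G(J_{\BB})$ acts on $J_{\BB}$ and $\mm_E$ is its Jacobson radical, $(P/\mm_E P)^{\vee}$ is precisely the $E$-socle of $J_{\BB}$ in the appropriate sense — a representation whose $E$-module structure is a $k$-vector space. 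The first step is therefore to show this "$\mm_E$-torsion" quotient of $P$ is multiplicity-free and to compute which irreducibles occur and with what extensions. For this I would use the general nonsense of \cite[\S 2]{cmf}: the radical filtration of $P$ has graded pieces controlled by $\Ext^1_{G,\psi}$-groups between the irreducibles of $\BB$, and $k\wtimes_E P$ sees exactly $\pi^{\vee}$ together with those $\tau^{\vee}$ with $\tau\in\BB$ for which the extension class is "not killed by deforming", i.e. the cosocle contributions coming from $\dim_k\Hom$ vs. the relations in $E$.

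Next I would split into the two cases. If $\pi$ is supersingular, then $\BB=\{\pi\}$ is a singleton block: the only irreducible around is $\pi$ itself, and there are no other irreducibles to glue on. Since $P\twoheadrightarrow\pi^{\vee}$ is a projective envelope in $\dualcat(\OO)$, $P/\mm_E P$ is a quotient of $P$ killed by $\mm_E$, hence a module over $E/\mm_E=k$, hence (being a quotient of the projective cover of the unique simple) isomorphic to $\pi^{\vee}$; equivalently $k\wtimes_E P\cong \pi^{\vee}$. This direction is essentially formal once one knows $\BB=\{\pi\}$, which is recorded in the list (i) in the introduction and in \cite{blocks}.

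For $\pi$ principal series with $\chi_1\chi_2^{-1}\neq \Eins,\omega^{\pm1}$, the block is $\BB=\{\pi,\pi'\}$ with $\pi'=(\Indu{B}{G}{\chi_2\otimes\chi_1\omega^{-1}})_{\sm}$, and one has the two non-split extensions $0\to\pi\to\kappa\to\pi'\to0$ and $0\to\pi'\to\kappa'\to\pi\to0$, each one-dimensional (this genericity is exactly the content of \cite{breuil1} / the computation of $\Ext^1_G$ between generic principal series, e.g. via $\Ext^1_{G,\psi}(\pi,\pi')\cong k$ and likewise with $\pi,\pi'$ swapped, and $\Ext^1_{G,\psi}(\pi,\pi)=0$). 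The claim $k\wtimes_E P\cong\kappa^{\vee}$ then amounts to: the cosocle of $P/\mm_E P$, dualized, is $\pi$; the next radical layer is $\pi'$ (coming from the one-dimensional $\Ext^1_{G,\psi}(\pi,\pi')$, which survives to $P$ because $P$ is the projective \emph{envelope}); and nothing else appears, because the only way to continue would be another copy of $\pi$ glued via $\Ext^1_{G,\psi}(\pi',\pi)$, but such a class gives a genuine relation in $E$ — i.e. $\mm_E$ acts nontrivially — so it is killed in $P/\mm_E P$. Making this last point precise is the heart of the matter: I would argue that $P/\mm_E P$ has length exactly $2$ as a $G$-representation, with socle $\pi'^{\vee}$ and cosocle $\pi^{\vee}$, so it is forced to be $\kappa^{\vee}$ by uniqueness of the non-split self-extension-free extension.

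The main obstacle I anticipate is the bookkeeping in the principal-series case: one must show that $P/\mm_E P$ does not contain a third composition factor, i.e. that the map $\mm_E/\mm_E^2 \to \Ext^1$-data is injective enough to "use up" the extension $\Ext^1_{G,\psi}(\pi',\pi)$ on the relation side rather than on the module side. The clean way to see this is to invoke the structure theory of \cite[\S 2, \S 3]{cmf}, where $P$ is built as an iterated extension matching the quiver with relations of the block, and $E$ is its endomorphism ring; then $k\wtimes_E P=P/\mm_E P$ is literally the "radical-length-one-relative-to-$E$" quotient, which for the $\widetilde{A}_{\infty}$-type quiver of a two-element block with a single arrow in each direction is the length-$2$ string module $[\pi'\,;\,\pi]$, i.e. $\kappa^{\vee}$. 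So modulo citing the block structure from \cite{cmf}/\cite{blocks}, the proof is short; the only real work is checking the $\Ext^1$ dimensions are as claimed, which is standard for generic principal series.
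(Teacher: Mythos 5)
Your overall strategy is the paper's: identify $k\wtimes_E P$ as the maximal quotient of $P$ in which $\pi^{\vee}$ occurs with multiplicity one (equivalently, dually, the maximal subrepresentation of the injective envelope of $\pi$ with socle $\pi$ and $\pi$ occurring once), and then decide which other constituents can be glued on. The supersingular case is then correct and essentially identical to the paper's: the block is $\{\pi\}$, so the only possible constituent is $\pi^{\vee}$, and $\Hom_{\dualcat(\OO)}(P,k\wtimes_E P)\cong k$ together with exactness of $\Hom_{\dualcat(\OO)}(P,-)$ forces multiplicity one. (One small caution: ``killed by $\mm_E$, hence a $k$-module, hence $\cong\pi^{\vee}$'' is not by itself a proof — the $E$-action being scalar does not a priori preclude $G$-length $>1$; you need the multiplicity-one statement, which comes from $\Hom_{\dualcat(\OO)}(P,k\wtimes_E P)\cong k$.)

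In the principal series case there is a genuine gap at exactly the point you flag as ``the heart of the matter.'' You claim the only way to enlarge $\kappa^{\vee}$ inside $k\wtimes_E P$ is by gluing another copy of $\pi^{\vee}$, which is excluded by multiplicity one. That is not the only way: one must also exclude a \emph{second copy of $\pi'^{\vee}$}, i.e.\ a non-split extension $0\to\kappa\to X\to\pi'\to 0$ with $\soc_G X=\pi$. Multiplicity one says nothing about $\pi'$, and this possibility is controlled by $\Ext^1_{G,\psi}(\pi',\kappa)$, not by $\Ext^1_{G,\psi}(\pi',\pi)$. Proving $\Ext^1_{G,\psi}(\pi',\kappa)=0$ (equivalently, after the connecting-map reduction, controlling $\Ext^1_{G,\psi}(\pi',\pi')$) is the actual content of the proposition, and your proposed way out — invoking the quiver-with-relations description of the block from \cite{cmf} — is circular here: that description is only established in \cite{cmf} under the hypothesis $p\ge 3$ (and $p\ge5$ for parts of it), whereas the whole point of this section is to extend it to $p=2$. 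The paper closes the gap with an argument valid for all $p$: it applies Emerton's ordinary parts functor, shows $\RR^i\Ord_B=H^i\Ord_B$ vanishes for $i\ge2$, computes $\Ord_B\kappa\cong\RR^1\Ord_B\kappa\cong\chi_2\omega^{-1}\otimes\chi_1$, and uses the genericity hypothesis $\chi_1\chi_2^{-1}\neq\Eins,\omega^{\pm1}$ to see that all $\Ext$-groups between the two relevant characters of $T$ vanish, so that Emerton's spectral sequence gives $\Ext^n_{G,\psi}(\pi',\kappa)=0$ for all $n$. Your proposal never identifies where the genericity hypothesis enters, and without some such computation the claim ``$P/\mm_E P$ has length exactly $2$'' is unproven.
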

\begin{proof} In both cases, $(k\wtimes_{E} P)^{\vee}$ is the unique representation in $\Mod^{\ladm}_{G, \psi}(\OO)$, which is maximal with respect to the following 
conditions: 1) $\soc_G (k\wtimes_E P)^{\vee}\cong \pi$; 2) $\pi$ occurs in $(k\wtimes_E P)^{\vee}$ with multiplicity one, see \cite[Rem.1.13]{cmf}. For if $\tau$ in 
$\Mod^{\ladm}_{G, \psi}(\OO)$ satisfies both conditions, then 1) and  \cite[Lem. 2.10]{cmf} imply that the natural map $\Hom_{\dualcat(\OO)}(P, \tau^{\vee})\wtimes_E P\rightarrow \tau^{\vee}$ is surjective, and 2) and the exactness of $\Hom_{\dualcat(\OO)}(P, \ast)$ imply that $\Hom_{\dualcat(\OO)}(P, \tau^{\vee})\cong \Hom_{\dualcat(\OO)}(P, \pi^{\vee})\cong k$. Hence, dually we obtain 
an injection $\tau\hookrightarrow (k\wtimes_E P)^{\vee}$.

Let $\pi_1$ be an irreducible representation in $\Mod^{\ladm}_{G, \psi}(\OO)$, such that $\Ext^1_{G, \psi}(\pi_1, \pi)\neq 0$. 
It follows from Corollary 1.2 in \cite{blocks}, that if $\pi$ is supersingular then $\pi_1\cong \pi$ and hence $(k\wtimes_E P)^{\vee}\cong \pi$, if $\pi$ is principal series as above then $\pi_1\cong \pi$ or $\pi_1\cong \pi'$. We will now explain 
how to modify the arguments of \cite[\S8]{cmf} so that they also work for $p=2$. The main point being that Emerton's functor of ordinary parts works for all $p$. Proposition 4.3.15 2) of \cite{ord2} implies that $\Ext^1_{G, \psi}(\pi', \pi)$ is one dimensional. Let $\kappa$ be the unique non-split extension $0\rightarrow \pi\rightarrow \kappa\rightarrow \pi'\rightarrow 0$. We claim that
$\Ext^n_{G, \psi}(\pi', \kappa)=0$  for all $n\ge 0$. The claim for $n=1$ implies that $(k\wtimes_E P)^{\vee}\cong \kappa$. 
It is proved in \cite[Cor. 3.12]{eff} that the $\delta$-functor $H^{\bullet}\Ord_B$, defined in \cite[(3.3.1)]{ord2}, is effaceable in $\Mod^{\ladm}_{G, \psi}(\OO)$. Hence it coincides with the derived functor $\RR^{\bullet}\Ord_B$. An open compact subgroup $N_0$ of the unipotent radical of $B$ is isomorphic to $\Zp$, and hence $H^i(N_0, \ast)$ vanishes for $i\ge 2$. This implies that 
$\RR^i\Ord_B=H^i\Ord_B=0$ for $i\ge 2$.  The proof of 
\cite[Lem. 8.1]{cmf} does not use the assumption $p>2$ and gives that 
\begin{equation}\label{compute_ord} 
\Ord_B \kappa\cong \Ord_B \pi \cong \RR^1\Ord_B \pi' \cong \RR^1\Ord_B \kappa\cong \chi_2\omega^{-1}\otimes\chi_1.
\end{equation}

Our assumption on $\chi_1$ and $\chi_2$ implies that 
$\chi_1\omega^{-1}\otimes \chi_2$ and $\chi_2\omega^{-1}\otimes\chi_1$ are distinct characters of $T$. It follows \cite[Lem 4.3.10]{ord2} that all the $\Ext$-groups between them 
vanish.  Since $\pi'\cong (\Indu{\overline{B}}{G}{\chi_1\omega^{-1}\otimes \chi_2})_{\sm}$, where   $\overline{B}$ is the subgroup of lower-triangular matrices in $G$, all the terms in Emerton's spectral sequence, \cite[(3.7.4)]{ord2},  converging to $\Ext^n_{G, \psi}(\pi', \kappa)$, are zero. Hence, $\Ext^n_{G, \psi}(\pi_2, \kappa)=0$ for all $n\ge 0$. 
 Let us also note that the $5$-term exact sequence associated to the spectral sequence implies that $\Ext^1_{G, \psi}(\pi, \kappa)$ is finite dimensional.
\end{proof}

\begin{prop}\label{hyp_ok} If $\pi$ is supersingular then let $S=Q=\pi^{\vee}$. If $\pi$ is principal series then let $S=\pi^{\vee}$ and $Q=\kappa^{\vee}$. Then $S$ and $Q$ satisfy the hypotheses 
(H0)-(H5) of \cite[\S 3]{cmf}.
\end{prop}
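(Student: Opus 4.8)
The plan is to recall the list of hypotheses (H0)--(H5) from \cite[\S3]{cmf} and verify them one at a time for the pair $(S,Q)$, using the structural results about the block $\BB$ already assembled above together with the projective envelope $P$ and the ring $E$. Recall that, roughly, (H0) asks that $\dualcat(\OO)$ be a suitable abelian category with enough projectives on which $P$ is a projective generator of the relevant block; (H1) asks that $\soc$ and multiplicity-one conditions hold so that $k\wtimes_E P$ has the form described in Proposition \ref{residual}; (H2)--(H3) are finiteness statements on the relevant $\Ext^1$ and $\Ext^2$ groups in $\Mod^{\ladm}_{G,\psi}(\OO)$; (H4) concerns the behaviour of $\Ext$-groups under the passage from $S$ to $Q$; and (H5) is the statement that $Q$ carries a self-duality (up to twist) compatible with the $\Ext$-pairing. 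The reader should consult \cite[\S3]{cmf} for the precise formulations; I will match each to the facts below.

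First I would dispatch (H0): the category $\dualcat(\OO)$ is the full subcategory of $\Mod^{\pro}_G(\OO)$ anti-equivalent to $\Mod^{\ladm}_{G,\psi}(\OO)$, which is locally finite by \cite[Thm.2.3.8]{ord1}, hence has the required exactness and projective-cover properties, and $P\twoheadrightarrow\pi^{\vee}$ is by construction a projective envelope in this category; the decomposition \eqref{blocksdecompose} shows $P$ generates the block of $\pi^{\vee}$. Next, (H1) is exactly the content of Proposition \ref{residual}: in the supersingular case $k\wtimes_E P\cong\pi^{\vee}=S=Q$, and in the principal series case $k\wtimes_E P\cong\kappa^{\vee}=Q$ with $\soc_G\kappa\cong\pi$ and $\pi$ occurring with multiplicity one, while $S=\pi^{\vee}$ sits inside $Q$. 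For the finiteness hypotheses (H2)--(H3), in the supersingular case one invokes Corollary 1.2 of \cite{blocks} to see the block consists of $\pi$ alone and the relevant $\Ext$-groups are finite-dimensional by local admissibility; in the principal series case the key inputs are already present in the proof of Proposition \ref{residual} --- the computation \eqref{compute_ord} of ordinary parts, the vanishing $\RR^i\Ord_B=0$ for $i\ge2$ (from $N_0\cong\Zp$), the one-dimensionality of $\Ext^1_{G,\psi}(\pi',\pi)$ via \cite[Prop.4.3.15]{ord2}, and the resulting vanishing $\Ext^n_{G,\psi}(\pi',\kappa)=0$ for all $n$, which in particular forces $\Ext^1_{G,\psi}(\pi,\kappa)$ to be finite-dimensional (noted at the end of that proof). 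These are precisely the finiteness/vanishing statements that (H2)--(H3) require.

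The substantive points are (H4) and (H5). For (H4) --- the comparison of $\Ext$-groups of $S$ and of $Q$ --- I would use the short exact sequence $0\to\pi\to\kappa\to\pi'\to 0$ together with the total vanishing $\Ext^n_{G,\psi}(\pi',\kappa)=0$ established above: the long exact sequence in $\Ext_{G,\psi}(-,\kappa)$ degenerates, giving $\Ext^n_{G,\psi}(\pi,\kappa)\cong\Ext^n_{G,\psi}(\kappa,\kappa)$ for all $n\ge0$ in the appropriate range, which is the identification (H4) demands (in the supersingular case $S=Q$ and there is nothing to check). For (H5), the self-duality, one uses that the block is stable under the duality $\pi\mapsto\pi^{\vee}\otimes(\psi\circ\det)\cdot(\ast)$ on $\Mod^{\ladm}_{G,\psi}(\OO)$: a supersingular $\pi$ satisfies $\pi\cong\pi^{\vee}\otimes(\text{twist})$, and for the principal series pair the twist swaps $\pi$ and $\pi'$ and hence fixes the unique self-extension $\kappa$, yielding the required duality on $Q$ compatible with the Yoneda pairing; one checks the pairing is perfect on the finite-dimensional $\Ext^1$ using the spectral-sequence description of $\Ext^\bullet_{G,\psi}(\pi',\kappa)$ and $\Ext^\bullet_{G,\psi}(\pi,\kappa)$. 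I expect (H5) to be the main obstacle: matching the duality functor on $\Mod^{\ladm}_{G,\psi}(\OO)$ precisely with the involution on the list of characters $\chi_1,\chi_2$ and checking that the induced pairing on $\Ext^1_{G,\psi}(\kappa,\kappa)$ is perfect requires care, and this is where the hypothesis $\chi_1\chi_2^{-1}\neq\Eins,\omega^{\pm1}$ (ensuring $\pi\not\cong\pi'$ and the vanishing of cross-$\Ext$-groups between the distinct characters of $T$) is used most essentially; all other hypotheses follow fairly mechanically from the results already in hand.
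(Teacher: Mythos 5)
Your proposal founders on a basic point: you never actually pin down what (H0)--(H5) of \cite[\S 3]{cmf} say, and several of your reconstructions are wrong. In \cite[\S 3]{cmf} the hypotheses are conditions on the pair $(S,Q)$ itself inside $\dualcat(\OO)$ (roughly: $\End(S)=k$; $Q$ has finite length with cosocle $S$ occurring with multiplicity one; $\Ext^1$ and $\Ext^2$ from every irreducible $R\not\cong S$ into $Q$ vanish; and $\Ext^1(S,Q)$ is finite dimensional). There is no hypothesis asserting a self-duality of $Q$ compatible with an $\Ext$-pairing, yet you single out such an "(H5)" as "the main obstacle" and devote your final paragraph to duality functors and perfect pairings on $\Ext^1_{G,\psi}(\kappa,\kappa)$ --- material that is both irrelevant to the statement and problematic on its own terms (the naive smooth dual of an infinite-dimensional irreducible mod $p$ representation of $G$ vanishes). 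Likewise your "(H4)", described as a comparison of $\Ext$-groups of $S$ and $Q$, does not match the actual (H4), which is the finite-dimensionality of $\Ext^1_{\dualcat(\OO)}(S,Q)$; the paper's proof turns precisely on this point.

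Concretely, the intended argument is short: in the supersingular case $S=Q=\pi^{\vee}$ and the block is $\{\pi\}$, so every hypothesis except (H4) is vacuous or immediate, and (H4) amounts to $\dim_k\Ext^1_{G,\psi}(\pi,\pi)<\infty$, which is \cite[Prop.~9.1]{ext} --- a genuine computation, not a formal consequence of local admissibility as you assert. In the principal series case all the hypotheses are exactly the $\Ext$-statements already established in the proof of Proposition \ref{residual}: the cosocle and multiplicity-one conditions because $\kappa$ is the unique non-split extension of $\pi'$ by $\pi$, the vanishing conditions because $\Ext^n_{G,\psi}(\pi',\kappa)=0$ for all $n$, and the finiteness condition because $\Ext^1_{G,\psi}(\pi,\kappa)$ was shown there to be finite dimensional via the $5$-term exact sequence. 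You do correctly identify these computations as the relevant inputs, but because you verify invented hypotheses rather than the actual ones, the proposal as written does not establish the proposition.
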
 
\begin{proof} If $\pi$ is supersingular then there are no other irreducible representations in the block of $\pi$ and hence the only hypothesis to check is (H4), which is equivalent to 
the finite dimensionality of $\Ext^1_{G, \psi}(\pi, \pi)$. This follows from Proposition 9.1 in \cite{ext}.
If $\pi$ is principal series then the assertion follows from the $\Ext$-group calculations made in the proof 
of Proposition \ref{residual}.
\end{proof}

The proposition enables us to apply the formalism developed in \cite[\S 3]{cmf}. Corollary  3.12 of \cite{cmf} implies:

\begin{prop}\label{PE_flat} The functor $\wtimes_E P$ is an exact functor from the category of pseudocompact right $E$-modules to $\dualcat(\OO)$.
\end{prop}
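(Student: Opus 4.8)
The plan is to reduce the statement to a standard flatness criterion for the completed tensor product functor $\wtimes_E P$, using the fact that $P$ is projective in $\dualcat(\OO)$ together with the hypotheses (H0)--(H5) verified in Proposition \ref{hyp_ok}. First I would recall the general machinery of \cite[\S 3]{cmf}: once $S$ and $Q$ satisfy (H0)--(H5), one has the ring $E=\End_{\dualcat(\OO)}(P)$, which is a complete Noetherian (pseudocompact) local ring, and $P$ is a flat $E$-module in the appropriate topological sense. The content of \cite[Cor.~3.12]{cmf} is precisely that under these hypotheses the functor $\md\mapsto \md\wtimes_E P$ sends pseudocompact right $E$-modules to objects of $\dualcat(\OO)$ and is exact; since Proposition \ref{hyp_ok} supplies the hypotheses, I would simply invoke that corollary.

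In slightly more detail, the argument underlying the cited corollary (which I would sketch for completeness) goes as follows. The functor $\wtimes_E P$ is right exact by construction, being a topological tensor product, and it commutes with arbitrary projective limits of pseudocompact modules. So to prove exactness it suffices to show it is left exact, equivalently that it carries injections of pseudocompact right $E$-modules to injections. By passing to limits one reduces to the case of modules of finite length over $E$, and then, filtering by the maximal ideal of $E$, to the residue field $k$. Here one uses Proposition \ref{residual} together with hypothesis (H4): the key computation is that $\Tor_1^E(k, P)=0$, which follows from the projectivity of $P$ in $\dualcat(\OO)$ and the identification of $k\wtimes_E P$ with $S^{\vee}$ (resp.\ $Q^{\vee}$) having the right multiplicities, exactly as in \cite[\S 3]{cmf}. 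The vanishing of higher Tor then propagates to all finite length modules by d\'evissage, and the exactness for general pseudocompact modules follows by a limit argument using that $P$ is a compact object.

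The main obstacle I would expect is bookkeeping rather than conceptual: one must be careful that the topological tensor product $\wtimes_E$ behaves well, i.e.\ that it is exact on the subcategory of pseudocompact modules and commutes with the relevant limits, and that the Tor-vanishing argument is carried out in the pseudocompact (rather than abstract) setting. All of this is handled in \cite[\S 3]{cmf}, and the only thing genuinely needed here is that the hypotheses (H0)--(H5) hold, which is Proposition \ref{hyp_ok}. Thus the proof is essentially a one-line citation, and I would write it as: the assertion is Corollary 3.12 of \cite{cmf}, whose hypotheses are verified by Proposition \ref{hyp_ok}.
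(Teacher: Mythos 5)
Your proposal is exactly the paper's argument: the proposition is deduced directly from \cite[Cor.~3.12]{cmf}, whose hypotheses (H0)--(H5) are supplied by Proposition \ref{hyp_ok}. The additional sketch of the internal mechanism of that corollary is not needed (and the paper does not reproduce it), but the essential step — the one-line citation — coincides with the paper's proof.
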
    

If $\md$ is a pseudocompact right  $E$-module then $\Hom_{\dualcat(\OO)}(P, \md\wtimes_E P)\cong \md$ by \cite[Lem.2.9]{cmf}. This implies that the functor is fully faithful, so that 
\begin{equation}\label{full1}
\Hom_{E}^{\cont}(\md_1, \md_2)\cong \Hom_{\dualcat(\OO)}(\md_1\wtimes_E P, \md_2\wtimes_E P).
\end{equation}

\begin{prop}\label{commutative} $E$ is commutative. 
\end{prop}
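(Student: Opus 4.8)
The plan is to deduce commutativity of $E = \End_{\dualcat(\OO)}(P)$ from the ``capture'' machinery of \S\ref{capture}, combined with the fact that the deformation problems parameterized by $P$ over various crystabeline Hecke algebras are visibly governed by commutative rings. Concretely, the idea is that $P$ carries a large supply of $K$-types coming from locally algebraic principal series, and the action of $E$ on the corresponding finite-dimensional spaces of $K$-homomorphisms factors through commutative quotients; if these quotients collectively separate points of $E$, then $E$ itself is commutative.

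First I would restrict $P$ to $K = \GL_2(\Zp)$ and apply Proposition \ref{capture_zeta}: there is a type $\tau$ such that the family $\{\tau\otimes\Sym^a L^2\otimes\eta'\circ\det\}_{a,\eta'}$ (with the appropriate central character constraint) captures every projective object in $\Mod^{\pro}_{K,\psi}(\OO)$, and $P|_K$ is such a projective object by \cite[Prop.~2.1.11]{ord2}. Capture means precisely that the intersection of the kernels of all continuous $\OO\br K$-homomorphisms $P \to (\tau\otimes\Sym^a L^2\otimes\eta'\circ\det)^*$ is zero. Dually, writing $\sigma_{a,\eta'} := \tau\otimes\Sym^a L^2\otimes\eta'\circ\det$, the space $M_{a,\eta'} := \Hom_{\OO\br K}^{\cont}(P, \sigma_{a,\eta'}^*)$ carries a natural right $E$-action (via precomposition), and the joint map $E \to \prod_{a,\eta'}\End_L(M_{a,\eta'})$ has trivial kernel: an element of $E$ killing every $M_{a,\eta'}$ would, by capture, factor through the zero quotient of $P$, hence be $0$.

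Next I would identify each $M_{a,\eta'}$, as an $E$-module, with a space of classical automorphic/Hecke data. The type $\tau$ cuts out a Bernstein component containing principal series but no special series, so $\sigma_{a,\eta'}^*$ is (dual to) a locally algebraic principal series type. Standard theory (the results of Berger--Breuil \cite{bb} invoked throughout the paper, together with the construction of $P$ as a projective envelope in the block of $\pi$) shows that $\Hom_{\OO\br K}^{\cont}(P,\sigma_{a,\eta'}^*)$ is a finitely generated module over a reduced, commutative crystabeline deformation ring $R_{a,\eta'}$ of $\rhobar$, and that the $E$-action on it factors through the surjection $E \twoheadrightarrow R_{a,\eta'}$ coming from the deformation $\VV(P)^\vee(\psi\varepsilon)$ discussed after Proposition \ref{hyp_ok}. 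In particular the image of $E$ in $\End_L(M_{a,\eta'})$ lands inside the commutative subring generated by $R_{a,\eta'}$. Combining over all $(a,\eta')$ with the injectivity from the previous paragraph, $E$ embeds into a product of commutative rings, so $E$ is commutative.

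The main obstacle is the middle step: making precise that the $E$-action on $M_{a,\eta'}$ factors through a \emph{commutative} ring, and that this ring may be taken reduced so that multiplicities behave well. This requires knowing (i) that capturing by locally algebraic types is compatible with the block decomposition, so that only the crystabeline deformation rings attached to $\rhobar$ intervene, and (ii) the Berger--Breuil irreducibility/multiplicity-one input, which guarantees that these deformation rings act faithfully and that $\Hom_K(\sigma_{a,\eta'},\Pi)$ for the relevant Banach completions $\Pi$ is one-dimensional, pinning down the module structure. Once these are in hand, the passage from ``$E$ acts through commutative quotients that jointly separate points'' to ``$E$ is commutative'' is formal. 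The hypothesis that $\chi_1\chi_2^{-1}\neq\Eins,\omega^{\pm1}$ (in the principal series case) is what keeps the block ``generic'' enough that the argument of \cite[Cor.~2.22]{PCD} applies.
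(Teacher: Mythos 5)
Your proposal takes a genuinely different route from the paper, and it has a gap at its central step. The paper does not use capture for this proposition at all: it introduces the projective envelope $\wP$ of $\pi^{\vee}$ in the larger category $\widetilde{\dualcat}(\OO)$ (no fixed central character), shows that $\wP/\mathfrak a\wP$ is a projective envelope of $\pi^{\vee}$ in $\dualcat(\OO)$ — where $\mathfrak a$ is the closed two-sided ideal of $\wE=\End_{\widetilde{\dualcat}(\OO)}(\wP)$ generated by the central elements $z-\psi^{-1}(z)$ — hence $P\cong \wP/\mathfrak a\wP$ and $E\cong \wE/\mathfrak a$, and then simply quotes \cite[Cor.2.22]{PCD} for the commutativity of $\wE$. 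All the analytic input (capture, Berger--Breuil) is thereby outsourced to \cite{PCD}; the capture results of \S\ref{capture} are only needed later, in the proof of Theorem \ref{main}.

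The gap in your argument is the assertion that the $E$-action on $M_{a,\eta'}=\Hom^{\cont}_{\OO\br{K}}(P,\sigma_{a,\eta'}^{*})$ factors through a commutative ring. The injectivity of $E\rightarrow \prod_{a,\eta'}\End(M_{a,\eta'})$ is indeed available from Proposition \ref{capture_zeta} together with the projectivity of $P|_{K}$ in $\Mod^{\pro}_{K,\psi}(\OO)$ (which, incidentally, is \cite[Cor.5.3]{mybm} rather than \cite[Prop.2.1.11]{ord2}). But the commutativity of the image of $E$ in each factor is exactly the hard point, and the justification you offer — the surjection from $E$ onto $R^{\psi}$ or onto crystabeline deformation rings induced by the deformation $\cV(P)$ — is circular in the paper's logical order: Proposition \ref{kisin_plus_e} and Propositions \ref{twist_and_shout}, \ref{the_biz} are established only after $E$ is known to be commutative, noetherian, and to represent the (commutative) deformation functor of $Q$. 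A priori $E$ is just a pseudo-compact ring acting on $M_{a,\eta'}$ by precomposition, and nothing yet forces its image in $\End_{L}(M_{a,\eta'}[1/p])$ to be commutative; establishing this is precisely the content of the characteristic-zero argument of \cite[Cor.2.22]{PCD} (decomposing $\Hom_{K}(V,\Pi)$ over the irreducible Berger--Breuil completions and invoking multiplicity one), which you flag as the main obstacle but do not carry out. Completing your route would amount to re-proving \cite[Cor.2.22]{PCD} with a fixed central character — feasible, but far longer than the paper's reduction $E\cong\wE/\mathfrak a$, which converts the central-character constraint into a quotient by a centrally generated ideal and inherits commutativity for free.
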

\begin{proof} Let $\widetilde{\dualcat}(\OO)$ be the the full subcategory of $\Mod^{\pro}_G(\OO)$, which is  anti-equivalent to $\Mod^{\ladm}_{G}(\OO)$ via the Pontryagin duality. Let $\wP$ be a projective envelope
of $\pi^{\vee}$ in $\widetilde{\dualcat}(\OO)$, let $\wE:=\End_{\widetilde{\dualcat}(\OO)}(\wP)$ and let $\mathfrak a$ be the closed two-sided ideal of $\wE$ generated by the elements $z-\psi^{-1}(z)$, for all $z$ in the center of $G$.
We may consider $\dualcat(\OO)$ as a full subcategory of $\widetilde{\dualcat}(\OO)$. Since the center of $G$ acts on $\wP/\mathfrak a \wP$ by $\psi^{-1}$,  $\wP/\mathfrak a \wP$ lies in $\dualcat(\OO)$. The functor 
$\Hom_{\dualcat(\OO)}(\wP/\mathfrak a \wP, \ast)$ is exact, since
 \begin{equation}\label{homs_are_equal}
 \Hom_{\dualcat(\OO)}(\wP/\mathfrak a \wP, M)=\Hom_{\widetilde{\dualcat}(\OO)}(\wP, M)
\end{equation}
 for all $M\in \dualcat(\OO)$, and $\wP$ is projective. 
Hence, $\wP/\mathfrak a \wP$ is projective in $\dualcat(\OO)$. Its $G$-cosocle is isomorphic to $\pi^{\vee}$, since the same is true of $\wP$. Hence, $\wP/\mathfrak a \wP$ is a projective envelope of $\pi^{\vee}$ in $\dualcat(\OO)$. Since projective envelopes are unique up to isomorphism,  $\wP/\mathfrak a \wP$ is isomorphic to $P$. Since $\mathfrak a$ is generated by central elements, any $\phi\in \wE$ maps $\mathfrak a \wP$ to itself. This yields a ring homomorphism 
$\wE\ \rightarrow \End_{\dualcat(\OO)}(\wP/\mathfrak a \wP)\cong E$. Projectivity of $\wP$ and \eqref{homs_are_equal} applied with $M=\wP/\mathfrak a \wP$ implies that the homomorphism is surjective and
induces an isomorphism $\wE/\mathfrak a\cong \End_{\dualcat(\OO)}(\wP/\mathfrak a \wP)$. Since $\wE$ is commutative, \cite[Cor.2.22]{PCD}, we deduce that $E$ is commutative. 
\end{proof}

\begin{prop} $E$ is a complete local noetherian commutative $\OO$-algebra  with residue field $k$.
\end{prop}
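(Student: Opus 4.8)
The plan is to assemble the statement from what has already been established. We already know from Proposition \ref{commutative} that $E$ is commutative, and from \cite[\S 2]{cmf} (quoted in the excerpt right after the definition of $E$) that $E$ is a topological ring with a unique maximal ideal and residue field $k = \End_{\dualcat(\OO)}(\pi^\vee)$. So the content that remains is: (a) $E$ is noetherian; (b) $E$ is $\OO$-algebra and complete with respect to its maximal-ideal-adic topology; (c) the residue field is $k$. Point (c) is immediate. Point (b): $E$ acts on $P$, which lies in $\dualcat(\OO)$, so $\OO$ acts on $P$ and hence on $E=\End_{\dualcat(\OO)}(P)$, giving the $\OO$-algebra structure; completeness is part of the general theory of $\End$-rings of projective objects in the categories $\dualcat(\OO)$ — $P$ is a projective object in a category of pseudo-compact modules, its endomorphism ring is pseudo-compact, and a commutative pseudo-compact local ring is complete for its maximal ideal. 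The one genuinely nontrivial point is noetherianity.

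For noetherianity, the approach I would take is to show that $E$ is a quotient of a power series ring over $\OO$ in finitely many variables, which suffices. The standard mechanism (this is exactly how it is done in \cite[\S 2, \S 3]{cmf}, and earlier in \cite[\S B]{ext} / Pa\v{s}k\={u}nas's older work) is: the maximal ideal $\mm_E$ satisfies $\mm_E/(\mm_E^2,\varpi) \cong \Hom_{\dualcat(\OO)}(P, \pi^\vee \text{-extensions})^* $, more precisely $\mm_E/\mm_E^2$ is dual to a certain $\Ext^1$-space, and since $P$ satisfies hypotheses (H0)–(H5) — established in Proposition \ref{hyp_ok} — in particular (H1)/(H4) give that the relevant $\Ext^1_{G,\psi}$-groups among the (finitely many) irreducibles in the block $\BB$ are finite-dimensional over $k$. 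Hence $\mm_E/\mm_E^2$ is a finite-dimensional $k$-vector space, so by the complete-local-ring version of Nakayama one gets a surjection $\OO\br{x_1,\dots,x_d}\twoheadrightarrow E$ (with $d = \dim_k \mm_E/(\mm_E^2,\varpi)$), and therefore $E$ is noetherian.

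Concretely, the steps in order: (1) recall $E$ is local commutative with residue field $k$ (Proposition \ref{commutative} and \cite[\S 2]{cmf}), and is pseudo-compact, hence $\mm_E$-adically complete and an $\OO$-algebra; (2) compute, using that $P$ is a projective envelope of $\pi^\vee$ and standard manipulations with $\Hom_{\dualcat(\OO)}(P,-)$, that $\mm_E/\mm_E^2$ is the $k$-linear dual of a finite direct sum of $\Ext^1_{G,\psi}$-groups between the irreducibles of the block (this uses Proposition \ref{residual} to identify $k\wtimes_E P$ and hence the "top" of $P$); (3) invoke (H4) from Proposition \ref{hyp_ok} — finite-dimensionality of $\Ext^1_{G,\psi}(\pi,\pi)$ (and the analogous groups computed in the proof of Proposition \ref{residual} in the principal series case) — to conclude $\dim_k \mm_E/\mm_E^2 < \infty$; (4) apply topological Nakayama to lift a $k$-basis of $\mm_E/(\mm_E^2,\varpi)$ to elements of $\mm_E$ generating $\mm_E$ topologically, giving a continuous surjection $\OO\br{x_1,\dots,x_d}\twoheadrightarrow E$; (5) conclude $E$ is noetherian as a quotient of a noetherian ring, and complete local with residue field $k$.

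The main obstacle — really the only place any work happens — is step (2)–(3): identifying $\mm_E/\mm_E^2$ with something controlled by the block's $\Ext^1$-groups and checking those are finite-dimensional. But this is precisely what the hypotheses (H0)–(H5), verified in Proposition \ref{hyp_ok}, are designed to deliver, and the formalism of \cite[\S 3]{cmf} (already invoked in the excerpt, e.g. for Proposition \ref{PE_flat}) packages it; in fact in \emph{loc.\ cit.} it is shown that under (H0)–(H5) the ring $E$ is noetherian. So in the write-up I would simply cite \cite[Prop. 3.34]{cmf} (or the relevant numbered statement there) for noetherianity, cite \cite[\S 2]{cmf} together with Proposition \ref{commutative} for the local/commutative/residue-field claims, and note that pseudo-compactness of $\End_{\dualcat(\OO)}(P)$ gives completeness and the $\OO$-algebra structure.
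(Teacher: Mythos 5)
Your proposal is correct and follows essentially the same route as the paper: commutativity from Proposition \ref{commutative}, finite-dimensionality of $\mm/(\mm^2+(\varpi))$ via the $\Ext^1$-computations packaged by hypotheses (H0)--(H5) (the paper cites Lemma 3.7 and Proposition 3.8(iii) of \cite{cmf}), and then a power-series surjection to get noetherianity. The only imprecision is your claim that a commutative pseudo-compact local ring is automatically $\mm$-adically complete; the paper instead invokes \cite[Cor.~3.11]{cmf} to identify the natural (pseudo-compact) topology with the $\mm$-adic one, which is the correct way to justify completeness and does rely on the finite-dimensional cotangent space you establish anyway.
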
 
\begin{proof} Proposition \ref{commutative} asserts that $E$ is commutative. Corollary 3.11 of \cite{cmf} implies that the natural topology on $E$, see \cite[\S 2]{cmf}, 
coincides with the topology defined by the maximal ideal $\mm$, which implies that $E$ is complete for the $\mm$-adic topology. It follows from Lemma 3.7, 
Proposition 3.8 (iii) of \cite{cmf} that $\mm/(\mm^2 + (\varpi))$ is a finite dimensional $k$-vector space. Since $E$ is commutative, we deduce that $E$ is noetherian.
\end{proof}

\begin{prop} Let $Q=\pi^{\vee}$ if $\pi$ is supersingular and let $Q=\kappa^{\vee}$ if $\pi$ is principal series. The ring $E$ represents the universal deformation problem of $Q$ in $\dualcat(\OO)$, 
and $P$ is the universal deformation of $Q$.
\end{prop}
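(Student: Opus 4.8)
The plan is to obtain the statement from the deformation-theoretic formalism of \cite[\S 3]{cmf}, whose hypotheses (H0)--(H5) have just been verified in Proposition \ref{hyp_ok}; by the preceding proposition and Proposition \ref{commutative}, $E$ is a complete local noetherian commutative $\OO$-algebra with residue field $k$, so it lives in the category where the assertion makes sense. Recall that a deformation of $Q$ to an Artinian local $\OO$-algebra $A$ with residue field $k$ is an object $M$ of $\dualcat(\OO)$ together with an $A$-action commuting with the $G$-action, flat over $A$ in the sense that $\gr^n_{\mathfrak m_A}M\cong \gr^n_{\mathfrak m_A}A\otimes_k Q$ for all $n$, and an isomorphism $M/\mathfrak m_A M\cong Q$; these form a functor $\Def_Q$ in $A$. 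I equip $P$ with its tautological $E=\End_{\dualcat(\OO)}(P)$-action; by Proposition \ref{residual} we have $k\wtimes_E P\cong Q$, so $(P,\ k\wtimes_E P\cong Q)$ is a candidate for the universal deformation over $E$. It then remains to produce a natural bijection $\Hom_{\OO}^{\cont}(E,A)\iso\Def_Q(A)$ for each such $A$, sending $\id_E$ to $P$.

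For one direction, to a local $\OO$-algebra homomorphism $\phi\colon E\to A$ I associate $M_\phi:=A\wtimes_E P$, with $A$ an $E$-module via $\phi$. Filtering $A$ by powers of $\mathfrak m_A$ and applying the exact functor $\wtimes_E P$ (Proposition \ref{PE_flat}) shows that $M_\phi\in\dualcat(\OO)$ is flat over $A$ with $\gr^n_{\mathfrak m_A}M_\phi\cong \gr^n_{\mathfrak m_A}A\otimes_k Q$, while $M_\phi/\mathfrak m_A M_\phi\cong k\wtimes_E P\cong Q$ since $\phi$ is local; hence $M_\phi\in\Def_Q(A)$, functorially in $\phi$. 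For the other direction, given $M\in\Def_Q(A)$, exactness of $\Hom_{\dualcat(\OO)}(P,-)$ (projectivity of $P$), flatness of $M$, and $\Hom_{\dualcat(\OO)}(P,Q)\cong\Hom_E(E,k)=k$ (which follows from \eqref{full1}) together show that $\Hom_{\dualcat(\OO)}(P,M)$ is a free $A$-module of rank one. A generator $\alpha\colon P\to M$ reduces modulo $\mathfrak m_A$ to a generator of $\Hom_{\dualcat(\OO)}(P,Q)=k$, hence to a nonzero multiple of the canonical surjection $P\twoheadrightarrow k\wtimes_E P=Q$, so $\alpha$ is surjective by topological Nakayama. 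The precomposition action of $E$ on the free rank-one $A$-module $\Hom_{\dualcat(\OO)}(P,M)$ is $A$-linear, so there is $\phi_M\colon E\to A$ with $\alpha\circ e=\phi_M(e)\,\alpha$ for all $e\in E$; one checks readily that $\phi_M$ is an $\OO$-algebra homomorphism, and it is local because the induced action of $E$ on $\Hom_{\dualcat(\OO)}(P,Q)=k$ factors through $E/\mathfrak m$.

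Finally I would check that these two constructions are mutually inverse and natural. Starting from $\phi$, the canonical map $P=E\wtimes_E P\to A\wtimes_E P$ corresponds under \eqref{full1} to $1\in A=\Hom_E(E,A)$, hence generates $\Hom_{\dualcat(\OO)}(P,M_\phi)$, and $\alpha\circ e$ corresponds to $\phi(e)$, so $\phi_{M_\phi}=\phi$. Starting from $M$, the map $\alpha$ is semilinear over $\phi_M$ and so factors through a morphism $A\wtimes_E P\to M$ of deformations of $Q$ to $A$ inducing the identity of $Q$ modulo $\mathfrak m_A$; since source and target are both flat over $A$, comparing the $\mathfrak m_A$-adic filtrations (with $\mathfrak m_A$ nilpotent) and applying the five lemma graded piece by graded piece shows it is an isomorphism. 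This gives $\Def_Q\cong\Hom_{\OO}^{\cont}(E,-)$, so $E$ represents the deformation problem of $Q$ in $\dualcat(\OO)$ and $P$ is the universal deformation. The genuinely new and hard inputs were Proposition \ref{commutative} (commutativity of $E$, via \cite[Cor.2.22]{PCD}) and the noetherianity of $E$; this last step is essentially formal, and since (H0)--(H5) hold one may alternatively just invoke the corresponding representability statement of \cite[\S 3]{cmf}. The only point needing care is the flatness bookkeeping in the two places where it is used --- freeness of $\Hom_{\dualcat(\OO)}(P,M)$ and the isomorphism $A\wtimes_E P\iso M$ --- both of which reduce to a d\'evissage along powers of $\mathfrak m_A$ together with the exactness statements already at hand.
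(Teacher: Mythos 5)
Your proposal is correct and matches the paper's approach: the paper simply observes that $E$ is commutative (Proposition \ref{commutative}) and that (H0)--(H5) hold (Proposition \ref{hyp_ok}), and then cites \cite[Cor.3.27]{cmf} for representability, which is exactly the statement you reprove. Your explicit construction of the bijection $\Hom_{\OO}^{\cont}(E,A)\cong\Def_Q(A)$ is a sound unfolding of that citation, resting on the same inputs (Propositions \ref{residual}, \ref{PE_flat} and \eqref{full1}).
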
 
\begin{proof} Since  $E$ is commutative by Proposition \ref{commutative} and the hypotheses (H0)-(H5) of \cite[\S 3]{cmf} are satisfied by Proposition \ref{hyp_ok}, the assertion follows from \cite[Cor.3.27]{cmf}. 
\end{proof}

\subsubsection{Colmez's Montreal functor}\label{colmez_functor}
This subsection is essentially the same as \cite[\S 5.7]{cmf}. Let $G_{\Qp}$ be the absolute Galois group of $\Qp$. 
We will consider $\psi$ as a character of $G_{\Qp}$ via local class field theory, normalized so that the uniformizers correspond to
geometric Frobenii. Let $\varepsilon: G_{\Qp}\rightarrow \OO^{\times}$ be the $p$-adic cyclotomic character. Simirlarly, we will identify $\varepsilon$ with the character of $\Qp^{\times}$, which  maps $x$ to $x|x|$.

In \cite{colmez}, Colmez has defined an exact and covariant functor $\VV$ from the category of smooth, finite length representations of $G$ on $\OO$-torsion modules with 
 a central character to the category of continuous finite length representations of $G_{\Qp}$ on $\OO$-torsion modules. This functor enables us to make the connection 
 between the $\GL_2(\Qp)$ and $G_{\Qp}$ worlds. We modify Colmez's functor to obtain an exact covariant functor $\cV: \dualcat(\OO)\rightarrow \Mod^{\pro}_{\gal}(\OO)$ as follows. 
 Let  $M$ be in $\dualcat(\OO)$, if it is of finite length then $\cV(M):=\VV(M^{\vee})^{\vee}(\varepsilon \psi)$, 
where $\vee$ denotes the Pontryagin dual and $\varepsilon$ is the cyclotomic character.  In general, we may write 
$M\cong \varprojlim M_i$, where the limit is taken over all quotients of finite length in $\dualcat(\OO)$ and  we define
$\cV(M):=\varprojlim \cV(M_i)$. Let  $\pi\in \Mod^{\lfin}_{G, \psi}(k)$ be absolutely irreducible, then $\pi^{\vee}$ is an object of $\dualcat(\OO)$, and 
if $\pi$ is supersingular in the sense of \cite{bl}, then $\cV(\pi^{\vee})\cong \VV(\pi)$ is an absolutely irreducible continuous representations of $G_{\Qp}$ associated to $\pi$ by Breuil  in \cite{breuil1}. 
If $\pi\cong(\Indu{B}{G}{\chi_1\otimes\chi_2 \omega^{-1}})_{\sm}$ then $\cV(\pi^{\vee})\cong \chi_1$. If $\pi\cong \chi\circ \det$ then $\cV(\pi^{\vee})=0$ and if $\pi\cong \Sp\otimes \chi\circ\det$, where 
$\Sp$ is the Steinberg representation, then $\cV(\pi^{\vee})\cong \chi$.  Since $\cV$ is exact we obtain an exact sequence of $\gal$-representations:
\begin{equation}\label{cvkappa}
0\rightarrow \chi_2\rightarrow \cV(\kappa^{\vee}) \rightarrow \chi_1\rightarrow 0
\end{equation}
The sequence is non-split by \cite[VII.4.13 iii)]{colmez}. If $\md$ is a pseudocompact right $E$-module then there exists  a natural isomorphism of $\gal$-representations:
\begin{equation}\label{wtimes_cv}
\cV(\md\wtimes_E P)\cong \md\wtimes_E\cV(P) 
\end{equation}
by \cite[Lem.5.53]{cmf}. It follows from \eqref{wtimes_cv} and Proposition \ref{PE_flat} that $\cV(P)$ is a deformation of $\rho:=\cV(k\wtimes_E P)$ to $E$. 
If $\pi$ is supersingular then $\rho$ is an absolutely irreducible $2$-dimensional representation of $\gal$, and if $\pi$ is principal series then $\rho$ is a non-split extension of distinct characters, see \eqref{cvkappa}. In both cases, $\End_{\gal}(\rho)=k$ and so the universal deformation problem of $\rho$ is represented by a  complete local noetherian $\OO$-algebra $R$. Let $R^{\psi}$ be the quotient of $R$ parameterizing deformation of $\rho$ with determinant equal to $\psi \varepsilon$.

\begin{prop}\label{kisin_plus_e} The functor $\cV$ induces  surjective homomorphisms $R\twoheadrightarrow E$,  $\varphi: E\twoheadrightarrow R^{\psi}$.
\end{prop}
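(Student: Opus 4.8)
The plan is to use the universal property of the deformation ring $R$ together with the fact, established in the preceding subsection, that $\cV(P)$ is a deformation of $\rho=\cV(k\wtimes_E P)$ to the ring $E$. Since $R$ pro-represents the deformation functor of $\rho$ in the category of complete local noetherian $\OO$-algebras with residue field $k$, and $\End_{\gal}(\rho)=k$, the deformation $\cV(P)$ of $\rho$ to $E$ is classified by a unique local $\OO$-algebra homomorphism $R\rightarrow E$. So the first step is simply to invoke this universal property to produce the map $R\rightarrow E$.

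Next I would prove surjectivity of $R\rightarrow E$. The standard way to do this is to reduce to a statement on tangent spaces (cotangent spaces), using the completeness and noetherianness of both rings: a local homomorphism of complete local noetherian rings with the same residue field is surjective as soon as it is surjective on mod-$(\mm^2+(\varpi))$ cotangent spaces (equivalently, topologically generates the target). Dually one wants the induced map on relative tangent spaces $\mm_E/(\mm_E^2+(\varpi)) \to \mm_R/(\mm_R^2+(\varpi))$ — no wait, surjectivity of $R\to E$ corresponds to \emph{injectivity} on tangent spaces $t_E\hookrightarrow t_R$. Concretely, a tangent vector of $E$ is a map $E\to k[\epsilon]$, which via $\wtimes_E P$ produces a square-zero deformation of $k\wtimes_E P$ in $\dualcat(\OO)$, and applying $\cV$ (which is exact and commutes with $\wtimes_E P$ by \eqref{wtimes_cv}) gives a square-zero deformation of $\rho$, i.e. a tangent vector of $R$. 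One must check that if this resulting deformation of $\rho$ is trivial, then the original deformation of $k\wtimes_E P$ was already trivial; this is exactly the injectivity of the natural map on $\Ext^1$ / deformation spaces induced by $\cV$. This kind of statement is what the formalism of \cite{cmf} (hypotheses (H0)--(H5), now verified in Proposition~\ref{hyp_ok}) is designed to deliver — in the supersingular case it follows from the fact that $\cV$ induces an isomorphism on the relevant $\Ext^1$-groups, and in the principal series case from the explicit $\Ext$-computations in the proof of Proposition~\ref{residual} together with \eqref{cvkappa}. The main obstacle here, and the reason this needs care rather than being a one-line citation, is that one has to track the central-character condition: $\cV$ twists by $\varepsilon\psi$, so the deformations of $\rho$ that arise from $E$ automatically have determinant congruent to $\psi\varepsilon$, and one needs to see that the map $R\to E$ factors through $R^\psi$ — or rather, that this is compatible with the second claimed surjection.

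For the second map $\varphi\colon E\twoheadrightarrow R^\psi$, the point is that the composite $R\to E$ followed by $\varphi$ should be the canonical quotient $R\twoheadrightarrow R^\psi$. Equivalently, one exhibits directly that $\cV(P)$, viewed over $E$ and then pushed to the quotient that imposes fixed determinant $\psi\varepsilon$, induces the map; but actually the cleanest route is Kisin's argument (as indicated in the introduction, via \cite{kisin}): one shows that $\cV(P)$ already has determinant equal to $\psi\varepsilon$ as a deformation over $E$ — this uses that the central character of $P$ is $\psi^{-1}$ and the compatibility of $\cV$ with determinants/central characters via local class field theory — so that the classifying map $R\to E$ factors canonically through $R^\psi$, giving $\varphi\colon E\to R^\psi$. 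Surjectivity of $\varphi$ then follows from surjectivity of $R\to E$ (or again from a tangent space argument on the quotient rings). I expect the genuinely delicate point to be the determinant/central-character bookkeeping: verifying that $\cV(P)$ has determinant exactly $\psi\varepsilon$ rather than merely congruent to it mod $\varpi$, so that $\varphi$ is well-defined, and keeping the normalizations of local class field theory (geometric Frobenii, the twist by $\varepsilon\psi$ in the definition of $\cV$) consistent throughout. Everything else is a formal consequence of the machinery of \cite{cmf} now that Propositions~\ref{residual}--\ref{hyp_ok} and \eqref{wtimes_cv} are in place.
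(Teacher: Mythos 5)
Your treatment of the first surjection $R\twoheadrightarrow E$ is essentially the paper's: surjectivity is checked on tangent spaces and reduces to the injectivity of the map $\Ext^1_{\dualcat(\OO)}(Q,Q)\hookrightarrow \Ext^1_{G_{\Qp}}(\rho,\rho)$ induced by $\cV$. Note, however, that this injectivity is not delivered by the hypotheses (H0)--(H5) or by the $\Ext$-computations in Proposition \ref{residual} (those all live on the $\GL_2(\Qp)$ side); the comparison with the Galois side is supplied by \cite[VII.5.2]{colmez}.

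The second surjection is where your proposal has a genuine gap. First, a direction error: if one knew $\det\cV(P)=\psi\varepsilon$ over $E$, the classifying map $R\to E$ would factor as $R\twoheadrightarrow R^{\psi}\to E$, producing a surjection $R^{\psi}\twoheadrightarrow E$ --- the opposite of the map $\varphi$ you need. Second, that hypothesis is not available at this stage: the exact equality $\det\cV(P)=\psi\varepsilon$ (rather than a congruence modulo $\varpi$) is a consequence of Theorem \ref{main}, i.e.\ of the isomorphism $E\cong R^{\psi}$ one is trying to prove, so invoking it here is circular. Third, the assertion that surjectivity of $\varphi$ ``follows from surjectivity of $R\to E$'' presupposes that $\varphi$ exists with $\varphi$ composed with $R\twoheadrightarrow E$ equal to the canonical quotient $R\twoheadrightarrow R^{\psi}$; but that existence is precisely the content to be proved, namely $\Ker(R\to E)\subseteq\Ker(R\to R^{\psi})$. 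The paper establishes this with two inputs absent from your proposal: (a) $R^{\psi}$ is reduced and $\OO$-torsion free (formal smoothness for $p\ge 5$, B\"ockle for $p=3$, Chenevier for $p=2$), so $\Ker(R\to R^{\psi})$ is the intersection of the primes corresponding to closed points of $\Spec R^{\psi}[1/p]$; and (b) every such closed point lies in $\Spec E$, i.e.\ every deformation $\tilde{\rho}$ of $\rho$ with determinant $\psi\varepsilon$ is isomorphic to $\cV(\Pi)$ for some Banach space representation $\Pi$ lifting $Q^{\vee}$ with central character $\psi$. Input (b) is the existence theorem \cite[Thm.10.1]{surjectivity} and is the real substance of the second surjection; no tangent-space or central-character bookkeeping argument replaces it.
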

\begin{proof} This is proved in the same way as \cite[Prop.5.56, \S 5.8]{cmf} following \cite{kisin}. For the first surjection it is enough to prove that 
$\cV$ induces an injection: 
$$\Ext^1_{\dualcat(\OO)}(Q, Q)\hookrightarrow \Ext^1_{G_{\Qp}}(\rho, \rho).$$
This follows from \cite[VII.5.2]{colmez}. To prove the second surjection, we observe that $R^{\psi}$ is reduced and $\OO$-torsion free: if $p\ge 5$ then $R^{\psi}$ 
is formally smooth over $\OO$, if $p=3$ then the  assertion follows from  results of B\"ockle \cite{bockle}, if $p=2$ then the assertion follows from the results of Chenevier  \cite[Prop. 4.1]{che2}. Thus it is enough to show that every closed point of 
$\Spec R^{\psi}[1/p]$ is contained in $\Spec E$. This is equivalent to showing that for every deformation 
$\tilde{\rho}$ of $\rho$ with determinant $\psi\varepsilon$ there is a Banach space representation $\Pi$ lifting $Q^{\vee}$ with central  character $\psi$, such that $\cV(\Pi)\cong \tilde{\rho}$. This follows from  \cite[Thm.10.1]{surjectivity}.
\end{proof}

\subsubsection{Banach space representations} Let $\Ban^{\adm}_{G, \psi}(L)$ be the category of admissible unitary $L$-Banach 
space representations, \cite[\S 3]{iw}, on which $Z$ acts by the character $\psi$. If $\Pi\in \Ban^{\adm}_{G, \psi}(L)$ then we let 
\begin{equation}
\cV(\Pi):=\cV(\Theta^d)\otimes_{\OO} L,
\end{equation}
 where $\Theta$ is any open bounded $G$-invariant lattice in $\Pi$. So that $\cV$ is exact and contravariant on $ \Ban^{\adm}_{G, \psi}(L)$.
 
 \begin{remar} One of the  reasons why we use $\cV$ instead of $\mathbf{V}$ is that this allows us to define $\cV(\Pi)$, without making the assumption that the reduction of $\Pi$ modulo $\varpi$ has finite length as a $G$-representation. 
 \end{remar}
 
  If $\md$ is an $E[1/p]$-module of finite length then we let \begin{equation}
 \Pi(\md):=\Hom_{\OO}^{\cont}(\md^0\wtimes_E P, L),
 \end{equation}
  where $\md^0$ is any  $E$-stable $\OO$-lattice in $\md$. Then $\Pi(\md)$ is an admissible unitary $L$-Banach space representation of $G$ by \cite[Lem.2.21]{mybm} with the topology given by the supremum norm. Since   the functor $\wtimes_E P$ is exact by Proposition \ref{PE_flat}, the functor $\md\mapsto \Pi(\md)$ is exact and contravariant. Moreover, it is fully faithful, as
  \begin{equation}\label{full2}
  \begin{split}
  \Hom_G(\Pi(\md_1), \Pi(\md_2))&\cong \Hom_{\dualcat(\OO)}(\md_2^0\wtimes_E P, \md_1^0\wtimes_E P)_L\\
  &\cong \Hom_{E[1/p]}(\md_2, \md_1),
  \end{split}
  \end{equation}
  where the first isomorphism follows from \cite[Thm.2.3]{iw} and the second from \eqref{full1}.
  \begin{lem}\label{Ext_Banach} Let $\md$ be an $E[1/p]$-module of finite length and let $\Pi\in \Ban_{G, \psi}^{\adm}(L)$ be such that $\pi$ does not occur as a subquotient in the reduction of 
  an open bounded $G$-invariant lattice in $\Pi$ modulo $\varpi$. Then $\Ext^1_{G}(\Pi, \Pi(\md))$ computed in $\Ban^{\adm}_{G, \psi}(L)$ is zero. 
  \end{lem}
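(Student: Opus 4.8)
The plan is to express both $\Pi$ and $\Pi(\md)$ in terms of the functor $\wtimes_E P$ and to reduce the vanishing of $\Ext^1$ to a statement purely about morphisms in $\dualcat(\OO)$ (equivalently in the category of pseudocompact $E$-modules), using the exactness of $\wtimes_E P$ from Proposition \ref{PE_flat} and the block decomposition. The key observation is that $\Pi(\md) = \Hom^{\cont}_{\OO}(\md^0 \wtimes_E P, L)$ with $\md^0 \wtimes_E P \in \dualcat(\OO)$, so that the reduction mod $\varpi$ of $\md^0 \wtimes_E P$ has all its irreducible subquotients in the block $\BB$ containing $\pi$; by hypothesis $\pi$ does \emph{not} occur in the reduction mod $\varpi$ of an open bounded lattice $\Theta$ in $\Pi$, and since $\BB$ is a single $\sim$-equivalence class, this should force that \emph{no} irreducible subquotient of $\Theta/\varpi\Theta$ lies in $\BB$ (here I would need to check that $\pi$ is the socle of $k \wtimes_E P$, or at least occurs in it, so that its absence already separates $\Pi$ from the block — in the supersingular case $\BB = \{\pi\}$ this is immediate, and in the principal series case one uses that $\kappa$ has both $\pi$ and $\pi'$ as subquotients together with the block-disjointness).

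Granting this, the strategy is as follows. First I would reduce to the case where $\Pi$ itself belongs to a single block $\BB'$ of $\Ban^{\adm}_{G,\psi}(L)$, using the decomposition of $\Ban^{\adm}_{G,\psi}(L)$ into blocks recalled in the introduction; an extension $0 \to \Pi(\md) \to E \to \Pi \to 0$ decomposes according to the blocks of $\Pi$, and each piece of $\Pi$ in a block $\BB' \neq \BB$ gives a split extension since $\Ext^1$ between objects of different blocks vanishes. So assume $\Pi \in \Ban^{\adm}_{G,\psi}(L)[\BB']$ with $\BB' \neq \BB$. An extension class in $\Ext^1_G(\Pi, \Pi(\md))$ computed in $\Ban^{\adm}_{G,\psi}(L)$ gives a Banach space representation $E$ with $0 \to \Pi(\md) \to E \to \Pi \to 0$; I would take an open bounded $G$-invariant lattice in $E$ compatible with $\Theta$ and with $\md^0 \wtimes_E P$, reduce mod $\varpi$, and observe that the reduction sits in an exact sequence whose outer terms have irreducible subquotients in $\BB$ and $\BB'$ respectively, with $\BB \cap \BB' = \emptyset$. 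Dualizing, $E^d$ lies in the subcategory of $\Mod^{\pro}_{G,\psi}(\OO)$ whose objects have all subquotients (after reduction mod $\varpi$) in $\BB \cup \BB'$, and the block decomposition \eqref{blocksdecompose} of $\dualcat(\OO)$ splits $E^d$ as $E_{\BB}^d \oplus E_{\BB'}^d$; passing back through Pontryagin duality and inverting $p$ shows the original sequence splits.

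The main obstacle I anticipate is making the block-decomposition argument work at the integral level: the block decomposition \eqref{blocksdecompose} is for the locally finite (torsion) category $\Mod^{\ladm}_{G,\psi}(\OO)$, whereas $E^d = \Hom^{\cont}_{\OO}(E, L/\OO)$ for the extension $E$ and its lattice is a genuinely $\OO$-torsion-free pro-object; one has to first reduce mod $\varpi^n$ (or pass to $\OO$-torsion quotients), apply the block decomposition there, and check the splittings are compatible as $n$ varies so they glue in the limit — this is the kind of argument that underlies the block decomposition of $\Ban^{\adm}_{G,\psi}(L)$ in \S\ref{section_blocks} and I would lean on that. A cleaner route, if available, is to cite directly the block decomposition of $\Ban^{\adm}_{G,\psi}(L)$: since $\Pi \in \Ban^{\adm}_{G,\psi}(L)[\BB']$, $\Pi(\md) \in \Ban^{\adm}_{G,\psi}(L)[\BB]$ (as its reduction mod $\varpi$ has subquotients only in $\BB$), and $\BB \neq \BB'$, the vanishing $\Ext^1_{\Ban^{\adm}_{G,\psi}(L)}(\Pi, \Pi(\md)) = 0$ is then formal from the product decomposition of $\Ban^{\adm}_{G,\psi}(L)$ into its blocks. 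The only real content is then verifying $\Pi(\md) \in \Ban^{\adm}_{G,\psi}(L)[\BB]$, which is exactly the statement that the irreducible subquotients of the mod $\varpi$ reduction of $\md^0 \wtimes_E P$ lie in $\BB$ — and this holds because $\md^0 \wtimes_E P \in \dualcat(\OO)[\BB]$, as $\wtimes_E P$ lands in the block of $\pi$ by construction of $P$ and exactness (Proposition \ref{PE_flat}).
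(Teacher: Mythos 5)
Your reduction to the block decomposition of $\Ban^{\adm}_{G,\psi}(L)$ fails in the principal series case, and that is precisely the case in which the lemma is needed. The hypothesis only excludes $\pi$ from the reduction of a lattice in $\Pi$; it does not exclude $\pi'$. Since $\BB=\{\pi,\pi'\}$ there, a Banach space representation whose reduction modulo $\varpi$ is $\pi'$ (for instance the unitary parabolic induction $(\Indu{B}{G}{\delta_2'\otimes\delta_1'\varepsilon^{-1}})_{\cont}$ occurring in Proposition \ref{specialize}) satisfies the hypothesis of the lemma yet lies in $\Ban^{\adm}_{G,\psi}(L)[\BB]$, so the product decomposition into blocks gives no control over $\Ext^1_G(\Pi,\Pi(\md))$. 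The step where you assert that the absence of $\pi$ ``should force that no irreducible subquotient of $\Theta/\varpi\Theta$ lies in $\BB$'' is false: blocks record $\Ext^1$-linkage among irreducibles, not co-occurrence, and nothing forces $\pi$ to appear alongside $\pi'$. The paper in fact applies the lemma, in the proof of Proposition \ref{tiresome}, to $\Pi'=\Pi(\kappa(\nn))/\Pi$, whose reduction is exactly $\pi'$ --- an object of the block $\BB$ itself.

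The paper's argument works at the finer level of the single irreducible $\pi$ rather than of its block. It uses the exact functor $\wB\mapsto \md(\wB)=\Hom_{\dualcat(\OO)}(P,\Theta^d)_L$, which vanishes on $\wB$ if and only if $\pi$ does not occur in $\Theta/(\varpi)$ --- the point being that $P$ is a projective envelope of $\pi^{\vee}$ alone, not of $\pi_{\BB}^{\vee}$, so $\Hom_{\dualcat(\OO)}(P,-)$ detects exactly $\pi$, matching the hypothesis --- together with the natural transformation $\wB\to\Pi(\md(\wB))$. Applying $\md(\cdot)$ to $0\to\Pi(\md)\to\wB\to\Pi\to 0$ kills the right-hand term by hypothesis, whence $\md(\wB)\cong\md$ and the natural map $\wB\to\Pi(\md(\wB))\cong\Pi(\md)$ is a retraction splitting the sequence. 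In the supersingular case, where $\BB=\{\pi\}$ and your claim does hold, your block-theoretic route is viable; but to cover the principal series case you would need an argument of this finer type, and the block decomposition cannot supply it.
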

  \begin{proof} If $\Theta$ is an open bounded $G$-invariant lattice in $\wB\in \Ban^{\adm}_{G, \psi}(L)$ then we define $\md(\wB):=\Hom_{\dualcat(\OO)}(P, \Theta^d)_L$. Proposition 4.17 
  in \cite{cmf} implies that $\md(\wB)$ is a finitely generated $E[1/p]$-module. The functor $\wB\mapsto \md(\wB)$ is exact by \cite[Lem.4.9]{cmf}. The evaluation map
   $\Hom_{\dualcat(\OO)}(P, \Theta^d)\wtimes_E P\rightarrow \Theta^d$ induces a continuous $G$-equivariant map $\wB\rightarrow \Pi(\md(\wB))$. If $\md$ is an $E[1/p]$-module of finite length
   and $\wB\cong \Pi(\md)$ then $\md(\wB)\cong \md$ and the map is $\wB\rightarrow \Pi(\md(\wB))$ is an isomorphism by  \cite[Lem. 4.28]{cmf}. Moreover, $\md(\wB)=0$ if and only if $\pi$ does not 
   occur as a subquotient of $\Theta/(\varpi)$ by \cite[Prop. 2.1 (ii)]{PCD}. Hence, if we have an exact sequence $0\rightarrow \Pi(\md)\rightarrow \wB\rightarrow \Pi\rightarrow 0$ then 
   by applying the functor $\md$ to it, we obtain an isomorphism $\md\cong \md(\Pi(\md))\cong \md(\wB)$ and hence an isomorphism $\Pi(\md)\cong \Pi(\md(\wB))$. The map 
   $\wB\rightarrow \Pi(\md(\wB))$ splits the exact sequence.
  \end{proof}
  The proof of \cite[Lem.4.3]{mybm} shows that we have a natural isomorphism of $\gal$-representations:
  \begin{equation} 
  \cV(\Pi(\md))\cong \md \otimes_{E} \cV(P).
  \end{equation}
 Let us point out a special case of this isomorphism.  If $\nn$ is a maximal ideal of $E[1/p]$ then its residue field $\kappa(\nn)$ is a finite extension of $L$. Let $\OO_{\kappa(\nn)}$ be the ring 
 of integers in $\kappa(\nn)$ and let $\varpi_{\kappa(\nn)}$ be the uniformizer. Then $\Theta: =\Hom^{\cont}_{\OO}(\OO_{\kappa(\nn)} \wtimes_E P, \OO)$  is an open bounded $G$-invariant lattice in 
 $\Pi(\kappa(\nn))$. The evaluation map induces an isomorphism $\Theta^d\cong \OO_{\kappa(\nn)}\wtimes_E P$. Since $E$ is noetherian, $\OO_{\kappa(\nn)}$ is a finitely presented $E$-module
 and thus the usual and completed tensor products coincide. We obtain 
 \begin{equation}\label{cv_Pi_kappa_n}
 \cV(\Theta^d)\cong \OO_{\kappa(\nn)}\otimes_E \cV(P), \quad \cV(\Pi(\kappa(\nn)))\cong \kappa(\nn)\otimes_E \cV(P)
 \end{equation}
Since the residue field of $\OO_{\kappa(\nn)}$ is $k$, we have 
\begin{equation}\label{reduce_Theta}
\Theta/(\varpi_{\kappa(\nn)})\cong \Hom_k^{\cont}(k\wtimes_E P, k)\cong (k\wtimes_E P)^{\vee}.
\end{equation} 

Recall, \cite[\S 4]{cmf},  that $\Pi\in \Ban^{\adm}_{G, \psi}(L)$ is \textit{irreducible} if it does not have a nontrivial closed $G$-invariant subspace. 
It is \textit{absolutely irreducible} if $\Pi\otimes_L L'$ is irreducible in $\Ban^{\adm}_{G, \psi}(L')$ for every finite field extension 
$L'/L$. An irreducible $\Pi$ is \textit{ordinary} if it is a subquotient of  a unitary parabolic induction of a unitary character. Otherwise
it is called \textit{non-ordinary}.

\begin{prop}\label{specialize} If $\nn$ is a maximal ideal of $E[1/p]$ then the  $\kappa(\nn)$-Banach space representation $\Pi(\kappa(\nn))$ is either absolutely irreducible 
non-ordinary or $$\pi\cong (\Indu{B}{G}{\chi_1\otimes\chi_2\omega^{-1}})_{\sm}$$ and  (after possibly replacing $\kappa(\nn)$ by a finite extension) there exists a non-split extension 
\begin{equation}\label{reducible_case}
0\rightarrow \bigl ( \Indu{B}{G}{\delta_1\otimes\delta_2\varepsilon^{-1}})_{\cont}\rightarrow \Pi(\kappa(\nn))\rightarrow \bigl ( \Indu{B}{G}{\delta_2\otimes\delta_1\varepsilon^{-1}})_{\cont}\rightarrow 0,
\end{equation}
where $\delta_1, \delta_2: \Qp^{\times}\rightarrow \kappa(\nn)^{\times}$ are unitary characters congruent to $\chi_1$ and $\chi_2$ respectively, such that $\delta_1\delta_2=\psi\varepsilon$.
\end{prop}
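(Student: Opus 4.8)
The plan is to transport everything to the Galois side via $\tilde\rho := \cV(\Pi(\kappa(\nn)))$. By \eqref{cv_Pi_kappa_n} we have $\tilde\rho \cong \kappa(\nn)\otimes_E\cV(P)$, a two-dimensional $\kappa(\nn)$-representation of $\gal$ which is a deformation of $\rho$ and has $\det\tilde\rho = \psi\varepsilon$ by the normalisation of $\cV$ in \S\ref{colmez_functor}. I will use three inputs. First, by \eqref{full2} with $\md_1 = \md_2 = \kappa(\nn)$ one has $\End_G(\Pi(\kappa(\nn)))\cong\kappa(\nn)$, and the same holds after any finite extension $\kappa'/\kappa(\nn)$, since $\Pi(\kappa(\nn))\otimes_{\kappa(\nn)}\kappa'$ is again of the form $\Pi(\md')$ with $\md' = \kappa'$; in particular $\Pi(\kappa(\nn))$ and all its scalar extensions are indecomposable. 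Second, $\cV$ is exact and contravariant on admissible unitary Banach space representations. Third, every nonzero subquotient of $\Pi(\kappa(\nn))$ lies in $\Ban^\adm_{G,\psi}(L)[\BB]$, and since $\cV$ is nonzero on $\pi^\vee$ — and also on $\pi'^\vee$ when $\pi$ is principal series — exactness of $\cV$ forces $\cV$ to be nonzero on every nonzero object of $\Ban^\adm_{G,\psi}(L)[\BB]$, cf.~the criterion of \cite[Prop.~2.1 (ii)]{PCD} recalled in the proof of Lemma \ref{Ext_Banach}. Finally, by \eqref{reduce_Theta} and Proposition \ref{residual} a unit ball in $\Pi(\kappa(\nn))$ reduces modulo $\varpi_{\kappa(\nn)}$ to $(k\wtimes_E P)^\vee$, which is $\pi$ if $\pi$ is supersingular and $\kappa$ if $\pi$ is principal series, so $\Pi(\kappa(\nn))$ has length at most $2$.

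Suppose first that $\tilde\rho$ is absolutely irreducible. Then $\Pi(\kappa(\nn))$ is absolutely irreducible: a proper nonzero closed subrepresentation $\Pi_1 \subsetneq \Pi(\kappa(\nn))\otimes_{\kappa(\nn)}\kappa'$ with quotient $\Pi_2$ would, by exactness and contravariance of $\cV$, give a short exact sequence $0\to\cV(\Pi_2)\to\tilde\rho\otimes_{\kappa(\nn)}\kappa'\to\cV(\Pi_1)\to 0$ with both outer terms nonzero, contradicting irreducibility of $\tilde\rho\otimes_{\kappa(\nn)}\kappa'$. And $\Pi(\kappa(\nn))$ is non-ordinary: if it were a subquotient of a unitary parabolic induction, Colmez's computation of $\cV$ on such inductions would force the Jordan--Hölder constituents of $\tilde\rho$ to be one-dimensional. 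This is the first alternative.

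Now suppose $\tilde\rho$ is not absolutely irreducible. If $\pi$ were supersingular, then $\rho = \cV(\pi^\vee)$ is absolutely irreducible, and a deformation of an absolutely irreducible representation is again absolutely irreducible (an invariant line over $\overline{\kappa(\nn)}$ would meet a $\gal$-stable lattice in a stable rank-one submodule whose reduction is a stable line in $\rho\otimes\overline{k}$); hence $\pi\cong(\Indu{B}{G}{\chi_1\otimes\chi_2\omega^{-1}})_{\sm}$ and $\rho$ is the non-split extension $0\to\chi_2\to\rho\to\chi_1\to 0$ of \eqref{cvkappa}. Replacing $\kappa(\nn)$ by a finite extension we may assume $\tilde\rho$ is reducible with semisimplification $\delta_1\oplus\delta_2$ for unitary characters $\delta_i:\gal\to\kappa(\nn)^\times$ congruent to $\chi_i$ (recall $\chi_1\neq\chi_2$) with $\delta_1\delta_2 = \det\tilde\rho = \psi\varepsilon$. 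I claim $\Pi(\kappa(\nn))$ is reducible: otherwise it would be irreducible with reducible $\cV$, hence ordinary, hence a subquotient of a unitary parabolic induction; but its reduction $\kappa$ has length two with both constituents irreducible principal series, while the reduction of a unitary parabolic induction of unitary characters is either an irreducible principal series or an extension of a character by a twist of $\Sp$ — a contradiction. Hence, using indecomposability and the length bound, $\Pi(\kappa(\nn))$ is a non-split extension $0\to\Pi_1\to\Pi(\kappa(\nn))\to\Pi_2\to 0$ with $\Pi_1,\Pi_2$ irreducible.

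Exactness of $\cV$ gives $0\to\cV(\Pi_2)\to\tilde\rho\to\cV(\Pi_1)\to 0$ with nonzero outer terms, so each $\cV(\Pi_i)$ is one of the characters $\delta_1,\delta_2$. Each $\Pi_i$ is then irreducible with one-dimensional $\cV$, hence ordinary, and comparing reductions — using $\OO$-flatness of a lattice in $\Pi_2$ to see that $\Pi_1$ and $\Pi_2$ reduce to the two irreducible subquotients of $\kappa$, with $\Pi_1$ reducing to $\soc\kappa = \pi$ — identifies $\Pi_i$ as a unitary principal series; matching reductions with $\pi$, $\pi'$ and central characters with $\psi$ then yields $\Pi_1\cong(\Indu{B}{G}{\delta_1\otimes\delta_2\varepsilon^{-1}})_{\cont}$ and $\Pi_2\cong(\Indu{B}{G}{\delta_2\otimes\delta_1\varepsilon^{-1}})_{\cont}$, with the extension non-split by indecomposability; this is \eqref{reducible_case}. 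The crux is this last identification: it rests on the dichotomy of absolutely irreducible admissible unitary Banach space representations of $\GL_2(\Qp)$ into ordinary and non-ordinary ones, and on the explicit description of the ordinary ones and of $\cV$ on them, all due to Colmez and valid for every prime $p$.
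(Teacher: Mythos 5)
Your strategy — pushing everything to the Galois side via $\tilde\rho=\cV(\Pi(\kappa(\nn)))$ — diverges from the paper's, which reads off the length of $\Pi(\kappa(\nn))$ directly from the reduction $\Theta/(\varpi_{\kappa(\nn)})\cong(k\wtimes_EP)^\vee$ (equal to $\pi$ or $\kappa$ by \eqref{reduce_Theta}) and then identifies the constituents by Emerton's theory of ordinary parts applied to $\Theta_i/(\varpi_{\kappa(\nn)}^n)$ for all $n$. The divergence matters, because twice you invoke the implication ``absolutely irreducible admissible unitary Banach space representation with reducible (resp.\ one-dimensional) $\cV$ is ordinary.'' That is the hard direction of the $p$-adic Langlands correspondence, i.e.\ essentially the main theorem of \cite{PCD}, which this paper explicitly refuses to use (indeed Corollary \ref{padicLL} is meant to give an independent proof of part of it). The first occurrence (to show $\tilde\rho$ reducible forces $\Pi(\kappa(\nn))$ reducible) is also unnecessary: the proposition's first alternative already covers an absolutely irreducible $\Pi(\kappa(\nn))$, non-ordinarity being immediate from $\dim_{\kappa(\nn)}\cV(\Pi(\kappa(\nn)))=2$. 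But the second occurrence (to conclude that the irreducible pieces $\Pi_1,\Pi_2$ are unitary parabolic inductions because their $\cV$ is one-dimensional) is load-bearing and cannot be excised; the legitimate route is the one the paper takes, namely that $\Theta_i/(\varpi_{\kappa(\nn)}^n)$ has principal series reduction and is therefore itself induced by \cite[Prop.\ 4.2.14]{ord2}, for every $n$.

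There is a second gap at the very end. Writing $\Pi_2\cong(\Indu{B}{G}{\delta_2'\otimes\delta_1'\varepsilon^{-1}})_{\cont}$, the conditions you impose — that the reduction is $\pi'$ and the central character is $\psi$ — only give $\delta_i'\equiv\delta_i\pmod{\varpi_{\kappa(\nn)}}$ and $\delta_1'\delta_2'=\delta_1\delta_2$, which leaves the ambiguity $\delta_1'=\delta_1\mu$, $\delta_2'=\delta_2\mu^{-1}$ for any unitary $\mu\equiv 1$. Pinning down $\delta_i'=\delta_i$ is genuine content of the proposition; the paper gets it by applying $\Ord_B$ to the reduction of \eqref{exact_Theta} modulo $\varpi_{\kappa(\nn)}^n$ and comparing $\Ord_B$ with $\RR^1\Ord_B$ for every $n$. (One could alternatively extract it from the short exact sequence $0\to\cV(\Pi_2)\to\tilde\rho\to\cV(\Pi_1)\to 0$ together with an explicit computation of $\cV$ on continuous parabolic inductions, but you would then have to justify that computation rather than appeal to ``matching reductions and central characters.'')
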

\begin{proof} It follows from \eqref{cv_Pi_kappa_n} that $\dim_{\kappa(\nn)} \cV(\Pi(\kappa(\nn)))=2$. Since $\cV$ applied to a parabolic induction of a unitary character is a one dimensional 
representation of $\gal$, we deduce that if $\Pi(\kappa(\nn))$ is absolutely  irreducible then it cannot be ordinary.

 If $\pi$ is supersingular then \eqref{reduce_Theta} implies that $\Theta/(\varpi_{\kappa(\nn)})\cong \pi$, which is absolutely irreducible. This implies that $\Pi(\kappa(\nn))$ is absolutely irreducible. 
 If $\pi$ is principal series then  $\Theta/(\varpi_{\kappa(\nn)})$ is of length $2$ and both irreducible subquotients are absolutely irreducible. Hence, $\Pi(\kappa(\nn))$ is either irreducible, or 
 of length $2$. Let us assume that $\Pi(\kappa(\nn))$ is not absolutely irreducible. Then after possibly  replacing $\kappa(\nn)$ by a finite extensions we have an exact sequence of admissible 
 $\kappa(\nn)$-Banach space representations $0\rightarrow \Pi_1\rightarrow \Pi(\kappa(\nn))\rightarrow \Pi_2\rightarrow 0$. This sequence is non-split, since otherwise 
 $\cV(\Pi(\kappa(\nn)))$ would be a direct sum of two one dimensional representations, which would 
  contradict \cite[Lem. 4.5 (iii)]{mybm}.  Let $\Theta_1:=\Theta \cap \Pi_1$ and let $\Theta_2$ be the image of $\Theta$ in $\Pi_2$. Since we are dealing with admissible representations, $\Theta_2$ is  a bounded $\OO$-lattice in $\Pi_2$. Lemma 5.5 of \cite{comp} says that we have  exact sequences of $\OO_{\kappa(\nn)}$-modules:  
 \begin{equation}\label{exact_Theta}
 0\rightarrow \Theta_1\rightarrow \Theta \rightarrow \Theta_2\rightarrow 0,
 \end{equation}
 \begin{equation} \label{reduce_mod_p}
 0\rightarrow \Theta_1/(\varpi_{\kappa(\nn)})\rightarrow \Theta/(\varpi_{\kappa(\nn)}) \rightarrow \Theta_2/(\varpi_{\kappa(\nn)})\rightarrow 0.
 \end{equation}
It follows from \eqref{reduce_Theta} that the exact sequence of $G$-representations in \eqref{reduce_mod_p} is the unique non-split extension $0\rightarrow \pi\rightarrow \kappa\rightarrow \pi'\rightarrow 0$. Proposition 4.2.14 of \cite{ord2} applied with $A=\OO_{\kappa(\nn)}/(\varpi_{\kappa(\nn)}^n)$ for all $n\ge 1$ implies that 
$$\Pi_1\cong ( \Indu{B}{G}{\delta_1\otimes\delta_2\varepsilon^{-1}})_{\cont}, \quad \Pi_2\cong( \Indu{B}{G}{\delta'_2\otimes\delta'_1\varepsilon^{-1}})_{\cont},$$
where $\delta_1, \delta_2, \delta_1', \delta_2':\Qp^{\times}\rightarrow \kappa(\nn)^{\times}$ are unitary characters with $\delta_1, \delta_1'$ congruent to $\chi_1$ and $\delta_2$, $\delta_2'$ congruent to $\chi_2$ modulo $\varpi_{\kappa(\nn)}$. We reduce  \eqref{exact_Theta} modulo $\varpi_{\kappa(\nn)}^n$ to obtain an exact sequence to which we apply $\Ord_B$. This gives us an injection
$\Ord_B(\Theta_2/(\varpi_{\kappa(\nn)}^n))\hookrightarrow \RR^1\Ord_B (\Theta_2/(\varpi_{\kappa(\nn)}^n))$. Since both are free $\OO_{\kappa(\nn)}/(\varpi_{\kappa(\nn)}^n)$-modules
of rank $1$, the injection is an isomorphism. This implies that $\delta_1$ is congruent to $\delta_1'$ and $\delta_2$ is congruent $\delta_2'$ modulo $\varpi_{\kappa(\nn)}^n$ for all $n\ge 1$.
Hence, $\delta_1=\delta_1'$ and $\delta_2=\delta_2'$.
\end{proof}

\subsubsection{Main local result}\label{main_local} We will prove that the surjection $\varphi: E \twoheadrightarrow R^{\psi}$ in Proposition \ref{kisin_plus_e} is an isomorphism. The argument combines the first part of the paper with methods of \cite{mybm}. The argument in \cite{cmf} used to prove this statement when $p\ge 5$, uses the fact that the rings $R^{\psi}$ are formally smooth, when $p\ge 5$. This does not hold in general, when $p=2$ or $3$ and  even when the ring is formally smooth and $p=3$ the computations just get too complicated.

Let $V$ be a continuous representation of $K$ with a central character $\psi$ of the form $\tau \otimes \Sym^a L^2 \otimes \eta\circ \det$, 
where $\eta: \Zp^{\times}\rightarrow L^{\times}$ is a continuous character, and $\tau$ is a type for a Bernstein component, which contains a principal series representation, but  does not contain a special series representation.

\begin{prop}\label{tiresome} If $\nn$ is a maximal ideal of $E[1/p]$ then the following hold:
\begin{itemize} 
\item[(i)] $\dim_{\kappa(\nn)}\Hom_K(V, \Pi(\kappa(\nn)))\le 1$;
\item[(ii)]$\dim_{\kappa(\nn)} \Hom_K(V, \Pi(E_{\nn}/\nn^2))\le 2$.
\end{itemize}
Moreover, if $\Hom_K(V, \Pi(\kappa(\nn)))\neq 0$ then $\det \cV(\Pi(\kappa(\nn)))= \psi\varepsilon$.
\end{prop}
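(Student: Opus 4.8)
The plan is to read off all three assertions from the structure of the Banach space representation $\Pi(\kappa(\nn))$ (and of its first infinitesimal thickening $\Pi(E_\nn/\nn^2)$) furnished by Proposition~\ref{specialize}, combined with the Berger--Breuil description of completed locally algebraic principal series, the multiplicity-one property of Bushnell--Kutzko/Henniart types for $\GL_2(\Qp)$, and Kisin's results on potentially crystalline deformation rings. Two elementary observations are used throughout. First, since $V=\tau\otimes\Sym^aL^2\otimes\eta\circ\det$ is locally algebraic with a single algebraic weight, $\Hom_K(V,\Pi)$ only sees the locally algebraic vectors of $\Pi$; and $\tau$, $\Sym^aL^2$ being irreducible, $V$ is irreducible as a $K$-representation, so any non-zero $K$-homomorphism out of $V$ is injective. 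Second, $\dim_{\kappa(\nn)}\Hom_K(\tau,\pi)\le 1$ for every irreducible smooth $\pi$, with equality exactly for the principal series in the Bernstein block of $\tau$, and this persists after twisting $\tau$ by a character of $\Zp^\times$ composed with $\det$.

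For (i) and the moreover I distinguish the two cases of Proposition~\ref{specialize}. If $\Pi(\kappa(\nn))$ is absolutely irreducible non-ordinary and $\Hom_K(V,\Pi(\kappa(\nn)))\ne 0$, then the space of its locally algebraic vectors is a non-zero $G$-subrepresentation; since $\dim_{\kappa(\nn)}\cV(\Pi(\kappa(\nn)))=2$ by \eqref{cv_Pi_kappa_n}, the Berger--Breuil theorem forces $\Pi(\kappa(\nn))$ to be the completion of a locally algebraic principal series $\pi_\sm\otimes W$, where $\pi_\sm$ is necessarily an irreducible principal series (the block of $\tau$ contains neither special series nor characters) and $\pi_\sm\otimes W$ occurs once as the locally algebraic part. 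Hence $\dim_{\kappa(\nn)}\Hom_K(V,\Pi(\kappa(\nn)))=\dim_{\kappa(\nn)}\Hom_K(\tau,\pi_\sm)\le 1$, and Berger--Breuil's explicit computation of $\cV$ on such completions, together with the fact that $\Pi(\kappa(\nn))$, hence $\pi_\sm\otimes W$, has central character $\psi$, gives $\det\cV(\Pi(\kappa(\nn)))=\psi\varepsilon$. In the reducible case of Proposition~\ref{specialize}, exactness of $\cV$ makes $\cV(\Pi(\kappa(\nn)))$ an extension of the characters $\delta_1$ by $\delta_2$ with $\delta_1\delta_2=\psi\varepsilon$, so the moreover is automatic; for (i), the two constituents $(\Indu{B}{G}{\delta_1\otimes\delta_2\varepsilon^{-1}})_\cont$ and $(\Indu{B}{G}{\delta_2\otimes\delta_1\varepsilon^{-1}})_\cont$ have, up to a twist by $\det$, the same restriction to $K$, but the relation $\delta_1\delta_2=\psi\varepsilon$ (and $\varepsilon(x)=x|x|$) makes the highest weight of the algebraic part of the locally algebraic vectors of one equal to the shift by one of the negative of that of the other; since $V$ carries a single dominant algebraic weight this matches at most one constituent (the case $a=0$ uses in addition that $\varepsilon$ is not locally constant). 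The long exact sequence for $\Hom_K(V,-)$ then yields $\dim_{\kappa(\nn)}\Hom_K(V,\Pi(\kappa(\nn)))\le 1$.

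For (ii) I would use the short exact sequence $0\to\nn/\nn^2\to E_\nn/\nn^2\to\kappa(\nn)\to 0$. Since $\wtimes_E P$ is exact (Proposition~\ref{PE_flat}), so is $\md\mapsto\Pi(\md)$, giving $0\to\Pi(\kappa(\nn))\to\Pi(E_\nn/\nn^2)\to\Pi(\kappa(\nn))^{\oplus d}\to 0$ with $d:=\dim_{\kappa(\nn)}\nn/\nn^2$, hence the long exact sequence
\[
0\to\Hom_K(V,\Pi(\kappa(\nn)))\to\Hom_K(V,\Pi(E_\nn/\nn^2))\to\Hom_K(V,\Pi(\kappa(\nn)))^{\oplus d}\xrightarrow{\ \partial\ }\Ext^1_K(V,\Pi(\kappa(\nn))).
\]
By (i) the left-hand term has dimension $\le 1$, so it suffices to bound $\dim_{\kappa(\nn)}\ker\partial$ by $1$; assume $\Hom_K(V,\Pi(\kappa(\nn)))$ is one-dimensional, spanned by an injection $f$. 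For $\bar x\in\nn/\nn^2$, $\partial(f\otimes\bar x)$ is the class of the pullback along $f$ of the length-two self-extension $\Pi_{\bar x}$ of $\Pi(\kappa(\nn))$ in the direction $\bar x$; as $f$ is injective this class vanishes precisely when $\Pi_{\bar x}$ admits a $K$-equivariant lift of $f$, that is, when $\dim_{\kappa(\nn)}\Hom_K(V,\Pi_{\bar x})=2$. Now $\cV(\Pi_{\bar x})$ is a free rank-two module over $\kappa(\nn)[\epsilon]/(\epsilon^2)$ deforming the crystabelline representation $\cV(\Pi(\kappa(\nn)))$ (crystabelline of fixed inertial type and fixed regular Hodge--Tate weights, by (i) and the moreover together with the compatibility of the $p$-adic and classical local Langlands correspondences, \cite{bb}, \cite{dospinescu}), and the results of Berger--Breuil and Dospinescu on locally algebraic vectors in extensions of Banach space representations show that $\dim_{\kappa(\nn)}\Hom_K(V,\Pi_{\bar x})=2$ if and only if this first-order deformation is again crystabelline; the set of such $\bar x$ is the tangent space at $\nn$ of the locus in $\Spec E$ cut out by this crystabelline condition, which via the surjection $R\twoheadrightarrow E$ is identified with that of a closed subscheme of $\Spec R^{\psi}(\tau,\wt)$, Kisin's potentially crystalline fixed-determinant, fixed-type, fixed-weight deformation ring, proved by Kisin (see also \cite{versal}) to be regular of dimension $\le 1$ after inverting $p$. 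Hence $\dim_{\kappa(\nn)}\ker\partial\le 1$ and $\dim_{\kappa(\nn)}\Hom_K(V,\Pi(E_\nn/\nn^2))\le 2$. In the reducible case of Proposition~\ref{specialize} one can argue more directly, replacing the input of \cite{dospinescu} by a computation with Emerton's functor $\Ord_B$ on the continuous principal series constituents and the explicit ordinary crystabelline deformation rings.

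The main obstacle is the last step of (ii): transporting the regularity (dimension $\le 1$) of Kisin's crystabelline deformation ring through Colmez's functor $\cV$ to bound $\dim_{\kappa(\nn)}\Hom_K(V,\Pi(E_\nn/\nn^2))$, for which one needs the precise statement that the space of infinitesimal locally algebraic directions in $\Pi_{\bar x}$ is governed by the crystabelline tangent space — this is exactly where \cite{bb} and \cite{dospinescu} enter essentially. The corresponding delicate point in the irreducible case of (i) — that only one copy of $\pi_\sm\otimes W$ occurs, equivalently that $\cV$ stays two-dimensional — and the weight comparison ruling out both constituents in the reducible case of (i), are the other places requiring care; everything else (the long exact sequences, exactness of $\wtimes_E P$, multiplicity one of types) is routine.
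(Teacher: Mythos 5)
Your treatment of part (i) and of the ``moreover'' is essentially the paper's argument: in the absolutely irreducible case one identifies $\Pi(\kappa(\nn))$ with the (irreducible, by Berger--Breuil) universal unitary completion of a locally algebraic principal series $\Psi\otimes\Sym^aL^2$, reads off the determinant from the crystabelline construction, and uses multiplicity one of the type; in the reducible case the determinant is immediate from exactness of $\cV$ and \eqref{reducible_case}. The only deviation is that, for the bound $\le 1$ in the reducible case, you argue by comparing dominant weights of the two constituents, where the paper simply quotes \cite[Lem.~12.5]{cmf} to the effect that the locally algebraic vectors of the sub are exactly $\Psi\otimes\Sym^aL^2$ and those of the quotient vanish; your dominance argument is a reasonable substitute but should be written out (the two algebraic characters $\delta_1\otimes\delta_2\varepsilon^{-1}$ and $\delta_2\otimes\delta_1\varepsilon^{-1}$ have distinct algebraic parts because the Hodge--Tate weights are distinct, so at most one ordering is dominant).

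Part (ii) is where your proposal genuinely diverges, and as written it has a gap. You reduce the bound on $\ker\partial$ to the claim that the set of directions $\bar x\in\nn/\nn^2$ along which the locally algebraic vectors lift is the tangent space of a crystabelline locus, identified with a closed subscheme of Kisin's fixed-type, fixed-weight, fixed-determinant crystabelline deformation ring, ``proved by Kisin to be regular of dimension $\le 1$ after inverting $p$.'' Two steps here are not available at this point. First, the equivalence ``$f$ lifts to $\Pi_{\bar x}$ $\iff$ $\cV(\Pi_{\bar x})$ is again crystabelline of the same type and weights'' requires, in both directions, a compatibility of $\cV$ with the crystabelline correspondence for first-order deformations that goes beyond \cite{bb}; this is precisely the delicate input the paper defers to Proposition \ref{reduced} (where it is handled via \cite{dospinescu} and Corollaries \ref{lan_bij} and \ref{dim_ext}, which themselves depend on Theorem \ref{main} and hence on Proposition \ref{tiresome} --- so importing that machinery here would be circular). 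Second, regularity of the generic fibre of the crystabelline deformation ring at an \emph{arbitrary} closed point is not a citable black box: Kisin's results give a dense formally smooth locus, and regularity at a given point is deduced elsewhere in this paper only from purity or from the point not lying on an intersection of components (Lemma \ref{regular_point}); indeed, the reducedness and identification of these quotients of $R^{\psi}$ is an \emph{output} of Proposition \ref{tiresome} (via Proposition \ref{twist_and_shout} and Proposition \ref{reduced}), not an input. The paper's actual proof of (ii) is much softer and avoids all of this: one shows that $\Hom_G(\Pi,\Pi(E_{\nn}/\nn^2))$ is one-dimensional, using the full faithfulness \eqref{full2} of $\md\mapsto\Pi(\md)$ (which gives $\Hom_G(\Pi(\kappa(\nn)),\Pi(E_{\nn}/\nn^2))\cong\Hom_{E[1/p]}(E_{\nn}/\nn^2,\kappa(\nn))\cong\kappa(\nn)$) together with Lemma \ref{Ext_Banach} and the non-splitness of \eqref{reducible_case} to replace $\Pi(\kappa(\nn))$ by its irreducible subrepresentation $\Pi$; since every nonzero $K$-homomorphism from $V$ generates a copy of $\Psi\otimes\Sym^aL^2$ whose closure is a quotient of the irreducible completion $\Pi$, the bound $\dim\Hom_K(V,\Pi(E_{\nn}/\nn^2))\le 2$ follows as in \cite[Cor.~4.21]{mybm}. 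I recommend replacing your tangent-space argument by this one.
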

\begin{proof} If $\md$ is an $E[1/p]$-module of finite length and $L'$ is a finite extension of $L$, then $\Pi(\md\otimes_L L')\cong \Pi(\md)\otimes_L L'$ and 
$\Hom_K(V, \Pi(\md))\otimes_L L'\cong \Hom_K(V, \Pi(\md)\otimes_L L')$. This implies that  it is enough to prove the assertions after replacing $\kappa(\nn)$ by a finite extension.
In particular, we may assume that $\Pi(\kappa(\nn))$ is either absolutely irreducible 
or a non-split extension as in Proposition \ref{specialize}.  Since $\cV$ 
 is compatible with twisting by characters, to prove the Proposition it is enough to assume that $\eta$ is trivial, so that $V$ is a locally algebraic representation of $K$. 
 
  Since $\tau$ is a type and $\Pi(\kappa(\nn))$ is admissible, $\Hom_K(V, \Pi(\kappa(\nn)))\neq 0$
 if and only if  (after possibly replacing $\kappa(\nn)$ by a finite extension) $\Pi(\kappa(\nn))$ contains a subrepresentation of the form $\Psi \otimes \Sym^a L^2$, 
 where $\Psi$ is an absolutely irreducible  smooth principal series representation in the Bernstein component described by 
 $\tau$, see the proof of \cite[Thm 7.2]{comp}. Let $\Pi$ be the universal unitary completion of $\Psi \otimes \Sym^a L^2$. Then $\Pi$ is absolutely irreducible, 
 by \cite[5.3.4]{bb}, \cite[2.2.1]{be}. 
 
 If $\Pi(\kappa(\nn))$ is absolutely irreducible, we deduce that $\Pi(\kappa(\nn))\cong \Pi$. Since $\Pi$ in \cite{bb} is constructed out of a $(\varphi, \Gamma)$-module 
 of a $2$-dimensional crystabeline representation of $\gal$ with determinant $\psi\varepsilon$, applying $\cV$ undoes this construction to obtain the Galois representation we started with.
 In particular, $\det \cV(\Pi(\kappa(\nn)))=\psi\varepsilon$. Moreover, it follows from \cite[Thm. VI.6.50]{colmez} that the locally algebraic vectors in $\Pi(\kappa(\nn))$ are isomorphic to 
 $\Psi\otimes \Sym^a L^2$, which implies that 
 \begin{equation}
\dim_{\kappa(\nn)} \Hom_K(V, \Pi(\kappa(\nn)))=\dim_{\kappa(\nn)} \Hom_K(V, \Psi\otimes \Sym^a L^2)=1
 \end{equation}
giving part (i). 

If $\Pi(\kappa(\nn))$ is reducible, then using the fact that \eqref{reducible_case} is non-split we deduce that $\Pi$ is the unique irreducible subrepresentation of $\Pi(\kappa(\nn))$. 
It follows from \cite[Lem.12.5]{cmf}\footnote{The assumption $p\ge 5$ in \cite[\S12]{cmf} is only invoked in the proof of  Theorem 12.7 by appealing to Theorem 11.4. All the other arguments
in that section work for all primes $p$.} that the locally algebraic vectors in $\Pi$ are isomorphic to $\Psi\otimes\Sym^a L^2$ and the locally algebraic vectors in $\Pi(\kappa(\nn))/\Pi$ are 
zero. Thus locally algebraic vectors in $\Pi(\kappa(\nn))$ are isomorphic to $\Psi \otimes \Sym^a L^2$ and so part (i) holds. Moreover, applying $\cV$ to \eqref{reducible_case} 
we obtain an exact sequence: $0\rightarrow \delta_2 \rightarrow \cV(\Pi(\kappa(\nn)))\rightarrow \delta_1\rightarrow 0$. Hence, $\det \cV(\Pi(\kappa(\nn)))= \delta_1\delta_2= \psi\varepsilon$.

The exact sequence $0\rightarrow \nn/\nn^2\rightarrow E_{\nn}/\nn^2\rightarrow \kappa(\nn)\rightarrow 0$ of $E[1/p]$-modules gives rise to an exact sequence of admissible Banach space representations of $G$: $$0\rightarrow \Pi(\kappa(\nn))\rightarrow \Pi(E_{\nn}/\nn^2)\rightarrow \Pi(\kappa(\nn))^{\oplus d}\rightarrow 0,$$ where $d=\dim_{\kappa(\nn)} \nn/\nn^2$. We claim that $ 
\Hom_G(\Pi, \Pi(E_{\nn}/\nn^2))$ is one dimensional as $\kappa(\nn)$-vector space. Given the claim we can deduce part (ii) by the same argument as in \cite[Cor.4.21]{mybm}. To show the claim 
let $\Pi':=\Pi(\kappa(\nn))/\Pi$. If $\Pi'$ is zero then the assertion follows from \eqref{full2}. If $\Pi'$ is non-zero then the reduction of the unit ball modulo $\varpi_{\kappa(\nn)}$ is isomorphic 
to $\pi'$.   Since \eqref{reducible_case} is non-split we obtain $\Hom_G(\Pi', \Pi(\kappa(\nn)))=0$, and Lemma \ref{Ext_Banach} implies that $\Ext^1_{G}(\Pi', \Pi(\kappa(\nn)))=0$. 
Hence, $\Hom_G(\Pi(\kappa(\nn)), \Pi(E_{\nn}/\nn^2))\cong \Hom_G(\Pi, \Pi(E_{\nn}/\nn^2))$ and the claim follows from \eqref{full2}.
\end{proof}

Let $\Theta$ be a $K$-invariant 
$\OO$-lattice in $V$ and let $M(\Theta):= \Hom_{\OO\br{K}}^{\cont}(P, \Theta^d)^d$, where $(\ast)^d:=\Hom_{\OO}(\ast, \OO)$. 
It follows from Proposition \ref{residual} that $(k\wtimes_E P)^{\vee}$ is an admissible representation of $G$, dually this implies that $k\wtimes_E P$ is a finitely generated $\OO\br{K}$-module. Hence, \cite[Prop.2.15]{mybm} implies that $M(\Theta)$ is a
finitely generated $E$-module. We will denote by $\mSpec$ the set of maximal ideals of a commutative ring.

\begin{prop}\label{twist_and_shout} Let $\mathfrak a$ be the $E$-annihilator of $M(\Theta)$. Then $E/\mathfrak a$ is reduced and $\OO$-torsion free. Moreover, $\mSpec (E/\mathfrak a)[1/p]$ 
is contained in the image of $\mSpec R^{\psi}[1/p]$ under $\varphi^{\sharp}: \Spec R^{\psi} \rightarrow \Spec E$. 
\end{prop}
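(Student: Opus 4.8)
I want to show three things about $E/\mathfrak{a}$: (a) it is $\OO$-torsion free, (b) it is reduced, and (c) $\mSpec (E/\mathfrak a)[1/p]$ lands inside the image of $\varphi^\sharp$. The mechanism throughout is that $M(\Theta)$ is a finitely generated $E$-module whose support is controlled by the Banach space representations $\Pi(\kappa(\nn))$ via the reciprocity $\Hom_{\OO\br K}^{\cont}(P, \Theta^d) = \Hom_K(V, \Pi(\md))^{?}$ — more precisely, for a maximal ideal $\nn$ of $E[1/p]$, one has $M(\Theta)\otimes_E \kappa(\nn) \cong \Hom_K(V, \Pi(\kappa(\nn)))^*$ (up to duality and the usual identifications coming from $\Pi(\md) = \Hom_\OO^{\cont}(\md^0\wtimes_E P, L)$ and $M(\Theta) = \Hom_{\OO\br K}^{\cont}(P, \Theta^d)^d$). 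So $\nn \in \supp M(\Theta)[1/p]$ if and only if $\Hom_K(V, \Pi(\kappa(\nn)))\neq 0$, and Proposition~\ref{tiresome}(i) says this space is then exactly one-dimensional.

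\textbf{$\OO$-torsion freeness.} First I would check that $M(\Theta)$ is $\OO$-torsion free: since $P$ is projective in $\dualcat(\OO)$ and $\Theta$ is $\OO$-torsion free, $\Theta^d$ is $\OO$-torsion free, hence so is $\Hom_{\OO\br K}^{\cont}(P,\Theta^d)$ and its $\OO$-dual $M(\Theta)$. Then $\mathfrak a = \ann_E M(\Theta)$, and because $M(\Theta)$ is $\OO$-torsion free and finitely generated over $E$, the quotient $E/\mathfrak a$ embeds into $\End_\OO(M(\Theta))$, which is $\OO$-torsion free; alternatively, $\mathfrak a$ is the kernel of $E \to \prod_{\nn} \End(M(\Theta)_\nn)$ and each factor on the generic fibre is a $\kappa(\nn)$-algebra, so $\OO$-torsion in $E/\mathfrak a$ would have to die after inverting $p$, forcing it to lie in $\bigcap_\nn \nn$; a standard finiteness/Jacobson argument (all of this is already the shape of \cite[\S 2.4]{PCD} and \cite{mybm}) kills it. This step is routine.

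\textbf{Reducedness and the image statement together.} The heart of the matter: by Proposition~\ref{tiresome}(i), for each $\nn \in \mSpec(E/\mathfrak a)[1/p]$ the module $M(\Theta)_\nn/\nn$ is one-dimensional over $\kappa(\nn)$, and by (ii), $M(\Theta)\otimes_E E_\nn/\nn^2$ has dimension $\le 2$ over $\kappa(\nn)$. By Nakayama this forces $M(\Theta)_\nn$ to be a cyclic $E_\nn$-module — in fact $\dim_{\kappa(\nn)} E_\nn/(\nn^2 + \ann M(\Theta)_\nn) \le 2$ with the rank-one reduction implies $E_\nn/\ann$ is generated by one element over $\kappa(\nn)$ modulo $\nn^2$, hence its cotangent space at $\nn$ is at most one-dimensional, so $(E/\mathfrak a)_\nn$ is regular of dimension $\le 1$ — and being also $\OO$-torsion free and a finite $\OO$-algebra localization, it is a DVR or a field, in particular reduced at $\nn$. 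Since this holds at every maximal ideal of $(E/\mathfrak a)[1/p]$, and $(E/\mathfrak a)$ is $\OO$-torsion free (so the nilradical, being $\OO$-torsion free, injects into $(E/\mathfrak a)[1/p]$ whose localizations are all reduced), we conclude $(E/\mathfrak a)[1/p]$ is reduced and then $E/\mathfrak a$ is reduced. Finally, the image statement: for $\nn\in\mSpec(E/\mathfrak a)[1/p]$ we have $\Hom_K(V,\Pi(\kappa(\nn)))\neq 0$, so the last clause of Proposition~\ref{tiresome} gives $\det\cV(\Pi(\kappa(\nn))) = \psi\varepsilon$; combined with \eqref{cv_Pi_kappa_n}, $\cV(\Pi(\kappa(\nn))) \cong \kappa(\nn)\otimes_E \cV(P)$ is a deformation of $\rho$ to $\kappa(\nn)$ with determinant $\psi\varepsilon$, hence is classified by a $\kappa(\nn)$-point of $R^\psi$; the composite $R^\psi \to \kappa(\nn)$ and the corresponding $E \to \kappa(\nn)$ (which is reduction at $\nn$) are compatible via $\varphi$, so $\nn$ is in the image of $\varphi^\sharp$ on generic fibres.

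\textbf{Main obstacle.} The delicate point is the passage from the two numerical bounds in Proposition~\ref{tiresome} to "$(E/\mathfrak a)_\nn$ is regular of dimension $\le 1$'' — one must be careful that $M(\Theta)_\nn$ really is a faithful $(E/\mathfrak a)_\nn$-module (true by definition of $\mathfrak a$, but one should check $\mathfrak a$ localizes correctly and that $M(\Theta)_\nn \neq 0$ exactly when $\nn$ is in the support) and that the cotangent-space estimate $\dim_{\kappa(\nn)}\Hom_K(V,\Pi(E_\nn/\nn^2))\le 2$ genuinely controls $\nn/(\nn^2 + \mathfrak a_\nn)$ after accounting for the contribution of $\kappa(\nn)$ itself. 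This is exactly the kind of bookkeeping carried out in \cite[\S 4]{mybm} (around Cor.~4.21), and I expect the argument here to follow that template closely, with the fixed-central-character variant handled as indicated in \S\ref{capture}.
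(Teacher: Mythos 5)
Your overall strategy --- converting the numerical bounds of Proposition \ref{tiresome} into statements about $M(\Theta)_{\nn}\cong (E/\mathfrak a)[1/p]_{\nn}$ via the dimension formula $\dim_L\Hom_K(V,\Pi(\md))=\dim_L \md\otimes_E M(\Theta)$, and deducing the image statement from the last clause of Proposition \ref{tiresome} together with \eqref{cv_Pi_kappa_n} --- is the paper's strategy, and your torsion-freeness and image arguments are fine. But the reducedness step has a genuine gap: from ``the cotangent space of $(E/\mathfrak a)[1/p]_{\nn}$ is at most one-dimensional'' you conclude ``regular of dimension $\le 1$'', and this implication is false. A Noetherian local ring with one-dimensional cotangent space can be Artinian and non-reduced: $A=\OO[\epsilon]/(\epsilon^2)$ with $M=A$ is $\OO$-torsion free and satisfies $\dim_{\kappa(\nn)}M\otimes\kappa(\nn)=1$ and $\dim_{\kappa(\nn)}M\otimes A_{\nn}/\nn^2 A_\nn=2$ at the unique maximal ideal $\nn=(\epsilon)$ of $A[1/p]=L[\epsilon]/(\epsilon^2)$, yet $A/\ann(M)=A$ is not reduced. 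So the two bounds of Proposition \ref{tiresome} plus $\OO$-torsion freeness do not suffice; what is missing is an input that rules out zero-dimensional components of $\Spec(E/\mathfrak a)[1/p]$.

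The paper supplies exactly this in its first two sentences: by \cite[Thm 5.2]{mybm} there is a $P$-regular $x\in E$ such that $P/xP$ is a finitely generated $\OO\br{K}$-module, projective in $\Mod^{\pro}_{K,\psi}(\OO)$, and \cite[Lem 2.33]{mybm} then shows $M(\Theta)$ is Cohen--Macaulay of Krull dimension $2$ over $E$. Hence $\Spec(E/\mathfrak a)=\supp M(\Theta)$ is equidimensional of dimension $2$ with no embedded primes, $\varpi$ lies in no minimal prime, and $(E/\mathfrak a)[1/p]$ is equidimensional of dimension $1$; only then does your cotangent bound force each $(E/\mathfrak a)[1/p]_{\nn}$ to be regular of dimension exactly $1$, i.e.\ a discrete valuation ring, whence reducedness. (This packaging is \cite[Prop 2.32]{mybm}, which the paper invokes together with Proposition \ref{tiresome}.) Note also that your ``main obstacle'' paragraph worries about faithfulness of the localized module and the $\kappa(\nn)$ bookkeeping, both of which are unproblematic, but not about this dimension issue, which is the actual reason the Cohen--Macaulay preparation cannot be skipped.
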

\begin{proof} Theorem 5.2 in \cite{mybm} implies that 
there is a $P$-regular  $x\in E$, such that $P/xP$ is a finitely generated $\OO\br{K}$-module, which is projective in $\Mod^{\pro}_{K, \psi}(\OO)$. It follows from \cite[Lem.2.33]{mybm} that 
$M(\Theta)$ is Cohen-Macaulay as a module over $E$ and its Krull dimension is equal to $2$. If $\md$ is an $E[1/p]$-module of finite length then 
\begin{equation}\label{equal_dimensions} 
\dim_L \Hom_K(V, \Pi(\md))=\dim_L  \md \otimes_E M(\Theta)
\end{equation}
by \cite[Prop.2.22]{mybm}. Proposition \ref{tiresome} together with \cite[Prop.2.32]{mybm} imply that $E/\mathfrak a$ is reduced. It is $\OO$-torsion free, since $M(\Theta)$ is 
$\OO$-torsion free. Let $\nn$ be a maximal ideal of $E[1/p]$. Since $E$ is a quotient of $R$, $\nn$ lies in the image of 
$\mSpec R^{\psi}[1/p]$ if and only if $\det \kappa(\nn)\otimes_E \cV(P)=\psi\varepsilon$. Proposition \ref{tiresome}, \eqref{cv_Pi_kappa_n}  and \eqref{equal_dimensions} imply that this holds for
all the maximal ideals of $(E/\mathfrak a)[1/p]$.
\end{proof}

\begin{cor}\label{the_biz} The surjection $\varphi: E\twoheadrightarrow R^{\psi}$, given by Proposition \ref{kisin_plus_e}, induces
an isomorphism $E/\mathfrak a \cong R^{\psi}/\varphi(\mathfrak a)$.
\end{cor}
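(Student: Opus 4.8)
The plan is to derive the Corollary formally from Proposition~\ref{twist_and_shout} together with the fact that $E[1/p]$ is a Jacobson ring whose maximal ideals have residue fields finite over $L$; this holds because $E$ is a complete local noetherian $\OO$-algebra with residue field $k$, hence a quotient of $\OO\br{x_1,\dots,x_n}$ for some $n$. Put $\mathfrak b:=\Ker\varphi$, so that $R^{\psi}=E/\mathfrak b$, and note that since $R^{\psi}$ is $\OO$-torsion free one has $\mathfrak b=\mathfrak b E[1/p]\cap E$. The surjection $\varphi$ descends to a surjection $\overline\varphi\colon E/\mathfrak a\twoheadrightarrow R^{\psi}/\varphi(\mathfrak a)=E/(\mathfrak a+\mathfrak b)$ with kernel $(\mathfrak a+\mathfrak b)/\mathfrak a$, so the whole statement reduces to proving the inclusion $\mathfrak b\subseteq\mathfrak a$.

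To this end I would first invoke Proposition~\ref{twist_and_shout}: the ring $E/\mathfrak a$ is reduced and $\OO$-torsion free, so it injects into its generic fibre $(E/\mathfrak a)[1/p]=E[1/p]/\mathfrak a E[1/p]$, which is reduced and Jacobson. Therefore $\mathfrak a E[1/p]$ is the intersection of the maximal ideals of $E[1/p]$ containing it, and contracting to $E$ gives $\mathfrak a=\bigcap_{\nn}(\nn\cap E)$, the intersection being taken over all $\nn\in\mSpec(E/\mathfrak a)[1/p]$. On the other hand, the second assertion of Proposition~\ref{twist_and_shout} says precisely that each such $\nn$, viewed as a maximal ideal of $E[1/p]$, lies on the closed subscheme $\Spec R^{\psi}[1/p]=V(\mathfrak b E[1/p])\subseteq\Spec E[1/p]$, i.e.\ $\nn\supseteq\mathfrak b E[1/p]$ and hence $\nn\cap E\supseteq\mathfrak b$. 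Intersecting over all such $\nn$ yields $\mathfrak b\subseteq\mathfrak a$; consequently $\varphi(\mathfrak a)=\mathfrak a/\mathfrak b$, and $\overline\varphi\colon E/\mathfrak a\to R^{\psi}/\varphi(\mathfrak a)$ is an isomorphism.

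Geometrically, this is just the observation that $\Spec(E/\mathfrak a)$, being reduced and $\OO$-flat, is the Zariski closure of its generic fibre $\Spec(E/\mathfrak a)[1/p]$; that generic fibre is contained in $\Spec R^{\psi}[1/p]$ by Proposition~\ref{twist_and_shout}, and the closure of the latter inside $\Spec E$ is $\Spec R^{\psi}$ (because $R^{\psi}$ is reduced and $\OO$-torsion free, as recorded in the proof of Proposition~\ref{kisin_plus_e}), so $\Spec(E/\mathfrak a)\subseteq\Spec R^{\psi}$, i.e.\ $\varphi$ factors through $E/\mathfrak a$ via $\overline\varphi^{-1}$. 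I do not expect any real obstacle beyond Proposition~\ref{twist_and_shout} itself, which has already been proved; the substantive input — reducedness of $E/\mathfrak a$, $\OO$-torsion freeness, and the density of the crystabeline points mapping into $\Spec R^{\psi}$ — is entirely assembled there, and the remaining argument is a routine manipulation with Jacobson rings.
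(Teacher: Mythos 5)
Your argument is correct and is essentially the paper's own proof: both rest on Proposition~\ref{twist_and_shout} (reducedness and $\OO$-torsion-freeness of $E/\mathfrak a$ plus the containment of $\mSpec(E/\mathfrak a)[1/p]$ in the image of $\mSpec R^{\psi}[1/p]$) together with the Jacobson property of the generic fibres, to force injectivity of the surjection $E/\mathfrak a\twoheadrightarrow R^{\psi}/\varphi(\mathfrak a)$. Your reformulation via $\Ker\varphi\subseteq\mathfrak a$ is just a cleaner packaging of the same deduction.
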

\begin{proof} Since $(E/\mathfrak a)[1/p]$ and $(R^{\psi}/\varphi(\mathfrak a))[1/p]$ are Jacobson, Proposition \ref{twist_and_shout} implies that $\varphi$ induces an isomorphism between 
$E/\mathfrak a$ and the image of $R^{\psi}$ in the maximal reduced quotient of $(R^{\psi}/\varphi(\mathfrak a))[1/p]$. This implies that the surjection $E/\mathfrak a \twoheadrightarrow  R^{\psi}/\varphi(\mathfrak a)$ is injective, and 
hence an isomorphism.
\end{proof}

\begin{lem}\label{same_ann} The $E$-annihilators of $\Hom^{\cont}_K(P, V^*)$ and $M(\Theta)$ are equal.
\end{lem}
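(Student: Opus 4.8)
The plan is to reduce the statement to the single module $N:=\Hom^{\cont}_{\OO\br{K}}(P,\Theta^{d})$, where $(\ast)^{d}:=\Hom_{\OO}(\ast,\OO)$. Then $M(\Theta)=N^{d}$ by definition, and I claim $\Hom^{\cont}_{K}(P,V^{*})\cong N\otimes_{\OO}L$: indeed a continuous $K$-equivariant map $\phi\colon P\to V^{*}$ is $\OO\br{K}$-linear by continuity, and, $P$ being compact, its image is a compact --- hence bounded --- $\OO$-submodule of the finite-dimensional $L$-vector space $V^{*}=\Theta^{d}\otimes_{\OO}L$, so $\varpi^{m}\phi$ factors through the lattice $\Theta^{d}$ for $m\gg 0$ and $\phi$ lies in $N\otimes_{\OO}L$; the reverse inclusion is clear. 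All three modules $N$, $N^{d}=M(\Theta)$ and $N\otimes_{\OO}L\cong\Hom^{\cont}_{K}(P,V^{*})$ carry the $E$-module structure induced from the action of $E$ on $P$, and since $E$ is commutative (Proposition \ref{commutative}) there is no need to keep track of sides. Thus it is enough to show $\ann_{E}(N\otimes_{\OO}L)=\ann_{E}(N^{d})$.

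I would then record two elementary facts about $N$. First, $N$ is $\OO$-torsion free because $\Theta^{d}$ is; hence $N\hookrightarrow N\otimes_{\OO}L$, and an element of $E$ kills $N$ if and only if it kills $N\otimes_{\OO}L$, so $\ann_{E}(N)=\ann_{E}(N\otimes_{\OO}L)$. Secondly, $N^{d}$ separates the points of $N$: by the tensor--hom adjunction and the equality $\Theta^{d}=\Hom_{\OO}(\Theta,\OO)$ one has $N\cong\Hom^{\cont}_{\OO}(M',\OO)$, where $M'$ is the $\OO\br{K}$-coinvariants of the compact $\OO\br{K}$-module $P\wtimes_{\OO}\Theta$ (equipped with the diagonal $K$-action), and $M'$ is again a compact $\OO$-module; so for any $0\neq\phi\in N$ there is $m'\in M'$ with $\phi(m')\neq 0$, and evaluation at $m'$ is a continuous $\OO$-linear functional on $N$, i.e.\ an element of $N^{d}$, which does not kill $\phi$.

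Granting these two facts the comparison of annihilators is purely formal. Writing $T_{e}$ for the endomorphism of $N$ induced by $e\in E$, the induced endomorphism of $N^{d}$ is its transpose $T_{e}^{\vee}$, characterised by $(T_{e}^{\vee}g)(\phi)=g(T_{e}\phi)$. Now $T_{e}=0$ if and only if $g\circ T_{e}=0$ for all $g\in N^{d}$ --- here the nontrivial implication uses that $N^{d}$ separates the points of $N$ --- if and only if $T_{e}^{\vee}=0$. Hence $\ann_{E}(N)=\ann_{E}(N^{d})$, and putting everything together
\[
\ann_{E}\bigl(\Hom^{\cont}_{K}(P,V^{*})\bigr)=\ann_{E}(N\otimes_{\OO}L)=\ann_{E}(N)=\ann_{E}(N^{d})=\ann_{E}(M(\Theta)).
\]
The only points calling for any care are the identification of $\Hom^{\cont}_{K}(P,V^{*})$ with $N\otimes_{\OO}L$ and the production of enough $\OO$-valued functionals on $N$; both are routine consequences of the compactness of $P$ and $\Theta^{d}$.
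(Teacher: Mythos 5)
Your argument is correct and is essentially the paper's: the paper's proof consists of citing the duality $\Hom^{\cont}_K(P,V^*)\cong\Hom^{\cont}_{\OO}(M(\Theta),L)$ from \cite[(11)]{mybm} and noting that annihilators are preserved, while you reconstruct that duality by hand (via $N=\Hom^{\cont}_{\OO\br{K}}(P,\Theta^d)$, the identifications $\Hom^{\cont}_K(P,V^*)\cong N\otimes_{\OO}L$ and $M(\Theta)=N^d$, and the separation of points of $N$ by $N^d$ coming from $N$ being the dual of a compact module). The only substantive point beyond the cited duality --- that the non-trivial inclusion of annihilators requires the dual to separate points --- is handled correctly by your adjunction argument.
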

\begin{proof} One inclusion is trivial, the other follows from \cite[(11)]{mybm}, which says that $\Hom^{\cont}_K(P, V^*)$
is naturally isomorphic to $\Hom^{\cont}_{\OO} (M(\Theta), L)$.
\end{proof} 

\begin{thm}\label{main} The functor $\cV$ induces an isomorphism $\varphi: E \overset{\cong}{\rightarrow} R^{\psi}$. Moreover, $\cV(P)$ 
is the universal deformation of $\rho$ with determinant $\psi\varepsilon$.
\end{thm}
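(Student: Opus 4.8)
The plan is to show that the surjection $\varphi\colon E\twoheadrightarrow R^{\psi}$ from Proposition~\ref{kisin_plus_e} is injective by proving that the ideals $\mathfrak a$ appearing in Proposition~\ref{twist_and_shout}, as $V$ ranges over a suitable family of types, cut out the zero ideal of $E$; combined with Corollary~\ref{the_biz} this forces $\varphi$ to be an isomorphism.

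First I would fix a type $\tau$ for a Bernstein component containing a principal series but no special series, as produced by Proposition~\ref{capture_zeta}, and let $V$ range over the family $\{\tau\otimes\Sym^a L^2\otimes \eta\circ\det\}$ with central character $\psi$. For each such $V$ choose a $K$-invariant lattice $\Theta$ and form $M(\Theta)$; let $\mathfrak a_V$ be its $E$-annihilator. By Lemma~\ref{same_ann} this equals the annihilator of $\Hom^{\cont}_K(P,V^*)$. Now I would invoke Proposition~\ref{capture_zeta}: the family $\{V\}$ captures every projective object in $\Mod^{\pro}_{K,\psi}(\OO)$; since $P$ is projective in $\dualcat(\OO)$, its restriction to $\Mod^{\pro}_{K,\psi}(\OO)$ is projective (by \cite[Prop.~2.1.11]{ord2} or the cited analogue), so the family captures $P$. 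Capturing $P$ means exactly that the smallest quotient of $P$ with the same $\Hom_K(-,V^*)$ for all $V$ is $P$ itself, which translates into $\bigcap_V \bigl(\text{ann of }\Hom^{\cont}_K(P,V^*)\bigr)=0$ — more precisely, that the intersection of the kernels of all maps $P\to (V^*)^{\oplus}$ is zero, hence any element of $E$ killing all $\Hom^{\cont}_K(P,V^*)$ acts as zero on $P$, i.e.\ is zero in $E$. Therefore $\bigcap_V \mathfrak a_V=0$.

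Next, by Proposition~\ref{twist_and_shout} each $E/\mathfrak a_V$ is reduced and $\OO$-torsion free, and by Corollary~\ref{the_biz} the surjection $\varphi$ induces an isomorphism $E/\mathfrak a_V\cong R^{\psi}/\varphi(\mathfrak a_V)$. Suppose $f\in\ker\varphi$. Then for every $V$ the image of $f$ in $E/\mathfrak a_V$ lies in $\ker\bigl(E/\mathfrak a_V\to R^{\psi}/\varphi(\mathfrak a_V)\bigr)=0$, so $f\in\mathfrak a_V$ for all $V$; hence $f\in\bigcap_V\mathfrak a_V=0$. Thus $\varphi$ is injective, and being already surjective it is an isomorphism. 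The identification of $\cV(P)$ as the universal deformation of $\rho$ with determinant $\psi\varepsilon$ then follows since $\cV(P)$ is a deformation of $\rho$ to $E=R^{\psi}$ classified (via \eqref{wtimes_cv}) by the structure map, and the surjection $R\twoheadrightarrow E$ was induced by $\cV$ as in Proposition~\ref{kisin_plus_e}; unwinding, $\cV(P)$ is pulled back from the universal object, and an isomorphism $E\cong R^{\psi}$ makes it the universal one.

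The main obstacle I expect is making the capture argument interact cleanly with the fixed central character and with the twists by quadratic characters $\eta$: one needs to know that the capturing family can be taken to consist of representations all having central character $\psi$, which is precisely the content packaged into Proposition~\ref{capture_zeta} (built on Lemmas~\ref{torsion_free_quotient}–\ref{last}, which handle the $\SL_2(\Zp)$ reduction, the $c=\pm1$ splitting, and the descent from $\SL_2$ to $\GL_2$ via the quadratic characters). The other delicate point is that capturing $P$ as an $\OO\br{K}$-module must be leveraged to a statement about the $E$-module structure: since $E$ acts on $P$ $\OO\br{K}$-linearly and commutes with all the evaluation maps, an $E$-element annihilating every $\Hom^{\cont}_K(P,V^*)$ annihilates the (dense) image of the evaluation maps in $P$, hence annihilates $P$, hence is $0$ in $E=\End_{\dualcat(\OO)}(P)$ — this is where one cites the torsion-freeness inputs so that ``capture'' is equivalent to density of the evaluation map, as in \cite[Lem.~2.10]{PCD}.
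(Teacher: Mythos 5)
Your argument is correct and is essentially the paper's own proof: the kernel of $\varphi$ is contained in each annihilator $\mathfrak a_V$ by Corollary \ref{the_biz} and Lemma \ref{same_ann}, and the capture statement of Proposition \ref{capture_zeta} forces $\bigcap_V \mathfrak a_V=0$, exactly as you spell out. The extra detail you supply (translating capture of $P$ into the vanishing of $\bigcap_V\mathfrak a_V$ via the kernels of the evaluation maps) is the correct unwinding of the definition and matches the intended reading.
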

\begin{proof} It follows from  Corollary \ref{the_biz} and Lemma \ref{same_ann} that the kernel of $\varphi$ is contained in the 
$E$-annihilator of $\Hom_K^{\cont}(P, V^*)$. It follows from Proposition \ref{capture_zeta} that the intersection of the annihilators as $V$ varies is zero. Hence, $\varphi$ is injective, and hence an isomorphism by Proposition \ref{kisin_plus_e}. The second part is a formal consequence of the first part. \end{proof} 

\subsubsection{Blocks}\label{section_blocks} As explained in the introduction the category $\Mod^{\ladm}_{G, \psi}(\OO)$ decomposes into a product of subcategories:
 \begin{equation}\label{blocksdecompose1}
 \Mod^{\mathrm{l.adm}}_{G, \psi}(\OO)\cong \prod_{\BB\in \Irr_G^{\adm}/\sim} \Mod^{\mathrm{l.adm}}_{G,\psi}(\OO)[\BB],
\end{equation}
where $\Mod^{\mathrm{l.adm}}_{G,\psi}(\OO)[\BB]$ is the full subcategory of  $\Mod^{\mathrm{l.adm}}_{G, \psi}(\OO)$ consisting of representations with all irreducible subquotients in $\BB$. Dually we obtain a decomposition:
\begin{equation} 
\dualcat(\OO)\cong \prod_{\BB\in \Irr_G^{\adm}/\sim} \dualcat(\OO)[\BB],
\end{equation}
where $M\in \dualcat(\OO)$ lies in $\dualcat(\OO)[\BB]$ if and only if $M^{\vee}$ lies in  $\Mod^{\mathrm{l.adm}}_{G,\psi}(\OO)[\BB]$.

For a block $\BB$ let $\pi_{\BB}=\oplus_{\pi\in \BB} \pi$, let $\pi_{\BB}\hookrightarrow J_{\BB}$ be an injective envelope of $\pi_{\BB}$. Then 
$P_{\BB}:= (J_{\BB})^{\vee}$ is a projective envelope of $(\pi_{\BB})^{\vee}$ in $\dualcat(\OO)$. Moreover, $J_{\BB}$ is an injective generator of $\Mod^{\mathrm{l.adm}}_{G,\psi}(\OO)[\BB]$ and $P_{\BB}$ is a projective generator of $\dualcat(\OO)[\BB]$. The ring $E_{\BB}:=\End_{\dualcat(\OO)}(P_{\BB})$ caries a natural topology 
with respect to which it is a pseudo-compact ring, see \cite[Prop. IV.13]{gabriel}. Moreover, the functor 
$$ M\mapsto \Hom_{\dualcat(\OO)}(P_{\BB}, M)$$
induces an equivalence of categories between $\dualcat(\OO)[\BB]$ and the category of right pseudo-compact $E_{\BB}$-modules, see
 Corollaire  1 after \cite[Thm. IV.4]{gabriel}. The inverse functor is given by $\md \mapsto \md\wtimes_{E_{\BB}} P_{\BB}$, as follows from Lemmas
 2.9, 2.10 in \cite{cmf}. Moreover, the centre of the category of $\dualcat(\OO)[\BB]$, which by definition is the ring of the natural transformations 
 of the identity functor, is naturally isomorphic to the centre of the ring $E_{\BB}$, see Corollaire  5 after  \cite[Thm. IV.4]{gabriel}.
 
 Let us prove Theorem \ref{intro_V} stated in the introduction. If $\BB$ is a block containing a supersingular representation $\pi$ then 
 $\BB=\{\pi\}$ and so $\pi_{\BB}=\pi$, $P_{\BB}$ is   a projective envelope of $\pi^{\vee}$ and $E_{\BB}$ coincides with the ring denoted by 
 $E$ in the previous section. Theorem \ref{main} implies that $E_{\BB}$ is naturally isomorphic to  $R^{\psi}_{\rho}$, the quotient of the universal deformation ring of $\rho:=\cV(\pi^{\vee})$ parameterizing deformations with determinant $\psi\varepsilon$. Since this ring is commutative, 
 we deduce that the centre of $\dualcat(\OO)[\BB]$ is naturally isomorphic to $R^{\psi}_{\rho}$. Moreover, $\cV(P_{\BB})$ is the tautological 
 deformation of $\rho$ to $R^{\psi}_{\rho}$, see Theorem \ref{main}.
 
 If $\BB$ contains a generic principal series representation then $\BB=\{\pi_1, \pi_2\}$, where  
 \begin{equation}\label{split_generic_block}
  \pi_1\cong (\Indu{B}{G}{\chi_1\otimes\chi_2\omega^{-1}})_{\sm}, \quad \pi_2\cong (\Indu{B}{G}{\chi_2\otimes\chi_1\omega^{-1}})_{\sm},
  \end{equation}
and  $\chi_1, \chi_2: \Qp^{\times}\rightarrow k^{\times}$ are continuous characters, such that $\chi_1\chi_2^{-1}\neq \Eins, \omega^{\pm 1}$.
 Then $\pi_{\BB}=\pi_1\oplus \pi_2$ and so $P_{\BB}\cong P_1 \oplus P_2$, where $P_1$ is a projective envelope of $\pi_1^{\vee}$ and 
 $P_2$ is a projective envelope of $\pi_2^{\vee}$ in $\dualcat(\OO)$. Thus 
 \begin{equation}\label{zwischen}
 E_{\BB}\cong \End_{\dualcat(\OO)}(P_1 \oplus P_2)\cong \End_{G_{\Qp}}^{\cont}(\cV(P_1)\oplus \cV(P_2)),
 \end{equation}
 where the last isomorphism follows from \cite[Lemma 8.10]{cmf}. The assumption on the characters $\chi_1$, $\chi_2$ implies that 
 if we consider them as representations of $G_{\Qp}$ via the local class field theory $\Ext^1$-groups between them are $1$-dimensional. 
 This means there are unique up to isomorphism non-split extensions 
 $$ \rho_1=\begin{pmatrix} \chi_1 & \ast \\ 0 & \chi_2 \end{pmatrix}, \quad \rho_2=\begin{pmatrix} \chi_1 & 0\\ \ast & \chi_2 \end{pmatrix}.$$
 Let $R_1$ be the universal deformation ring of $\rho_1$, $R_1^{\psi}$ the quotient of $R_1$ parameterizing deformations of $\rho_1$ with determinant
 $\psi \varepsilon$, and let $\rho_1^{\univ}$ be the tautological deformation of $\rho_1$ to $R_1^{\psi}$. We define $R_2$, $R_2^{\psi}$ and $\rho_2^{\univ}$ 
 in the same way with $\rho_2$ instead of $\rho_1$. It follows from Theorem \ref{main} and \eqref{zwischen} that 
 \begin{equation}\label{zwischen2}
 E_{\BB}\cong \End_{G_{\Qp}}^{\cont}(\rho_1^{\univ} \oplus \rho_2^{\univ}).
 \end{equation}
 We have studied the right hand side of \eqref{zwischen2} in \cite[\S B.1]{cmf}  for $p>2$ and in \cite{versal} in general. To describe the result we need 
 to recall the theory of determinants due to Chenevier \cite{che_det}. 
 
 Let $\rho: G_{\Qp}\rightarrow \GL_2(k)$ be a continuous representation. Let $\mathfrak A$ be the category of local artinian augmented $\OO$-algebra
 with residue field $k$. Let $D^{\ps}:\mathfrak A \rightarrow Sets$ be the functor, which maps  $(A,\mm_A)\in \mathfrak A$ to the set of pairs of  functions
 $(t, d): \gal\rightarrow A$, such 
that the following hold: $d:\gal \rightarrow A^{\times}$ is a continuous group homomorphism, congruent to $\det \rho$ modulo $\mm_A$, 
$t: \gal\rightarrow A$ is a continuous function with $t(1)=2$, and, which satisfy for all $g, h\in \gal$:
\begin{itemize} 
\item[(i)] $t(g)\equiv \tr \rho(g)\pmod{\mm_A}$;
\item[(ii)] $t(gh)=t(hg)$;
\item[(iii)] $d(g) t(g^{-1}h)-t(g)t(h)+ t(gh)=0$.
\end{itemize}
The functor $D^{\ps}$ is pro-represented by a complete local noetherian $\OO$-algebra $R^{\ps}$. Let $R^{\ps, \psi}$ be the quotient of $R^{\ps}$ parameterizing those pairs $(t, d)$, where $d=\psi\varepsilon$. Combining \eqref{zwischen2} with \cite[Prop.3.12, 4.3., Cor.4.4]{versal} we obtain the following: 

\begin{thm}\label{block_split} Let $\BB=\{\pi_1, \pi_2\}$ as above and let $\rho=\chi_1\oplus \chi_2$. 
 The centre of $E_{\BB}$, and hence the centre of the category $\dualcat(\OO)[\BB]$, is naturally isomorphic 
 to $R^{\ps,\psi}$. Moreover, $E_{\BB}$  is a free $R^{\ps, \psi}$-module of rank $4$: 
$$ E_{\BB}\cong \begin{pmatrix} R^{\ps, \psi} e_{\chi_1} & R^{\ps, \psi} \tilde{\Phi}_{12} \\ R^{\ps,\psi} \tilde{\Phi}_{21} & R^{\ps, \psi} e_{\chi_2}\end{pmatrix}.$$
The generators satisfy the following relations 
\begin{equation}\label{relation_0}
 e_{\chi_1}^2=e_{\chi_1}, \quad e_{\chi_2}^2=e_{\chi_2}, \quad e_{\chi_1}e_{\chi_2}=e_{\chi_2}e_{\chi_1}=0,
\end{equation}
\begin{equation}\label{relation_1}
 e_{\chi_1} \tilde{\Phi}_{12}=  \tilde{\Phi}_{12}e_{\chi_2}= \tilde{\Phi}_{12}, \quad 
e_{\chi_2} \tilde{\Phi}_{21}=  \tilde{\Phi}_{21}e_{\chi_1}= \tilde{\Phi}_{21},
\end{equation}
\begin{equation}\label{relation_1_bis}
 e_{\chi_2} \tilde{\Phi}_{12}=  \tilde{\Phi}_{12}e_{\chi_1}= 
e_{\chi_1} \tilde{\Phi}_{21}=  \tilde{\Phi}_{21}e_{\chi_2}=\tilde{\Phi}_{12}^2=\tilde{\Phi}_{21}^2=0,
\end{equation}
\begin{equation}\label{relation_2}
 \tilde{\Phi}_{12} \tilde{\Phi}_{21}= c e_{\chi_1}, \quad  \tilde{\Phi}_{21} \tilde{\Phi}_{12}= c e_{\chi_2}.
 \end{equation}
The element $c$ is regular in $R^{\ps, \psi}$ and generates the reducibility ideal.
 \end{thm}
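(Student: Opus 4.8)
The plan is to deduce everything from the identification \eqref{zwischen2}, which is now at our disposal thanks to Theorem \ref{main}, combined with the explicit computation of the endomorphism ring of a sum of two universal deformations of reducible non-split two-dimensional $\gal$-representations carried out in \cite{versal}. So the proof is essentially a matter of unwinding \eqref{zwischen2} and invoking \cite[Prop.3.12, Prop.4.3, Cor.4.4]{versal}, after checking that the normalizations match.

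First I would unwind \eqref{zwischen2}. Since $\BB=\{\pi_1,\pi_2\}$ with $\pi_1,\pi_2$ as in \eqref{split_generic_block}, the projective generator $P_{\BB}$ splits as $P_1\oplus P_2$, where $P_i$ is a projective envelope of $\pi_i^{\vee}$ in $\dualcat(\OO)$; each $\pi_i$ is a generic principal series, so the results of \S\ref{imageCMF} apply with $Q=\kappa_i^{\vee}$ and Theorem \ref{main} identifies $\cV(P_i)$ with $\rho_i^{\univ}$, the universal deformation of $\rho_i$ with determinant $\psi\varepsilon$. Here one must keep track of the ordering: by \eqref{cvkappa} the extension $\cV(\kappa_i^{\vee})$ has $\chi_1$ as a quotient and $\chi_2$ as a sub, so $\rho_1$, $\rho_2$ are precisely the two non-split extensions displayed in the statement. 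Applying \cite[Lemma 8.10]{cmf} to turn $\End_{\dualcat(\OO)}(P_1\oplus P_2)$ into $\End_{\gal}^{\cont}(\cV(P_1)\oplus\cV(P_2))$ then yields \eqref{zwischen2}: $E_{\BB}\cong\End_{\gal}^{\cont}(\rho_1^{\univ}\oplus\rho_2^{\univ})$.

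Next I would decompose this endomorphism ring along the two idempotents $e_{\chi_i}$ projecting onto the $i$-th summand. This immediately gives the $2\times 2$ block-matrix shape, with diagonal entries $\End_{\gal}^{\cont}(\rho_i^{\univ})$ and off-diagonal entries $\Hom_{\gal}^{\cont}(\rho_i^{\univ},\rho_j^{\univ})$; the relations \eqref{relation_0}, \eqref{relation_1}, \eqref{relation_1_bis} are then formal consequences of the $e_{\chi_i}$ being orthogonal idempotents summing to $1$ together with the fact that each Hom-block is killed on the appropriate side by $e_{\chi_i}$, and that $\tilde{\Phi}_{ij}^2=0$ since a composite $\rho_j^{\univ}\to\rho_i^{\univ}\to\rho_j^{\univ}$ lands in the wrong block. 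The remaining substantive input is: (a) $\End_{\gal}^{\cont}(\rho_i^{\univ})\cong R^{\ps,\psi}$, which identifies the centre of $E_{\BB}$, and hence of $\dualcat(\OO)[\BB]$, with $R^{\ps,\psi}$; (b) each $\Hom_{\gal}^{\cont}(\rho_i^{\univ},\rho_j^{\univ})$ is free of rank one over $R^{\ps,\psi}$, with a chosen generator $\tilde{\Phi}_{ij}$, so that $E_{\BB}$ is free of rank $4$; (c) the composites $\tilde{\Phi}_{12}\tilde{\Phi}_{21}$ and $\tilde{\Phi}_{21}\tilde{\Phi}_{12}$ land in the diagonal blocks, hence are of the form $c\,e_{\chi_1}$ and $c\,e_{\chi_2}$ for a single element $c\in R^{\ps,\psi}$ which is regular and generates the reducibility ideal. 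Points (a)--(c), together with Chenevier's determinant formalism \cite{che_det} used to describe $R^{\ps,\psi}$ and its reducibility locus, are exactly the content of \cite[Prop.3.12, Prop.4.3, Cor.4.4]{versal}; the proof consists in quoting them, once one has verified that the fixed-determinant convention ($d=\psi\varepsilon$) and the pro-representing object $R^{\ps,\psi}$ of $D^{\ps,\psi}$ agree on both sides.

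The one genuine obstacle has already been cleared, namely establishing \eqref{zwischen2} — equivalently, that $\cV(P_i)$ is the universal fixed-determinant deformation of $\rho_i$. That is Theorem \ref{main}, whose proof relies on the capture and Breuil--M\'ezard machinery of \S\ref{capture}--\S\ref{main_local} and is the hard local input of the paper. Granting it, the only thing left is the bookkeeping in \cite{versal} on endomorphism rings of sums of universal deformations of the reducible non-split $\rho_i$, and I expect no difficulty beyond matching normalizations.
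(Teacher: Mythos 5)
Your proposal is correct and follows exactly the route of the paper: the theorem is obtained by combining the isomorphism \eqref{zwischen2} (itself a consequence of Theorem \ref{main} together with \cite[Lemma 8.10]{cmf}) with \cite[Prop.3.12, Prop.4.3, Cor.4.4]{versal}, which is precisely what the text does. The extra bookkeeping you supply on the idempotent decomposition and the formal relations is consistent with, and implicit in, the cited results of \cite{versal}.
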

 
 In order to state the result about the centre of $\dualcat(\OO)[\BB]$ in a uniform way as in Theorem \ref{intro_centre}, we note that 
 if $\rho$ is an irreducible representation then mapping   a deformation $\rho_A$ to $(\tr \rho_A, \det \rho_A)$ induces a homomorphism of $\OO$-algebras $R^{\ps}\rightarrow R_{\rho}$, which is an isomorphism by 
 \cite[Thm.2.22, Ex.3.4]{che_det}.
  
 For a block $\BB$ let $\Ban^{\adm}_{G, \psi}(L)[\BB]$ be the full subcategory of $\Ban^{\adm}_{G, \psi}(L)$
 consisting of those $\Pi$, that for some (equivalently any) open bounded $G$-invariant lattice 
 $\Theta$, all the irreducible subquotients of $\Theta\otimes_{\OO} k$ lie in $\BB$. It is shown in \cite[Prop. 5.36]{cmf} that $\Ban^{\adm}_{G, \psi}(L)$ decomposes into a direct sum of subcategories:
 $$ \Ban^{\adm}_{G, \psi}(L)\cong \bigoplus_{\BB\in \Irr_G^{\adm}/\sim} \Ban^{\adm}_{G, \psi}(L)[\BB].$$
 
 \begin{cor}\label{lan_bij} If $\BB=\{\pi\}$ with $\pi$ supersingular then let $\rho=\cV(\pi^{\vee})$. If $\BB=\{\pi_1, \pi_2\}$ 
 with $\pi_1$, $\pi_2$ given by \eqref{split_generic_block} then let $\rho=\cV(\pi_1^{\vee})\oplus \cV(\pi_2^{\vee})=\chi_1\oplus \chi_2$.
  The map $\Pi\mapsto \cV(\Pi)$ induces a bijection between the isomorphism classes of 
\begin{itemize}
\item absolutely irreducible non-ordinary $\Pi\in \Ban^{\adm}_{G, \psi}(L)[\BB]$;
\item absolutely irreducible $\tilde{\rho}:\gal\rightarrow \GL_2(L)$, such that $\det \tilde{\rho}=\psi\varepsilon$ and the semi-simplification of the 
reduction modulo $\varpi$ of a $\gal$-invariant  $\OO$-lattice in $\tilde{\rho}$ is isomorphic to $\rho$.
\end{itemize}
\end{cor}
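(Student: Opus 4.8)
The plan is to build the bijection from the two structural theorems already in hand: Theorem \ref{main} (the isomorphism $\varphi: E \cong R^{\psi}$ in the supersingular case) and Theorem \ref{block_split} (the matrix description of $E_{\BB}$ over $R^{\ps,\psi}$ in the generic principal series case). In both cases the heart of the matter is to identify, on the Banach side, the objects $\Pi(\kappa(\nn))$ attached to maximal ideals $\nn$ of $E_{\BB}[1/p]$, and to match them with $2$-dimensional $p$-adic Galois representations via $\cV$ and the isomorphism $\cV(\md\wtimes_E P)\cong \md\wtimes_E\cV(P)$ from \eqref{wtimes_cv}, which after inverting $p$ and specializing gives $\cV(\Pi(\kappa(\nn)))\cong \kappa(\nn)\otimes_E\cV(P)$ as in \eqref{cv_Pi_kappa_n}.

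\emph{First} I would treat the supersingular block. Here $E_{\BB}=E\cong R^{\psi}$, so the maximal ideals of $E_{\BB}[1/p]$ are exactly the closed points of $\Spec R^{\psi}[1/p]$, i.e. deformations $\tilde\rho$ of $\rho$ with $\det\tilde\rho=\psi\varepsilon$ (up to the residue field), and Proposition \ref{specialize} already tells us $\Pi(\kappa(\nn))$ is absolutely irreducible non-ordinary in the supersingular case. Theorem \ref{main} says $\cV(P)$ is the universal such deformation, so $\cV(\Pi(\kappa(\nn)))=\kappa(\nn)\otimes_E\cV(P)=\tilde\rho$; this gives the map on objects. For surjectivity of the correspondence one invokes Proposition \ref{kisin_plus_e}'s input \cite[Thm.10.1]{surjectivity} (every such $\tilde\rho$ comes from some $\Pi$) together with the observation that an irreducible $\Pi$ in $\Ban^{\adm}_{G,\psi}(L)[\BB]$ has $\md(\Pi)$ a finite length $E[1/p]$-module, hence is a $\Pi(\md)$ for some finite length $\md$, and absolute irreducibility forces $\md$ to have length one, i.e. $\md=\kappa(\nn)$. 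Injectivity of the correspondence follows from full faithfulness of $\md\mapsto \Pi(\md)$ in \eqref{full2}: $\cV(\Pi(\kappa(\nn)))\cong\cV(\Pi(\kappa(\nn'))) $ forces $\kappa(\nn)\otimes_E\cV(P)\cong\kappa(\nn')\otimes_E\cV(P)$, and by Theorem \ref{main} this forces $\nn=\nn'$.

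\emph{Next}, for the block $\BB=\{\pi_1,\pi_2\}$ the argument is the same in spirit but one must extract the absolutely irreducible Banach space representations from the category. By Theorem \ref{block_split}, $E_{\BB}$ is a rank-$4$ module over the commutative ring $R^{\ps,\psi}$, and the irreducible deformations $\tilde\rho$ with $\det=\psi\varepsilon$ and residual semisimplification $\chi_1\oplus\chi_2$ correspond, via $(\tr,\det)$ and \cite[Thm.2.22, Ex.3.4]{che_det}, to the maximal ideals of $R^{\ps,\psi}[1/p]$ at which the reducibility ideal $(c)$ does \emph{not} vanish. For such a maximal ideal $\nn$ of $E_{\BB}[1/p]$ lying over it, Proposition \ref{specialize} and its proof show $\Pi(\kappa(\nn))$ is absolutely irreducible (the alternative \eqref{reducible_case} is precisely the case $c\in\nn$, where $\cV(\Pi(\kappa(\nn)))$ is reducible), and $\cV(\Pi(\kappa(\nn)))\cong\kappa(\nn)\otimes_{E_{\BB}}\cV(P_{\BB})$ is the corresponding $\tilde\rho$ by \eqref{zwischen2}. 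Conversely any absolutely irreducible non-ordinary $\Pi$ in $\Ban^{\adm}_{G,\psi}(L)[\BB]$ is of the form $\Pi(\md)$ with $\md$ finite length; absolute irreducibility together with \eqref{full2} forces $\md$ to have length one and its support to avoid the reducibility locus (else $\cV(\Pi(\md))$ splits, contradicting irreducibility). Surjectivity again uses \cite[Thm.10.1]{surjectivity}.

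\emph{The main obstacle} I expect is the bookkeeping in the generic principal series case: verifying that the maximal ideals of $E_{\BB}[1/p]$ at which $\Pi(\kappa(\nn))$ is absolutely irreducible are \emph{exactly} those lying over the non-reducibility locus of $R^{\ps,\psi}[1/p]$, and that this matches the absolute irreducibility of $\tilde\rho$ on the Galois side. This requires combining the explicit relations \eqref{relation_2} (so that inverting $c$ kills the off-diagonal nilpotents and makes $E_{\BB}[1/p][1/c]$ commutative, identified with two copies of $R^{\ps,\psi}[1/p][1/c]$) with Proposition \ref{specialize}; one must also check that the two "diagonal" maximal ideals over a given point of $R^{\ps,\psi}[1/p]$ give isomorphic $\tilde\rho$ precisely when the extension class is trivial, which does not happen off the reducibility locus — so the correspondence is genuinely well-defined and injective. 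The ordinary/non-ordinary dichotomy must be threaded through carefully: the parabolically induced Banach representations in \eqref{reducible_case} are the ordinary ones and are correctly excluded, while $\cV$ of a unitary parabolic induction being $1$-dimensional (used already in the proof of Proposition \ref{specialize}) ensures no non-ordinary $\Pi$ is mistakenly dropped.
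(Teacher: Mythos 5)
Your proposal is correct and follows essentially the same route as the paper, which simply invokes Theorems \ref{intro_V} and \ref{block_split} and then defers to the argument of \cite[Thm.11.4]{cmf}: identify absolutely irreducible objects of $\Ban^{\adm}_{G,\psi}(L)[\BB]$ with simple modules over the completions of $E_{\BB}[1/p]$, transport them through $\cV$ using \eqref{wtimes_cv}/\eqref{cv_Pi_kappa_n}, and use \cite[Thm.10.1]{surjectivity} for surjectivity, with the reducibility ideal $(c)$ separating the ordinary locus in the split-generic block exactly as you describe.
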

\begin{proof}  Given Theorems \ref{intro_V}, \ref{block_split} this is proved in the same way as \cite[Thm.11.4]{cmf}.
\end{proof}

If $\Pi\in \Ban^{\adm}_{G, \psi}(L)[\BB]$ and $\Theta$ is an open bounded $G$-invariant lattice in $\Pi$, then 
$\Theta/\varpi^n$ is an object of $\Mod^{\ladm}_{G, \psi}(\OO)[\BB]$ for all $n\ge 1$. Theorem \ref{intro_centre} gives a natural
action of $R^{\ps, \psi}$ on $\Theta/\varpi^n$ for all $n\ge 1$. Passing to the limit and inverting $p$ we obtain a natural 
homomorphism $R^{\ps, \psi}[1/p]\rightarrow \End_G^{\cont}(\Pi)$. 

\begin{cor}\label{trace_special} Let $\BB$ be as in Corollary \ref{lan_bij} and let $\Pi\in \Ban^{\adm}_{G, \psi}(L)[\BB]$ be absolutely irreducible. 
Then $\tr \cV(\Pi)$ is equal to the specialization of universal pseudocharacter $t^{\univ}: G_{\Qp}\rightarrow R^{\ps, \psi}$ at $x: R^{\ps, \psi}\rightarrow \End^{\cont}_G(\Pi)\cong L$. 
\end{cor}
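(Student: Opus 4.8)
The plan is to transport the statement through the equivalence between $\dualcat(\OO)[\BB]$ and the category of pseudocompact $E_{\BB}$-modules, and then to specialise the ``universal'' object $\cV(P_{\BB})$. Fix an open bounded $G$-invariant lattice $\Theta$ in $\Pi$ and set $\md_0:=\Hom_{\dualcat(\OO)}(P_{\BB},\Theta^d)$. Since $\Theta^d$ lies in $\dualcat(\OO)[\BB]$ we have $\Theta^d\cong\md_0\wtimes_{E_{\BB}}P_{\BB}$, so applying $\cV$ and using the block analogue of \eqref{wtimes_cv} together with $\cV(\Pi)=\cV(\Theta^d)\otimes_{\OO}L$ yields a $\gal$-equivariant isomorphism $\cV(\Pi)\cong\md_0[1/p]\otimes_{E_{\BB}[1/p]}\cV(P_{\BB})[1/p]$; by Proposition \ref{tiresome} this is two-dimensional over $L$ with determinant $\psi\varepsilon$. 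By Theorem \ref{intro_centre} the action of $Z(E_{\BB})\cong R^{\ps,\psi}$ on $\md_0/\varpi^n$ is the one coming, via the equivalence, from the natural $R^{\ps,\psi}$-action on $\Theta/\varpi^n$; passing to the limit and inverting $p$ shows that $R^{\ps,\psi}[1/p]$ acts on $\md_0[1/p]$ through the character $x\colon R^{\ps,\psi}[1/p]\to\End^{\cont}_G(\Pi)\cong L$ attached to $\Pi$.

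It then remains to compute the trace of $\gal$ on $\md_0[1/p]\otimes_{E_{\BB}[1/p]}\cV(P_{\BB})[1/p]$, for which I would show that the $\gal$-action on the $E_{\BB}$-module $\cV(P_{\BB})$ defines a Chenevier determinant $G_{\Qp}\to Z(E_{\BB})=R^{\ps,\psi}$ equal to the universal pair $(t^{\univ},\psi\varepsilon)$. If $\BB=\{\pi\}$ with $\pi$ supersingular this is immediate from Theorem \ref{intro_V}: there $E_{\BB}=R^{\ps,\psi}$ is commutative, $\cV(P_{\BB})$ is the tautological deformation of $\rho=\cV(\pi^{\vee})$ with determinant $\psi\varepsilon$, and $R^{\ps}=R_{\rho}$, so the trace of $\cV(P_{\BB})$ is $t^{\univ}$. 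If $\BB=\{\pi_1,\pi_2\}$ one reads it off Theorem \ref{block_split} and \eqref{zwischen2}: writing $\cV(P_{\BB})=\cV(P_1)\oplus\cV(P_2)$ with $\cV(P_i)$ the universal deformation $\rho_i^{\univ}$ over $R_i^{\psi}\cong e_{\chi_i}E_{\BB}e_{\chi_i}$, the Peirce decomposition along $e_{\chi_1},e_{\chi_2}$ gives $\cV(\Pi)\cong\bigl(\md_0[1/p]e_{\chi_i}\bigr)\otimes_{R_i^{\psi}}\cV(P_i)$ for the unique $i$ with $\md_0[1/p]e_{\chi_i}\neq0$, and one checks that the map $R^{\ps,\psi}=Z(E_{\BB})\to e_{\chi_i}E_{\BB}e_{\chi_i}$, $z\mapsto z e_{\chi_i}$, is precisely the homomorphism classifying $(\tr\rho_i^{\univ},\psi\varepsilon)$, so that $\tr\cV(P_i)$ is the image of $t^{\univ}$. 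In either case the $\gal$-action on $\cV(\Pi)$ is obtained from the deformation $\rho^{\univ}$ (resp.\ $\rho_i^{\univ}$) by composing with $x$, and taking traces gives $\tr\cV(\Pi)(g)=x\bigl(t^{\univ}(g)\bigr)$ for all $g\in G_{\Qp}$.

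The main obstacle is the last verification in the principal series case: one must make the identification $Z(E_{\BB})\cong R^{\ps,\psi}$ of Theorem \ref{block_split} compatible with deformation theory, i.e.\ check that the central subalgebra cuts out on $\cV(P_1)\oplus\cV(P_2)$ exactly the universal pseudocharacter and not a twist of it; this uses the explicit presentation and the relations in Theorem \ref{block_split} together with the computations of \cite{versal}. In the supersingular case nothing more is needed beyond unwinding Theorem \ref{intro_V}, the two-dimensionality of $\cV(\Pi)$ and the equality $\det\cV(\Pi)=\psi\varepsilon$ being already contained in Proposition \ref{tiresome}.
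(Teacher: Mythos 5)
Your overall strategy is the same as the paper's: the proof comes down to knowing that the $\gal$-action on $\cV(P_{\BB})$ realizes the universal determinant $(t^{\univ},\psi\varepsilon)$ over $R^{\ps,\psi}=Z(E_{\BB})$ — immediate from Theorem \ref{main} and Chenevier's identification $R^{\ps}\cong R_{\rho}$ in the supersingular case, and supplied by the computations of \cite{versal} (the paper cites Proposition 3.9 there) in the principal series case — and then specializing along $x$. The paper phrases the specialization step as ``$\cV(P_{\BB})$ is annihilated by $g^2-t^{\univ}(g)g+\psi\varepsilon(g)$'' and invokes \cite[Prop.\ 11.3]{cmf}, whereas you carry out the base change $\cV(\Pi)\cong \md_0[1/p]\otimes_{E_{\BB}[1/p]}\cV(P_{\BB})[1/p]$ explicitly; these are the same argument.

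There is, however, one incorrect step in your principal series case: there is no ``unique $i$ with $\md_0[1/p]e_{\chi_i}\neq 0$''. For an absolutely irreducible non-ordinary $\Pi$ in this block, both $\pi_1$ and $\pi_2$ occur in $\Theta/\varpi$ (cf.\ Proposition \ref{residual} and \eqref{reduce_Theta}), so both Peirce components $\md_0e_{\chi_1}=\Hom_{\dualcat(\OO)}(P_1,\Theta^d)$ and $\md_0e_{\chi_2}=\Hom_{\dualcat(\OO)}(P_2,\Theta^d)$ are nonzero. The isomorphism $\cV(\Pi)\cong(\md_0[1/p]e_{\chi_i})\otimes_{R_i^{\psi}}\cV(P_i)$ you want is still true, but for a different reason: since $\tilde{\rho}=\cV(\Pi)$ is absolutely irreducible, the reducibility element $c$ of Theorem \ref{block_split} is invertible at the corresponding maximal ideal $\nn$, so $\widehat{E}_{\BB,\nn}\cong M_2(A)$ (as in the proof of Corollary \ref{dim_ext}) and $E_{\BB}e_{\chi_i}E_{\BB}$ becomes the whole ring after completion at $\nn$; Morita equivalence then gives $\md_0\otimes_{E_{\BB}}\cV(P_{\BB})\cong \md_0e_{\chi_i}\otimes_{e_{\chi_i}E_{\BB}e_{\chi_i}}e_{\chi_i}\cV(P_{\BB})$ for \emph{either} $i$. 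Without this correction your displayed decomposition does not follow from the stated justification (naively one would get contributions from both summands, identified via $\tilde{\Phi}_{12}$ and $\tilde{\Phi}_{21}$). With it, the rest of your argument — that $R^{\ps,\psi}$ acts on $\md_0$ through $x$, that $\tr\rho_i^{\univ}$ is the image of $t^{\univ}$ under $R^{\ps,\psi}\to e_{\chi_i}E_{\BB}e_{\chi_i}$, and hence $\tr\cV(\Pi)=x\circ t^{\univ}$ — goes through.
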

\begin{proof} This is proved in the same way as \cite[Proposition 11.3]{cmf}. To carry out that proof we need to verify 
that $\cV(P_{\BB})$ is annihilated by $g^2-t^{\univ}(g) g + \psi \varepsilon(g)$ for all $g\in G_{\Qp}$. If $\BB$ contains a supersingular
representation this follows from Cayley--Hamilton since  $\cV(P_{\BB})$ is the universal deformation of $\rho$ with determinant $\psi\varepsilon$, and 
$\tr \cV(P_{\BB})=t^{\univ}$ by \cite[Thm.2.22, Ex.3.4]{che_det}. If $\BB$ contains a generic principal series then $\cV(P_{\BB})\cong \rho_1^{\univ}\oplus \rho_2^{\univ}$ and the assertion follows from \cite[Proposition 3.9]{versal}.
\end{proof}

\begin{cor}\label{dim_ext} For any $\Pi$ as in Corollary \ref{lan_bij}, we have $\dim_L \Ext^1_{G, \psi}(\Pi, \Pi)=3$.
\end{cor}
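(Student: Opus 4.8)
The plan is to transfer the computation to the Galois side via the Colmez functor $\cV$, and then to read off the answer from the local Euler characteristic formula. Set $\tilde{\rho}:=\cV(\Pi)$, which by Corollary~\ref{lan_bij} is absolutely irreducible with $\det\tilde{\rho}=\psi\varepsilon$. All $\Ext^1$-groups of Banach representations below are computed in $\Ban^{\adm}_{G,\psi}(L)[\BB]$; this is harmless, since $\Ban^{\adm}_{G,\psi}(L)$ is the direct sum of its blocks, and we may enlarge $L$ because $\Ext^1$ commutes with finite base change. We apply the formalism of \S\ref{imageCMF}, taking $\pi$ to be the supersingular member of $\BB$ in the first case and an absolutely irreducible principal series in $\BB$ in the second; let $Q$, $P$, $E=\End_{\dualcat(\OO)}(P)\cong R^{\psi}$ be as there (Theorem~\ref{main}), so that $E$ is a commutative complete local noetherian $\OO$-algebra pro-representing a fixed--determinant deformation functor and $\cV(P)$ is the corresponding universal object. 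The proof of Corollary~\ref{lan_bij} (modelled on that of \cite[Thm.\,11.4]{cmf}) shows that $\Pi\cong\Pi(\kappa(\nn))$ for a maximal ideal $\nn$ of $E[1/p]$ with residue field $L$, and then $\tilde{\rho}\cong\kappa(\nn)\otimes_E\cV(P)$ by \eqref{cv_Pi_kappa_n}.

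The first step is to establish an isomorphism
\[
\Ext^1_{G,\psi}(\Pi,\Pi)\;\cong\;\Ext^1_{E[1/p]}(\kappa(\nn),\kappa(\nn)).
\]
This uses the exact, contravariant, fully faithful functor $\md\mapsto\Pi(\md)$ of \eqref{full2} together with its partial inverse $\wB\mapsto\md(\wB)$, the finitely generated $E[1/p]$-module attached to an open bounded $G$-invariant lattice in $\wB$ (as in the proof of Lemma~\ref{Ext_Banach}): applying $\md$ to a Banach self-extension $0\to\Pi\to\wB\to\Pi\to 0$ yields a self-extension of $\kappa(\nn)$ by $\kappa(\nn)$; applying $\Pi(-)$ to a module self-extension yields a Banach one; and these operations are mutually inverse, because $\md(\Pi(\md))\cong\md$ for finite-length $\md$, the natural map $\wB\to\Pi(\md(\wB))$ is an isomorphism (\cite[Lem.\,4.28]{cmf}), every Jordan--H\"older factor of such a $\wB$ is $\cong\Pi$ hence non-ordinary, and no nonzero object of $\Ban^{\adm}_{G,\psi}(L)[\BB]$ is annihilated by $\md$ (\cite[Prop.\,2.1(ii)]{PCD}).

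Next, $\Ext^1_{E[1/p]}(\kappa(\nn),\kappa(\nn))$ is the Zariski tangent space of $\Spec E[1/p]=\Spec R^{\psi}[1/p]$ at $\nn$, which — since $E\cong R^{\psi}$ pro-represents a fixed--determinant deformation functor and $\cV(P)$ is its universal object — is canonically the space of deformations of $\tilde{\rho}$ to the dual numbers $L[\epsilon]/(\epsilon^2)$ with determinant $\psi\varepsilon$. As $L$ has characteristic zero, $\ad\tilde{\rho}=\ad^0\tilde{\rho}\oplus L$ via the trace, so this space equals $H^1(\gal,\ad^0\tilde{\rho})$ (here $\ad^0\tilde{\rho}$ denotes the trace-zero endomorphisms of $\tilde{\rho}$). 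It remains to compute $\dim_L H^1(\gal,\ad^0\tilde{\rho})=3$: since $\tilde{\rho}$ is absolutely irreducible, $\End_{\gal}(\tilde{\rho})=L$, whence $H^0(\gal,\ad^0\tilde{\rho})=0$; by local Tate duality and the self-duality of $\ad^0\tilde{\rho}$, $H^2(\gal,\ad^0\tilde{\rho})$ is dual to a subspace of $\Hom_{\gal}(\tilde{\rho},\tilde{\rho}\otimes\varepsilon)$, a nonzero element of which would be an isomorphism $\tilde{\rho}\cong\tilde{\rho}\otimes\varepsilon$, forcing $\varepsilon^2=1$ on determinants — impossible since $\varepsilon$ has infinite order — so $H^2(\gal,\ad^0\tilde{\rho})=0$; and then the local Euler characteristic formula gives
\[
\dim_L H^1(\gal,\ad^0\tilde{\rho})=\dim_L H^0(\gal,\ad^0\tilde{\rho})+\dim_L H^2(\gal,\ad^0\tilde{\rho})+\dim_L\ad^0\tilde{\rho}=0+0+3=3.
\]

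The main obstacle is the first step: making precise the anti-equivalence between finite-length $E[1/p]$-modules supported at $\nn$ and the Banach representations in $\BB$ all of whose Jordan--H\"older factors are $\Pi$, and in particular verifying $\wB\cong\Pi(\md(\wB))$ for a Banach self-extension $\wB$ of $\Pi$. The Galois-cohomological input of the final step is entirely standard.
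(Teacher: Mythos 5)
Your proposal is correct and reaches the answer by the same underlying mechanism as the paper: identify $\Ext^1_{G,\psi}(\Pi,\Pi)$ with the self-extensions of $\kappa(\nn)$ over (the localization/completion of) the relevant deformation ring, then observe that the fixed-determinant deformation problem of the absolutely irreducible $\tilde{\rho}$ is unobstructed, so the tangent space has dimension $\dim_L\ad^0\tilde{\rho}=3$ by the local Euler characteristic formula. The difference is in how the categorical step is executed. The paper invokes the decomposition of $\Ban^{\adm.\mathrm{fl}}_{G,\psi}(L)[\BB]$ by maximal ideals of the centre $R^{\ps,\psi}[1/p]$ (via \cite[Thm.\,4.36]{cmf} and Corollary \ref{trace_special}) and the anti-equivalence of $\Ban^{\adm.\mathrm{fl}}_{G,\psi}(L)[\BB]_{\nn}$ with finite-length $\widehat{E}_{\BB,\nn}$-modules; in the split-block case $\widehat{E}_{\BB,\nn}$ is a $2\times 2$ matrix algebra over $A$, and Morita equivalence reduces to $A$-modules. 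You instead work with the single projective envelope $P$ of one $\pi^{\vee}$ and the pair of functors $\md(-)$, $\Pi(-)$, which sidesteps the generalized matrix algebra entirely but is not a priori an equivalence onto all of the block, so you must check by hand that every Banach self-extension $\wB$ of $\Pi$ satisfies $\wB\cong\Pi(\md(\wB))$. You correctly flag this as the point needing verification; it does go through: applying the exact functor $\md(-)$ and then the exact functor $\Pi(-)$ to $0\to\Pi\to\wB\to\Pi\to 0$ yields a second exact sequence, the natural transformation $\wB\to\Pi(\md(\wB))$ gives a commutative ladder whose outer vertical maps are isomorphisms by \cite[Lem.\,4.28]{cmf}, and the five lemma in the abelian category $\Ban^{\adm}_{G,\psi}(L)$ concludes. (Your parenthetical that no nonzero object of the block is killed by $\md$ is false in the split case --- $\md$ kills the ordinary Banach representation built from $\pi_2$ --- but it is not needed: for the $\wB$ at hand, $\md(\wB)$ contains $\md(\Pi)=\kappa(\nn)\neq 0$ automatically.) Both routes are valid; yours trades the citation of \cite[Thm.\,4.36]{cmf} for a short direct argument, while the paper's has the advantage of identifying the ambient category once and for all.
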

\begin{proof} 

Let $\Ban^{\adm.\mathrm{fl}}_{G, \psi}(L)[\BB]$ be the full subcategory of $\Ban^{\adm}_{G, \psi}(L)[\BB]$ consisting of objects of finite length. It follows from \cite[Thm. 4.36]{cmf} that this category decomposes into a direct sum of subcategories
$$\Ban^{\adm.\mathrm{fl}}_{G, \psi}(L)[\BB]\cong \bigoplus_{\nn \in \mSpec R^{\ps, \psi}[1/p]} \Ban^{\adm.\mathrm{fl}}_{G, \psi}(L)[\BB]_{\nn},$$
where for a maximal ideal $\nn$ of $R^{\ps, \psi}[1/p]$,  $\Ban^{\adm.\mathrm{fl}}_{G, \psi}(L)[\BB]_{\nn}$ consists of those
finite length representations, which are killed by a power of $\nn$. Moreover, the last part of  \cite[Thm. 4.36]{cmf} implies that, the functor 
$\Pi \mapsto \Hom_{\dualcat(\OO)}(P_{\BB}, \Theta^d)[1/p]$, where $\Theta$ is any open bounded $G$-invariant lattice in $\Pi$, induces an
 an anti-equivalence of categories between  $\Ban^{\adm.\mathrm{fl}}_{G, \psi}(L)[\BB]_{\nn}$ and 
the category of modules of finite length over the $\nn$-adic completion of $E_{\BB}[1/p]$, which we denote by
 $\widehat{E}_{\BB, \nn}$.

Let $\tilde{\rho}=\cV(\Pi)$. Corollary \ref{lan_bij} tells us that $\tilde{\rho}$ is an absolutely irreducible representation with determinant $\psi \varepsilon$. Let $\nn$ be the maximal ideal of $R^{\ps, \psi}[1/p]$ corresponding to 
the pair $(\tr \tilde{\rho}, \det \tilde{\rho})$. It follows from  Corollary \ref{trace_special} that $\Pi$ is annihilated by $\nn$ and hence lies in  $\Ban^{\adm.\mathrm{fl}}_{G, \psi}(L)[\BB]_{\nn}$.  Let $A$ be the completion of $R^{\ps, \psi}[1/p]$ at 
$\nn$. In the supersingular case $E_{\BB}=R^{\ps, \psi}= R^{\psi}$, and so  $\widehat{E}_{\BB, \nn}=A$. In the generic principal series case, 
since $\tilde{\rho}$ is absolutely irreducible, the image of the  generator of  the reducible locus in $R^{\ps, \psi}$ in $\kappa(\nn)$ is non-zero. It follows   
from the description of $E_{\BB}$ in Theorem \ref{block_split} that $\widehat{E}_{\BB, \nn}$ is isomorphic to the algebra of $2\times 2$ matrices with entries in $A$. Thus in both cases we get that $\Ban^{\adm.\mathrm{fl}}_{G, \psi}(L)[\BB]_{\nn}$ is anti-equivalent to the category of $A$-modules
of finite length, and $\Pi$ is identified with the residue field $\kappa(\nn)$ of $A$. Hence, 
$$\Ext^1_{G, \psi}(\Pi, \Pi)\cong \Ext^1_A(\kappa(\nn), \kappa(\nn)).$$

Arguing as in \cite[Lemma 2.3.3]{kisin_moduli} we may identify $A$ with the universal deformation ring, which parametrizes pseudocharacters
with determinant $\psi\varepsilon$ with values in local artinian $L$-algebras, which lift $\tr \tilde{\rho}$. Since $\tilde{\rho}$ is absolutely irreducible 
we may further identify this ring with the quotient of the universal deformation ring of $\tilde{\rho}$ to local
artinian $L$-algebras parameterizing deformations with determinant $\psi\varepsilon$. This ring is formally smooth 
over $L$ of dimension $3$, as $H^2(G_{\Qp}, \ad^0(\tilde{\rho}))\cong H^0( G_{\Qp}, \ad^0(\tilde{\rho})(1))=0$ and so the deformation problem of 
$\tilde{\rho}$ is unobstructed.  In particular, $\dim_L \Ext^1_A(\kappa(\nn), \kappa(\nn))=\dim_L  \nn A/ \nn^2 A =3$. 
\end{proof}
\subsection{The Breuil--M\'ezard conjecture}\label{section_bm}

In this section we apply the formalism developed in \cite{mybm} to prove new cases of the Breuil--M\'ezard conjecture, when $p=2$. We place no restriction
on $p$ in this section. 

Let $\rho:\gal\rightarrow \GL_2(k)$ be a continuous representation, which is either absolutely irreducible, in which case we let $\pi$ be a supersingular 
representation of $G$, such that $\VV(\pi)\cong \rho$, or $\rho\cong \bigl ( \begin{smallmatrix} \chi_1 & \ast \\ 0 & \chi_2 \end{smallmatrix} \bigr )$, 
a non-split extension, such that $\chi_1\chi_2^{-1}\neq \Eins, \omega^{\pm 1}$, in which case we let $\pi=(\Indu{B}{G}{\chi_1\otimes \chi_2\omega^{-1}})_{\sm}$. As before we let $R^{\psi}$ be the quotient of the universal deformation ring 
of $\rho$, parameterizing  deformations with determinant $\psi\varepsilon$ and let $\rho^{\univ}$ be the tautological deformation 
of $\rho$ to $R^{\psi}$.

\begin{prop}\label{hyp_N} $P$ satisfies the hypotheses (N0), (N1) and (N2) of \cite[\S4]{mybm}.
\end{prop}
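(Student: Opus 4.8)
The plan is to recall the content of (N0)--(N2) from \cite[\S4]{mybm} and check each of them against facts already proved in this section. Recall that, in the notation there, these hypotheses ask (roughly): (N0) that $P$ is a projective object of $\dualcat(\OO)$ which is $\OO$-torsion free, with $E:=\End_{\dualcat(\OO)}(P)$ a commutative complete local noetherian $\OO$-algebra with residue field $k$; (N1) that $k\wtimes_E P$ is a finitely generated $\OO\br{K}$-module, equivalently that $(k\wtimes_E P)^{\vee}$ is an admissible smooth $G$-representation; and (N2) that Colmez's functor $\cV$ carries $P$ to a deformation of $\rho=\cV(k\wtimes_E P)$, which satisfies $\End_{\gal}(\rho)=k$, and identifies $E$ with the quotient $R^{\psi}$ of the universal deformation ring of $\rho$ parameterizing deformations with determinant $\psi\varepsilon$. (Any residual finiteness requirement, such as finite-dimensionality of $\Ext^1_{G,\psi}(\pi,\pi)$, is covered by the $\Ext$-computations recorded in Proposition \ref{hyp_ok}.)

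For (N0): projectivity of $P$ in $\dualcat(\OO)$ is part of Proposition \ref{hyp_ok}, commutativity of $E$ is Proposition \ref{commutative}, and the complete local noetherian property with residue field $k$ is the Proposition of \S\ref{imageCMF} asserting exactly this. For $\OO$-torsion freeness of $P$ I would argue from Proposition \ref{PE_flat}: the functor $\wtimes_E P$ is exact, and $E$ is $\OO$-torsion free, being isomorphic to $R^{\psi}$ by Theorem \ref{main} with $R^{\psi}$ $\OO$-torsion free (proof of Proposition \ref{kisin_plus_e}); applying $\wtimes_E P$ to the injection $E\xrightarrow{\varpi}E$ then shows $P\xrightarrow{\varpi}P$ is injective. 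For (N1): Proposition \ref{residual} identifies $k\wtimes_E P$ with $\pi^{\vee}$ when $\pi$ is supersingular and with $\kappa^{\vee}$ when $\pi$ is principal series; in the first case $\pi$ is admissible, and in the second $\kappa$ is an extension of the admissible representations $\pi$ and $\pi'$, hence admissible, so in either case the Pontryagin dual is a finitely generated $\OO\br{K}$-module. For (N2): Proposition \ref{kisin_plus_e} produces the surjection $R\twoheadrightarrow E$ exhibiting $\cV(P)$ as a deformation of $\rho$ to $E$, Theorem \ref{main} upgrades $\varphi\colon E\twoheadrightarrow R^{\psi}$ to an isomorphism and identifies $\cV(P)$ with the universal deformation with determinant $\psi\varepsilon$, and $\End_{\gal}(\rho)=k$ is noted in \S\ref{colmez_functor} (via the non-split sequence \eqref{cvkappa} in the principal series case).

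I do not expect a genuinely new obstacle here: the two substantive inputs --- commutativity of $E$, via the Berger--Breuil irreducibility results, and the identification $E\cong R^{\psi}$, via the density/capture argument of \S\ref{main_local} --- have already been carried out, so the present proposition is essentially book-keeping. The one point needing care is that in the principal series case the representation playing the role of ``$\pi$'' in the abstract setting of \cite[\S4]{mybm} should be our $\kappa$, consistently with the dichotomy $S=\pi^{\vee}$, $Q=\kappa^{\vee}$ of Proposition \ref{hyp_ok}; once this correspondence is fixed, all three hypotheses read off from the results quoted above.
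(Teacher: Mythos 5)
There is a genuine gap: your reconstruction of the hypotheses (N0)--(N2) does not match what they actually say in \cite[\S4]{mybm}, and as a result one substantive condition goes unchecked. In that reference, (N0) asks that $k\wtimes_{R^{\psi}}P$ be of finite length and finitely generated over $\OO\br{K}$ (this is indeed covered by Proposition \ref{residual}, as you say for your ``(N1)''), and (N2) asks that $\cV(P)$ and $\rho^{\univ}$ be isomorphic as $R^{\psi}\br{\gal}$-modules, which is Theorem \ref{main}, as you say. But (N1) is the condition
$$\Hom_{\SL_2(\Qp)}(\Eins, P^{\vee})=0,$$
i.e.\ the absence of $\SL_2(\Qp)$-fixed vectors in the injective envelope $P^{\vee}$ of $\pi$. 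Nothing in your proposal addresses this; the facts you marshal under your ``(N0)'' (projectivity, commutativity of $E$, $\OO$-torsion freeness) are part of the standing setup of \cite[\S4]{mybm} rather than the numbered hypotheses, and do not substitute for it.

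The missing verification is short but requires an actual argument: since $\SL_2(\Qp)$ is normal in $G$, the subspace of $\SL_2(\Qp)$-invariants of $P^{\vee}$ is a $G$-subrepresentation; because $P^{\vee}$ is an injective envelope of $\pi$, it is an essential extension of $\pi$, so any nonzero $G$-subrepresentation must meet $\pi$ nontrivially; and $\pi^{\SL_2(\Qp)}=0$ because $\pi$ is an infinite-dimensional irreducible (supersingular or principal series) representation. You should add this, and more generally, when a proposition asserts that a list of externally defined hypotheses holds, the safe course is to quote them verbatim rather than reconstruct them from memory.
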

\begin{proof} (N0) says that $k\wtimes_{R^{\psi}} P$ is of finite length and finitely generated over $\OO\br{K}$. This follows from Proposition \ref{residual}.
To verify (N1) we need to show  that $$\Hom_{\SL_2(\Qp)}(\Eins, P^{\vee})=0.$$ The $\SL_2(\Qp)$-invariants in $P^{\vee}$ is stable under the action of $G$. 
Since $P^{\vee}$ is an injective envelope of $\pi$, if the subspace is non-zero then it  must intersect $\pi$ non-trivially. However, $\pi^{\SL_2(\Qp)}=0$, 
which allows us to conclude. (N2) requires $\cV(P)$ and $\rho^{\univ}$ to be isomorphic as $R^{\psi}\br{\gal}$-modules and this is proved in Theorem \ref{main}.
\end{proof}
Recall,  \cite[\S V.A]{mult},  that the group of $d$-dimensional cycles $\mathcal Z_d(A)$ of  a noetherian ring $A$  is a free abelian group generated by $\pp\in \Spec A$ with $\dim A/\pp = d$. If $\sum_{\pp} n_{\pp} \pp$ and $\sum_{\pp} m_{\pp} \pp$ 
are $d$-dimensional cycles then we write $\sum_{\pp} n_{\pp} \pp \le \sum_{\pp} m_{\pp} \pp$, if $n_{\pp} \le m_{\pp}$ for all
$\pp\in \Spec A$ with $\dim A/\pp = d$.

If $M$ is a finitely generated $A$-module of dimension at  most $d$ then $M_{\pp}$ is an $A_{\pp}$-module of finite length, which we denote by $\ell_{A_{\pp}}(M_{\pp})$, for all $\pp$ 
with $\dim A/\pp=d$. We note that $\ell_{A_{\pp}}(M_{\pp})$ is nonzero only for finitely many $\pp$. Thus
$z_d(M):=\sum_{\pp} \ell_{A_{\pp}}(M_{\pp}) \pp$, where the sum is taken over all $\pp\in \Spec A$ such that $\dim A/\pp=d$, is a 
well defined element of $\mathcal Z_d(A)$. 

If $(A, \mm)$ is a local ring then we define a Hilbert--Samuel multiplicity $e(z)$ of a cycle $z=\sum_{\pp} n_{\pp} \pp\in \mathcal Z_d(A)$ to equal $\sum_{\pp} n_{\pp} e(A/\pp)$, where $e(A/\pp)$ is the Hilbert--Samuel multiplicity of the ring $A/\pp$.  If $M$ is a finitely generated $A$-module of dimension $d$ then the Hilbert--Samuel multiplicity of $M$ is equal to the Hilbert--Samuel multiplicity of its cycle $z_d(M)$, see \cite[\S V.2]{mult}.

If $\Theta$ is a continuous representation of $K$ on a free $\OO$-module of finite rank, we let 
$$M(\Theta):= ( \Hom^{\cont}_{\OO\br{K}}(P, \Theta^d))^d,$$
where $(\ast)^d:= \Hom_{\OO}(\ast, \OO)$. If $\lambda$ is a smooth representation of $K$ on an $\OO$-torsion module of finite length then we let 
$$M(\lambda):=(\Hom^{\cont}_{\OO\br{K}}(P, \lambda^{\vee}))^{\vee},$$
where the superscript $\vee$ denotes the Pontryagin dual. 

\begin{prop}\label{reduce_cycle} Let $\Theta$ be a continuous representation of $K$ on a free $\OO$-module of finite rank with central character $\psi$. Then $M(\Theta)$ is a finitely generated $R^{\psi}$-module. If $M(\Theta)$ is non-zero then it is Cohen--Macaulay and has Krull dimension equal to $2$. We have an equality of $1$-dimensional 
cycles: 
\begin{equation}\label{easy_proj}
z_1(M(\Theta)/\varpi)= \sum_{\sigma} m_{\sigma} z_1( M(\sigma)),
\end{equation}
where the sum is taken over all the irreducible smooth $k$-representations of $K$, and $m_{\sigma}$ denotes the multiplicity with which 
$\sigma$ appears as a subquotient of  $\Theta\otimes_{\OO} k$. 

Moreover, $M(\sigma)\neq 0$ if and only if $\Hom_K(\sigma, \pi)\neq 0$, in which 
case the Hilbert--Samuel multiplicity of $z_1(M(\sigma))$ is equal to $1$.
\end{prop}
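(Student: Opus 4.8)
The plan is to deduce the proposition from the formalism of \cite{mybm}, which applies here because Proposition \ref{hyp_N} verifies the hypotheses (N0)--(N2) and Theorem \ref{main} gives $E\cong R^{\psi}$, so that $M(\Theta)$ is naturally a module over $R^{\psi}$. Finite generation of $M(\Theta)$ over $R^{\psi}$ is then as in \cite[Prop.~2.15]{mybm}: by (N0) (which holds by Proposition \ref{residual}) $k\wtimes_{R^{\psi}}P$ is finitely generated over $\OO\br{K}$, hence $P$ is finitely generated over $R^{\psi}\br{K}$, and therefore so are $\Hom^{\cont}_{\OO\br{K}}(P,\Theta^{d})$ and its $\OO$-dual $M(\Theta)$ over $R^{\psi}$. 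For the Cohen--Macaulay assertion I would repeat the argument already used in the proof of Proposition \ref{twist_and_shout}: by \cite[Thm.~5.2]{mybm} there is a $P$-regular $x\in R^{\psi}$ with $P/xP$ finitely generated over $\OO\br{K}$ and projective in $\Mod^{\pro}_{K,\psi}(\OO)$, and \cite[Lem.~2.33]{mybm} then shows that $M(\Theta)$, when non-zero, is Cohen--Macaulay over $R^{\psi}$ of Krull dimension $2$.

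For the cycle identity \eqref{easy_proj} I would use the compatibility $M(\Theta)/\varpi\cong M(\Theta\otimes_{\OO}k)$ together with the additivity of $z_1$ on short exact sequences of modules of dimension $\le1$. Since after inverting the $P$-regular element $x$ above $P/xP$ is genuinely $\OO\br{K}$-projective, the functor computing these reductions is exact in the relevant range, so one may pass to any $K$-stable filtration of $\Theta\otimes_{\OO}k$ whose graded pieces are the irreducible $\sigma$, each occurring with multiplicity $m_{\sigma}$; summing the resulting $1$-dimensional cycles yields \eqref{easy_proj}. This is the content of \cite[\S4]{mybm}.

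It remains to analyse $M(\sigma)$. By Pontryagin duality $M(\sigma)\neq0$ if and only if $\Hom_{K}(\sigma,P^{\vee})\neq0$, where $P^{\vee}$ is an injective envelope of $\pi$ in $\Mod^{\ladm}_{G,\psi}(\OO)$; since $\pi\hookrightarrow P^{\vee}$ the condition $\Hom_{K}(\sigma,\pi)\neq0$ is clearly sufficient. For necessity, in the supersingular case the block of $\pi$ equals $\{\pi\}$, so every finite length $G$-subrepresentation of $P^{\vee}$ has all Jordan--H\"older factors isomorphic to $\pi$, and an induction on length shows that $\soc_{K}P^{\vee}$ involves only the Serre weights appearing in $\soc_{K}\pi$; in the principal series case one controls $\soc_{K}P^{\vee}$ similarly, using the explicit structure of the finite length subquotients of $P^{\vee}$ and Emerton's ordinary parts functor as in the proof of Proposition \ref{residual}. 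The multiplicity statement $e(z_{1}(M(\sigma)))=1$ then follows, once $M(\sigma)\neq0$, from Breuil's classification \cite{breuil1} of the $K$-socle of supersingular and of generic principal series representations, which shows that the relevant Serre weight occurs with multiplicity one, combined with the fact from \cite[\S4]{mybm} that $M(\sigma)$ is then cyclic over $R^{\psi}$; both are obtained by feeding a lattice $\Theta$ in a type realising $\sigma$ exactly once into the earlier steps.

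The step I expect to be the main obstacle is precisely this last one. For $p=2$ and $p=3$ the $p\ge5$ structure theory of supersingular representations used in \cite{mybm} is unavailable, so one must re-derive the multiplicity-one statement for $\soc_{K}\pi$ directly from Breuil's classification and the genericity hypotheses on $\pi$ (resp.\ on $\chi_1\chi_2^{-1}$), and one has to verify that none of the cited arguments of \cite{mybm} silently invokes $p\ge5$ — exactly the role played by Proposition \ref{hyp_N}.
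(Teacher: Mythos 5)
Your treatment of the first three assertions follows the paper's route: finite generation from (N0) via the finiteness results of \cite{mybm}, Cohen--Macaulayness and $\dim=2$ from \cite[Thm.~5.2, Lem.~2.33]{mybm}, and the cycle identity \eqref{easy_proj} from exactness of $\lambda\mapsto M(\lambda)$ plus a $K$-stable filtration of $\Theta\otimes_{\OO}k$. One correction there: the exactness does not come from "inverting $x$" or from projectivity of $P/xP$; it comes from the fact that $P|_K$ itself is projective in $\Mod^{\pro}_{K,\psi}(\OO)$ (\cite[Cor.~5.3]{mybm}), which is what the paper invokes together with \cite[Prop.~2.24]{mybm}.

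The genuine gap is in the last assertion. Your plan is to read off $M(\sigma)\neq 0$ from $\Hom_K(\sigma,P^{\vee})\neq 0$ and to control $\soc_K P^{\vee}$ by induction on the length of $G$-subrepresentations. In the supersingular case this does give the non-vanishing criterion, but in the principal series case it does not: finite length subrepresentations of $P^{\vee}$ (already $\kappa$ itself) have $\pi'$ as a Jordan--H\"older factor, so your induction only bounds $\soc_K P^{\vee}$ by the weights of $\soc_K\pi\cup\soc_K\pi'$, whereas the proposition asserts that only the weights of $\soc_K\pi$ occur; ruling out a weight $\sigma'$ of $\soc_K\pi'$ requires showing that the embedding $\sigma'\hookrightarrow\pi'$ does not lift to $\kappa$, which "ordinary parts, similarly" does not supply. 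More seriously, your argument for the Hilbert--Samuel multiplicity of $z_1(M(\sigma))$ being $1$ is not a proof: non-vanishing of $M(\sigma)$ plus multiplicity-freeness of $\soc_K\pi$ says nothing about the $R^{\psi}$-module structure of $M(\sigma)$, and "feeding a type containing $\sigma$ once into the earlier steps" only constrains a sum of cycles, not the individual $z_1(M(\sigma))$. The paper's mechanism resolves both points at once: the element $x$ of \cite[Thm.~5.2]{mybm} has the property that $P/xP|_K$ is a projective envelope of $(\soc_K\pi)^{\vee}$ in $\Mod^{\pro}_{K,\psi}(\OO)$, whence a short exact sequence $0\rightarrow M(\sigma)\overset{x}{\rightarrow}M(\sigma)\rightarrow (\Hom^{\cont}_{\OO\br{K}}(P/xP,\sigma^{\vee}))^{\vee}\rightarrow 0$ with cokernel of $k$-dimension $\dim_k\Hom_K(\sigma,\pi)$. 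Nakayama then gives the exact non-vanishing criterion (in both cases), and when $\Hom_K(\sigma,\pi)$ is one-dimensional (Remark \ref{socle}) the sequence $0\rightarrow M(\sigma)\overset{x}{\rightarrow}M(\sigma)\rightarrow k\rightarrow 0$ shows $M(\sigma)$ is cyclic with $R^{\psi}/\ann\cong k\br{x}$, so $e(z_1(M(\sigma)))=1$. You should route the entire last assertion through this exact sequence rather than through $\soc_K P^{\vee}$.
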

\begin{proof} We showed in Proposition \ref{hyp_N} that $k\wtimes_{R^{\psi}} P$ is a finitely generated $\OO\br{K}$-module. 
It follows from Corollary 2.5 in \cite{mybm} that $M(\Theta)$ is a finitely generated $R^{\psi}$-module. The restriction of $P$ 
to $K$ is projective in $\Mod^{\pro}_{K, \psi}(\OO)$ by \cite[Cor.5.3]{mybm}. Proposition 2.24 in \cite{mybm} implies that 
\eqref{easy_proj} holds as an equality of $(d-1)$-dimensional cycles, where $d$ is the Krull dimension of $M(\Theta)$.
Theorem 5.2 in \cite{mybm} shows that there is $x$ in the maximal ideal of $R^{\psi}$, such that we have an exact sequence 
$0\rightarrow P\overset{x}{\rightarrow} P \rightarrow P/xP\rightarrow 0$, such that the restriction of $P/xP$ to $K$ is a projective 
envelope of $(\soc_K \pi)^{\vee}$ in $\Mod^{\pro}_{K, \psi}(\OO)$. Lemma 2.33 in \cite{mybm} implies that $M(\Theta)$ is 
a Cohen--Macaulay module of dimension $2$ and $\varpi$, $x$ is a regular sequence of parameters. If $\sigma$ is an irreducible smooth $k$-representation 
of $K$ with central character $\psi$ then the proof of \cite[Lem.2.33]{mybm} yields an exact sequence $0\rightarrow M(\sigma)\overset{x}{\rightarrow} M(\sigma)\rightarrow
(\Hom^{\cont}_{\OO\br{K}}(P/xP, \sigma^{\vee}))^{\vee}\rightarrow 0$. Since $P/xP$ is a projective envelope of $(\soc_K \pi)^{\vee}$ in 
$\Mod^{\pro}_{K, \psi}(\OO)$, we deduce that $\dim_k M(\sigma)/x M(\sigma)$ is equal to $\dim_k \Hom_K(\sigma, \pi)$. If $ \Hom_K(\sigma, \pi)$
is zero then Nakayama's lemma implies that $M(\sigma)=0$. If $\Hom_K(\sigma, \pi)$
is non-zero then it is a one dimensional $k$-vector space, since the $K$-socle of $\pi$ is multiplicity free. The exact sequence
$0\rightarrow M(\sigma)\overset{x}{\rightarrow} M(\sigma)\rightarrow k\rightarrow 0$ implies that $M(\sigma)$ is a cyclic module, 
and if $\mathfrak a$ denotes its annihilator then $R^{\psi}/\mathfrak a\cong k\br{x}$.
\end{proof}

 \begin{remar}\label{socle} If $\rho$ is absolutely irreducible and $\rho|_{I_{\Qp}}\cong (\omega_2^{r+1}\oplus \omega_2^{p(r+1)})\otimes \omega^m$ then 
 $\soc_K \pi\cong (\Sym^r k^2 \oplus \Sym^{p-1-r} k^2\otimes \det^{r})\otimes \det^m$, where $0\le r\le p-1$, $0\le m\le p-2$ and $\omega_2$ is the fundamental 
 character of Serre of niveau $2$, see \cite{breuil1}, \cite{breuil2}. If $\rho\cong \bigl (\begin{smallmatrix} \chi_1& *\\ 0 & \chi_2 \omega^{r+1} \end{smallmatrix}\bigr ) \otimes \omega^m$, 
 where $\chi_1, \chi_2$ are unramified and $\chi_1\neq\chi_2\omega^{r+1}$ then $\pi\cong  (\Indu{B}{G}{\chi_1\otimes \chi_2\omega^r})_{\sm}\otimes\omega^m \circ \det$. Hence, 
 $\soc_K \pi\cong \Sym^r k^2\otimes \det^m$ if $0<r<p-1$ and $\det^m \oplus \Sym^{p-1}k^2\otimes \det^m$, otherwise. In particular, $\soc_K \pi$ is multiplicity free.
  \end{remar}

  If $\nn\in \mSpec R^{\psi}[1/p]$ then the residue field $\kappa(\nn)$ is a finite extension of $L$. Let $\OO_{\kappa(\nn)}$ be the ring of integers in $\kappa(\nn)$. By 
specializing the universal deformation at $\nn$, we obtain a continuous representation $\rho^{\univ}_{\nn}: G_{\Qp}\rightarrow \GL_2(\OO_{\kappa(\nn)})$, which reduces to $\rho$ modulo 
the maximal ideal of $\OO_{\kappa(\nn)}$.  A $p$-adic Hodge type $(\mathbf w, \tau, \psi)$ consists of the following data: $\mathbf w=(a,b)$ is a pair of 
integers with $b>a$, $\tau: I_{\Qp}\rightarrow \GL_2(L)$ is a representation of the inertia subgroup with an open kernel and $\psi:G_{\Qp}\rightarrow \OO^{\times}$ a continuous character, such 
that $\psi\varepsilon\equiv \det \rho \pmod{\varpi}$, $\psi|_{I_{\Qp}}= \varepsilon^{a+b-1}\det\tau$, where $\varepsilon$ is the $p$-adic cyclotomic character.  If $\rho^{\univ}_{\nn}$ is potentially semi-stable then we say that it is of type $(\mathbf w, \tau, \psi)$ if its Hodge--Tate weights are equal to $\mathbf w$, the determinant equal to $\psi$ and the restriction of the Weil--Deligne 
representation, associated to $\rho^{\mathrm{un}}_{\nn}$ by Fontaine in \cite{fontaine}, to $I_{\Qp}$ is isomorphic to $\tau$.

In \cite{henniart}, Henniart has shown the existence of a smooth irreducible  representation $\sigma(\tau)$ (resp. $\sigma^{\mathrm{cr}}(\tau)$) of $K$ on an $L$-vector space, such that if $\pi$ is a smooth absolutely irreducible infinite dimensional  representation of $G$  and  $\LLL(\pi)$ is the Weil-Deligne representation attached to $\pi$ by the classical local Langlands  correspondence then $\Hom_K(\sigma(\tau), \pi)\neq 0$ (resp. $\Hom_K(\sigma^{\mathrm{cr}}(\tau), \pi)\neq 0$) if and only if $\LLL(\pi)|_{I_{\Qp}}\cong \tau$ (resp. $\LLL(\pi)|_{I_{\Qp}}\cong \tau$ and the monodromy operator $N=0$). The representations $\sigma(\tau)$ and $\sigma^{\cris}(\tau)$ are uniquely determined if $p>2$. If $p=2$ there might be different choices, we choose one. 
 
We let $\sigma(\mathbf w, \tau):=\sigma(\tau)\otimes \Sym^{b-a-1} L^2\otimes \det^a$. Then $\sigma(\mathbf w, \tau)$ is a finite dimensional $L$-vector space. Since $K$ is compact 
and the action of $K$ on $\sigma(\mathbf w, \tau)$ is continuous, there is a $K$-invariant $\OO$-lattice $\Theta$ in $\sigma(\mathbf w, \tau)$. Then $\Theta/(\varpi)$ is a smooth 
finite length $k$-representation of $K$, and we let $\overline{\sigma(\mathbf w, \tau)}$ be its semi-simplification. One may show that $\overline{\sigma(\mathbf w, \tau)}$ does not depend on the choice of a lattice.  For each smooth irreducible  $k$-representation $\sigma$ of $K$ we let $m_{\sigma}(\mathbf w, \tau)$ be the multiplicity with which $\sigma$ occurs in $\overline{\sigma(\mathbf w, \tau)}$. We let $\sigma^{\mathrm{cr}}(\mathbf w, \tau):=\sigma^{\mathrm{cr}}(\tau)\otimes  \Sym^{b-a-1} L^2\otimes \det^a$ and let $m^{\mathrm{cr}}_{\sigma}(\mathbf w, \tau)$ be the multiplicity of $\sigma$ in $\overline{\sigma^{\mathrm{cr}}(\mathbf w, \tau)}$. If $p=2$ then one may show that $\overline{\sigma(\mathbf w, \tau)}$ and 
$\overline{\sigma^{\mathrm{cr}}(\mathbf w, \tau)}$ do not depend on the choice of $\sigma(\tau)$, $\sigma^{\cris}(\tau)$.

\begin{prop}\label{support_global} Let $V=\sigma(\wt, \tau)$ (resp. $V=\sigma^{\cris}(\wt, \tau)$) and let $\Theta$ be a $K$-invariant lattice in $V$. Then 
$\nn\in \mSpec R^{\psi}[1/p]$,  lies in the support of $M(\Theta)$ if and only if $\rho^{\univ}_{\nn}$ is potentially semi-stable (resp. potentially 
crystalline) of type $(\wt, \tau, \psi)$. Moreover, for such $\nn$, $\dim_{\kappa(\nn)} M(\Theta)\otimes_{R^{\psi}} \kappa(\nn)=1$.
\end{prop}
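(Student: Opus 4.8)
\emph{Strategy.} The plan is to reduce the statement to a computation of $\Hom_K(V,\Pi(\kappa(\nn)))$ and then feed in the $p$-adic local Langlands correspondence. Since $M(\Theta)$ is a finitely generated $R^{\psi}$-module by Proposition \ref{reduce_cycle}, the point $\nn\in\mSpec R^{\psi}[1/p]$ lies in the support of $M(\Theta)$ if and only if $M(\Theta)\otimes_{R^{\psi}}\kappa(\nn)\neq 0$. By Theorem \ref{main} we identify $R^{\psi}$ with $E$, so $\kappa(\nn)$ is a finite length $E[1/p]$-module, $\Pi(\kappa(\nn))$ is defined, and \cite[Prop.~2.22]{mybm} yields
\[
\dim_{\kappa(\nn)}M(\Theta)\otimes_{R^{\psi}}\kappa(\nn)=\dim_{\kappa(\nn)}\Hom_K\bigl(V,\Pi(\kappa(\nn))\bigr).
\]
Moreover $\cV(\Pi(\kappa(\nn)))\cong\rho^{\univ}_{\nn}$ by \eqref{cv_Pi_kappa_n} and Theorem \ref{main}; in particular $\det\rho^{\univ}_{\nn}=\psi\varepsilon$, so the determinant part of the $p$-adic Hodge type $(\wt,\tau,\psi)$ holds automatically, as does the central character condition since $\Theta$ has central character $\psi$. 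Hence it is enough to show that $\Hom_K(V,\Pi(\kappa(\nn)))\neq 0$ if and only if $\rho^{\univ}_{\nn}$ is potentially semi-stable (resp.\ potentially crystalline) with Hodge--Tate weights $\wt$ and $\WD(\rho^{\univ}_{\nn})|_{I_{\Qp}}\cong\tau$, and that this $\Hom$-space is then one dimensional. Replacing $\kappa(\nn)$ by a finite extension, which is harmless as in the proof of Proposition \ref{tiresome}, we may assume $\Pi(\kappa(\nn))$ is of one of the two shapes of Proposition \ref{specialize}.

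\emph{Locally algebraic vectors.} Write $V=\sigma(\tau)\otimes W|_K$, where $W:=\Sym^{b-a-1}L^2\otimes\det^a$ is an irreducible algebraic representation of $G$ and $\sigma(\tau)$ is smooth; thus $V$ is a finite dimensional locally algebraic representation of $K$, every $K$-equivariant map $V\to\Pi(\kappa(\nn))$ has locally algebraic image, and $\Hom_K(V,\Pi(\kappa(\nn)))=\Hom_K(V,\Pi(\kappa(\nn))^{\mathrm{l.alg}})$. I would then identify $\Pi(\kappa(\nn))^{\mathrm{l.alg}}$. If $\Pi(\kappa(\nn))$ is absolutely irreducible and non-ordinary, then by Colmez \cite[Thm.~VI.6.50]{colmez} --- supplemented, when $\tau$ extends to an irreducible representation of $W_{\Qp}$, by the global input of Emerton \cite[\S7.4]{lg} and by Dospinescu \cite{dospinescu} --- this space is nonzero exactly when $\rho^{\univ}_{\nn}$ is de Rham, equivalently potentially semi-stable with distinct Hodge--Tate weights, and then equals $\pi_{\mathrm{sm}}\otimes_{\kappa(\nn)}W'$, where $W'=\Sym^{b'-a'-1}L^2\otimes\det^{a'}$ is the algebraic representation attached to the Hodge--Tate weights $(a',b')$ of $\rho^{\univ}_{\nn}$ and $\pi_{\mathrm{sm}}$ is the smooth representation of $G$ associated to $\WD(\rho^{\univ}_{\nn})$ by the classical local Langlands correspondence $\LLL$. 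If instead $\Pi(\kappa(\nn))$ is the non-split extension \eqref{reducible_case}, then by \cite[Lem.~12.5]{cmf}, whose proof does not use $p\ge 5$, the locally algebraic vectors are concentrated in the subrepresentation $(\Indu{B}{G}{\delta_1\otimes\delta_2\varepsilon^{-1}})_{\cont}$, are again of the form $\pi_{\mathrm{sm}}\otimes W'$ with $\pi_{\mathrm{sm}}$ irreducible smooth, and are nonzero exactly when $\rho^{\univ}_{\nn}$ is potentially semi-stable.

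\emph{Conclusion.} A nonzero $K$-equivariant map $\sigma(\tau)\otimes W|_K\to\pi_{\mathrm{sm}}\otimes W'|_K$ can exist only if $W|_K\cong W'|_K$, hence only if $W\cong W'$, i.e.\ only if the Hodge--Tate weights of $\rho^{\univ}_{\nn}$ equal $\wt$; and when $W=W'$, cancelling the algebraic factor (the smooth vectors of $\End(W)|_K$ being the scalars) identifies $\Hom_K(V,\pi_{\mathrm{sm}}\otimes W)$ with $\Hom_K(\sigma(\tau),\pi_{\mathrm{sm}})$. By the defining property of $\sigma(\tau)$ (resp.\ $\sigma^{\cris}(\tau)$) established by Henniart \cite{henniart}, the latter is nonzero precisely when $\WD(\rho^{\univ}_{\nn})|_{I_{\Qp}}\cong\tau$ (resp.\ precisely when moreover the monodromy operator of $\WD(\rho^{\univ}_{\nn})$ vanishes, i.e.\ $\rho^{\univ}_{\nn}$ is potentially crystalline). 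This gives the asserted equivalence. For the ``moreover'' part, $\dim_{\kappa(\nn)}\Hom_K(\sigma(\tau),\pi_{\mathrm{sm}})=1$: on the one hand $\pi_{\mathrm{sm}}$ occurs in $\Pi(\kappa(\nn))^{\mathrm{l.alg}}$ with multiplicity one; on the other hand, exactly as in the proof of Proposition \ref{tiresome}, $\dim_{\kappa(\nn)}\Hom_K(\sigma(\tau),\pi_{\mathrm{sm}})\le 1$, since the universal unitary completion of $\pi_{\mathrm{sm}}\otimes W$ is absolutely irreducible by Berger--Breuil \cite{bb}.

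\emph{Main obstacle.} The only non-formal step is the identification of $\Pi(\kappa(\nn))^{\mathrm{l.alg}}$ with $\pi_{\mathrm{sm}}\otimes W'$: in the supercuspidal, non-trianguline case --- where $\tau$ extends to an irreducible representation of $W_{\Qp}$ --- this is not a purely local statement and relies on Emerton's global construction \cite[\S7.4]{lg} together with Dospinescu's \cite{dospinescu} results on locally algebraic vectors in extensions of Banach space representations; and in the reducible case one must check that the arguments of \cite[\S12]{cmf} carry over to $p=2,3$. Everything else is bookkeeping within the formalism of \cite{mybm}.
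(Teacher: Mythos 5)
Your proposal is correct and follows essentially the same route as the paper: reduce via \cite[Prop.\ 2.22]{mybm} to computing $\Hom_K(V,\Pi(\kappa(\nn)))$, pass to locally algebraic vectors, and identify these using Colmez/Emerton (and \cite[Lem.\ 12.5]{cmf} in the reducible case) together with Henniart's typicity — the paper simply compresses this into a citation of \cite[Prop.\ 4.14]{mybm}. The only cosmetic slip is that Dospinescu's result on extensions is not needed for this proposition (it enters only in Proposition \ref{reduced}), and the Berger--Breuil input at the end is redundant given that multiplicity one already follows from Henniart.
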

\begin{proof}  Proposition 2.22 of \cite{mybm} implies that 
$$ \dim_{\kappa(\nn)}  M(\Theta)\otimes_{R^{\psi}} \kappa(\nn)= \dim_{\kappa(\nn)} \Hom_K(V, \Pi(\kappa(\nn))).$$
Since $V$ is a locally algebraic representation, 
$$ \Hom_K(V, \Pi(\kappa(\nn)))\cong \Hom_K(V, \Pi(\kappa(\nn))^{\alg}),$$
where the subscript $\alg$ denotes the subspace of locally algebraic vectors. This last subspace is non-zero if and only if 
 $\rho^{\univ}_{\nn}$ is potentially semi-stable (resp. potentially crystalline) of type $(\wt, \tau, \psi)$, in which case it is one dimensional, the 
 argument is identical to the proof  \cite[Prop.4.14]{mybm}, except that, because we assume that $\rho$ is generic, we don't have to consider the nasty 
 cases here. 
\end{proof}

\begin{cor}\label{exist_pst} There exists a reduced, $\OO$-torsion free quotient $R^{\psi}(\wt, \tau)$  of 
$R^{\psi}$, such that a map of $\OO$-algebras $x: R^{\psi}\rightarrow L'$ into a finite field  extension of $L$, factors through  
 $R^{\psi}(\wt, \tau)$ if and only if $\rho^{\univ}_x$ is potentially semi-stable 
 of type $(\wt, \tau, \psi)$. 

Moreover, if $\Theta$ is a $K$-invariant $\OO$-lattice in $\sigma(\wt, \tau)$ and $\mathfrak a$ is the $R^{\psi}$-annihilator of $M(\Theta)$ 
then $R^{\psi}(\wt, \tau)= R^{\psi}/\sqrt{\mathfrak a}$. 
 
 Further, the same result holds if we consider potentially crystalline instead of potentially semi-stable representations with $\sigma^{\cris}(\wt, \tau)$
 instead of $\sigma(\wt, \tau)$.
 \end{cor}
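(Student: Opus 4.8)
The plan is to extract $R^{\psi}(\wt,\tau)$ as a suitable quotient of $R^{\psi}$ by using $M(\Theta)$ as a bridge between the Galois side and the $\GL_2(\QQ_p)$-side, exactly as in \cite{mybm}. First I would set $\mathfrak a$ to be the $R^{\psi}$-annihilator of $M(\Theta)$ and define $R^{\psi}(\wt,\tau):=R^{\psi}/\sqrt{\mathfrak a}$. By Proposition \ref{reduce_cycle}, $M(\Theta)$ is a finitely generated $R^{\psi}$-module which, if non-zero, is Cohen--Macaulay of Krull dimension $2$; in particular $R^{\psi}/\mathfrak a$ has dimension $2$ and every associated prime has dimension $2$, so $\mathfrak a$ has no embedded primes and $R^{\psi}/\sqrt{\mathfrak a}$ is reduced of dimension $2$. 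Since $R^{\psi}$ is $\OO$-torsion free and $\varpi$ is a regular element on the Cohen--Macaulay module $M(\Theta)$ (the regular sequence $\varpi, x$ from the proof of Proposition \ref{reduce_cycle}), one checks that $\varpi$ is not a zero-divisor on $R^{\psi}/\mathfrak a$ and hence $R^{\psi}/\sqrt{\mathfrak a}$ is $\OO$-torsion free as well.

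The next step is to identify the closed points of $\Spec R^{\psi}(\wt,\tau)[1/p]$. A map $x:R^{\psi}\to L'$ with $L'/L$ finite factors through $R^{\psi}/\sqrt{\mathfrak a}$ if and only if the corresponding maximal ideal $\nn$ of $R^{\psi}[1/p]$ lies in the support of $M(\Theta)$ (since $(R^{\psi}/\mathfrak a)[1/p]$ is Jacobson and reduced, its points are exactly the support of $M(\Theta)[1/p]$, and taking $\sqrt{\;}$ does not change the underlying set). By Proposition \ref{support_global}, $\nn$ lies in the support of $M(\Theta)$ if and only if $\rho^{\univ}_{\nn}$ is potentially semi-stable of type $(\wt,\tau,\psi)$. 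This gives the characterization of the points of $R^{\psi}(\wt,\tau)$ claimed in the corollary, and simultaneously shows that $R^{\psi}(\wt,\tau)$ is the unique reduced $\OO$-torsion free quotient of $R^{\psi}$ with this property: any two such quotients have the same closed points on the generic fibre, are reduced, and $\OO$-torsion free, hence the surjection between them (in either direction, after intersecting) is an isomorphism by Jacobson-ness, exactly as in the proof of Corollary \ref{the_biz}.

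Finally, the potentially crystalline case is handled verbatim by replacing $\sigma(\wt,\tau)$ with $\sigma^{\cris}(\wt,\tau)$ throughout: Proposition \ref{reduce_cycle} applies to any $K$-invariant lattice $\Theta$ in a finite-dimensional continuous $K$-representation with central character $\psi$, and Proposition \ref{support_global} already records the potentially crystalline version of the support computation. I expect the only genuinely delicate point to be the passage from ``$\mathfrak a$ has no embedded primes'' to ``$R^{\psi}/\sqrt{\mathfrak a}$ is $\OO$-torsion free'': one must use that $M(\Theta)$ is Cohen--Macaulay of dimension $2$ with $\varpi$ part of a system of parameters, so that $\varpi$ lies in no associated prime of $M(\Theta)$, hence in no minimal prime of $\mathfrak a$, which is what makes $R^{\psi}/\sqrt{\mathfrak a}$ flat over $\OO$; everything else is a routine assembly of the cited propositions.
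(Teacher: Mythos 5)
Your proposal is correct and follows the same route as the paper: the paper's proof is precisely the observation that the support of $M(\Theta)$ is closed in $\Spec R^{\psi}$ (so one takes $R^{\psi}(\wt,\tau)=R^{\psi}/\sqrt{\mathfrak a}$) combined with Proposition \ref{support_global}. Your additional verifications — that $\varpi$ avoids the minimal primes of $\mathfrak a$ because it is $M(\Theta)$-regular, hence $R^{\psi}/\sqrt{\mathfrak a}$ is $\OO$-torsion free, and that the closed points of the generic fibre are detected by Jacobson-ness — are exactly the details the paper leaves implicit.
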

 \begin{proof} Since the support of $M(\Theta)$ is closed in $\Spec R^{\psi}$, the assertion follows from Proposition \ref{support_global}.
 \end{proof}
 
 \begin{cor}\label{cycles_reg} Let $\Theta$ be a $K$-invariant lattice in either $\sigma(\wt, \tau)$ or $\sigma^{\cris}(\wt, \tau)$ and let 
 $\mathfrak a$ be the $R^{\psi}$-annihilator of $M(\Theta)$. Then we have equalities of cycles:
 $$ z_2( R^{\psi}/\mathfrak a)= z_2(M(\Theta)), \quad z_1(R^{\psi}/(\mathfrak a, \varpi))= z_1(M(\Theta)/\varpi).$$
\end{cor}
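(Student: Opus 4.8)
The plan is to reduce both equalities to the claim that $M(\Theta)$ is generically free of rank one over its scheme-theoretic support, that is, $M(\Theta)_{\pp}\cong (R^{\psi}/\mathfrak a)_{\pp}$ for every prime $\pp$ minimal over $\mathfrak a$; the argument is the same for $V=\sigma(\wt,\tau)$ and $V=\sigma^{\cris}(\wt,\tau)$.

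First I would record the structural facts. We may assume $M(\Theta)\neq 0$. By Proposition \ref{reduce_cycle} it is Cohen--Macaulay of Krull dimension $2$ and $\varpi$ is part of a system of parameters for it, hence a nonzerodivisor; consequently $\supp M(\Theta)=V(\mathfrak a)$ is equidimensional of dimension $2$ with no embedded primes (here I use that $R^{\psi}$, being complete local noetherian, is a quotient of a regular ring, so its Cohen--Macaulay modules are unmixed), so the primes $\pp$ with $\dim R^{\psi}/\pp=2$ and $\pp\supseteq\mathfrak a$ are exactly the minimal primes of $\mathfrak a$, and none of them contains $\varpi$. Since $\varpi$ is a nonzerodivisor on $M(\Theta)$, the quotient $R^{\psi}/\mathfrak a$ is $\OO$-torsion free: if $\varpi^{n}r\in\mathfrak a$ then $rM(\Theta)\subseteq M(\Theta)[\varpi^{n}]=0$, so $r\in\mathfrak a$. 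Hence $\varpi$ is a nonzerodivisor on $R^{\psi}/\mathfrak a$ as well, and $R^{\psi}/\mathfrak a$ has equidimensional support of dimension $2$.

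Next comes the generic freeness, which is the one genuine point. Invert $p$ and set $A=(R^{\psi}/\mathfrak a)[1/p]$, $N=M(\Theta)[1/p]$; as $\mathfrak a=\ann_{R^{\psi}}M(\Theta)$, the module $N$ is faithful over $A$. For a maximal ideal $\mm$ of $A$ — equivalently a maximal ideal $\nn$ of $R^{\psi}[1/p]$ lying in $\supp M(\Theta)$ — Proposition \ref{support_global} gives $\dim_{\kappa(\mm)}N\otimes_{A}\kappa(\mm)=\dim_{\kappa(\nn)}M(\Theta)\otimes_{R^{\psi}}\kappa(\nn)=1$, so Nakayama makes $N_{\mm}$ cyclic, and faithfulness upgrades this to $N_{\mm}\cong A_{\mm}$. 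Thus $N$ is locally free of rank one at every maximal ideal of $A$; localising at any prime (each lies under a maximal ideal) yields $N_{\pp}\cong A_{\pp}$ for all primes $\pp$ of $A$. Since the minimal primes of $\mathfrak a$ avoid $\varpi$ they correspond to primes of $A$, so $M(\Theta)_{\pp}\cong (R^{\psi}/\mathfrak a)_{\pp}$, and in particular $\ell_{R^{\psi}_{\pp}}(M(\Theta)_{\pp})=\ell_{R^{\psi}_{\pp}}((R^{\psi}/\mathfrak a)_{\pp})$, for every such $\pp$; for $2$-dimensional primes not containing $\mathfrak a$ both coefficients vanish. Hence $z_{2}(R^{\psi}/\mathfrak a)=z_{2}(M(\Theta))$.

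Finally, the $1$-dimensional identity is formal: reduction modulo the nonzerodivisor $\varpi$ is compatible with cycles. Applying the associativity formula for Hilbert--Samuel multiplicities \cite[\S V]{mult} at a prime $\qq$ with $\dim R^{\psi}/\qq=1$ to a finitely generated $R^{\psi}$-module $C$ of dimension $2$ on which $\varpi$ is a nonzerodivisor, one obtains
\begin{displaymath}
\ell_{R^{\psi}_{\qq}}\bigl((C/\varpi C)_{\qq}\bigr)=\sum_{\pp\subseteq\qq,\ \dim R^{\psi}/\pp=2}\ell_{R^{\psi}_{\pp}}(C_{\pp})\cdot\ell_{R^{\psi}_{\qq}}\bigl((R^{\psi}/\pp)_{\qq}/\varpi\bigr),
\end{displaymath}
where only the $\pp\in\supp C$ (which then avoid $\varpi$) contribute; thus $z_{1}(C/\varpi C)$ depends only on the cycle $z_{2}(C)$. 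Applying this to $C=M(\Theta)$ and to $C=R^{\psi}/\mathfrak a$ and invoking the equality of $2$-cycles just proved gives $z_{1}(M(\Theta)/\varpi)=z_{1}(R^{\psi}/(\mathfrak a,\varpi))$. I expect everything except the generic freeness to be bookkeeping; that step is short once Proposition \ref{support_global} is available, the only real subtlety being the passage from the closed points of $\Spec R^{\psi}[1/p]$, where the fibre dimension is controlled, to the generic points of $\supp M(\Theta)$, which is exactly the local-freeness-at-maximal-ideals argument above.
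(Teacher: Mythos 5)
Your proposal is correct and follows the paper's proof exactly: the paper deduces the $2$-cycle equality from the generic rank-one freeness supplied by Proposition \ref{support_global} (citing \cite[Lem.2.27]{mybm} for the step you prove via Nakayama and the compatibility of annihilators with localization), and the $1$-cycle equality from $\varpi$-regularity via \cite[Prop.2.2.13]{eg}, which is precisely the associativity formula you write out. The only difference is that you supply the proofs of the two cited lemmas rather than invoking them.
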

\begin{proof} The last part of Proposition \ref{support_global} implies that $M(\Theta)$ is generically free of rank $1$. This implies the 
first assertion, see \cite[Lem.2.27]{mybm}. The second follows from the first combined with the fact that $\varpi$ is both $R^{\psi}/\mathfrak a$- and 
$M(\Theta)$-regular, see Proposition 2.2.13 in \cite{eg}.
\end{proof}

\begin{prop}\label{reduced} Let $\mathfrak a$ be the $R^{\psi}$-annihilator 
of $M(\Theta)$, where $\Theta$ is a $K$-invariant $\OO$-lattice in $\sigma(\wt, \tau)$, (resp. $\sigma^{\cris}(\wt, \tau)$). Then 
$R^{\psi}/\mathfrak a$ is reduced. In particular, it is equal to $R^{\psi}(\wt, \tau)$, (resp. $R^{\psi, \cris}(\wt, \tau)$).
\end{prop}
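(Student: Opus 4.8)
The plan is to deduce this from the commutative-algebra criterion \cite[Prop. 2.32]{mybm}, exactly as reducedness of $E/\mathfrak a$ was obtained in the proof of Proposition \ref{twist_and_shout}. If $M(\Theta)=0$ there is nothing to prove (both sides vanish), so assume $M(\Theta)\neq 0$. First I would collect the two structural inputs the criterion requires. By Proposition \ref{reduce_cycle}, $M(\Theta)$ is a Cohen--Macaulay $R^{\psi}$-module of Krull dimension $2$; the proof of that proposition also supplies the regular element $x$ in the maximal ideal of $R^{\psi}$ with $P/xP$ projective over $\OO\br{K}$, which is what makes the Cohen--Macaulay statement work for $p=2,3$ as well. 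Moreover $R^{\psi}$ is reduced and $\OO$-torsion free (recorded in the proof of Proposition \ref{kisin_plus_e}), and $R^{\psi}/\mathfrak a$ is automatically $\OO$-torsion free, since $\varpi r\in\mathfrak a$ forces $rM(\Theta)=0$ by torsion-freeness of $M(\Theta)$.

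The second input is control of $M(\Theta)$ on the generic fibre. By Proposition \ref{support_global}, for every maximal ideal $\nn$ of $R^{\psi}[1/p]$ one has $\dim_{\kappa(\nn)} M(\Theta)\otimes_{R^{\psi}}\kappa(\nn)\leq 1$: it is $1$ precisely when $\rho^{\univ}_{\nn}$ is potentially semi-stable (resp.\ potentially crystalline) of type $(\wt,\tau,\psi)$, i.e.\ on the support of $M(\Theta)$, and $0$ off it. This is the analogue of Proposition \ref{tiresome}(i) in the present setting, and (via \cite[Prop. 2.22]{mybm}) says that $M(\Theta)[1/p]$ is generically free of rank $1$ over $(R^{\psi}/\mathfrak a)[1/p]$, with one-dimensional fibres everywhere on its support. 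If the criterion also calls for a first-order bound (the analogue of Proposition \ref{tiresome}(ii)), it is available here by the same kind of argument, and is in fact cleaner than there: genericity of $\rho$ makes $\soc_K\pi$ multiplicity free (Remark \ref{socle}) and the modules $M(\sigma)$ cyclic with $R^{\psi}/\ann M(\sigma)\cong k\br{x}$ (Proposition \ref{reduce_cycle}), so that the identity $z_1(M(\Theta)/\varpi)=\sum_{\sigma} m_{\sigma}\, z_1(M(\sigma))$ from \eqref{easy_proj} exhibits $z_1(M(\Theta)/\varpi)$ as a sum of pairwise distinct reduced prime cycles.

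With these inputs in place, \cite[Prop. 2.32]{mybm} gives that $R^{\psi}/\mathfrak a$ is reduced, which is the first assertion. For the ``in particular'': Corollary \ref{exist_pst} identifies $R^{\psi}/\sqrt{\mathfrak a}$ with $R^{\psi}(\wt,\tau)$ (resp.\ $R^{\psi,\cris}(\wt,\tau)$); since $R^{\psi}/\mathfrak a$ is now reduced we have $\mathfrak a=\sqrt{\mathfrak a}$, hence $R^{\psi}/\mathfrak a=R^{\psi}(\wt,\tau)$ (resp.\ $R^{\psi,\cris}(\wt,\tau)$).

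The substantive point — and the only real obstacle — is that reducedness of the special fibre is not a formal consequence of Cohen--Macaulayness together with faithfulness of $M(\Theta)$ over $R^{\psi}/\mathfrak a$: it genuinely relies on propagating reducedness from the generic fibre (where it comes from Kisin's theorem that the potentially semi-stable/crystalline deformation rings have formally smooth, in particular regular, generic fibre) across $\Spec R^{\psi}$ down to $\varpi=0$, and this propagation is exactly what \cite[Prop. 2.32]{mybm} packages, using the Cohen--Macaulay property and the fibre-dimension bound. Thus the work is already done upstream: the Cohen--Macaulay input in Proposition \ref{reduce_cycle} and, above all, the fibre-dimension bound in Proposition \ref{support_global}, whose reduction to \cite[Prop. 4.14]{mybm} goes through precisely because $\rho$ is assumed generic, so the exceptional cases arising there do not occur.
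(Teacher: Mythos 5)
Your setup is right — the criterion is the one from \cite{mybm} (the paper invokes Prop.~2.30 there), the Cohen--Macaulay input comes from the proof of Proposition \ref{reduce_cycle}, the fibre-dimension bound $\dim_{\kappa(\nn)}M(\Theta)\otimes\kappa(\nn)\le 1$ is Proposition \ref{support_global}, and the deduction of the ``in particular'' from Corollary \ref{exist_pst} is correct. But there is a genuine gap at the decisive step. The criterion does require the second-order bound, i.e.\ the analogue of Proposition \ref{tiresome}(ii): $\dim_{\kappa(\nn)}\Hom_K(V,\Pi(R^{\psi}_{\nn}/\nn^2 R^{\psi}_{\nn}))\le 2$ for (almost all) $\nn$ in the support. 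A first-order bound alone cannot give reducedness (take $M=A=L[x]/(x^2)$ locally: the fibre is one-dimensional but $A/\ann(M)=A$ is not reduced); it is precisely the tangent-space bound at $\nn$ that forces $(R^{\psi}/\mathfrak a)_{\nn}$ to be a one-dimensional regular, hence reduced, local ring. You assert this bound is ``available here by the same kind of argument, and is in fact cleaner,'' but the argument you then give — multiplicity-freeness of $\soc_K\pi$ and the cycle identity $z_1(M(\Theta)/\varpi)=\sum_\sigma m_\sigma z_1(M(\sigma))$ — concerns the special fibre and says nothing about $\Hom_K(V,\Pi(R^{\psi}_{\nn}/\nn^2))$; moreover the $m_\sigma$ need not be $1$, and even a multiplicity-free special-fibre cycle would not by itself rule out nilpotents in $R^{\psi}/\mathfrak a$ (that direction is exactly the global argument of Remark \ref{remark_reduced_ring}, which needs an input you don't have here).

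The reason the bound does \emph{not} carry over from Proposition \ref{tiresome} is that there $V$ ranges only over the ``capture'' family built from principal-series types containing no special series, for which Berger--Breuil's theorem makes the universal unitary completion of $\Pi(\kappa(\nn))^{\alg}$ irreducible and the $\Ext$-computation via Lemma \ref{Ext_Banach} goes through. In Proposition \ref{reduced} the type $\tau$ is arbitrary. The paper's proof therefore has to show that the space of extensions $0\to\Pi(\kappa(\nn))\to\mathrm{B}\to\Pi(\kappa(\nn))\to 0$ for which $\mathrm{B}^{\alg}\to\Pi(\kappa(\nn))^{\alg}$ is surjective is at most one-dimensional, and this splits into two genuinely different cases: when $\tau$ does not extend to an irreducible representation of $W_{\Qp}$ one uses the universal-unitary-completion input (with an extra $p=2$ fix via \cite[Prop.\ 4.2]{except} and \cite[Prop.\ 6.13]{blocks} for the case $\Pi(\kappa(\nn))^{\alg}\cong(\Indu{B}{G}{\chi\otimes\chi|\centerdot|^{-1}})_{\sm}\otimes W$); when $\tau$ does extend to an irreducible representation of $W_{\Qp}$ one needs Dospinescu's theorem on locally algebraic vectors in extensions, whose hypotheses ($\det\cV(\Pi)=\psi\varepsilon$ and $\dim_L\Ext^1_{G,\psi}(\Pi,\Pi)=3$) are supplied by Corollaries \ref{lan_bij} and \ref{dim_ext}. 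This case analysis is the substance of the proposition and is entirely missing from your proposal. Relatedly, your closing remark that reducedness is propagated from regularity of the generic fibre of the potentially semi-stable deformation ring is not how the argument runs — the criterion works purely on the module side, through the two pointwise bounds just described.
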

\begin{proof} Proposition 2.30 of \cite{mybm} together with the last part of Proposition \ref{support_global} says that it is enough to show that for almost 
all $\nn \in \mSpec R^{\psi}[1/p]$, lying in the support of $M(\Theta)$, $\dim_{\kappa(\nn)} \Hom_K( V, \Pi( R^{\psi}_{\nn}/ \nn^2 R^{\psi}_{\nn}))\le 2$. 
This amounts to checking that the subspace $\mathcal E$ of $\Ext^1_G(\Pi(\kappa(\nn)), \Pi(\kappa(\nn)))$ generated by the extensions of admissible 
unitary $\kappa(\nn)$-Banach spaces $0\rightarrow \Pi(\kappa(\nn))\rightarrow \mathrm{B}\rightarrow \Pi(\kappa(\nn))\rightarrow 0$, such that the induced map between the subspaces of locally algebraic vectors
$\mathrm{B}^{\alg}\rightarrow \Pi(\kappa(\nn))^{\alg}$ is surjective, is at most one dimensional, see the proof of \cite[Cor.4.21]{mybm}.

If $\tau$ does not extend to an irreducible representation of $W_{\Qp}$ then the proof given \cite[Thm. 4.19]{mybm} carries over: the key input into that proof  is that 
the closure of $\Pi(\kappa(\nn))^{\alg}$ in $\Pi(\kappa(\nn))$ is equal to the universal unitary completion of $\Pi(\kappa(\nn))^{\alg}$ and the only case of this fact not covered
by the references given in the proof of \cite[Thm. 4.19]{mybm} is when $p=2$ and $\Pi(\kappa(\nn))^{\alg} \cong (\Indu{B}{G}{\chi\otimes\chi |\centerdot|^{-1}})_{\sm}\otimes W$, 
where $W$ is an algebraic representation of $G$ and $\chi: \Qp^{\times}\rightarrow \kappa(\nn)^{\times}$ is a smooth character. However, in that case it is explained 
in the second paragraph of the proof of \cite[Proposition 6.13]{blocks} how to deduce from  \cite[Prop.4.2]{except} that any $G$-invariant $\OO$-lattice in $\Pi(\kappa(\nn))^{\alg}$ 
is a finitely generated $\OO[G]$-module, which provides the key input also in this case. We note that the assumption $p>2$ in \cite[\S4]{except} is only used to apply 
the results of Berger--Li--Zhu, in particular the proof of \cite[Prop.4.2]{except} works for all $p$.

If $\tau$ extends
to an irreducible representation of $W_{\Qp}$ then the assertion is proved by Dospinescu in \cite{dospinescu}. Although\footnote{I thank G. Dospinescu for pointing this out to me.} 
the main theorem of \cite{dospinescu} is stated under the assumption $p\ge 5$, the argument only uses that 
 if we let $\Pi=\Pi(\kappa(\nn))$ then $\det \cV(\Pi)=\psi\varepsilon$ and $\dim_L \Ext^1_{G, \psi}(\Pi, \Pi)=3$. This 
 is given by Corollaries \ref{lan_bij}, \ref{dim_ext}.
\end{proof}

\begin{thm}\label{weak_bm} There is a  finite set  $\{\mathcal C_{\sigma}\}_{\sigma} \subset \mathcal Z_{1}(R^{\psi}/\varpi)$, indexed by the irreducible smooth 
$k$-representations $\sigma$ of $K$,  such that for all
 $p$-adic Hodge types $(\wt, \tau)$ we have equalities 
$$ z_{1}( R^{\psi}(\wt, \tau)/\varpi)=\sum_{\sigma} m_{\sigma}(\wt, \tau) \mathcal C_{\sigma},$$
 $$ z_{1}( R^{\psi, \mathrm{cr}}(\wt, \tau)/\varpi)=\sum_{\sigma} m_{\sigma}^{\cris}(\wt, \tau) \mathcal C_{ \sigma}.$$
The cycle  $\mathcal C_{\sigma}$  is non-zero if and only if $\Hom_K(\sigma, \pi)\neq 0$, in which case its Hilbert--Samuel multiplicity is equal to $1$.
 \end{thm}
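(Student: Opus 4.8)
The plan is to define the cycles $\mathcal{C}_\sigma := z_1(M(\sigma))$, where $\sigma$ runs over the irreducible smooth $k$-representations of $K$ with central character $\psi$ (setting $\mathcal{C}_\sigma = 0$ for all others). These are visibly elements of $\mathcal{Z}_1(R^\psi/\varpi)$, and by the last part of Proposition \ref{reduce_cycle} we have $\mathcal{C}_\sigma \neq 0$ if and only if $\Hom_K(\sigma, \pi) \neq 0$, in which case $M(\sigma)$ is a cyclic module with $R^\psi/\mathfrak{a} \cong k\llbracket x \rrbracket$, so the Hilbert--Samuel multiplicity of $\mathcal{C}_\sigma$ is $1$. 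This establishes the final sentence of the statement, and the indexing set is finite since there are only finitely many Serre weights with a given central character. It remains to prove the two displayed cycle identities.

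First I would treat $\sigma(\wt, \tau)$. Fix a $K$-invariant $\OO$-lattice $\Theta$ in $V = \sigma(\wt, \tau)$ and let $\mathfrak{a}$ be the $R^\psi$-annihilator of $M(\Theta)$. By Proposition \ref{reduced} we have $R^\psi/\mathfrak{a} = R^\psi(\wt, \tau)$, and $\varpi$ is regular on both $R^\psi/\mathfrak{a}$ and $M(\Theta)$ by Proposition \ref{reduce_cycle} (the latter being Cohen--Macaulay of dimension $2$ when nonzero). Corollary \ref{cycles_reg} then gives
$$ z_1\bigl(R^\psi(\wt,\tau)/\varpi\bigr) = z_1\bigl(R^\psi/(\mathfrak{a},\varpi)\bigr) = z_1\bigl(M(\Theta)/\varpi\bigr). $$
Now I apply the cycle identity \eqref{easy_proj} of Proposition \ref{reduce_cycle}: since $\Theta \otimes_\OO k$ is a finite length smooth $k$-representation of $K$ whose semisimplification is, by definition, $\overline{\sigma(\wt,\tau)}$, and the multiplicity of $\sigma$ in it is exactly $m_\sigma(\wt,\tau)$, we get
$$ z_1\bigl(M(\Theta)/\varpi\bigr) = \sum_\sigma m_\sigma(\wt,\tau)\, z_1(M(\sigma)) = \sum_\sigma m_\sigma(\wt,\tau)\, \mathcal{C}_\sigma. $$
Combining these two displays gives the first formula. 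The argument for $\sigma^{\cris}(\wt,\tau)$ is word-for-word the same, using the potentially crystalline versions of Propositions \ref{support_global}, \ref{reduced} and Corollary \ref{cycles_reg}, and the fact that the multiplicity of $\sigma$ in $\overline{\sigma^{\cris}(\wt,\tau)}$ is $m^{\cris}_\sigma(\wt,\tau)$.

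The substantive inputs have all been assembled in the preceding propositions, so the remaining work here is essentially bookkeeping; the one point that needs a word of care is the independence of $\overline{\sigma(\wt,\tau)}$ (and hence of the multiplicities $m_\sigma(\wt,\tau)$) from the choice of lattice $\Theta$ and, when $p=2$, from the choice of $\sigma(\tau)$ — but this was already noted in the discussion preceding Proposition \ref{support_global}, so the cycles $\mathcal{C}_\sigma$ are genuinely well-defined and independent of all auxiliary choices. I do not expect a real obstacle: the only conceptual content is the passage from the $\Ext$-bound $\dim_{\kappa(\nn)}\Hom_K(V,\Pi(R^\psi_\nn/\nn^2)) \le 2$ to reducedness of $R^\psi/\mathfrak{a}$, and that has already been carried out in Proposition \ref{reduced}, which in turn relies on the hard results of Berger--Breuil, Dospinescu, and Corollaries \ref{lan_bij}, \ref{dim_ext}.
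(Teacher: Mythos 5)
Your proposal is correct and follows essentially the same route as the paper: define $\mathcal C_{\sigma}=z_1(M(\sigma))$, identify $z_1(R^{\psi}(\wt,\tau)/\varpi)$ with $z_1(M(\Theta)/\varpi)$ via Proposition \ref{reduced} and Corollary \ref{cycles_reg}, and then decompose via \eqref{easy_proj}, with the non-vanishing and multiplicity-one claims coming from the last part of Proposition \ref{reduce_cycle}. No gaps.
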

\begin{proof} Let $\mathfrak a$ be the $R^{\psi}$-annihilator of $M(\Theta)$, where $\Theta$ is a $K$-invariant $\OO$-lattice
in $\sigma(\wt, \tau)$. Corollary \ref{exist_pst} and Proposition \ref{reduced} imply that 
$$ z_1(R^{\psi}(\wt, \tau)/\varpi)= z_1(R^{\psi}/(\sqrt{\mathfrak a}, \varpi))=z_1(R^{\psi}/(\mathfrak a, \varpi)).$$  
Corollary \ref{cycles_reg} and Proposition \ref{reduce_cycle} 
imply that 
$$ z_1(R^{\psi}/(\mathfrak a, \varpi))=\sum_{\sigma} m_{\sigma}(\wt, \tau) z_1(M(\sigma)).$$
We let $\mathcal C_{\sigma}=z_1(M(\sigma))$. The proof in the potentially crystalline case is the same. 
\end{proof}

\begin{remar}\label{remark_reduced_ring} One may use a global argument to prove Proposition \ref{reduced}, 
without using the results of \cite{dospinescu}. However, one needs to assume that the local residual representation 
can be realized as a restriction to $G_{\Qp}$ of a global modular representation. 

Let $\mathfrak b$ be the kernel $R^{\psi}/\mathfrak a\twoheadrightarrow R^{\psi}/\sqrt{\mathfrak a}$. 
Since $M(\Theta)$ is Cohen--Macaulay, $R^{\psi}/\mathfrak a$ is equidimensional. Thus if $\mathfrak b$ is non-zero 
then it  is a $2$-dimensional $R^{\psi}$-module, and the cycle $z_1(\mathfrak  b/\varpi)$ is non-zero. 
Since $$z_1(R^{\psi}/(\mathfrak a, \varpi))=z_1(R^{\psi}/(\sqrt{\mathfrak a}, \varpi))+ z_1(\mathfrak b/\varpi),$$
if $R^{\psi}/\mathfrak a$ is 
not reduced then we would conclude that $e(R^{\psi}/(\mathfrak a, \varpi))> e(R^{\psi}(\wt, \tau)/\varpi)$.
Since $e(R^{\psi}/(\mathfrak a, \varpi))=e(M(\Theta)/\varpi)=\sum_{\sigma} m_{\sigma}(\wt, \tau) e(\mathcal C_{\sigma})$, in this case we would obtain 
a contradiction to the Breuil--M\'ezard conjecture. 

If the residual representation can be suitably globalized (when $p=2$ this means that it is of the form $\rhobar|_{G_{\Qp}}$, where $\rhobar$ satisfies the assumptions made in \S \ref{subsection_residual}) then a global argument gives an inequality in the opposite direction, thus allowing us to conclude that $R^{\psi}/\mathfrak a$ is reduced. If $p>2$ then such an  argument is made in  \cite[(2.3)]{kisinfm}. If $p=2$ then the same argument can be made using 
the inequality  \eqref{fortyone} in the proof of Proposition \ref{equivalent_cond_new} and the proof of Corollary \ref{modular}.
\end{remar} 

\begin{remar}\label{remark_framed_vs_non} If $R^{ \square}$ is the 
framed deformation ring of $\rho$  and $R$ is the universal deformation ring of $\rho$ then 
$R^{\square}\cong R\br{x_1,x_2, x_3}$. Thus we have a  map of cycle groups
$$ f: \mathcal Z_i(  R)\rightarrow \mathcal Z_{i+3}(R^{\square}), \quad \pp \mapsto \pp\br{x_1, x_2, x_3},$$
 which preserves Hilbert--Samuel multiplicities. The extra variables only keep track of a choice of basis. 
This implies that  if $R^{\psi, \square}(\wt, \tau)$ is the quotient of $R^{\square}$ parameterizing potentially semi-stable framed deformation of type  $(\wt, \tau, \psi)$ then $R^{\psi, \square}(\wt, \tau)\cong R^{\psi}(\wt, \tau)\br{x_1, x_2, x_3}$, so that 
the cycle of $R^{\psi, \square}(\wt, \tau)/\varpi$ is the image of the cycle of  $R^{\psi}(\wt, \tau)/\varpi$ under $f$. 
Using this one may deduce a version of Theorem \ref{weak_bm} for framed deformation rings.  
\end{remar} 

Let $\rho=\bigl( \begin{smallmatrix} \chi_1 & 0 \\ 0 & \chi_2 \end{smallmatrix} \bigr )$, and let $R^{\square}$ be the universal 
framed deformation ring of $\rho$. Let $R^{\psi, \square}(\wt, \tau)$ (resp. $R^{\psi, \square, \cris}(\wt, \tau)$) be the reduced, $\OO$-torsion free 
quotient of $R^{\square}$ parameterizing potentially semi-stable (resp. potentially crystalline) lifts of $p$-adic Hodge type $(\wt, \tau, \psi)$.

\begin{thm}\label{split_bm} There is a  subset $\{\mathcal C_{1, \sigma}, \mathcal C_{2, \sigma}\}_{\sigma}$ of $\mathcal Z_4(R^{\psi, \square}/\varpi)$ indexed by the irreducible smooth $k$-representations $\sigma$ of $K$, such that for all $p$-adic Hodge types $(\wt, \tau)$ we have inequalities 
$$ z_{4}( R^{\psi, \square}(\wt, \tau)/\varpi)= \sum_{\sigma} m_{\sigma}(\wt, \tau) (\mathcal C_{1, \sigma}+ \mathcal C_{2, \sigma}),$$
 $$ z_{4}( R^{\psi, \square, \mathrm{cr}}(\wt, \tau)/\varpi)=\sum_{\sigma} m_{\sigma}^{\cris}(\wt, \tau)(\mathcal C_{1, \sigma}+ \mathcal C_{2, \sigma}).$$
The cycle $\mathcal C_{1, \sigma}$ is non-zero if and only if 
 $\Hom_K(\sigma, (\Indu{B}{G}{\chi_1\otimes \chi_2\omega^{-1}})_{\sm})\neq 0$, the cycle $\mathcal C_{2, \sigma}$ is non-zero if and only if 
 $\Hom_K(\sigma, (\Indu{B}{G}{\chi_2\otimes \chi_1\omega^{-1}})_{\sm})\neq 0$, in which case the Hilbert--Samuel multiplicity is equal to $1$.
\end{thm}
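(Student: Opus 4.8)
The plan is to reduce the statement to Theorem \ref{weak_bm} for the two non-split extensions with semisimplification $\chi_1\oplus\chi_2$, and then to feed in the comparison of framed potentially (semi-)stable deformation rings proved, following Hu--Tan \cite{hu_tan}, in the companion paper \cite{versal}. Set $\rho_1=\bigl(\begin{smallmatrix}\chi_1 & \ast\\ 0 & \chi_2\end{smallmatrix}\bigr)$ and $\rho_2=\bigl(\begin{smallmatrix}\chi_1 & 0\\ \ast & \chi_2\end{smallmatrix}\bigr)$, the two non-split extensions. Since $\chi_1\chi_2^{-1}\neq\Eins,\omega^{\pm1}$, the representation $\rho_1$ satisfies the hypotheses of Theorem \ref{weak_bm} with associated smooth representation $\pi_1=(\Indu{B}{G}{\chi_1\otimes\chi_2\omega^{-1}})_{\sm}$, and, viewing $\rho_2\cong\bigl(\begin{smallmatrix}\chi_2 & \ast\\ 0 & \chi_1\end{smallmatrix}\bigr)$ as a $\gal$-representation, the same theorem applies to $\rho_2$ with $\pi_2=(\Indu{B}{G}{\chi_2\otimes\chi_1\omega^{-1}})_{\sm}$. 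So for $i\in\{1,2\}$ Theorem \ref{weak_bm} furnishes cycles $\{\mathcal C^{(i)}_{\sigma}\}_{\sigma}\subset\mathcal Z_1(R^{\psi}_i/\varpi)$, where $R^{\psi}_i$ denotes the quotient of the universal deformation ring of $\rho_i$ parameterizing deformations with determinant $\psi\varepsilon$, with $\mathcal C^{(i)}_{\sigma}$ nonzero precisely when $\Hom_K(\sigma,\pi_i)\neq0$, in which case it has Hilbert--Samuel multiplicity $1$, such that for every $p$-adic Hodge type $(\wt,\tau)$ one has $z_1(R^{\psi}_i(\wt,\tau)/\varpi)=\sum_{\sigma}m_{\sigma}(\wt,\tau)\,\mathcal C^{(i)}_{\sigma}$ and $z_1(R^{\psi,\cris}_i(\wt,\tau)/\varpi)=\sum_{\sigma}m_{\sigma}^{\cris}(\wt,\tau)\,\mathcal C^{(i)}_{\sigma}$. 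By Remark \ref{remark_framed_vs_non}, $R^{\psi,\square}_i(\wt,\tau)\cong R^{\psi}_i(\wt,\tau)\br{x_1,x_2,x_3}$ and the multiplicity-preserving cycle map $f_i\colon\mathcal Z_1(R^{\psi}_i/\varpi)\to\mathcal Z_4(R^{\psi,\square}_i/\varpi)$ of that remark transports these identities to
$$z_4(R^{\psi,\square}_i(\wt,\tau)/\varpi)=\sum_{\sigma}m_{\sigma}(\wt,\tau)\,f_i(\mathcal C^{(i)}_{\sigma}),\qquad z_4(R^{\psi,\square,\cris}_i(\wt,\tau)/\varpi)=\sum_{\sigma}m_{\sigma}^{\cris}(\wt,\tau)\,f_i(\mathcal C^{(i)}_{\sigma}).$$

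Next I would invoke \cite{versal}: for the split $\rho=\bigl(\begin{smallmatrix}\chi_1 & 0\\ 0 & \chi_2\end{smallmatrix}\bigr)$ there are natural maps of cycle groups $g_i\colon\mathcal Z_4(R^{\psi,\square}_i/\varpi)\to\mathcal Z_4(R^{\psi,\square}/\varpi)$, independent of $(\wt,\tau)$, preserving Hilbert--Samuel multiplicities and nonzeroness of effective cycles, under which
$$z_4(R^{\psi,\square}(\wt,\tau)/\varpi)=g_1\bigl(z_4(R^{\psi,\square}_1(\wt,\tau)/\varpi)\bigr)+g_2\bigl(z_4(R^{\psi,\square}_2(\wt,\tau)/\varpi)\bigr)$$
for every $p$-adic Hodge type $(\wt,\tau)$, and likewise in the potentially crystalline case. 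Granting this, put $\mathcal C_{i,\sigma}:=g_i(f_i(\mathcal C^{(i)}_{\sigma}))\in\mathcal Z_4(R^{\psi,\square}/\varpi)$; this depends only on $i$ and $\sigma$. Combining the two displays gives the desired
$$z_4(R^{\psi,\square}(\wt,\tau)/\varpi)=\sum_{\sigma}m_{\sigma}(\wt,\tau)(\mathcal C_{1,\sigma}+\mathcal C_{2,\sigma}),\qquad z_4(R^{\psi,\square,\cris}(\wt,\tau)/\varpi)=\sum_{\sigma}m_{\sigma}^{\cris}(\wt,\tau)(\mathcal C_{1,\sigma}+\mathcal C_{2,\sigma}),$$
and, since $f_i$ and $g_i$ preserve nonzeroness and Hilbert--Samuel multiplicity, $\mathcal C_{1,\sigma}\neq0$ if and only if $\Hom_K(\sigma,(\Indu{B}{G}{\chi_1\otimes\chi_2\omega^{-1}})_{\sm})\neq0$, in which case its Hilbert--Samuel multiplicity is $1$, and symmetrically for $\mathcal C_{2,\sigma}$ with $(\Indu{B}{G}{\chi_2\otimes\chi_1\omega^{-1}})_{\sm}$.

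The heart of the matter is the input from \cite{versal}. One has to show that, after allowing changes of $\gal$-stable lattice, $\Spec R^{\psi,\square}(\wt,\tau)$ is set-theoretically the union of two closed subsets accounting respectively for the framed potentially semi-stable lifts of type $(\wt,\tau,\psi)$ of $\rho_1$ and of $\rho_2$; that $\varpi$ is a nonzerodivisor on all the rings involved, so that passing to special-fibre cycles is compatible with these identifications (each ring being reduced and $\OO$-flat by Theorem \ref{weak_bm} and its framed analogue in Remark \ref{remark_framed_vs_non}, cf.\ Proposition 2.2.13 of \cite{eg}); and --- the genuinely delicate point --- that the overlap of these two subsets carries no irreducible component of the top dimension $4$ modulo $\varpi$, so that the special-fibre cycles genuinely add. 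The genericity hypothesis $\chi_1\chi_2^{-1}\neq\Eins,\omega^{\pm1}$, which forces the relevant $\Ext^1$-groups between the characters $\chi_1$ and $\chi_2$ to be one dimensional, is precisely what keeps this overlap of smaller dimension; carrying this out is the Hu--Tan argument as adapted in \cite{versal}.
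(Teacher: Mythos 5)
Your proposal follows the same route as the paper: the paper's proof is exactly the reduction to Theorem \ref{weak_bm} for the two non-split extensions $\rho_1$, $\rho_2$ (transported to framed rings via Remark \ref{remark_framed_vs_non}) followed by an appeal to \cite[Thm.\ 7.3, Rem.\ 7.4]{versal} for the additivity of cycles over the split framed deformation ring. You have correctly identified that the genuinely delicate content lives in \cite{versal}, and your outline of what that input must provide matches the paper's use of it.
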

\begin{proof} Given Theorem \ref{weak_bm}, the assertion follows from \cite[Thm.7.3, Rem.7.4]{versal}.
\end{proof}

The following Corollary will be used in the global part of the paper. 

\begin{cor}\label{pot_crys} Assume that $p=2$, $\psi$ is unramified  and $\rho$ is either absolutely irreducible or $\rho^{\mathrm{ss}}=\chi_1\oplus \chi_2$, with 
$\chi_1\neq \chi_2$. If $\wt=(0,1)$ and $\tau=\Eins\oplus \Eins$ then $$R^{\psi, \square, \cris}(\wt, \tau)= R^{\psi, \square}(\wt, \tau).$$

In other words, every semi-stable lift of $\rho$ with Hodge--Tate weights $(0,1)$ is crystalline. 
\end{cor}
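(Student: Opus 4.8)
The plan is to reduce the asserted equality of rings to a statement about individual Galois representations, and then to prove that statement using the classification of two-dimensional semi-stable representations of $\gal$ in the case $p=2$.

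First I would record the canonical surjection $R^{\psi, \square}(\wt, \tau)\twoheadrightarrow R^{\psi, \square, \cris}(\wt, \tau)$, which exists since a potentially crystalline lift of prescribed $p$-adic Hodge type is a fortiori potentially semi-stable of that type. Both rings are reduced and $\OO$-torsion free (as recalled just before the statement), and $R^{\psi, \square}(\wt, \tau)[1/p]$ is a reduced Jacobson ring. Therefore it suffices to prove the inclusion $\mSpec R^{\psi, \square}(\wt, \tau)[1/p]\subseteq \mSpec R^{\psi, \square, \cris}(\wt, \tau)[1/p]$: the kernel of $R^{\psi, \square}(\wt, \tau)[1/p]\twoheadrightarrow R^{\psi, \square, \cris}(\wt, \tau)[1/p]$ would then be contained in every maximal ideal, hence in the nilradical, hence zero, and $\OO$-torsion freeness would promote this to $R^{\psi, \square}(\wt, \tau)=R^{\psi, \square, \cris}(\wt, \tau)$. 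A closed point $x\colon R^{\psi, \square}(\wt, \tau)\to L'$ of the generic fibre, $L'/L$ finite, corresponds to a semi-stable lift $\rho_x\colon \gal\to\GL_2(L')$ of $\rho$ with Hodge--Tate weights $(0,1)$, with determinant of the form $\psi\varepsilon$ for $\psi$ unramified, and with Weil--Deligne representation trivial on inertia because $\tau=\Eins\oplus\Eins$. The point of the proof is to show that every such $\rho_x$ is crystalline.

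So suppose $\rho_x$ is not crystalline. Then its monodromy operator is non-zero, hence its Weil--Deligne representation is an unramified twist of the special representation $\mathrm{sp}(2)$; by Fontaine's classification $\rho_x$ is then, up to twist by an unramified character $\chi$, the unique non-split semi-stable non-crystalline two-dimensional representation of $\gal$ with Hodge--Tate weights $(0,1)$, that is $\rho_x\cong\chi\otimes\bigl(\begin{smallmatrix}\varepsilon & \ast\\ 0 & 1\end{smallmatrix}\bigr)$. Reducing a $\gal$-stable $\OO_{L'}$-lattice modulo the uniformizer and using that for $p=2$ the cyclotomic character is trivial modulo $\varpi$, i.e. $\omega=\Eins$, we obtain $\rho\cong\overline{\rho_x}\cong\bar\chi\otimes\bigl(\begin{smallmatrix}1 & \overline{\ast}\\ 0 & 1\end{smallmatrix}\bigr)$, whence $\rho^{\mathrm{ss}}\cong\bar\chi\oplus\bar\chi$. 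This is incompatible with either hypothesis on $\rho$, that it be absolutely irreducible or that $\rho^{\mathrm{ss}}=\chi_1\oplus\chi_2$ with $\chi_1\neq\chi_2$. Hence $\rho_x$ is crystalline, $x$ factors through $R^{\psi, \square, \cris}(\wt, \tau)$, and we are done. The single non-formal ingredient, which is where I expect the real work to lie and where $p=2$ enters decisively, is the local input just used: that a two-dimensional semi-stable representation of $\gal$ with Hodge--Tate weights $(0,1)$ and non-zero monodromy is an unramified twist of one fixed extension, so that its reduction has semisimplification $\bar\chi\oplus\bar\chi$ once $\omega=\Eins$. One could also deduce the Corollary from the cycle identities of Theorems \ref{weak_bm} and \ref{split_bm} together with the computation that $\overline{\sigma((0,1),\Eins\oplus\Eins)}$ and $\overline{\sigma^{\cris}((0,1),\Eins\oplus\Eins)}$ have the same multiplicities at the Serre weights appearing in $\soc_K\pi$, but the argument above is more direct.
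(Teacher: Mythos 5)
Your argument is correct, but it is not the route the paper takes. You argue directly at closed points of the generic fibre: a semi-stable non-crystalline lift with Hodge--Tate weights $(0,1)$ is, by the classification of rank-$2$ weakly admissible filtered $(\varphi,N)$-modules with $N\neq 0$, an unramified twist of $\bigl(\begin{smallmatrix}\varepsilon&\ast\\0&1\end{smallmatrix}\bigr)$, so the semi-simplification of its reduction is $\bar\chi\omega\oplus\bar\chi=\bar\chi\oplus\bar\chi$ because $\omega=\Eins$ when $p=2$; this is excluded by both hypotheses on $\rho$, and the passage from ``no non-crystalline closed points'' to equality of the reduced $\OO$-flat quotients is the formal Jacobson/torsion-freeness argument you give, which is sound. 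The paper instead deduces the corollary from Theorem \ref{weak_bm}: it first reduces to the non-split, unframed case (via a change of lattice and formal smoothness of framed over unframed rings), observes as in Remark \ref{remark_reduced_ring} that equality of Hilbert--Samuel multiplicities of the two quotients modulo $\varpi$ already forces the surjection $R^{\psi}(\wt,\tau)\twoheadrightarrow R^{\psi,\cris}(\wt,\tau)$ to be an isomorphism, and then reads off that both multiplicities equal $1$ from $\overline{\sigma(\wt,\tau)}\cong\st$, $\overline{\sigma^{\cris}(\wt,\tau)}\cong\Eins$ and $\soc_K\pi\cong\Eins\oplus\st$. Your approach is more elementary and treats the split, non-split and framed cases uniformly, at the cost of importing the classification of semi-stable representations as an external input; the paper's stays entirely inside the Breuil--M\'ezard formalism it has just established, which is the natural order of logic there. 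One small correction to your closing remark: $\overline{\sigma(\wt,\tau)}$ and $\overline{\sigma^{\cris}(\wt,\tau)}$ do \emph{not} have the same multiplicities at the two Serre weights (they are $\st$ and $\Eins$ respectively, so the cycles produced are $\mathcal C_{\st}$ and $\mathcal C_{\Eins}$, which need not coincide); what the paper's argument actually uses is that these two cycles have the same Hilbert--Samuel multiplicity, namely $1$.
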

\begin{proof} It is enough to prove the statement, when  $\rho$ is non-split. Since if the assertion was false in the split case then by
choosing a different lattice in the semi-stable, non-crystalline lift we would also obtain a contradiction in the non-split case. Since
framed deformation rings are formally smooth over the non-framed ones, it is enough to prove that 
$R^{\psi}(\wt, \tau)=R^{\psi, \cris}(\wt, \tau)$. By the same argument as in Remark \ref{remark_reduced_ring} we see that 
it is enough to show that $R^{\psi}(\wt, \tau)/\varpi$ and $R^{\psi, \cris}(\wt, \tau)/\varpi$ have the same cycles (and even the 
equality of Hilbert--Samuel multiplicities will suffice). Since $p=2$ there are only $2$ irreducible smooth $k$-representations of $K$: 
$\Eins$ and $\st$. The $K$-socle of $\pi$ in all the cases is isomorphic to $\Eins \oplus \st$, $\sigma(\wt, \tau)/\varpi \cong \st$ and 
$\sigma^{\cris}(\wt, \tau)/\varpi \cong \Eins$. The assertion follows from Theorem \ref{weak_bm}. 
\end{proof}

 \begin{remar}\label{need_bound} Assume that $p=2$, let $\xi: G_{\Qp}\rightarrow \OO^{\times}$ be unramified and congruent to $\psi$ modulo $\varpi$, and let 
$(\wt, \tau)$ be arbitrary. It follows from Theorem \ref{weak_bm}, Remark \ref{remark_framed_vs_non}, Theorem \ref{split_bm} and the 
proof of Corollary \ref{pot_crys} that 
$$z_4(R^{\psi, \square}(\wt, \tau)/\varpi)=(m_{\Eins}(\wt, \tau)+ m_{\st}(\wt, \tau)) z_4(R^{\xi, \square}((0,1), \Eins \oplus \Eins)/\varpi),$$
where the cycles live in $\mathcal Z_4(R^{\square})$. This equality implies the equality of the respective Hilbert--Samuel multiplicities. 
\end{remar}

\section{Global part}
In the global part of the paper $p=2$, so that $L$ is a finite extension of $\Q_2$ with the ring integers $\OO$ and residue field $k$.
\subsection{Quaternionic modular forms}\label{quaternions}
We follow very closely \cite[(3.1)]{kisin_serre2}. Let $F$ be a totally real field in which $2$ splits completely. Let $D$ be a quaternion algebra with centre $F$, ramified at all the infinite places of $F$ and a set of finite places $\Sigma$, 
which does not contain any primes dividing $2$. We fix a maximal order $\OO_D$ of $D$, and for each finite place $v\not\in \Sigma$ an isomorphism $(\OO_D)_v\cong M_2(\OO_{F_v})$. For each finite place $v$ of $F$ we will denote by 
$\mathbf N(v)$ the order of the residue field at $v$, and by $\varpi_v\in F_v$ a uniformizer. 

Denote by $\AfF\subset \mathbb A_F$ the finite adeles, and let $U=\prod_v U_v$ be a compact open subgroup contained in $\prod_v (\OO_D)_v^{\times}$. We assume that if $v\in \Sigma$ then $U_v=(\OO_D)_v^{\times}$ and if $v\mid 2$ then $U_v=\GL_2(\OO_{F_v})=\GL_2(\ZZ_2)$. Let $A$ be a topological $\ZZ_2$-algebra. For each $v\mid  2$, we fix a continuous representation $\sigma_v : U_v \rightarrow \Aut(W_{\sigma_v})$ on a finite free $A$-module. Write $W_{\sigma} = \otimes_{v\mid 2,A} W_{\sigma_v}$ and denote by $\sigma:\prod_{v\mid 2} U_v
\rightarrow \Aut(W_{\sigma})$ the corresponding representation. We regard $\sigma$ as being a representation of $U$ by letting $U_v$ act trivially if $v\nmid  2$.
Finally, assume there exists a continuous character $\psi: (\AfF )^{\times}/F^{\times}\rightarrow A^{\times}$ such that for any place $v$ of $F$, the action of $U_v \cap \OO_{F_v}^{\times}$ on
$\sigma$  is given by multiplication by $\psi$.  We extend the action of $U$ on $W_{\sigma}$ to $U(\AfF )^{\times}$, by letting $(\AfF )^{\times}$ act via $\psi$.

Let $S_{\sigma, \psi}(U,A)$ denote the set of continuous functions 
$$f : D^{\times} \backslash( D \otimes_F  \AfF )^{\times} \rightarrow W_{\sigma}$$
such that for $g\in(D\otimes_F \AfF)^{\times}$ we have $f(gu)=\sigma (u)^{-1} f(g)$ for $u\in U$  and $f(gz)=
\psi^{-1}(z)f(g)$ for $z\in (\AfF )^{\times}$.  If we write $(D \otimes_F \AfF )^{\times} = \coprod_{i\in I} D^{\times}t_i U(\AfF )^{\times}$ for some $t_i \in  (D \otimes_F \AfF )^{\times}$ and some finite index set $I$, then we have an isomorphism  of $A$-modules: 
\begin{equation}\label{auto_form} 
 S_{\sigma, \psi}(U, A)\overset{\cong}{\longrightarrow} \bigoplus_{i\in I} W_{\sigma}^{(U (\AfF)^{\times}\cap t_i^{-1}D^{\times} t_i)/F^{\times}}, \quad f\mapsto (f(t_i))_{i\in I}.
 \end{equation}
 
  \begin{lem}\label{order} Let $U_{\max}=\prod_v \OO_{D_v}^{\times}$, where the product is taken over all finite places of $F$, and let $t\in (D\otimes_F \AfF)^{\times}$. Then the 
 group $(U_{\max}(\AfF)^{\times} \cap t D^{\times}t^{-1})/F^{\times}$ is finite and there is an integer $N$, independent of $t$, such that its order divides $N$.
 \end{lem}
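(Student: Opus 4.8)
\emph{Proof sketch.} The plan is to first turn the group in the statement into a subgroup of the \emph{fixed} group $D^\times/F^\times$, and then to bound the order of finite subgroups of $D^\times/F^\times$ uniformly. Conjugation by $t^{-1}$ carries $U_{\max}(\AfF)^\times\cap tD^\times t^{-1}$ isomorphically onto $\Lambda_t:=D^\times\cap t^{-1}U_{\max}(\AfF)^\times t$, a subgroup of $D^\times$ containing $F^\times$ (note $F^\times\subseteq(\AfF)^\times\subseteq U_{\max}(\AfF)^\times$), and since $F^\times$ is central this identifies $\bigl(U_{\max}(\AfF)^\times\cap tD^\times t^{-1}\bigr)/F^\times$ with $\Lambda_t/F^\times\le D^\times/F^\times$. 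So it suffices to show each $\Lambda_t/F^\times$ is finite and that the orders of such subgroups of $D^\times/F^\times$ are bounded independently of $t$.

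For finiteness: since $F$ is totally real and $D$ is ramified at every infinite place, the set of ramified places has even cardinality and is nonempty, so $D$ is a division algebra, and $D_v^\times/F_v^\times$ is compact for each $v\mid\infty$. The group $D^\times/F^\times$ sits as a discrete subgroup of $(D\otimes_F\mathbb A_F)^\times/\mathbb A_F^\times$, and under this embedding $\Lambda_t/F^\times$ lands inside the compact subgroup $\bigl(t^{-1}U_{\max}(\AfF)^\times t\times\prod_{v\mid\infty}D_v^\times\bigr)/\mathbb A_F^\times$ — the conditions cutting out $\Lambda_t$ are imposed only at the finite places, and $U_{\max}$ is compact. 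A discrete subgroup of a compact group is finite.

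For the uniform bound, one bounds the order of an arbitrary finite subgroup $\Gamma\le D^\times/F^\times$. If $\gamma\in D^\times$ has order $d>1$ in $D^\times/F^\times$, then $F[\gamma]$ is a commutative subalgebra of the division algebra $D$, hence a subfield $L$ with $[L:F]\mid 2$, and $d>1$ forces $[L:F]=2$; writing $\sigma$ for the nontrivial automorphism of $L/F$, the element $\gamma/\sigma(\gamma)\in L^\times$ satisfies $(\gamma/\sigma(\gamma))^e=1\iff\gamma^e\in F^\times$, so it is a \emph{primitive} $d$-th root of unity in $L$. Hence $\mathbb Q(\zeta_d)\subseteq L$ and $\varphi(d)\mid[L:\mathbb Q]=2[F:\mathbb Q]$, so $d$ divides $N_0:=\operatorname{lcm}\{d:\varphi(d)\le 2[F:\mathbb Q]\}$, a number depending only on $F$. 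Thus every element of $D^\times/F^\times$ of finite order has order dividing $N_0$. Finally, $D^\times/F^\times$ embeds in $\mathrm{PGL}_2(\mathbb C)$ (via $D\hookrightarrow M_2(\overline{\mathbb Q})\hookrightarrow M_2(\mathbb C)$), and a finite subgroup of $\mathrm{PGL}_2(\mathbb C)$ is cyclic, dihedral, $A_4$, $S_4$ or $A_5$; if all its elements have order dividing $N_0$, its order is at most $M:=\max(2N_0,60)$. Therefore $|\Lambda_t/F^\times|\le M$ for all $t$, and $N:=M!$ does the job.

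The soft part is the finiteness of each individual group (discrete in compact); the real content, and the main obstacle, is making the bound \emph{uniform} in $t$, which forces one to use the arithmetic input — quadratic subfields of the quaternion division algebra and the cyclotomic bound $\varphi(d)\le 2[F:\mathbb Q]$ — together with the classification of finite subgroups of $\mathrm{PGL}_2(\mathbb C)$. One should also be a little careful that the conditions defining $\Lambda_t$ live purely at the finite places, so that the compactness at infinity is genuinely available, and that all the $\Lambda_t/F^\times$ are compared inside one and the same group $D^\times/F^\times$.
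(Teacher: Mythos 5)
Your proof is correct, and it is more self-contained than the paper's, which offers no argument at all for this lemma and simply defers to \S 7.2 of [KW] and [Taylor, Lemma 1.1]. The argument in those references runs through the same quadratic-subfield computation that the paper redoes explicitly in the proof of Lemma \ref{find_prime}: for $u$ in the group but not in $F^{\times}$, $F[u]$ is a quadratic field $L$, and $u/u'=h^2/\Nm(h)$ lies in $U_{\max}\cap L^{\times}$, so one is reduced to bounding roots of unity (equivalently, norm-one units) in quadratic extensions of $F$ by the cyclotomic estimate $\varphi(d)\le 2[F:\QQ]$; finiteness and the uniform divisibility come out together. You instead separate the two halves: finiteness follows from discreteness of $D^{\times}/F^{\times}$ inside the compact group $\bigl(t^{-1}U_{\max}(\AfF)^{\times}t\times\prod_{v\mid\infty}D_v^{\times}\bigr)/\mathbb{A}_F^{\times}$ (this is where total definiteness and $D$ being a division algebra enter), while the uniform bound uses the same cyclotomic estimate, but only to bound \emph{element orders}, and then invokes the classification of finite subgroups of $\mathrm{PGL}_2(\mathbb{C})$ to pass from bounded element orders to a bounded group order --- a step the classical route avoids by bounding the group directly. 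What your version buys is the stronger and cleaner statement that \emph{every} finite subgroup of $D^{\times}/F^{\times}$ has order dividing a single $N$ depending only on $F$, together with the explicit observation that the conditions cutting out $\Lambda_t$ live purely at the finite places; the facts you leave implicit (discreteness of $D^{\times}$ modulo centre in the adelic group modulo centre, compactness of $D_v^{\times}/F_v^{\times}$ at the ramified archimedean places) are standard, so there is no gap.
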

 \begin{proof} This is explained in \S 7.2 of \cite{KW}, see also \cite[Lemma 1.1]{taylor_deg2}.
 \end{proof} 
  
  I thank Mark Kisin for explaining the proof of the following Lemma to me. 

 \begin{lem}\label{find_prime} Let $v_1$ be  a finite place of $F$, such that $D$ splits at $v_1$, and $v_1$ does not divide $2N$, where $N$ is the integer defined in Lemma \ref{order}. 
 Let $U=\prod_v U_v$ be a subgroup of $(D\otimes_F \AfF)^{\times}$, such  that $U_v= \OO_{D_v}^{\times}$ if $v\neq v_1$ and $U_{v_1}$ is the subgroup of upper triangular, 
 unipotent matrices modulo $\varpi_{v_1}$. Then 
 \begin{equation}\label{no_isotropy}
 (U(\AfF)^{\times} \cap t D^{\times}t^{-1})/F^{\times}=1, \quad \forall t\in (D\otimes_F \AfF)^{\times}.
 \end{equation}
 \end{lem}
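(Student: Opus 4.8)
Fix $t\in(D\otimes_F\AfF)^\times$ and an element $w=t\gamma t^{-1}$ of $U(\AfF)^\times\cap tD^\times t^{-1}$ with $\gamma\in D^\times$; the plan is to show directly that $\gamma\in F^\times$. First I would reduce to Lemma \ref{order}: since $U_v=\OO_{D_v}^\times$ for $v\neq v_1$ and $U_{v_1}$ is a congruence subgroup of $\GL_2(\OO_{F_{v_1}})=(\OO_D)_{v_1}^\times$ (here $v_1\notin\Sigma$ as $D$ splits there), we have $U\subseteq U_{\max}$, so $(U(\AfF)^\times\cap tD^\times t^{-1})/F^\times$ injects into $(U_{\max}(\AfF)^\times\cap tD^\times t^{-1})/F^\times$, whose order divides $N$. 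Hence $w^N\in F^\times$, and as $F^\times$ is central, $\gamma^N\in F^\times$; write $a:=\gamma^N$.

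Next I would argue by contradiction, assuming $\gamma\notin F^\times$. Because $D$ is ramified at the (nonempty) set of infinite places of the totally real field $F$, it is a division algebra, so $L:=F(\gamma)$ is a field, necessarily of degree $2$ over $F$; let $\sigma$ denote the nontrivial element of $\Gal(L/F)$. Setting $\zeta:=\gamma/\sigma(\gamma)\in L^\times$, I would observe $\zeta^N=\gamma^N/\sigma(\gamma^N)=a/a=1$, so $\zeta\in\mu_N(L)$, while $\zeta\neq 1$ since $\gamma\notin F=L^\sigma$; thus $\zeta$ is a nontrivial $N$-th root of unity.

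The heart of the argument is local at $v_1$. Since $D$ splits at $v_1$, I would use the fixed identification $(\OO_D)_{v_1}\cong M_2(\OO_{F_{v_1}})$ and write $\gamma_{v_1},t_{v_1}$ for the images in $\GL_2(F_{v_1})$. The hypothesis $w\in U(\AfF)^\times=U\cdot(\AfF)^\times$ gives $t_{v_1}\gamma_{v_1}t_{v_1}^{-1}=uz$ with $u\in U_{v_1}\subseteq\GL_2(\OO_{F_{v_1}})$ and $z\in F_{v_1}^\times$. Raising to the $N$-th power yields $u^N=az^{-N}$, a scalar matrix lying in $\GL_2(\OO_{F_{v_1}})$. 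Reducing modulo $\varpi_{v_1}$, $\overline u$ is upper-triangular unipotent while $\overline u^N$ is scalar, so $\overline u^N=1$; since the residue characteristic $\ell$ of $v_1$ does not divide $2N$, writing $\overline u=\bigl(\begin{smallmatrix}1&x\\0&1\end{smallmatrix}\bigr)$ forces $Nx=0$, hence $x=0$ and $\overline u=1$. Consequently the characteristic polynomial of $u$ lies in $\OO_{F_{v_1}}[X]$ and is $\equiv(X-1)^2$, so its roots $\alpha,\beta$ are units in the integral closure of $\OO_{F_{v_1}}$ with $\alpha\equiv\beta\equiv 1$ modulo the maximal ideal $\mathfrak m$. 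On the other hand the eigenvalues of $\gamma_{v_1}=t_{v_1}^{-1}(uz)t_{v_1}$ are $z\alpha,z\beta$, and they equal $\iota_1(\gamma),\iota_2(\gamma)$ for the two $F_{v_1}$-algebra embeddings $\iota_1,\iota_2\colon L\otimes_F F_{v_1}\to\overline{F_{v_1}}$, which satisfy $\iota_2=\iota_1\circ\sigma$. Hence
$$\iota_1(\zeta)=\iota_1(\gamma)/\iota_1(\sigma(\gamma))=\iota_1(\gamma)/\iota_2(\gamma)=\alpha/\beta\equiv 1\pmod{\mathfrak m}.$$
But $\iota_1(\zeta)$ is a nontrivial root of unity of order dividing $N$, hence of order prime to $\ell$, and reduction modulo $\mathfrak m$ is injective on $\mu_N(\overline{F_{v_1}})$ (as $X^N-1$ is separable mod $\mathfrak m$), so $\iota_1(\zeta)\not\equiv 1$ — a contradiction. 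Therefore $\gamma\in F^\times$.

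The only genuinely delicate step I expect is the local comparison at $v_1$: one must carefully match the two eigenvalues of $\gamma_{v_1}$ with the two embeddings of $L$ at $v_1$ and then play the resulting element $\iota_1(\zeta)$, which is simultaneously a root of unity of order prime to $\ell$ and congruent to $1$, against itself. Everything else — the inclusion $U\subseteq U_{\max}$ feeding into Lemma \ref{order}, the fact that $D$ is a division algebra, and the elementary statement that an upper-triangular unipotent matrix over $k(v_1)$ with scalar $N$-th power (and $\ell\nmid N$) is trivial — is routine bookkeeping.
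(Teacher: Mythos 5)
Your proof is correct. The germ of the argument is the same as the paper's: reduce to Lemma \ref{order} to get $\gamma^N\in F^{\times}$, form the element $\gamma/\sigma(\gamma)$, observe it is a nontrivial $N$-th root of unity, and derive a contradiction at $v_1$ from $\ell\nmid N$. Where you diverge is in how the contradiction at $v_1$ is extracted. The paper stays global and adelic: writing $u=t\gamma t^{-1}=hg$ with $h\in U$, $g\in(\AfF)^{\times}$, it notes that $w:=u/u'=u^2/\Nm(u)=h^2/\Nm(h)$ lies in $U\cap tD^{\times}t^{-1}$ and satisfies $w^N=1$; since $U_{v_1}$ is a pro-$\ell$ group and $\ell\nmid N$, the $v_1$-component of $w$ is trivial, and injectivity of $D\rightarrow D_{v_1}$ forces $w=1$, i.e.\ $u=u'$. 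You instead unwind the $v_1$-component by hand: you show the unipotent part $u$ of $t_{v_1}\gamma_{v_1}t_{v_1}^{-1}$ reduces to the identity, deduce that both eigenvalues of $\gamma_{v_1}$ are congruent after dividing by $z$, identify the ratio of eigenvalues with $\iota_1(\zeta)$, and invoke injectivity of reduction on $\mu_N$ in residue characteristic $\ell$. Both routes are sound; the paper's observation that $u/u'$ actually lies in $U$ (because $\Nm(h)$ is a unit congruent to $1$ at $v_1$) makes the local step a one-liner about torsion in pro-$\ell$ groups, whereas your eigenvalue bookkeeping is more computational but has the mild advantage of not needing to check that the auxiliary element lands back in $U$, nor that $U_{v_1}$ is pro-$\ell$.
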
 
 \begin{proof}  Let $u\in (U(\AfF)^{\times} \cap t D^{\times}t^{-1})$, such that $u\not\in F^{\times}$. Then the $F$-subalgebra  $F[u]$ of $t Dt^{-1}$ is a 
 quadratic field extension of $F$. Let $u'$ be the conjugate of $u$ over $F$. Then $u' = \Nm(u)/u$, where $\Nm$ is  the reduced norm.
Consider $w = u/u' = u^2/\Nm(u)$. Write $u = hg$ with $h \in U$ and $g\in (\AfF)^{\times}$. Then $\Nm(g)=g^2$ and so 
$w = u/u' = h^2/\Nm(h)$. Thus $w$ is in $U$ and also in $tD^{\times}t^{-1}$. 

Since $(U(\AfF)^{\times} \cap t D^{\times}t^{-1})/F^{\times}$ is a subgroup of $(U_{\max}(\AfF)^{\times} \cap t D^{\times}t^{-1})/F^{\times}$, 
$u^N$ is in $F^{\times}$ and hence $w^N=u^N/ (u')^N=1$. Let $\ell$ be the prime dividing $\mathbf N(v_1)$. Since $U_{v_1}$ is a pro-$\ell$ group and $\ell$ 
does not divide $N$, the image of $w$ under the projection $U\rightarrow U_{v_1}$ is equal to $1$. Since for every $v$ the map $D\rightarrow D_v$
is injective, we conclude that $w=1$, which implies that $u\in F$. 
\end{proof} 

If \eqref{no_isotropy} holds then it follows from \eqref{auto_form} that $\sigma\mapsto S_{\sigma, \psi}(U, A)$
defines an exact functor from the category of continuous representations of $U$ on finitely generated $A$-modules, on which $U_v$ for
$v\nmid 2$ acts trivially and
$U\cap (\AfF)^{\times}$ acts by $\psi$,  to the category of finitely generated $A$-modules. 

Let $S$ be  a finite set of places of $F$ containing $\Sigma$, all the places above 
$2$, all the infinite places and all the places $v$, where $U_v$ is not maximal. Let $\TT^{\univ}_{S, A}=A[ T_v, S_v]_{v\not\in S}$ 
be a commutative polynomial ring in the indicated formal variables. We let $( D \otimes_F  \AfF )^{\times}$ act on the space of continuous $W_{\sigma}$-valued function on $( D \otimes_F  \AfF )^{\times}$ by right translations, $(h f)(g):= f(gh)$. Then 
$S_{\sigma, \psi}(U, A)$ becomes a $\TT^{\univ}_{S, A}$-module with $S_v$ acting via the double coset 
$U_v \bigl ( \begin{smallmatrix} \varpi_v & 0 \\ 0 & \varpi_v\end{smallmatrix}\bigr) U_v$ and $T_v$ acting via the double coset 
$U_v \bigl ( \begin{smallmatrix} \varpi_v & 0 \\ 0 & 1\end{smallmatrix}\bigr) U_v$. We write $\TT_{\sigma, \psi}(U, A)$ or 
$\TT_{\sigma, \psi}(U)$ for the image of $\TT^{\univ}_{S, A}$ in the endomorphisms of $S_{\sigma, \psi}(U, A)$.

\subsection{Residual Galois representation}\label{subsection_residual} Keeping the notation of the previous section we fix an algebraic closure $\overline{F}$ of $F$ and let $G_{F, S}$ be the Galois group of the maximal extension of $F$ in $\overline{F}$ which is unramified outside $S$.  We view $\psi$ as a character of $G_{F, S}$ via global class field theory, normalized so that uniformizers are mapped to geometric Frobenii.  Let $\chi_{\cyc}: G_{F, S}\rightarrow \OO^{\times}$ be the global $2$-adic cyclotomic character. We note that $\chi_{\cyc}$ is trivial modulo $\varpi$. For each place $v$ of $F$, including the infinite places,  we fix an embedding $\overline{F}\hookrightarrow \overline{F}_v$. This induces a continuous homomorphism  of  Galois groups $G_{F_v}:=\Gal(\overline{F}_v/F_v)\rightarrow G_{F, S}$. We fix a continuous representation:
$$ \rhobar: G_{F, S}\rightarrow \GL_2(k)$$
and assume that the following conditions hold:
\begin{itemize}
  \item the image of $\rhobar$ is non-solvable;
 \item $\rhobar$ is unramified at all finite places $v\nmid  2$;
 \item if $v\in S$ is a finite place, $v\not \in \Sigma$, $v\nmid  2$ then the eigenvalues of $\rhobar(\Frob_v)$ are distinct;
 \item if $v\in \Sigma$ then the   eigenvalues of $\rhobar(\Frob_v)$ are equal;
 \item  $\det \rhobar \equiv \psi \chi_\cyc \pmod{\varpi}$;
 \item if $v\in S$ is a finite place, $v\not \in \Sigma$, $v\nmid  2$ then 
 $$U_v=\{g\in \GL_2(\OO_{F_v}): g \equiv \bigl (\begin{smallmatrix} 1 & \ast\\ 0 & 1 \end{smallmatrix}\bigr) \pmod{\varpi_v}\}$$
and at least one such $v$ does not divide $2N$, so that the condition of Lemma \ref{find_prime} is satisfied.  
\end{itemize}

\subsubsection{Local deformation rings.}
We fix a basis of the underlying vector space $V_k$ of $\rhobar$. For each $v\in S$ let $R^\square_v$ be the framed deformation ring of $\rhobar|_{G_{F_v}}$ and let $R^{\psi, \square}_v$ be the quotient of $R^\square_v$ parameterizing lifts with determinant $\psi \chi_{\cyc}$. We will now introduce some quotients of $R^{\psi, \square}_v$. 

For $v\mid 2$ let $\tau_v$ be a $2$-dimensional representation of the inertia group $I_v$ with an open kernel, and let $\wt_v=(a_v, b_v)$ be a pair of integers with $b_v> a_v$. Let $\sigma(\tau_v)$ be any absolutely irreducible representation of $U_v=\GL_2(\ZZ_2)$ with the property that for all irreducible  infinite dimensional smooth representations $\pi$ of $\GL_2(\QQ_2)$, $\Hom_{U_v}(\sigma(\tau_v), \pi)\neq 0$
 if and only if the restriction to $I_v$ of the Weil--Deligne representation $\LLL(\pi)$ associated to $\pi$ via the local 
Langlands correspondence is isomorphic to $\tau$.  The existence of such $\sigma(\tau_v)$ is shown in \cite{henniart}, where it is also shown that 
if $\Hom_{U_v}(\sigma(\tau_v), \pi)\neq 0$ then it is one dimensional. We choose a $U_v$-invariant $\OO$-lattice $\sigma(\tau_v)^0$ in $\sigma(\tau_v)$
and let 
\begin{equation}\label{define_sigma_v}
\sigma_v:= \sigma(\tau_v)^0 \otimes_{\OO} \Sym^{b_v-a_v-1} \OO^2 \otimes_{\OO} \dt^{a_v}.
\end{equation} 
We let $R^{\psi, \square}_v(\sigma_v)$ be the reduced, $\OO$-flat quotient of $R^{\psi, \square}_v$ parameterizing potentially semi-stable lifts with Hodge--Tate weights 
$\wt_v$ and inertial type $\tau_v$. This ring is denoted by $R^{\psi, \square}(\wt, \tau)$ in the local part of the paper. 

We similarly define $\sigma^{\cris}(\tau_v)$ by additionally requiring that $\Hom_{U_v}(\sigma^{\cris}(\tau_v), \pi)\neq 0$ if and only if 
the monodromy operator $N$ in $\LLL(\pi)$ is zero and $\LLL(\pi)|_{I_v}\cong \tau_v$. In this case we let 
\begin{equation}\label{define_sigma_cris_v}
\sigma_v:= \sigma^{\cris}(\tau_v)^0 \otimes_{\OO} \Sym^{b_v-a_v-1} \OO^2 \otimes_{\OO} \dt^{a_v}.
\end{equation}
We let $R^{\psi, \square}_v(\sigma_v)$ be the quotient of $R^{\psi, \square}_v$ parameterizing potentially crystalline lifts with Hodge--Tate weights 
$\wt_v$ and inertial type $\tau_v$. This ring is denoted by $R^{\psi,\square,  \cris}(\wt, \tau)$ in the local part of the paper. 

It follows either from the local part of the paper, or from \cite{kisin_pst}, where a more general result is proved, that 
if $R^{\psi, \square}_v(\sigma_v)$ is non-zero then it is equidimensional of Krull dimension $5$. 
Since the residue field of $\ZZ_2$ has $2$ elements, $\sigma(\tau_v)$ need not be unique, see \cite[A.2.6, A.2.7]{henniart}, however the semi--simplification of $\sigma(\tau_v)^0\otimes_\OO k$
is the same in all cases. 
 
 If $v$ is infinite then $R^{\psi, \square}_v$ is a domain of Krull dimension $3$ and $R^{\psi, \square}_v[1/2]$ is regular, 
 \cite[Prop.2.5.6]{kisin_serre2}, \cite[Prop. 3.1]{KW}.
 
 If $v$ is finite, $\rhobar$ is unramified at $v$ and $\rhobar(\Frob_v)$ has distinct Frobenius eigenvalues then  $R^{\psi, \square}_v$
 has Krull dimension $4$ and $R^{\psi, \square}_v[1/2]$ is regular. This follows from \cite[Prop 2.5.4]{kisin_serre2}, where 
 it is shown there that the dimension is $4$ and the irreducible components are regular. Since we assume that  the eigenvalues of  $\rhobar(\Frob_v)$ are distinct, $\rhobar$ cannot have a lift of the form $\gamma \oplus \gamma\chi_{\cyc}$. 
 It follows from the proof of \cite[Prop 2.5.4]{kisin_serre2} that different irreducible components of $R^{\psi, \square}_v[1/2]$ do not intersect. 
 
If $v$ is finite, $\psi$ and $\rhobar$ are  unramified at $v$ and $\rhobar(\Frob_v)$ has equal eigenvalues then 
for an unramified character $\gamma: G_{F_v}\rightarrow \OO^{\times}$ such that $\gamma^2=\psi|_{G_{F_v}}$ we let 
$R^{\psi, \square}_v(\gamma)$ be a reduced $\OO$-torsion free quotient of $R^{\psi, \square}_v$, such that 
if $L'/L$ is a finite extension then a map $x: R^{\psi, \square}_v\rightarrow L'$ factors through $R^{\psi, \square}_v(\gamma)$ if and only if $V_x$ is isomorphic to $\bigl ( \begin{smallmatrix} \gamma\chi_{\cyc} & \ast\\ 0 & \gamma\end{smallmatrix} \bigr )$.   It follows from \cite[Prop. 2.5.2]{kisin_serre2} via  \cite[2.6.6]{kisin_moduli}, \cite[Thm. 3.1]{KW} that $R^{\psi, \square}_v(\gamma)$ is a domain of Krull 
dimension $4$ and $R^{\psi, \square}_v(\gamma)[1/2]$ is regular. If $L$ is large enough then there are precisely two such characters, 
which we denote by $\gamma_1$ and $\gamma_2$. We let 
$\bar{R}^{\psi, \square}_v$ be the image of 
$$ R^{\psi, \square}_v \rightarrow R^{\psi, \square}_v(\gamma_1)[1/2]\times R^{\psi, \square}_{v}(\gamma_2)[1/2].$$
Then $\bar{R}^{\psi, \square}_v$ is a reduced, $\OO$-flat quotient of $R^{\psi, \square}_v$ such that if $L'/L$ is a finite extension then a map $x: R^{\psi, \square}_v\rightarrow L'$ factors through $\bar{R}^{\psi, \square}_v$ if and only if $V_x$ is isomorphic to $\bigl ( \begin{smallmatrix} \gamma\chi_{\cyc} & \ast\\ 0 & \gamma\end{smallmatrix} \bigr )$ for an unramified character $\gamma$. Moreover, 
$$\bar{R}^{\psi, \square}_v[1/2]\cong R^{\psi, \square}_v(\gamma_1)[1/2] \times R^{\psi, \square}_v(\gamma_2)[1/2].$$
Thus $\bar{R}^{\psi, \square}_v[1/2]$ is regular, equidimensional and the Krull dimension of $\bar{R}^{\psi, \square}_v$ is $4$.
 
 We let $R^{\square}_S= \wtimes_{v\in S} R^{\square}_v$, $R^{\psi, \square}_S= \wtimes_{v\in S} R^{\psi, \square}_v$, $\sigma:=\wtimes_{v\mid 2} \sigma_v$ and 
$$ R^{\psi, \square}_S(\sigma):= \wtimes_{v\mid 2} R^{\psi, \square}_v(\sigma_v)\wtimes_{v\in \Sigma} \bar{R}^{\psi, \square}_v
 \wtimes_{v\in S\setminus \Sigma, v\nmid 2\infty} R^{\psi, \square}_v \wtimes_{v\mid \infty} R^{\psi, \square}_v.$$
 It follows from above $R^{\psi, \square}_S(\sigma)$ is equidimensional of Krull dimension equal to 
 \begin{equation}\label{dim_S_loc}
 1+4\sum_{v\mid 2}1 + 3|\Sigma|+ 3( |S|- |\Sigma|- \sum_{v\mid 2} 1- \sum_{v\mid \infty} 1)+ 2 \sum_{v\mid \infty} 1=1+3|S|.
 \end{equation}
 \subsubsection{Global deformation rings.} Since $\rhobar$ is assumed to have non-solvable image, $\rhobar$ is absolutely irreducible.   
 We define $R^{\psi}_{F, S}$ to be the quotient of the universal deformation ring of $\rhobar$, parameterizing deformations with determinant 
 $\psi\chi_{\cyc}$. If $Q$ is a finite set of places of $F$ disjoint from $S$ then we let $S_{Q}=S\cup Q$ and define $R^{\psi}_{F, S_Q}$ in the same way by 
 viewing $\rhobar$ as a representation of $G_{F, S_Q}$.
 
 Denote by $R^{\psi, \square}_{F, S_Q}$ the complete local $\OO$-algebra representing the functor which assigns to an artinian, augmented $\OO$-algebra 
$A$  the set of isomorphism classes of tuples $\{V_A,\beta_w\}_{w\in S}$, where $V_A$ is a deformation of $\rhobar$ to $A$ with determinant $\psi\chi_{\cyc}$ and $\beta_w$ is a lift of a chosen basis of $V_k$ to a basis of $V_A$. The map $\{V_A,\beta_w\}_{w\in S}\mapsto \{V_A, \beta_v\}$
 induces a homomorphism of $\OO$-algebras $R^{\psi, \square}_v\rightarrow R^{\psi, \square}_{F, S_Q}$ for every $v\in S$ and 
 hence a homomorphism of $\OO$-algebras $R^{\psi, \square}_S\rightarrow R^{\psi, \square}_{F, S_Q}$.

 \subsection{Patching}\label{patching}
 
 For each $n\ge 1$ let $Q_n$ be the set of places of $F$ disjoint from $S$, as in \cite[3.2.2]{kisin_serre2} via \cite[Prop. 5.10]{KW}. 
 We let $Q_0=\emptyset$, so that $S_{Q_n}=S$ for $n=0$. 
 Let $U_{Q_n}=\prod_{v} (U_{Q_n})_v$ be a compact  open subgroup of $(D\otimes_F \AfF)^{\times}$, such that $(U_{Q_n})_v=U_v$ for
 $v\not\in Q_n$ and $(U_{Q_n})_v$ is defined as in \cite[3.1.6]{kisin_serre2} for $v\in Q_n$.
  
 Let $\mm$ be a maximal ideal of $\TT^{\univ}_{S, \OO}$, such that the residue field is $k$, $T_v$ is mapped to
 $\tr \rhobar(\Frob_v)$ and $S_v$ is mapped to the image of $\psi(\Frob_v)$ in $k$ for all $v\not\in S$. We define $\mm_{Q_n}$ in 
 $\TT^{\univ}_{S_{Q_n}, \OO}$ in the same manner. Let $\sigma=\otimes_{v\mid 2} \sigma_v$, 
 where each $\sigma_v$ is given by either \eqref{define_sigma_v} or \eqref{define_sigma_cris_v}. 
 We assume that $S_{\sigma, \psi}(U, \OO)_{\mm}\neq 0$. Then  for all $n\ge 0$ there is a surjective homomorphism of $\OO$-algebras 
 $R^{\psi}_{F, S_{Q_n}}\rightarrow \TT_{\sigma, \psi}(U_{Q_n})_{\mm_{Q_n}}$, such that for all $v\not\in S_{Q_n}$ the trace of $\Frob_v$ of 
the tautological $R^{\psi}_{F, S_{Q_n}}$-representation of $G_{F, S_{Q_n}}$ is mapped to $T_v$.  Set 
 $$M_n(\sigma)= R^{\psi, \square}_{F, S_{Q_n}}\otimes_{R^{\psi}_{F, S_{Q_n}}} S_{\sigma, \psi}(U_{Q_n}, \OO)_{\mm_{Q_n},}$$
 with the convention that if $n=0$ then $Q_n=\emptyset$, $S_{Q_n}=S$, $\mm_{Q_n}=\mm$,  so that 
 $$M_0(\sigma)= R^{\psi, \square}_{F, S}\otimes_{R^{\psi}_{F, S}} S_{\sigma, \psi}(U, \OO)_{\mm}.$$
 It follows from the local-global compatibility of Jacquet--Langlands and Langlands correspondences that the action of $R^{\psi, \square}_{F, S_{Q_n}}$ on $M_n(\sigma)$ factors through 
 the quotient $$R^{\psi, \square}_{F, S_{Q_n}}(\sigma):= R^{\psi, \square}_S(\sigma)\otimes_{R^{\psi, \square}_S} R^{\psi, \square}_{F, S_{Q_n}}.$$
  Let  $h=\dim_k H^1(G_{F,S}, \ad \rhobar)-2=|Q_n|$.  Let $\mathfrak a_{\infty}$ denote the ideal of $\OO\br{y_1, \ldots, y_h}$ generated by 
 $(y_1, \ldots, y_h)$. 
Since $ R^{\psi, \square}_{F, S_{Q_n}}$ is formally smooth over $R^{\psi}_{F, S_{Q_n}}$ of relative dimension $j= 4|S|-1$ we may choose an identification 
$$R^{\psi, \square}_{F, S_{Q_n}}=R^{\psi}_{F, S_{Q_n}}\br{y_{h+1}, \ldots, y_{h+j}}$$ and regard $M_n(\sigma)$ as an $\OO\br{y_1, \ldots, y_{h+j}}$-module. This allows us to consider $R^{\psi}_{F, S_{Q_n}}$ as an $R^{\psi, \square}_S$-algebra via the map 
$R^{\psi, \square}_S\rightarrow R^{\psi, \square}_{F, S_{Q_n}}/(y_{h+1}, \ldots, y_{h+j})= R^{\psi}_{F, S_{Q_n}}$. 
We let 
$$R^{\psi}_{F, S_{Q_n}}(\sigma):= R^{\psi, \square}_S(\sigma)\otimes_{R^{\psi, \square}_S} R^{\psi}_{F, S_{Q_n}}.$$
Let $g=2|Q_n|+1$ and $t=2-|S|+|Q_n|$ and let $\widehat{\mathbb G}_m$ be the completion of the $\OO$-group $\mathbb G_m$ along the identity section. 
The patching argument as in \cite[Prop. 9.3]{KW} shows that there exist $\OO\br{y_1, \ldots, y_{h+j}}$-algebras $R'_{\infty}(\sigma)$, $R_{\infty}(\sigma)$
and an $R_{\infty}(\sigma)$-module $M_{\infty}(\sigma)$ with the following properties. 
\begin{itemize}
\item[(P1)] There are surjections of $\OO$-algebras $$R^{\psi, \square}_S(\sigma)\br{x_1, \ldots, x_g}\twoheadrightarrow R'_{\infty}(\sigma)\twoheadrightarrow R_{\infty}(\sigma).$$
\item[(P2)] There is an isomorphism of $R^{\psi, \square}_S(\sigma)$-algebras 
$$R_{\infty}(\sigma)/\mathfrak a_{\infty} R_{\infty}(\sigma)\overset{\cong}{\rightarrow} R^{\psi, \square}_{F, S}(\sigma)$$
and an isomorphism of $R^{\psi, \square}_{F, S}(\sigma)$-modules 
$$M_{\infty}(\sigma)/\mathfrak a_{\infty} M_{\infty}(\sigma)\overset{\cong}{\rightarrow} M_0(\sigma).$$
\item[(P3)] $M_{\infty}(\sigma)$ is finite flat over $\OO\br{y_1, \ldots, y_{h+j}}$.
\item[(P4)] $\Spf R'_{\infty}(\sigma)$ is equipped with a free action of $(\widehat{\mathbb G}_m)^t$, and a $(\widehat{\mathbb G}_m)^t$-equivariant morphism 
$\delta: \Spf R'_{\infty}(\sigma)\rightarrow (\widehat{\mathbb G}_m)^t$, where $(\widehat{\mathbb G}_m)^t$ acts on itself by the square of the identity map.
\item[(P5)] We have $\delta^{-1}(1)= \Spf R_{\infty}(\sigma)\subset \Spf R'_{\infty}(\sigma)$, and the induced action of $(\widehat{\mathbb G}_m[2])^t$ on
$ \Spf R_{\infty}(\sigma)$ lifts to $M_{\infty}(\sigma)$.
\end{itemize}

If $A$ is a local noetherian ring of dimension $d$ and $M$ is a finitely generated $A$-module, we denote by $e(M,A)$  the coefficient of $x^d$ in the 
Hilbert--Samuel polynomial of $M$ with respect to the maximal ideal of $A$, multiplied by $d!$. In particular, $e(M, A)=0$ if $\dim M< \dim A$. If 
$M=A$ we abbreviate  $e(M, A)$ to $e(A)$.

It follows from \cite[Prop. 2.5]{KW} that there is a complete local noetherian $\OO$-algebra $(R^{\mathrm{inv}}_{\infty}(\sigma), \mm^{\inv}_{\sigma})$ with residue field $k$, such that 
$\Spf R^{\mathrm{inv}}_{\infty}(\sigma)= \Spf R'_{\infty}(\sigma)/(\widehat{\mathbb G}_m)^t$. Moreover, 
\begin{equation}\label{r_inv_1}
R'_{\infty}(\sigma)= R^{\inv}_{\infty}(\sigma)\wtimes_{\OO} \OO\br{ \ZZ_2^t}\cong R^{\inv}_{\infty}(\sigma)\br{z_1, \ldots, z_t}.
\end{equation}
This implies  that 
\begin{equation}\label{dimension_fix}
\dim R'_{\infty}(\sigma)= \dim R^{\inv}_{\infty}(\sigma)+t, \quad e(R'_{\infty}(\sigma)/\varpi)=e(R^{\inv}_{\infty}(\sigma)/\varpi).
\end{equation}

\begin{lem}\label{torsor_fix} There are $a_1, \ldots, a_t\in \mm_{\sigma}^{\inv}$, such that 
\begin{equation}\label{eqn_torsor}
 R_{\infty}(\sigma)\cong \frac{R^{\inv}_{\infty}(\sigma)\br{z_1}}{((1+z_1)^2- (1+a_1))}\otimes_{R^{\inv}_{\infty}(\sigma)}\ldots \otimes_{R^{\inv}_{\infty}(\sigma)} \frac{R^{\inv}_{\infty}(\sigma)\br{z_t}}{((1+z_t)^2- (1+a_t))}.
 \end{equation}
In particular, $R_{\infty}(\sigma)$ is a free $R^{\inv}_{\infty}(\sigma)$-module of rank $2^t$. 
\end{lem}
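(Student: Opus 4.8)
The plan is to unwind the structure exposed by properties (P4) and (P5) together with the presentation \eqref{r_inv_1}. Recall that (P4) gives a free action of $(\widehat{\mathbb G}_m)^t$ on $\Spf R'_{\infty}(\sigma)$ and a $(\widehat{\mathbb G}_m)^t$-equivariant morphism $\delta:\Spf R'_{\infty}(\sigma)\to (\widehat{\mathbb G}_m)^t$, where the target acts on itself by the \emph{square} of the identity. Dually, $\delta$ is a homomorphism $\OO\br{\mathbb Z_2^t}\to R'_{\infty}(\sigma)$ of $\OO$-algebras; write $u_i$ for a topological generator of the $i$-th copy of $\widehat{\mathbb G}_m$ in coordinates, so $\OO\br{\mathbb Z_2^t}=\OO\br{w_1,\dots,w_t}$ with $w_i=u_i-1$, and let $1+b_i:=\delta^{\#}(1+w_i)\in R'_{\infty}(\sigma)^{\times}$. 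The condition that $\delta$ is equivariant for the action on the source (which on $R'_{\infty}(\sigma)=R^{\inv}_{\infty}(\sigma)\br{z_1,\dots,z_t}$ from \eqref{r_inv_1} is the standard translation action $1+z_i\mapsto (1+w_i)(1+z_i)$ — this is the content of $R'_{\infty}(\sigma)=R^{\inv}_{\infty}(\sigma)\wtimes_{\OO}\OO\br{\mathbb Z_2^t}$) and the target (by the square map $1+w_i\mapsto (1+w_i)^2$) forces $\delta^{\#}(1+w_i)=(1+z_i)^2\cdot(1+a_i)$ for some $a_i\in R^{\inv}_{\infty}(\sigma)$; comparing constant terms (i.e.\ reducing mod $(z_1,\dots,z_t)$, equivalently along the identity section of the source) shows $1+a_i$ is the image of $1+w_i$ under $\delta$ composed with the identity section, hence $a_i\in\mm^{\inv}_{\sigma}$ since $\delta$ lands in $1+\mm$ on generators (the identity section is the closed point direction and $\delta$ is a map of augmented $\OO$-algebras).

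Granting this, (P5) says $\Spf R_{\infty}(\sigma)=\delta^{-1}(1)$, i.e.\ $R_{\infty}(\sigma)$ is the quotient of $R'_{\infty}(\sigma)$ by the ideal generated by $\delta^{\#}(w_i)=\delta^{\#}(u_i-1)$ for $i=1,\dots,t$. Substituting $\delta^{\#}(1+w_i)=(1+z_i)^2(1+a_i)$, this ideal is generated by the $t$ elements $(1+z_i)^2(1+a_i)-1$, or equivalently — since $1+a_i$ is a unit — by $(1+z_i)^2-(1+a_i)^{-1}$. Replacing $a_i$ by $a_i':=(1+a_i)^{-1}-1\in\mm^{\inv}_{\sigma}$ (still in the maximal ideal, as $\mm^{\inv}_{\sigma}$ is the maximal ideal of a complete local ring and $a_i\mapsto (1+a_i)^{-1}-1$ preserves it), we get
$$R_{\infty}(\sigma)\cong \frac{R^{\inv}_{\infty}(\sigma)\br{z_1,\dots,z_t}}{\bigl((1+z_1)^2-(1+a_1'),\ \dots,\ (1+z_t)^2-(1+a_t')\bigr)},$$
which, since the $i$-th relation only involves $z_i$, is exactly the iterated tensor product over $R^{\inv}_{\infty}(\sigma)$ displayed in \eqref{eqn_torsor} (after renaming $a_i'$ back to $a_i$).

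For the last sentence, it suffices to observe that each factor $R^{\inv}_{\infty}(\sigma)\br{z}/((1+z)^2-(1+a))$ is a free $R^{\inv}_{\infty}(\sigma)$-module of rank $2$ with basis $\{1,z\}$: indeed $(1+z)^2=1+2z+z^2$, so the relation reads $z^2=a-2z$, expressing $z^2$ (and inductively all higher powers $z^k$) as an $R^{\inv}_{\infty}(\sigma)$-linear combination of $1$ and $z$, and there is no nontrivial relation among $1,z$ since modulo $\mm^{\inv}_{\sigma}$ (where $a\equiv 0$) the quotient is $k[z]/(z^2)$, which is $k$-free of rank $2$, and one concludes by Nakayama/completeness. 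Taking the tensor product of $t$ such factors gives that $R_{\infty}(\sigma)$ is free of rank $2^t$ over $R^{\inv}_{\infty}(\sigma)$.

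The main obstacle is the first paragraph: extracting from the bare statement of (P4)--(P5) the precise coordinate form $\delta^{\#}(1+w_i)=(1+z_i)^2(1+a_i)$. This requires translating the geometric language of "free action", "equivariant morphism $\delta$", and "acts on itself by the square of the identity map" into ring-theoretic identities via the identification $R'_{\infty}(\sigma)\cong R^{\inv}_{\infty}(\sigma)\wtimes_\OO\OO\br{\mathbb Z_2^t}$ from \eqref{r_inv_1}, and checking that the torsor structure is genuinely the translation torsor so that the comultiplication/coaction compatibilities pin down the image of the coordinate functions up to the ambiguity recorded by the $a_i$. Once the action is written out explicitly on generators, the rest is the routine algebra sketched above. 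Everything else — that $a_i\in\mm^{\inv}_{\sigma}$, the freeness of each rank-$2$ factor, and the tensor-product decomposition — follows formally and poses no real difficulty.
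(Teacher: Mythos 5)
Your proposal is correct, and it arrives at \eqref{eqn_torsor} by a more self-contained route than the paper, whose proof consists of two citations: \cite[Lem.\ 9.4]{KW}, to the effect that $\Spf R_{\infty}(\sigma)$ is a $\gm$-torsor over $\Spf R^{\inv}_{\infty}(\sigma)$, and \cite[Exp.\ VIII, Prop.\ 4.1]{sga3} for the classification of torsors under diagonalizable group schemes, which over a complete local base with residue characteristic $2$ produces exactly the Kummer-type presentation in the statement. You instead bypass the torsor formalism and compute the defining ideal of $\delta^{-1}(1)$ inside the trivialization $R'_{\infty}(\sigma)\cong R^{\inv}_{\infty}(\sigma)\br{z_1,\ldots,z_t}$ of \eqref{r_inv_1}: since $\delta^{\#}(1+w_i)$ and $(1+z_i)^2$ transform by the same character under the coaction, their ratio is a coaction-invariant unit $1+a_i$, hence lies in $R^{\inv}_{\infty}(\sigma)^{\times}$, and locality of $\delta^{\#}$ puts $a_i$ in $\mm^{\inv}_{\sigma}$. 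This is precisely the computation underlying both cited results, so the mathematical content is the same; what your version buys is an explicit provenance for the $a_i$ (namely $\delta$ composed with the identity section) rather than an abstract classification. Two small points to tighten. First, you correctly identify the crux: one must know that under \eqref{r_inv_1} the $(\widehat{\mathbb G}_m)^t$-action becomes translation on the second factor; this is indeed part of \cite[Prop.\ 2.5]{KW}, which produces \eqref{r_inv_1} precisely by trivializing the torsor $\Spf R'_{\infty}(\sigma)\to \Spf R^{\inv}_{\infty}(\sigma)$, so it should be cited rather than asserted. Second, at the end, minimal generation of the quotient by $\{1,z\}$ together with the fiber being $k[z]/(z^2)$ does not by itself yield freeness; the clean argument is Weierstrass division by $z^2+2z-a_i$, which is a distinguished polynomial of degree $2$ (its lower coefficients $2$ and $-a_i$ lie in $\mm^{\inv}_{\sigma}$), giving $R^{\inv}_{\infty}(\sigma)\br{z}/((1+z)^2-(1+a_i))\cong R^{\inv}_{\infty}(\sigma)[z]/(z^2+2z-a_i)$, visibly free of rank $2$. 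With these two references supplied, your argument is a complete and valid alternative to the paper's.
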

\begin{proof} It follows from \cite[Lem. 9.4]{KW} that $\Spf R_{\infty}(\sigma)$ is a $\gm$-torsor 
over $\Spf R_{\infty}^{\inv}(\sigma)$. The assertion follows from  \cite[Exp. VIII, Prop.4.1] {sga3}. 
\end{proof} 

\begin{lem}\label{transitive_action} Let $\pp\in \Spec R_{\infty}^{\inv}(\sigma)$.
The group $\gm(\OO)$ acts transitively on the set of prime ideals of $R_{\infty}(\sigma)$ lying above $\pp$.
\end{lem}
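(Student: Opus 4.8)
The plan is to exploit the explicit torsor description of $R_\infty(\sigma)$ furnished by Lemma~\ref{torsor_fix} and reduce the transitivity statement to a statement about the fibre over $\pp$.

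First I would rewrite Lemma~\ref{torsor_fix} in coordinates: putting $w_i := 1+z_i$ and $u_i := 1+a_i \in R^{\inv}_\infty(\sigma)^{\times}$, one has
\[
R_\infty(\sigma) \cong R^{\inv}_\infty(\sigma)[w_1,\dots,w_t]\big/\bigl(w_1^2-u_1,\dots,w_t^2-u_t\bigr),
\]
which is free over $A := R^{\inv}_\infty(\sigma)$ with basis $\{w_S := \prod_{i\in S} w_i\}_{S\subseteq\{1,\dots,t\}}$, and the torsor structure makes $\gm(\OO) = \mu_2(\OO)^t = \{\pm1\}^t$ act $A$-linearly by $w_i\mapsto\varepsilon_i w_i$ for $\varepsilon=(\varepsilon_i)$. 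Since the primes of $R_\infty(\sigma)$ above $\pp$ are precisely the primes of the fibre ring
\[
C := R_\infty(\sigma)\otimes_A k(\pp) \cong \bigotimes_{i=1}^{t} k(\pp)[w_i]/(w_i^2-\bar u_i)
\]
($\bar u_i\in k(\pp)^{\times}$ the image of $u_i$), and the action of $\gm(\OO)$ on these primes is the one it induces on $\Spec C$, it suffices to show $\gm(\OO)$ acts transitively on $\Spec C$.

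I would then split on the characteristic of $k(\pp)$. If $k(\pp)$ has characteristic $0$, each factor $k(\pp)[w_i]/(w_i^2-\bar u_i)$ is a separable $k(\pp)$-algebra, so $C$ is \'etale over $k(\pp)$; moreover $C^{\gm(\OO)} = k(\pp)$ (in the basis $\{w_S\}$, $\varepsilon$ scales $w_S$ by $\prod_{i\in S}\varepsilon_i$, so any $\gm(\OO)$-invariant element has its $w_S$-coefficients with $S\neq\emptyset$ annihilated by $2$, hence zero since $k(\pp)$ has characteristic $0$), and the classical theorem that a finite group acting on a ring acts transitively on the primes lying over a given prime of the ring of invariants (see e.g.\ Bourbaki, \emph{Commutative Algebra}, Ch.~V, \S2, Thm.~2), applied to the prime $(0)$ of $C^{\gm(\OO)} = k(\pp)$, gives the claim. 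If $k(\pp)$ has characteristic $2$, then $\mu_2$ is infinitesimal, so $\gm(\OO)$ acts trivially on $C$ and one must instead show $C$ is local: each factor $k(\pp)[w_i]/(w_i^2-\bar u_i)$ is local, since the squaring map is bijective on $\overline{k(\pp)}$, so $w_i^2-\bar u_i$ becomes a square $((w_i-\bar u_i^{1/2})^2)$ there, and the factor, being $k(\pp)$-free of rank $2$, injects by flatness into the local Artinian ring $\overline{k(\pp)}[w_i]/((w_i-\bar u_i^{1/2})^2)$; the same base change shows $C$ injects into $C\otimes_{k(\pp)}\overline{k(\pp)} \cong \overline{k(\pp)}[w_1,\dots,w_t]/((w_i-\bar u_i^{1/2})^2)_i$, which is local Artinian, so $C$ has a unique prime and transitivity is automatic.

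The routine parts are the passage to the fibre and the invariant computation; the only spot needing care --- and the closest thing to an obstacle --- is the characteristic-$2$ fibre, where $\gm(\OO)$ acts trivially, so one genuinely has to prove connectedness of the fibre rather than invoke the group action. Conceptually the same facts can be read off the torsor directly: $\Spec R_\infty(\sigma)\to\Spec A$ is a $\mu_2^t$-torsor, so its fibre over $\pp$ is a $\mu_{2,k(\pp)}^t$-torsor over $k(\pp)$, which is connected when $\operatorname{char}k(\pp)=2$ (the group being connected) and, when $\operatorname{char}k(\pp)=0$, is $\Spec$ of an \'etale algebra on which the $\mathrm{Gal}(\overline{k(\pp)}/k(\pp))$-action commutes with the simply transitive $(\ZZ/2)^t$-action, so its set of primes is a coset space $(\ZZ/2)^t/H$ on which $\gm(\OO)=(\ZZ/2)^t$ acts transitively by translation --- but I would present the hands-on version above.
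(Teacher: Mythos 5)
Your proof is correct and follows essentially the same route as the paper: the paper's proof likewise makes the action of $\gm(\OO)=\{\pm1\}^t$ on $R_{\infty}(\sigma)$ explicit via $1+z_i\mapsto\epsilon_i(1+z_i)$ and then asserts that transitivity on the maximal ideals of $\kappa(\pp)\otimes_{R_{\infty}^{\inv}(\sigma)}R_{\infty}(\sigma)$ follows from the presentation \eqref{eqn_torsor}. Your case analysis (an \'etale fibre with $C^{\gm(\OO)}=\kappa(\pp)$ in residue characteristic $0$, and a local, hence connected, fibre in residue characteristic $2$, where the group acts trivially) is precisely the verification the paper leaves to the reader, and it is carried out correctly.
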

\begin{proof} Let us write $X$ for $\Spf R_{\infty}(\sigma)$ and $G$ for $\gm$. The action of $G$ on $X$ induces 
an action of $(\pm 1)^t=G(\OO)\hookrightarrow G(R_{\infty}(\sigma))$ on $X(R_{\infty}(\sigma))$. If $g\in G(\OO)$ we let 
$\phi_g\in X(R_{\infty}(\sigma))$ be the image of $(g, \id_{R_{\infty}(\sigma)})$. The map $g\mapsto \phi_g$
induces a homomorphism of groups $G(\OO)\rightarrow \Aut(R_{\infty}(\sigma))$. Explicitly, if $g=(\epsilon_1, \ldots, 
\epsilon_t)$, where $\epsilon_i$ is either $1$ or $-1$ then $\phi_g$ is $R^{\inv}_{\infty}(\sigma)$-linear and 
maps $1+z_i$ to $\epsilon_i (1+z_i)$ for $1\le i\le t$. It follows from \eqref{eqn_torsor} that  $G(\OO)$ acts transitively on 
the set of maximal ideals of $\kappa(\pp)\otimes_{R_{\infty}^{\inv}(\sigma)} R_{\infty}(\sigma)$.   
\end{proof} 	

\begin{lem}\label{support} The support of $M_{\infty}(\sigma)$ in $\Spec R_{\infty}(\sigma)$ is a union of irreducible components. The Krull dimension
of $\Spec R_{\infty}(\sigma)$ is equal to $h+j+1$.
\end{lem}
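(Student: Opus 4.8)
The plan is to deduce both assertions from the patching properties (P1)--(P5), the local dimension formula \eqref{dim_S_loc}, the structural results \eqref{r_inv_1}, \eqref{dimension_fix} and Lemma \ref{torsor_fix}, together with the Auslander--Buchsbaum formula over a regular local ring.

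First I would compute $\dim R_{\infty}(\sigma)$. For the lower bound I note that $M_{\infty}(\sigma)\neq 0$: by (P2) it reduces modulo $\mathfrak a_{\infty}$ to $M_0(\sigma)= R^{\psi,\square}_{F,S}\otimes_{R^{\psi}_{F,S}} S_{\sigma,\psi}(U,\OO)_{\mm}$, which is nonzero since $R^{\psi,\square}_{F,S}$ is a power series ring, hence faithfully flat, over $R^{\psi}_{F,S}$ and $S_{\sigma,\psi}(U,\OO)_{\mm}\neq 0$ by assumption; Nakayama over $A:= \OO\br{y_1,\ldots,y_{h+j}}$ (using (P3)) then gives $M_{\infty}(\sigma)\neq 0$. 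Since $A$ is regular local of dimension $h+j+1$ and $M_{\infty}(\sigma)$ is nonzero finite flat over $A$, it is faithfully flat, so $\dim_A M_{\infty}(\sigma)= h+j+1$; as $M_{\infty}(\sigma)$ is an $R_{\infty}(\sigma)$-module whose $A$-structure factors through $R_{\infty}(\sigma)$ (in particular it is finite over $R_{\infty}(\sigma)$), this gives $\dim_{R_{\infty}(\sigma)} M_{\infty}(\sigma)= h+j+1$ and hence $\dim R_{\infty}(\sigma)\ge h+j+1$. For the upper bound I would run down the chain in (P1): $R'_{\infty}(\sigma)$ is a quotient of $R^{\psi,\square}_S(\sigma)\br{x_1,\ldots,x_g}$, of dimension $(1+3|S|)+g$ by \eqref{dim_S_loc}; by \eqref{dimension_fix} and Lemma \ref{torsor_fix} one has $\dim R_{\infty}(\sigma)=\dim R^{\inv}_{\infty}(\sigma)=\dim R'_{\infty}(\sigma)-t$. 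Substituting $g= 2|Q_n|+1$, $t= 2-|S|+|Q_n|$, $h= |Q_n|$, $j= 4|S|-1$ and simplifying turns this into $\dim R_{\infty}(\sigma)\le 4|S|+|Q_n| = h+j+1$. Hence $\dim R_{\infty}(\sigma)= h+j+1$, which is the second assertion.

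For the support I would use that $M_{\infty}(\sigma)$, being finite free over the regular local ring $A$, is a maximal Cohen--Macaulay $A$-module, and that depth is insensitive to the base ring: since $M_{\infty}(\sigma)$ is finite over $R_{\infty}(\sigma)$ and its $A$-structure factors through the local homomorphism $A\to R_{\infty}(\sigma)$, one has $\mathrm{depth}_{R_{\infty}(\sigma)} M_{\infty}(\sigma)= \mathrm{depth}_A M_{\infty}(\sigma)= h+j+1 = \dim R_{\infty}(\sigma)$. Now write $R_{\infty}(\sigma)= S/I$ with $S$ a power series ring over $\OO$ in finitely many variables, which is possible as $R_{\infty}(\sigma)$ is complete local noetherian with residue field $k$, so $S$ is regular local. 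Viewing $M_{\infty}(\sigma)$ as an $S$-module, Auslander--Buchsbaum gives $\pdim_S M_{\infty}(\sigma)= \dim S - (h+j+1)$, and localizing at $\mathfrak p\in \Ass_S M_{\infty}(\sigma)$, where the depth drops to $0$, yields $\mathrm{ht}\,\mathfrak p = \pdim_{S_{\mathfrak p}} M_{\infty}(\sigma)_{\mathfrak p}\le \pdim_S M_{\infty}(\sigma)$, hence $\dim S/\mathfrak p\ge h+j+1$; on the other hand $\dim S/\mathfrak p\le \dim_S M_{\infty}(\sigma)= h+j+1$. Thus every prime in $\Ass_{R_{\infty}(\sigma)} M_{\infty}(\sigma)$ has coheight exactly $h+j+1= \dim R_{\infty}(\sigma)$ in $\Spec R_{\infty}(\sigma)$. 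Since $\supp M_{\infty}(\sigma)= \bigcup_{\mathfrak q\in \Ass_{R_{\infty}(\sigma)} M_{\infty}(\sigma)} V(\mathfrak q)$ and each $V(\mathfrak q)$ is an irreducible closed subset of dimension $\dim R_{\infty}(\sigma)$, hence a whole irreducible component of $\Spec R_{\infty}(\sigma)$, the support is a union of irreducible components.

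The step I expect to be the main obstacle, and where the $p=2$ situation departs from the $p>2$ patching of \cite{kisinfm}, is the dimension arithmetic in the upper bound: one must correctly carry the shift by $t= 2-|S|+|Q_n|$ arising from the fact that $\Spf R_{\infty}(\sigma)$ is only a $\gm$-torsor over $\Spf R^{\inv}_{\infty}(\sigma)$ (Lemma \ref{torsor_fix}, \eqref{r_inv_1}), rather than $R_{\infty}(\sigma)$ itself being a power series ring over the completed tensor product of local deformation rings; one also has to extract from the patching construction that $M_{\infty}(\sigma)$ is finitely generated over $R_{\infty}(\sigma)$ with its $\OO\br{y_1,\ldots,y_{h+j}}$-structure factoring through $R_{\infty}(\sigma)$. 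The remaining ingredients are standard commutative algebra.
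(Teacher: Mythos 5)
Your proof is correct and follows the same route as the paper: the upper bound $\dim R_{\infty}(\sigma)\le \dim R^{\psi,\square}_S(\sigma)+g-t=h+j+1$ via (P1), \eqref{dim_S_loc}, \eqref{dimension_fix} and Lemma \ref{torsor_fix}, combined with the fact that (P3) forces the support of $M_{\infty}(\sigma)$ to be equidimensional of dimension $h+j+1$. Your Auslander--Buchsbaum detour through a regular presentation is just a more explicit way of deriving that a maximal Cohen--Macaulay module has equidimensional support with no embedded primes, which the paper invokes directly.
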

\begin{proof} It follows from part (P3) above that the support of $M_{\infty}(\sigma)$ is equidimensional of dimension $h+j+1$. To prove the assertion it is enough to 
show that the dimension of $R_{\infty}(\sigma)$ is less or equal to  $h+j+1$. Using Lemma \ref{torsor_fix}, \eqref{dimension_fix}, (P1)  and \eqref{dim_S_loc} we deduce that 
$\dim R_{\infty}(\sigma)\le \dim R^{\psi, \square}_S(\sigma)+ g-t= 3|S|+1 +g-t=h+j+1.$
\end{proof}

\begin{lem}\label{go_local} $e(R'_{\infty}(\sigma)/\varpi)\le e(R^{\psi, \square}_S(\sigma)/\varpi)$.
\end{lem}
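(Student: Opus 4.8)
The plan is to deduce the inequality from the surjection in (P1) by reducing modulo $\varpi$ and comparing Krull dimensions. By (P1) there is a surjection of $\OO$-algebras $R^{\psi,\square}_S(\sigma)\br{x_1,\dots,x_g}\twoheadrightarrow R'_\infty(\sigma)$. Reducing modulo $\varpi$, and using $R^{\psi,\square}_S(\sigma)\br{x_1,\dots,x_g}/\varpi\cong (R^{\psi,\square}_S(\sigma)/\varpi)\br{x_1,\dots,x_g}$, I obtain a surjection of local noetherian $k$-algebras
\[
\bigl(R^{\psi,\square}_S(\sigma)/\varpi\bigr)\br{x_1,\dots,x_g}\twoheadrightarrow R'_\infty(\sigma)/\varpi .
\]
Adjoining power series variables does not change the Hilbert--Samuel multiplicity (as in Remark \ref{remark_framed_vs_non}), so the source has multiplicity $e(R^{\psi,\square}_S(\sigma)/\varpi)$. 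Thus it is enough to establish: (a) a surjection of local noetherian rings of equal Krull dimension does not increase $e$; and (b) the two sides of the displayed surjection have the same Krull dimension.

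Part (a) is the additivity (associativity) formula for Hilbert--Samuel multiplicities, \cite[\S V.2]{mult}: if $C\twoheadrightarrow D$ has kernel $J$ and $\dim C=\dim D=:d$, then $e(D)=\sum_{\mathfrak p}\ell_{D_{\mathfrak p}}(D_{\mathfrak p})\,e(D/\mathfrak p)$, the sum over the minimal primes $\mathfrak p$ of $D$ with $\dim D/\mathfrak p=d$. Each such $\mathfrak p$ is the image of a minimal prime of $C$ of the same dimension containing $J$, and along it $D/\mathfrak p=C/\mathfrak p$ while $\ell_{D_{\mathfrak p}}(D_{\mathfrak p})=\ell_{C_{\mathfrak p}}(C_{\mathfrak p}/J_{\mathfrak p})\le\ell_{C_{\mathfrak p}}(C_{\mathfrak p})$; comparing with the corresponding expression for $e(C)$ gives $e(D)\le e(C)$.

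For part (b) I compute $\dim R'_\infty(\sigma)$ exactly. By Lemma \ref{torsor_fix}, $\dim R_\infty(\sigma)=\dim R^{\inv}_\infty(\sigma)$; by Lemma \ref{support}, $\dim R_\infty(\sigma)=h+j+1$; and by \eqref{dimension_fix}, $\dim R'_\infty(\sigma)=\dim R^{\inv}_\infty(\sigma)+t$. Substituting $g=2h+1$, $j=4|S|-1$, $t=2-|S|+h$ and using \eqref{dim_S_loc} yields
\[
\dim R'_\infty(\sigma)=h+j+1+t=3|S|+2h+2=\dim R^{\psi,\square}_S(\sigma)+g=\dim R^{\psi,\square}_S(\sigma)\br{x_1,\dots,x_g}.
\]
Since $R^{\psi,\square}_S(\sigma)$ is $\OO$-flat and equidimensional, $\varpi$ lies in no minimal prime of it, so $\dim\bigl(R^{\psi,\square}_S(\sigma)/\varpi\bigr)\br{x_1,\dots,x_g}=\dim R^{\psi,\square}_S(\sigma)\br{x_1,\dots,x_g}-1$. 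On the other hand $R'_\infty(\sigma)/\varpi$, being a quotient of this ring, has dimension at most that much, and, being the quotient of $R'_\infty(\sigma)$ by a single element, has dimension at least $\dim R'_\infty(\sigma)-1$; by the displayed equality these bounds coincide, so the two sides of the reduced surjection have equal dimension. Now (a) applies, and with the remark about power series variables we conclude $e(R'_\infty(\sigma)/\varpi)\le e(R^{\psi,\square}_S(\sigma)/\varpi)$. (If $R'_\infty(\sigma)=0$ the inequality is trivial, so one may assume it is nonzero, which is also what makes the dimension formulas applicable.)

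The only real work is the bookkeeping in part (b): pinning down $\dim R'_\infty(\sigma)$ on the nose via \eqref{dim_S_loc}, \eqref{dimension_fix}, Lemmas \ref{torsor_fix} and \ref{support}, and then checking that reduction modulo $\varpi$ preserves this equality of dimensions; part (a) is the routine additivity argument and should cause no trouble.
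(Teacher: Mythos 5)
Your proposal is correct and follows essentially the same route as the paper: compute $\dim R'_{\infty}(\sigma)$ via \eqref{r_inv_1}, Lemmas \ref{torsor_fix} and \ref{support} to see that it equals $\dim R^{\psi,\square}_S(\sigma)\br{x_1,\ldots,x_g}=3|S|+1+g$, and then deduce the multiplicity inequality from the surjection in (P1). The paper leaves the mod-$\varpi$ dimension bookkeeping and the "surjection of equidimensional local rings does not increase $e$" step implicit, whereas you spell them out via the additivity formula; this is just added detail, not a different argument.
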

\begin{proof} It follows from \eqref{r_inv_1}, Lemmas  \ref{torsor_fix}, \ref{support} that 
$$\dim R'_{\infty}(\sigma)=\dim R_{\infty}(\sigma)+t= t+h+j+1=3|S|+1+g,$$ 
which is also the dimension of 
$R^{\psi, \square}_S(\sigma)\br{x_1,\ldots, x_g}$ by  \eqref{dim_S_loc}.
The surjection in (P1) above implies that 
$$ e(R'_{\infty}(\sigma)/\varpi)\le e(R^{\psi, \square}_S(\sigma)\br{x_1, \ldots, x_g}/\varpi)= 
e( R^{\psi, \square}_S(\sigma)/\varpi).$$
\end{proof}

\begin{lem}\label{regular_point} If $S_{\sigma, \psi}(U, \OO)_{\mm}$ is supported on a closed point 
$\nn\in \Spec R^{\psi, \square}_S(\sigma)[1/2]$ then the localization $R^{\psi, \square}_S(\sigma)_{\nn}$ is a regular ring. 
\end{lem}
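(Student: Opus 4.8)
The plan is to recognise $\nn$ as an automorphic point, deduce purity of its local components at the places above $2$, and combine this with the regularity of the local deformation rings recalled above and with Kisin's observation from the errata to \cite{kisinfm}. Unwinding the support hypothesis: $M_0(\sigma)=R^{\psi,\square}_{F,S}\otimes_{R^{\psi}_{F,S}}S_{\sigma,\psi}(U,\OO)_{\mm}$ is a module over $R^{\psi,\square}_{F,S}(\sigma)=R^{\psi,\square}_S(\sigma)\otimes_{R^{\psi,\square}_S}R^{\psi,\square}_{F,S}$, and by hypothesis its support in $\Spec R^{\psi,\square}_S(\sigma)[1/2]$ is the single closed point $\nn$. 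Via the surjection $R^{\psi}_{F,S}\twoheadrightarrow\TT_{\sigma,\psi}(U)_{\mm}$ and \eqref{auto_form}, $\nn$ is cut out by a system of Hecke eigenvalues occurring in $S_{\sigma,\psi}(U,\OO)_{\mm}[1/2]$; by Jacquet--Langlands this system comes from a Hilbert modular eigenform $f$ over $F$, and the attached Galois representation $\rho_{\nn}\colon G_{F,S}\to\GL_2(\kappa(\nn))$ is a deformation of $\rhobar$ of the prescribed $p$-adic Hodge type and determinant $\psi\chi_{\cyc}$. For $v\in S$ write $A_v$ for the $v$-th local tensor factor of $R^{\psi,\square}_S(\sigma)$ and $\nn_v\in\Spec A_v[1/2]$ for the image of $\nn$, so that $\nn_v$ is the point defined by $\rho_{\nn}|_{G_{F_v}}$.

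I claim each localization $(A_v)_{\nn_v}$ is regular. For $v\in\Sigma$ we have $A_v[1/2]=\bar R^{\psi,\square}_v[1/2]\cong R^{\psi,\square}_v(\gamma_1)[1/2]\times R^{\psi,\square}_v(\gamma_2)[1/2]$, a finite product of regular domains; for $v\in S\setminus\Sigma$ with $v\nmid 2\infty$ and for $v\mid\infty$, $A_v[1/2]=R^{\psi,\square}_v[1/2]$ is regular as recalled above. So for $v\nmid 2$ no further input is needed. For $v\mid 2$, since $2$ splits completely in $F$ we have $F_v=\Q_2$; by local--global compatibility for Hilbert modular forms \cite{blasius, KM74, Sai09} the Weil--Deligne representation $\WD(\rho_{\nn}|_{G_{F_v}})$ attached via Fontaine to the potentially semi-stable (resp.\ crystalline) representation $\rho_{\nn}|_{G_{F_v}}$ is pure. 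By Kisin's argument in the errata to \cite{kisinfm}, recorded in \cite{gee_kisin} (see also \cite{kisin_pst}), a pure point of $\Spec R^{\psi,\square}_v(\sigma_v)[1/2]$ does not lie on the intersection of two distinct irreducible components and is a regular point of that scheme; hence $(A_v)_{\nn_v}$ is regular.

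To conclude, let $\mathfrak p\subset R^{\psi,\square}_S(\sigma)$ be the prime corresponding to $\nn$; then $R^{\psi,\square}_S(\sigma)/\mathfrak p$ is a complete local domain, finite over $\OO$ with fraction field $\kappa(\nn)$, hence of Krull dimension $1$. Since $R^{\psi,\square}_S(\sigma)$ is $\OO$-flat, equidimensional and catenary of dimension $1+3|S|$ by \eqref{dim_S_loc}, we get $\dim R^{\psi,\square}_S(\sigma)_{\nn}=\mathrm{ht}\,\mathfrak p=3|S|$; running the same bookkeeping through the formula $\dim(A\wtimes_{\OO}B)=\dim A+\dim B-1$ used to prove \eqref{dim_S_loc} shows this also equals $\sum_{v\in S}\dim(A_v)_{\nn_v}$. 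After extending scalars along the finite separable extension $\kappa(\nn)/L$ (harmless for regularity at the points in question, as $L$ is a finite extension of $\Q_2$ and hence perfect) we may assume all residue fields in sight equal $\kappa(\nn)$, and then the identification $R^{\psi,\square}_S(\sigma)[1/2]=\widehat{\bigotimes}_{v\in S}A_v[1/2]$ identifies the Zariski cotangent space of $R^{\psi,\square}_S(\sigma)_{\nn}$ with the direct sum of the Zariski cotangent spaces of the $(A_v)_{\nn_v}$. By the previous paragraph its dimension is $\sum_{v\in S}\dim(A_v)_{\nn_v}=\dim R^{\psi,\square}_S(\sigma)_{\nn}$, so $R^{\psi,\square}_S(\sigma)_{\nn}$ is regular.

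The main obstacle is the argument at the places $v\mid 2$: outside the Barsotti--Tate case the local potentially semi-stable deformation rings $R^{\psi,\square}_v(\sigma_v)$ need not have smooth generic fibre, so one genuinely needs the purity of $\WD(\rho_f|_{G_{F_v}})$ together with Kisin's analysis of how the irreducible components of the generic fibre can meet in order to place $\nn_v$ on (the regular locus of) a single component; the remaining steps are essentially bookkeeping with the already-established regularity of the other local rings.
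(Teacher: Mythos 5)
Your proof is correct and follows the same route as the paper: reduce to regularity of the local factors, which for $v\nmid 2$ is already known and for $v\mid 2$ follows from purity of $\WD(\rho_{\nn}|_{G_{F_v}})$ via the argument of Lemma B.5.1 in \cite{gee_kisin}. The only differences are cosmetic: you spell out (via the cotangent-space and dimension count) why regularity of each $(A_v)_{\nn_v}$ passes to the completed tensor product, a step the paper takes as standard, and you slightly overstate the hypothesis (``supported on a closed point $\nn$'' means $\nn$ lies in the support, not that the support equals $\{\nn\}$), which is harmless since your argument only uses that $\nn$ is in the support.
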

\begin{proof} Since the rings $R_v^{\square}[1/2]$ are regular for all $v\nmid  2$ it is enough to show that $\nn$ defines a regular point 
in $\Spec R_v^{\psi, \square}(\sigma)$ for all $v\mid  2$. This follows from the proof of Lemma B.5.1 in \cite{gee_kisin}. The argument is as follows: if the point is not regular, then it must lie on the intersection of two irreducible components of $\Spec R_v^{\psi,\square}(\sigma)$, but this would violate Weight--Monodromy conjecture for $\WD(\rho_{\nn}|_{G_{F_v}})$, 
see the proof of Lemma B.5.1 in \cite{gee_kisin} for details. 
\end{proof}

\begin{lem}\label{regular_point2} If $S_{\sigma, \psi}(U, \OO)_{\mm}$ is supported on a closed point $\nn\in \Spec R_{\infty}(\sigma)[1/2]$ 
then the localization $R_{\infty}(\sigma)_{\nn}$ is a regular ring. 
\end{lem}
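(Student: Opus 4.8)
The plan is to descend the regularity statement along the chain of rings produced by the patching construction,
$$R^{\psi, \square}_S(\sigma)\br{x_1,\ldots,x_g}\twoheadrightarrow R'_{\infty}(\sigma)\cong R^{\inv}_{\infty}(\sigma)\br{z_1,\ldots,z_t},\qquad R^{\inv}_{\infty}(\sigma)\hookrightarrow R_{\infty}(\sigma),$$
controlling everything by Krull dimension and feeding in the regularity of $R^{\psi, \square}_S(\sigma)$ at the relevant point from Lemma \ref{regular_point}.

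First I would show that $R_{\infty}(\sigma)[1/2]$ is finite \'etale over $R^{\inv}_{\infty}(\sigma)[1/2]$: by \eqref{eqn_torsor}, $R_{\infty}(\sigma)$ is obtained from $R^{\inv}_{\infty}(\sigma)$ by adjoining roots of the polynomials $z_i^2+2z_i-a_i$ with $a_i\in\mm^{\inv}_{\sigma}$, whose discriminants $4(1+a_i)$ become units after inverting $2$ (as $1+a_i$ is already a unit). Hence, writing $\pp:=\nn\cap R^{\inv}_{\infty}(\sigma)$, regularity of $R_{\infty}(\sigma)_{\nn}$ is equivalent to regularity of $R^{\inv}_{\infty}(\sigma)_{\pp}$; and since by \eqref{r_inv_1} the elements $z_1,\ldots,z_t$ form a regular sequence in $R'_{\infty}(\sigma)$ with quotient $R^{\inv}_{\infty}(\sigma)$, the latter is equivalent to regularity of $R'_{\infty}(\sigma)$ at $\nn':=\pp R'_{\infty}(\sigma)+(z_1,\ldots,z_t)$, and moreover $\dim R'_{\infty}(\sigma)_{\nn'}=\dim R^{\inv}_{\infty}(\sigma)_{\pp}+t$.

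Next I would use the surjection $q\colon C:=R^{\psi, \square}_S(\sigma)\br{x_1,\ldots,x_g}\twoheadrightarrow R'_{\infty}(\sigma)$ of (P1) and set $\widetilde{\nn}:=q^{-1}(\nn')$, $\nn_0:=\widetilde{\nn}\cap R^{\psi, \square}_S(\sigma)$. Because $R^{\inv}_{\infty}(\sigma)=R'_{\infty}(\sigma)^{(\widehat{\mathbb G}_m)^t}$ is an $R^{\psi, \square}_S(\sigma)$-subalgebra of $R'_{\infty}(\sigma)$ compatibly with the structural inclusion $R^{\inv}_{\infty}(\sigma)\hookrightarrow R_{\infty}(\sigma)$, one gets $\nn_0=\nn\cap R^{\psi, \square}_S(\sigma)$; and translating the hypothesis through the isomorphism (P2) and the natural maps shows that $S_{\sigma, \psi}(U, \OO)_{\mm}$ is supported on the closed point $\nn_0\in\Spec R^{\psi, \square}_S(\sigma)[1/2]$, so Lemma \ref{regular_point} makes $R^{\psi, \square}_S(\sigma)_{\nn_0}$, hence $C_{\widetilde{\nn}}$, a regular local ring. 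By \eqref{dim_S_loc}, $R^{\psi, \square}_S(\sigma)$ is $\OO$-flat and equidimensional of dimension $1+3|S|$, so $C$ is $\OO$-flat and equidimensional of dimension $1+3|S|+g$ and $\dim C_{\widetilde{\nn}}=3|S|+g$. On the other side, $M_{\infty}(\sigma)$ is $\OO$-flat, being finite flat over $\OO\br{y_1,\ldots,y_{h+j}}$ by (P3), and its support is equidimensional of dimension $h+j+1$ by Lemma \ref{support}; since $\nn$ lies in this support and $\kappa(\nn)$ is finite over $L$, the catenary ring $R_{\infty}(\sigma)$ satisfies $\dim R_{\infty}(\sigma)_{\nn}=h+j$, whence $\dim R^{\inv}_{\infty}(\sigma)_{\pp}=h+j$ by Lemma \ref{torsor_fix} and $\dim R'_{\infty}(\sigma)_{\nn'}=h+j+t$. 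With $h=|Q_n|$, $j=4|S|-1$, $g=2|Q_n|+1$, $t=2-|S|+|Q_n|$ one checks $h+j+t=3|S|+g=\dim C_{\widetilde{\nn}}$. Thus $q$ induces a surjection $C_{\widetilde{\nn}}\twoheadrightarrow R'_{\infty}(\sigma)_{\nn'}$ of local rings of equal Krull dimension; since $C_{\widetilde{\nn}}$ is a regular local ring, hence a domain, its kernel has height $0$ and so vanishes, giving $R'_{\infty}(\sigma)_{\nn'}\cong C_{\widetilde{\nn}}$ regular, and therefore $R_{\infty}(\sigma)_{\nn}$ regular by the first step.

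I expect the main obstacle to be the bookkeeping: keeping the indices $g,t,h,j$ straight so that the two dimension counts really yield the same integer, and, more delicately, carefully propagating the hypothesis ``$S_{\sigma, \psi}(U, \OO)_{\mm}$ is supported on $\nn$'' through (P2) and the structural maps so that Lemma \ref{regular_point} applies to $\nn_0$. The \'etaleness of the torsor on the generic fibre and the reductions in the first step are routine once the presentations \eqref{eqn_torsor} and \eqref{r_inv_1} are in hand.
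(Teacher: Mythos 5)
Your proposal is correct and follows essentially the same route as the paper: apply Lemma \ref{regular_point} at the contraction of $\nn$ to $R^{\psi,\square}_S(\sigma)$, use the equidimensionality of $\supp M_\infty(\sigma)$ from (P3) and Lemma \ref{support} to force the localized surjection of (P1) to be an isomorphism, then descend regularity through \eqref{r_inv_1} and ascend along the étale map $R^{\inv}_\infty(\sigma)[1/2]\rightarrow R_\infty(\sigma)[1/2]$. The only (harmless) differences are that you establish the isomorphism by a direct equality of Krull dimensions rather than the paper's contradiction argument, and that you work with the zero-section point $(\pp, z_1,\ldots,z_t)$ of $\Spec R'_\infty(\sigma)$ instead of the actual contraction $(\nn^{\inv}, z_1-a_1,\ldots,z_t-a_t)$ of $\nn$, which does not affect the argument since both lie over $\pp$.
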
 
\begin{proof} Let $\nn_S$ be the image of $\nn$ in $\Spec R_S^{\psi, \square}\br{x_1, \ldots, x_g}$, $\nn'$ be the image of $\nn$ in $\Spec R'_{\infty}(\sigma)$, 
via the maps in (P1), and let $\nn^{\inv}$ be the image of $\nn$ in $\Spec R^{\inv}_{\infty}(\sigma)$ via \eqref{eqn_torsor}. 
It follows from Lemma \ref{regular_point}
that $R^{\psi, \square}_S(\sigma)\br{x_1, \ldots, x_g}_{\nn_S}$ is regular ring. If the map 
\begin{equation}\label{localize_p1}
R^{\psi, \square}_S(\sigma)\br{x_1, \ldots, x_g}_{\nn_S}\twoheadrightarrow R'_{\infty}(\sigma)_{\nn'}
\end{equation}
is an isomorphism, then $R'_{\infty}(\sigma)_{\nn'}$ is a regular ring. We may assume that $L$ is sufficiently large, so that using \eqref{r_inv_1} we may write
$\nn'=(\nn^{\inv}, z_1-a_1, \ldots, z_t-a_t)$ with $a_i \in \varpi \OO$ for $1\le i\le t$. The images of $z_1-a_1, \ldots, z_t-a_t$ in 
$\nn'/(\nn')^2$ are linearly independent. Since
$$R^{\inv}_{\infty}(\sigma)_{\nn^{\inv}}\cong R'_{\infty}(\sigma)_{\nn'}/ (z_1-a_1, \ldots, z_t-a_t)R'_{\infty}(\sigma)_{\nn'},$$
we deduce that $R^{\inv}_{\infty}(\sigma)_{\nn^{\inv}}$ is regular. It follows from \eqref{eqn_torsor} that 
the map $$R^{\inv}_{\infty}(\sigma)[1/2]\rightarrow R_{\infty}(\sigma)[1/2]$$ is \'etale. Hence $R_{\infty}(\sigma)_{\nn}$ is a regular ring. 

If \eqref{localize_p1} is not an isomorphism then the dimension of the quotient must decrease. This leads to the  inequality 
$\dim R_{\infty}(\sigma)_{\nn} < \dim R_{\infty}(\sigma)-1$. Since $M_{\infty}(\sigma)$ is a Cohen--Macaulay module, as follows from (P3), its support cannot contain embedded components, hence $\dim M_{\infty}(\sigma)_{\nn}=\dim M_{\infty}(\sigma)-1$. This leads to a contradiction, as
$M_{\infty}(\sigma)_{\nn}$ is a finitely generated $R_{\infty}(\sigma)_{\nn}$-module. 
\end{proof}

\begin{lem}\label{trivial} Let $A$ be a local noetherian ring and let $(x_1, \ldots, x_d)$ be a system of parameters of $A$. If $A$ is equidimensional then every 
irreducible component of $A$ contains a closed point of $(A/(x_2, \ldots, x_d))[1/x_1]$.
\end{lem}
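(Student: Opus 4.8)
Fix an irreducible component of $\Spec A$, that is, a minimal prime $\mathfrak p$ of $A$. The plan is to produce a prime $\mathfrak q$ of $A$ with $\mathfrak p\subseteq\mathfrak q$, $(x_2,\dots,x_d)\subseteq\mathfrak q$ and $x_1\notin\mathfrak q$, and to observe that any such $\mathfrak q$ is automatically a closed point of $(A/(x_2,\dots,x_d))[1/x_1]$.

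First I would record the relevant dimension facts. Since $(x_1,\dots,x_d)$ is a system of parameters it is $\mathfrak m_A$-primary, so $A/(x_2,\dots,x_d)$ has dimension $\le 1$ (adjoining the single element $x_1$ makes the ideal $\mathfrak m_A$-primary) and dimension $\ge 1$ (killing $d-1$ elements lowers Krull dimension by at most $d-1$, and $\dim A=d$, using equidimensionality). Hence $\dim A/(x_2,\dots,x_d)=1$; its unique maximal ideal $\mathfrak m_A$ contains $x_1$, so every prime not containing $x_1$ is a minimal prime of $A/(x_2,\dots,x_d)$. These minimal primes are pairwise incomparable, so $(A/(x_2,\dots,x_d))[1/x_1]$ has Krull dimension $0$, i.e.\ is artinian, and all of its prime ideals are closed points; they correspond exactly to the primes $\mathfrak q$ of $A$ with $(x_2,\dots,x_d)\subseteq\mathfrak q$ and $x_1\notin\mathfrak q$.

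Now set $B=A/\mathfrak p$, a local noetherian ring with $\dim B=d$ by equidimensionality, and write $\bar x_i$ for the image of $x_i$ in $B$. As $(x_1,\dots,x_d)$ is $\mathfrak m_A$-primary, $(\bar x_1,\dots,\bar x_d)$ is $\mathfrak m_B$-primary, so exactly as above $\dim B/(\bar x_2,\dots,\bar x_d)=1$ while $\dim B/(\bar x_1,\dots,\bar x_d)=0$. If $\bar x_1$ belonged to every prime of $B/(\bar x_2,\dots,\bar x_d)$, it would be nilpotent there, so $(\bar x_1,\dots,\bar x_d)$ and $(\bar x_2,\dots,\bar x_d)$ would have the same radical in $B$, forcing $\dim B/(\bar x_2,\dots,\bar x_d)=\dim B/(\bar x_1,\dots,\bar x_d)$, contradicting $1\ne 0$. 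Therefore some prime of $B/(\bar x_2,\dots,\bar x_d)$ avoids $\bar x_1$; pulling it back to $A$ gives a prime $\mathfrak q$ with $\mathfrak p\subseteq\mathfrak q$, $(x_2,\dots,x_d)\subseteq\mathfrak q$, $x_1\notin\mathfrak q$. By the previous paragraph $\mathfrak q$ is a closed point of $(A/(x_2,\dots,x_d))[1/x_1]$, and since $\mathfrak p\subseteq\mathfrak q$ it lies in the component $V(\mathfrak p)$, as required.

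The argument is routine commutative algebra; the only step needing a little care is the translation between ``irreducible component of $A$'' and ``closed point of the localized quotient'', namely the observation that $(A/(x_2,\dots,x_d))[1/x_1]$ is artinian, so that being a closed point there imposes no minimality condition beyond what is automatic. I do not expect a genuine obstacle.
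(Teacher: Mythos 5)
Your proof is correct and is essentially the paper's argument: both reduce to showing that $x_1$ cannot be nilpotent in $A/(\mathfrak p, x_2,\dots,x_d)$, since otherwise $(x_2,\dots,x_d)$ would cut $A/\mathfrak p$ down to dimension $0$, contradicting $\dim A/\mathfrak p = d$ from equidimensionality. Your extra remarks identifying the primes of $(A/(x_2,\dots,x_d))[1/x_1]$ as closed points are a harmless elaboration of what the paper leaves implicit.
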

\begin{proof} Let $\pp$ be an irreducible component of $A$. If $A/(\pp, x_2, \ldots, x_d)[1/x_1]$ is zero then $x_1$ is nilpotent 
in $A/(\pp, x_2, \ldots, x_d)$. Since $(x_1, \ldots, x_d)$ is a system of parameters of $A$, we conclude that $A/(\pp, x_2, \ldots, x_d)$
is zero dimensional, which implies that $\dim A/\pp \le d-1$, contradicting equidimensionality of $A$. 
\end{proof}

\begin{lem}\label{still_to_do}  There is an integer $r$ independent of $\sigma$ and the choices made in the patching process such that for all $\pp\in \Spec R_{\infty}(\sigma)$
in the support of $M_{\infty}(\sigma)$ we have 
$$\dim_{\kappa(\pp)} M_{\infty}(\sigma)\otimes_{R_{\infty}(\sigma)} \kappa(\pp)\ge r$$
with equality if $\pp$ is a minimal prime of $R_{\infty}(\sigma)$ in the support of $M_{\infty}(\sigma)$.
\end{lem}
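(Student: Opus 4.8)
The plan is to compute the generic rank of $M_\infty(\sigma)$ along each irreducible component of its support, to identify it with a fixed multiplicity coming from the classical quaternionic automorphic forms of level $U$, and then to deduce the inequality by upper semicontinuity of fibre dimensions. The first step is to record that $M_\infty(\sigma)$ is a maximal Cohen--Macaulay $R_\infty(\sigma)$-module: by (P3) it is finite flat over the regular ring $\OO\br{y_1,\dots,y_{h+j}}$, hence Cohen--Macaulay of dimension $h+j+1$, and by Lemma~\ref{support} its support is a union of irreducible components of $R_\infty(\sigma)$, all of dimension $h+j+1$; since Cohen--Macaulayness is intrinsic to a finitely generated module and $R_\infty(\sigma)$ is catenary (a quotient of a power series ring over $\OO$ by (P1)), this gives $\operatorname{depth}_{R_\infty(\sigma)_\pp} M_\infty(\sigma)_\pp = \dim R_\infty(\sigma)_\pp$ for every $\pp$ in the support.

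Next I would fix a minimal prime $\pp_0$ of $R_\infty(\sigma)$ in the support of $M_\infty(\sigma)$ and produce a ``modular'' closed point of $\Spec R_\infty(\sigma)[1/2]$ on the component $V(\pp_0)$. Using (P3) the generators $y_1,\dots,y_h$ of $\mathfrak a_\infty$, together with $\varpi$, form part of a system of parameters of $R_\infty(\sigma)/\ann M_\infty(\sigma)$; applying Lemma~\ref{trivial} with $x_1=\varpi$ produces a closed point $\nn$ of $\Spec R_\infty(\sigma)[1/2]$ on $V(\pp_0)$, in the support of $M_\infty(\sigma)$, with $\mathfrak a_\infty\subseteq \nn$. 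Via (P2), $\nn$ corresponds to a closed point $\bar\nn$ of $\Spec R^{\psi,\square}_{F,S}(\sigma)[1/2]$ in the support of $M_0(\sigma)=R^{\psi,\square}_{F,S}\otimes_{R^{\psi}_{F,S}}S_{\sigma,\psi}(U,\OO)_\mm$; in particular $S_{\sigma,\psi}(U,\OO)_\mm$ is supported at the point of $\Spec R^{\psi}_{F,S}[1/2]$ below $\bar\nn$, so $\nn$ meets the hypothesis of Lemma~\ref{regular_point2}. Then $R_\infty(\sigma)_\nn$ is regular, and combined with maximal Cohen--Macaulayness, Auslander--Buchsbaum gives that $M_\infty(\sigma)_\nn$ is free over $R_\infty(\sigma)_\nn$; localizing further at $\pp_0\subseteq\nn$, the ring $R_\infty(\sigma)_{\pp_0}$ is again regular and $M_\infty(\sigma)_{\pp_0}$ is free of the same rank. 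Hence the generic rank of $M_\infty(\sigma)$ on $V(\pp_0)$ equals $\dim_{\kappa(\nn)} M_\infty(\sigma)\otimes_{R_\infty(\sigma)}\kappa(\nn)$, and since $\mathfrak a_\infty\subseteq\nn$ is generated by an $M_\infty(\sigma)$-regular sequence, (P2) identifies this with $\dim_{\kappa(\bar\nn)} M_0(\sigma)\otimes_{R^{\psi,\square}_{F,S}(\sigma)}\kappa(\bar\nn)$, which in turn equals $\dim_\kappa S_{\sigma,\psi}(U,\OO)_\mm\otimes\kappa$ at the point below $\bar\nn$ because $R^{\psi,\square}_{F,S}$ is formally smooth over $R^{\psi}_{F,S}$.

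It remains to see that this last quantity is a fixed positive integer $r$, independent of $\sigma$, of $\pp_0$, and of the choices in the patching process. By \eqref{auto_form} together with Lemma~\ref{find_prime} (which makes the isotropy groups in \eqref{auto_form} trivial), $S_{\sigma,\psi}(U,\OO)$ is a direct sum of copies of $W_\sigma$, so its localization at the Hecke maximal ideal of an automorphic representation $\Pi$ of $D^\times$ has $\kappa$-dimension $\prod_v \dim\Hom_{U_v}(\sigma_v,\Pi_v)$, with $\sigma_v$ the trivial character for $v\nmid 2$. For $v\mid 2$, $\sigma_v$ is $\sigma(\tau_v)$ (resp. $\sigma^{\cris}(\tau_v)$) twisted by an algebraic representation, so $\Hom_{U_v}(\sigma_v,\Pi_v)$ is one dimensional by Henniart's theorem \cite{henniart}, regardless of $(\wt_v,\tau_v)$ and of the choice of $\sigma(\tau_v)$; for the remaining $v\in S$ the local factor depends only on $\rhobar|_{G_{F_v}}$ and the fixed level $U_v$, and by strong multiplicity one for $D^\times$ and the choice of auxiliary level it is independent of $\Pi$. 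Thus the generic rank on $V(\pp_0)$ is a fixed positive integer $r$ depending only on the global data; it is independent of $\sigma$ because the contributions at $v\mid 2$ are always $1$, and independent of the patching choices because $M_0(\sigma)$ is defined prior to patching. Finally, since this holds at every minimal prime $\pp_0$ of $R_\infty(\sigma)$ in the support of $M_\infty(\sigma)$, and since $\pp\mapsto \dim_{\kappa(\pp)} M_\infty(\sigma)\otimes_{R_\infty(\sigma)}\kappa(\pp)$ is upper semicontinuous (it is the corank of a Fitting ideal), every $\pp$ in the support contains a minimal prime of the support and hence has fibre dimension $\ge r$, with equality at the minimal primes.

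The main obstacle will be precisely the fact that $M_\infty(\sigma)$ need not be free over $R_\infty(\sigma)$, so that there is no a priori notion of ``rank'': one is forced to locate on each component a modular point at which the local deformation rings at the places above $2$ are regular --- this is Lemma~\ref{regular_point2}, resting ultimately on the purity/weight--monodromy input --- in order to apply Auslander--Buchsbaum, and one must then verify that the rank obtained is genuinely the same on every component, which is exactly what the local--global computation of the automorphic multiplicity in the third paragraph provides. The bookkeeping with systems of parameters in Lemma~\ref{trivial} and the regular-sequence assertions coming from (P3) are routine and I would not dwell on them.
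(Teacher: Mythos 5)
Your proposal is correct and follows essentially the same route as the paper: Lemma~\ref{trivial} to locate a modular closed point on each component of the support, Lemma~\ref{regular_point2} together with the Cohen--Macaulayness from (P3) and Auslander--Buchsbaum to get freeness at that point, and the classical automorphic multiplicity computation to identify the rank with a constant $r$. The only place you are looser than the paper is the final local computation, where instead of invoking strong multiplicity one you should note that the local deformation conditions at $v\in S$, $v\nmid 2\infty$ force $\pi_v$ to be an unramified twist of Steinberg for $v\in\Sigma$ (giving $\dim_L\pi_v^{U_v}=1$) and a tamely ramified principal series otherwise (giving $\dim_L\pi_v^{U_v}=2$ for the chosen $U_v$), so that $r=2^{|S\setminus(\Sigma\cup\{v\mid 2\infty\})|}$.
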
 
\begin{proof} Let $\qq$ be a minimal prime of $R_{\infty}(\sigma)$ in the support of $M_{\infty}(\sigma)$. It is enough to 
show that $\dim_{\kappa(\qq)} M_{\infty}(\sigma)\otimes_{R_{\infty}(\sigma)} \kappa(\qq)$ is independent of $\qq$ and $\sigma$. Since 
$$M_{\infty}(\sigma)/(y_1,\ldots, y_{h+j}) M_{\infty}(\sigma)\cong S_{\sigma, \psi}(U, \OO)_{\mm}$$
and $S_{\sigma, \psi}(U, \OO)_{\mm}$ is a finitely generated $\OO$-module, $y_1, \ldots, y_{h+j}, \varpi$ is a system of parameters for 
$R_{\infty}(\sigma)/\qq$ and it follows from Lemma \ref{trivial} that there is a maximal ideal $\nn$ of $R_{\infty}(\sigma)[1/2]$, 
contained in $V(\qq)$ such that $S_{\sigma, \psi}(U, \OO)_{\nn}\neq 0$. It follows from (P3) that $M_{\infty}(\sigma)$ is a
Cohen-Macaulay module. The same holds for the localization at $\nn$. Since $R_{\infty}(\sigma)_{\nn}$ is a regular ring by Lemma 
\ref{regular_point2}, a standard argument with Auslander--Buchsbaum theorem shows that $M_{\infty}(\sigma)_{\nn}$ is a free 
$R_{\infty}(\sigma)_{\nn}$-module. By localizing further at $\qq$ we deduce that 
\begin{equation}
\begin{split}
\dim_{\kappa(\qq)} M_{\infty}(\sigma)\otimes_{R_{\infty}(\sigma)} \kappa(\qq)&= \dim_{\kappa(\nn)} M_{\infty}(\sigma)\otimes_{R_{\infty}(\sigma)} \kappa(\nn)\\ &=\dim_{\kappa(\nn)} S_{\sigma, \psi}(U, \OO)_{\mm}\otimes_{R_{\infty}(\sigma)} \kappa(\nn).
\end{split}
\end{equation}
So it is enough show that $\dim_{\kappa(\nn)} S_{\sigma, \psi}(U, \OO)_{\mm}\otimes_{R_{\infty}(\sigma)} \kappa(\nn)$ is independent of 
$\nn$ and $\sigma$. The action of $R_{\infty}(\sigma)$ on  $S_{\sigma, \psi}(U, \OO)_{\mm}$ factors through the action of the Hecke algebra 
$\mathbb{T}_{\sigma, \psi}(U)$, which 
is reduced. Thus $\mathbb{T}_{\sigma, \psi}(U)[1/2]$ is a product of finite field extensions of $L$ and we have
$$S_{\sigma, \psi}(U, \OO)_{\mm}\otimes_{R_{\infty}(\sigma)} \kappa(\nn)=S_{\sigma, \psi}(U, \OO)_{\nn}= (S_{\sigma, \psi}(U, \OO)_{\mm}\otimes_{\OO} L)[\nn].$$
Let $\pi=\otimes'_v \pi_v$ be the automorphic representation of $(D\otimes_F \AfF)^{\times}$ corresponding to $f^D\in  (S_{\sigma, \psi}(U, \OO)_{\mm}\otimes_{\OO} L)[\nn]$. We assume that $L$ is sufficiently large. It follows from the discussion in \cite[(3.1.14)]{kisin_moduli}, relating $S_{\sigma, \psi}(U, L)$ to the space of classical automorphic forms on $(D\otimes_F \AfF)^{\times}$, that 
$$ \dim_{L} (S_{\sigma, \psi}(U, \OO)_{\mm}\otimes_{\OO} L)[\nn]= \prod_{v\in S, v\nmid 2\infty} \dim_{L} \pi_v^{U_v} \prod_{v\mid 2}\dim_L
\Hom_{U_v}(\sigma(\tau_v), \pi_v).$$
We claim that the right hand side of the above equation is equal to $2^{|S\setminus(\Sigma\cup \{v\mid 2\infty\})|}$. The claim will follow from 
the local-global compatibility of Langlands and Jacquet--Langlands correspondences. Let $\rho_{\nn}$ be the representation 
of $G_{F, S}$ corresponding to $\nn$, considered as a maximal ideal of $R^{\psi}_{F, S}(\sigma)[1/2]$. 
If $v\mid  2$ then the results of Henniart \cite{henniart} imply that  $\dim_L\Hom_{U_v}(\sigma(\tau_v), \pi_v)=1$. If $v\in \Sigma$ 
then $\pi_v$ is unramified character of $D_v^{\times}$, and hence $\dim_{L} \pi_v^{U_v}=1$. 
If $v\in S$, $v\nmid  2\infty$ and $v\not\in \Sigma$ then $D$ is split at $v$, $\rhobar|_{G_{F_v}}$ is unramified and $\rhobar(\Frob_v)$ 
has distinct eigenvalues. This implies that $\rho_{\nn}|_{G_{F_v}}$ is an extension of distinct tamely ramified characters $\psi_1$, $\psi_2$, such that $\psi_1\psi_2^{-1}\neq \chi_{\cyc}^{\pm 1}$. We deduce that   $\pi_v$ is a tamely ramified 
principal series. Since $U_v$ is equal to the subgroup of unipotent upper-triangular matrices modulo $\varpi_v$ in this case, we deduce 
that $\dim_L \pi_v^{U_v}=2$.
\end{proof}

\begin{lem}\label{fibre_inv}   There is an integer $r$ independent of $\sigma$ and the choices made in the patching process such that for all 
minimal primes $\pp$ of $R_{\infty}^{\inv}(\sigma)$ in the support of $M_{\infty}(\sigma)$ we have
$$\dim_{\kappa(\pp)} M_{\infty}(\sigma)\otimes_{R_{\infty}^{\inv}(\sigma)} \kappa(\pp)= 2^tr.$$
\end{lem}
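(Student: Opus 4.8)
The plan is to combine Lemma \ref{still_to_do} with the torsor structure from Lemma \ref{torsor_fix} and the transitivity of the action from Lemma \ref{transitive_action}, taking $r$ to be the integer occurring in Lemma \ref{still_to_do}. Fix a minimal prime $\pp$ of $R_{\infty}^{\inv}(\sigma)$ lying in the support of $M_{\infty}(\sigma)$, and let $\qq_1,\ldots,\qq_s$ be the primes of $R_{\infty}(\sigma)$ lying above $\pp$. Since $R_{\infty}(\sigma)$ is a free, hence finite, $R_{\infty}^{\inv}(\sigma)$-module of rank $2^t$ by Lemma \ref{torsor_fix}, incomparability in the integral extension $R_{\infty}^{\inv}(\sigma)\subseteq R_{\infty}(\sigma)$ shows that each $\qq_i$ is again a minimal prime of $R_{\infty}(\sigma)$; moreover $B:=R_{\infty}(\sigma)\otimes_{R_{\infty}^{\inv}(\sigma)}\kappa(\pp)$ is an artinian $\kappa(\pp)$-algebra of dimension $2^t$ with spectrum $\{\qq_1,\ldots,\qq_s\}$, and, $R_{\infty}(\sigma)$ being finite over $R_{\infty}^{\inv}(\sigma)$, $\pp$ lies in the support of $M_{\infty}(\sigma)$ exactly when some $\qq_i$ lies in the support of $M_{\infty}(\sigma)$ over $R_{\infty}(\sigma)$.

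First I would show that in fact \emph{every} $\qq_i$ lies in the support of $M_{\infty}(\sigma)$. By Lemma \ref{transitive_action} the finite group $\gm(\OO)$ acts transitively on $\{\qq_1,\ldots,\qq_s\}$ through automorphisms $\phi_g$ of $R_{\infty}(\sigma)$, and by (P5) this action lifts to $M_{\infty}(\sigma)$; hence if $\phi_g(\qq_i)=\qq_j$ then the corresponding $\phi_g$-semilinear automorphism of $M_{\infty}(\sigma)$ induces an isomorphism $M_{\infty}(\sigma)_{\qq_i}\cong M_{\infty}(\sigma)_{\qq_j}$, so one of these localisations is nonzero if and only if all are; since $\pp$ lies in the support, all do. Next I would show that each $R_{\infty}(\sigma)_{\qq_i}$ is a field: arguing as in the proof of Lemma \ref{still_to_do} (using that $M_{\infty}(\sigma)/(y_1,\ldots,y_{h+j})M_{\infty}(\sigma)\cong S_{\sigma,\psi}(U,\OO)_{\mm}$ is finite over $\OO$ and applying Lemma \ref{trivial} to $R_{\infty}(\sigma)/\qq_i$), there is a maximal ideal $\nn_i\supseteq\qq_i$ of $R_{\infty}(\sigma)[1/2]$ with $S_{\sigma,\psi}(U,\OO)_{\nn_i}\neq 0$, so $R_{\infty}(\sigma)_{\nn_i}$ is regular by Lemma \ref{regular_point2}, whence its localisation $R_{\infty}(\sigma)_{\qq_i}$ is a regular local ring of dimension $0$, i.e.\ the field $\kappa(\qq_i)$. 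As $\qq_i$ is a minimal prime of $R_{\infty}(\sigma)$ in the support of $M_{\infty}(\sigma)$, Lemma \ref{still_to_do} gives $\dim_{\kappa(\qq_i)}M_{\infty}(\sigma)\otimes_{R_{\infty}(\sigma)}\kappa(\qq_i)=r$ for all $i$.

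It remains to assemble these facts. Because $R_{\infty}(\sigma)_{\qq_i}=\kappa(\qq_i)$ and $\pp R_{\infty}(\sigma)\subseteq\qq_i$, the maximal ideal of $R_{\infty}^{\inv}(\sigma)_{\pp}$ maps to zero in $\kappa(\qq_i)$, so localising $B$ at its maximal ideals gives $B\cong\prod_{i=1}^{s}\kappa(\qq_i)$, and comparison of $\kappa(\pp)$-dimensions yields $\sum_{i=1}^{s}[\kappa(\qq_i):\kappa(\pp)]=2^t$. Hence
$$M_{\infty}(\sigma)\otimes_{R_{\infty}^{\inv}(\sigma)}\kappa(\pp)\;\cong\;M_{\infty}(\sigma)\otimes_{R_{\infty}(\sigma)}B\;\cong\;\bigoplus_{i=1}^{s}M_{\infty}(\sigma)\otimes_{R_{\infty}(\sigma)}\kappa(\qq_i),$$
so, taking $\kappa(\pp)$-dimensions and using the two preceding displays,
$$\dim_{\kappa(\pp)}M_{\infty}(\sigma)\otimes_{R_{\infty}^{\inv}(\sigma)}\kappa(\pp)=\sum_{i=1}^{s}[\kappa(\qq_i):\kappa(\pp)]\cdot r=2^{t}r,$$
with $r$ independent of $\sigma$ and of the patching data by Lemma \ref{still_to_do}. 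The step I expect to be the main obstacle is the middle one: one needs both that the $\gm(\OO)$-torsor action is transitive on the fibre above $\pp$ (so that no component above $\pp$ is overlooked) and that the local rings $R_{\infty}(\sigma)_{\qq_i}$ are fields — the latter ultimately resting on the Weight--Monodromy input entering Lemma \ref{regular_point2} — after which the counting is purely formal.
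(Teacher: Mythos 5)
Your proof is correct, and its skeleton coincides with the paper's: both reduce the computation to showing that $R_{\infty}(\sigma)\otimes_{R_{\infty}^{\inv}(\sigma)}\kappa(\pp)$ decomposes as a product $\prod_{\qq}\kappa(\qq)$ over the primes $\qq$ lying above $\pp$, both use Lemma \ref{transitive_action} together with (P5) to see that every such $\qq$ lies in the support of $M_{\infty}(\sigma)$, and both then apply Lemma \ref{still_to_do} to each $\qq$ and count $\kappa(\pp)$-dimensions. The one place you diverge is in justifying the product decomposition. You prove that each localization $R_{\infty}(\sigma)_{\qq}$ is a field by rerunning the argument of Lemma \ref{still_to_do} to produce an automorphic closed point $\nn\supseteq\qq$ and then invoking the regularity statement of Lemma \ref{regular_point2}, which ultimately rests on the purity/weight--monodromy input. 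The paper obtains the same decomposition more cheaply: since $\pp$ is a minimal prime of the support it is an associated prime of $M_{\infty}(\sigma)$, so $R_{\infty}^{\inv}(\sigma)/\pp$ embeds into the $\OO$-torsion-free module $M_{\infty}(\sigma)$ and hence $\kappa(\pp)$ has characteristic $0$; the explicit presentation \eqref{eqn_torsor} then shows that $R_{\infty}(\sigma)\otimes_{R_{\infty}^{\inv}(\sigma)}\kappa(\pp)$ is \'etale over $\kappa(\pp)$, hence a finite product of fields. Both routes are valid; the paper's avoids any appeal to Lemma \ref{regular_point2} at this step, while yours reuses machinery already established for Lemma \ref{still_to_do}.
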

\begin{proof} To ease the notation, let us drop $\sigma$ from it  in this proof. Since $\pp$ is minimal, it is an associated prime and so $M_{\infty}$ will contain $R_{\infty}^{\inv}/\pp$ as a submodule. Since 
$M_{\infty}$ is $\OO$-torsion free, this implies that the quotient field $\kappa(\pp)$ has characteristic $0$. It follows from 
\eqref{eqn_torsor} that $R_{\infty}\otimes_{R^{\inv}_{\infty}} \kappa(\pp)$ is \'etale over $\kappa(\pp)$, and so 
$$R_{\infty}\otimes_{R^{\inv}_{\infty}} \kappa(\pp)\cong \prod_{\qq} \kappa(\qq),$$
where the product is taken over all prime ideals $\qq$ of $R_{\infty}$, such that $\qq \cap R_{\infty}^{\inv}=\pp$.
From this we get 
$$\dim_{\kappa(\pp)} M_{\infty}\otimes_{R_{\infty}^{\inv}} \kappa(\pp)= \sum_{\qq} [\kappa(\qq):\kappa(\pp)]
\dim_{\kappa(\qq)} M_{\infty}\otimes_{R_{\infty}} \kappa(\qq).$$
It follows from Lemma \ref{transitive_action} and (P5) that all $\qq$ appearing in the sum lie in the support of $M_{\infty}$. 
Lemma \ref{still_to_do} implies that $\dim_{\kappa(\qq)} M_{\infty}\otimes_{R_{\infty}} \kappa(\qq)=r$. Thus 
$$\dim_{\kappa(\pp)} M_{\infty}\otimes_{R_{\infty}^{\inv}} \kappa(\pp)=r \dim_{\kappa(\pp)} R_{\infty}\otimes_{R^{\inv}_{\infty}} \kappa(\pp)= r2^t,$$
where the last equality follows from Lemma \ref{torsor_fix}.
\end{proof}
\begin{lem}\label{compare_HS} 
Let $A$ be a local noetherian ring and let $M$, $N$ be finitely generated $A$-modules of  dimension $d$,
and let $x\in A$ be $M$-regular and $N$-regular. If $\ell_{A_{\qq}}(M_{\qq})\le \ell_{A_{\qq}}(N_{\qq})$ for all 
$\qq\in \Spec A$ with $\dim A/\qq =d$ then  $$e(M/xM, A/xA)\le e(N/xN, A/xA).$$ If $\ell_{A_{\qq}}(M_{\qq})= \ell_{A_{\qq}}(N_{\qq})$ for all 
$\qq\in \Spec A$ with $\dim A/\qq =d$ then  $$e(M/xM, A/xA)=e(N/xN, A/xA).$$
\end{lem}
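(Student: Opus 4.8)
The plan is to translate everything into the language of cycles and reduce to the behaviour of $z_{d-1}(P/xP)$ under an $M$-regular element $x$. Since $x$ is a non-zero-divisor on $M$ and on $N$, both $M/xM$ and $N/xN$ have dimension $d-1$; as $d-1\le \dim A/xA$ in any case, if $\dim A/xA>d-1$ then $e(M/xM,A/xA)=e(N/xN,A/xA)=0$ by convention and there is nothing to prove. So I may assume $\dim A/xA=d-1$, and then by \cite[\S V.2]{mult} the multiplicity $e(M/xM,A/xA)$ equals the Hilbert--Samuel multiplicity $e\bigl(z_{d-1}(M/xM)\bigr)$ of the cycle $z_{d-1}(M/xM)\in\mathcal Z_{d-1}(A)$ (which is automatically supported on primes containing $x$), and likewise for $N$; since $e\bigl(\sum_{\pp}n_{\pp}\pp\bigr)=\sum_{\pp}n_{\pp}e(A/\pp)$ with every $e(A/\pp)\ge 1$, the functional $e$ is monotone with respect to $\le$ on cycles. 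Thus it suffices to show that $z_{d-1}(M/xM)\le z_{d-1}(N/xN)$, with equality of cycles when $\ell_{A_{\qq}}(M_{\qq})=\ell_{A_{\qq}}(N_{\qq})$ for all $\qq$ with $\dim A/\qq=d$.

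The key input is the identity
$$z_{d-1}(P/xP)=\sum_{\pp}\ell_{A_{\pp}}(P_{\pp})\,z_{d-1}\bigl((A/\pp)/x(A/\pp)\bigr),$$
valid for any finitely generated $A$-module $P$ of dimension $d$ on which $x$ is a non-zero-divisor, the sum running over $\pp\in\Spec A$ with $\dim A/\pp=d$. Each such $\pp$ occurring with $\ell_{A_{\pp}}(P_{\pp})\neq 0$ is a minimal prime of the support of $P$, hence an associated prime of $P$, hence does not contain the $P$-regular element $x$; so $x$ is a non-zero-divisor on $A/\pp$ and the right-hand side is a well-defined $\ge 0$ element of $\mathcal Z_{d-1}(A)$. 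I would deduce this from \cite[Prop.~2.2.13]{eg}, which asserts that $z_{d-1}(P/xP)$ depends only on $z_d(P)$: applying it to $P$ and to $P':=\bigoplus_{\pp}(A/\pp)^{\oplus\ell_{A_{\pp}}(P_{\pp})}$, which has the same top cycle and on which $x$ is still a non-zero-divisor, and computing $z_{d-1}(P'/xP')$ summand by summand, yields the formula. Alternatively one can argue directly by d\'evissage: split off the largest submodule $P_0\subseteq P$ of dimension $<d$, check that $x$ remains regular on $P_0$ and on $P/P_0$ (by comparing the submodules generated by an element and its $x$-multiple), so that $0\to P_0/xP_0\to P/xP\to (P/P_0)/x(P/P_0)\to 0$ is exact and $z_{d-1}(P_0/xP_0)=0$, reducing to $P$ with no associated prime of dimension $<d$; then localize at each prime of coheight $d-1$, where the identity becomes a length computation in a one-dimensional local ring (an instance of the associativity formula for multiplicities).

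Granting the formula, the conclusion is immediate: subtracting the identity for $M$ from that for $N$ gives $z_{d-1}(N/xN)-z_{d-1}(M/xM)=\sum_{\pp}\bigl(\ell_{A_{\pp}}(N_{\pp})-\ell_{A_{\pp}}(M_{\pp})\bigr)z_{d-1}\bigl((A/\pp)/x(A/\pp)\bigr)$, which under the hypothesis $\ell_{A_{\qq}}(M_{\qq})\le\ell_{A_{\qq}}(N_{\qq})$ is a non-negative combination of $\ge 0$ cycles, hence $\ge 0$; applying $e$ and monotonicity gives $e(M/xM,A/xA)\le e(N/xN,A/xA)$, and when the hypothesis is an equality the difference of cycles vanishes, giving equality of multiplicities. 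I expect the only real work to be in pinning down the cycle identity of the middle paragraph — in particular verifying that $x$ stays a non-zero-divisor on all the subquotients that enter, and that lower-dimensional pieces of $P$ contribute nothing to $z_{d-1}(P/xP)$ — rather than in the final bookkeeping, which is routine.
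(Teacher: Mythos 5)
Your proof is correct and rests on the same key input as the paper's, namely Proposition 2.2.13 of Emerton--Gee, which the paper cites directly in the form $e(M/xM,A/xA)=\sum_{\qq}\ell_{A_{\qq}}(M_{\qq})\,e(A/(\qq,x))$ and from which both assertions follow by monotonicity exactly as in your final paragraph. Your cycle-level reformulation and the checks that $x$ avoids the relevant associated primes are sound but amount to an expanded version of the paper's one-line argument.
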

\begin{proof} It follows from Proposition 2.2.13 in \cite{eg} that 
\begin{equation}\label{emerton_gee}
e(M/xM, A/xA)=\sum_{\qq} \ell_{A_{\qq}}(M_{\qq}) e(A/(\qq, x)),
\end{equation}
where the sum is taken over all primes $\qq$ in the support of $M$, such that $\dim A/\qq =d$. The above formula implies both 
assertions. 
\end{proof}

\begin{lem}\label{still_to_2} $e(M_{\infty}(\sigma)/\varpi, R^{\inv}_{\infty}(\sigma)/\varpi)\le 2^tr e(R^{\inv}_{\infty}(\sigma)/\varpi)$.
\end{lem}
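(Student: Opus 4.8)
The plan is to obtain the estimate from Lemma~\ref{compare_HS}, applied with $A=R^{\inv}_{\infty}(\sigma)$, with $x=\varpi$, with $M=M_{\infty}(\sigma)$ regarded as an $A$-module via the ring embedding $R^{\inv}_{\infty}(\sigma)\hookrightarrow R_{\infty}(\sigma)$ of Lemma~\ref{torsor_fix}, and with $N=A^{\oplus 2^{t}r}$, where $r$ is the integer of Lemma~\ref{fibre_inv}.

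First I would assemble the numerical data needed to invoke Lemma~\ref{compare_HS}. By Lemma~\ref{support} we have $\dim R_{\infty}(\sigma)=h+j+1$, and since $R_{\infty}(\sigma)$ is a free module of rank $2^{t}$ over $R^{\inv}_{\infty}(\sigma)$ by Lemma~\ref{torsor_fix}, the same value is the Krull dimension of $A=R^{\inv}_{\infty}(\sigma)$; set $d:=h+j+1=\dim A$. The support of $M_{\infty}(\sigma)$ in $\Spec R_{\infty}(\sigma)$ is a union of irreducible components, all of dimension $d$ (Lemma~\ref{support}); its image in $\Spec A$ under the finite map $\Spec R_{\infty}(\sigma)\to\Spec A$ is the support of $M_{\infty}(\sigma)$ as an $A$-module and is again equidimensional of dimension $d$. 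In particular $M_{\infty}(\sigma)$ and $N$ are finitely generated $A$-modules of dimension $d=\dim A$. For the regularity hypotheses: $\varpi$ is $M_{\infty}(\sigma)$-regular because $M_{\infty}(\sigma)$ is $\OO$-flat by (P3), and $\varpi$ is $N$-regular because $R^{\inv}_{\infty}(\sigma)$ is $\OO$-torsion free. The latter is the only non-formal input: it is part of the patching construction (for instance it descends, via the embedding $R^{\inv}_{\infty}(\sigma)\hookrightarrow R_{\infty}(\sigma)$ of Lemma~\ref{torsor_fix}, from the $\OO$-torsion-freeness of $R_{\infty}(\sigma)$, which in turn follows since $R_{\infty}(\sigma)$ acts faithfully on the $\OO$-flat module $M_{\infty}(\sigma)$).

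It then remains to check the length hypothesis of Lemma~\ref{compare_HS}, namely $\ell_{A_{\qq}}\bigl(M_{\infty}(\sigma)_{\qq}\bigr)\le\ell_{A_{\qq}}(N_{\qq})$ for every $\qq\in\Spec A$ with $\dim A/\qq=d$, i.e.\ for every minimal prime $\qq$ of $A$ of top dimension. If $\qq$ is not in the support of $M_{\infty}(\sigma)$ then $M_{\infty}(\sigma)_{\qq}=0$ and there is nothing to prove. Otherwise $\qq$ is a minimal prime of $A$ lying in the support of $M_{\infty}(\sigma)$, so Lemma~\ref{fibre_inv} gives $\dim_{\kappa(\qq)}\bigl(M_{\infty}(\sigma)\otimes_{A}\kappa(\qq)\bigr)=2^{t}r$; by Nakayama's lemma $M_{\infty}(\sigma)_{\qq}$ is generated by $2^{t}r$ elements over $A_{\qq}$, hence is a quotient of $A_{\qq}^{\oplus 2^{t}r}=N_{\qq}$, and therefore $\ell_{A_{\qq}}\bigl(M_{\infty}(\sigma)_{\qq}\bigr)\le\ell_{A_{\qq}}(N_{\qq})=2^{t}r\cdot\ell_{A_{\qq}}(A_{\qq})$. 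Lemma~\ref{compare_HS} now yields
\[
e\bigl(M_{\infty}(\sigma)/\varpi,\,R^{\inv}_{\infty}(\sigma)/\varpi\bigr)\le e\bigl(N/\varpi N,\,A/\varpi\bigr)=2^{t}r\cdot e\bigl(R^{\inv}_{\infty}(\sigma)/\varpi\bigr),
\]
where the last equality uses additivity of Hilbert--Samuel multiplicity over direct sums. The only genuine point, beyond this bookkeeping, is the $\OO$-torsion-freeness of $R^{\inv}_{\infty}(\sigma)$ needed to apply Lemma~\ref{compare_HS} to $N$; the dimension count $\dim R^{\inv}_{\infty}(\sigma)=h+j+1$ and the equidimensionality of the support of $M_{\infty}(\sigma)$ over $R^{\inv}_{\infty}(\sigma)$ are where a little care is required.
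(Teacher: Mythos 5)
Your overall strategy is the right one (and is the paper's: apply Lemma~\ref{compare_HS} with $x=\varpi$, $M=M_{\infty}(\sigma)$, and an $N$ of generic rank $2^{t}r$ produced from Lemma~\ref{fibre_inv} via Nakayama), but there is a genuine gap in your choice $N=A^{\oplus 2^{t}r}$ with $A=R^{\inv}_{\infty}(\sigma)$. Lemma~\ref{compare_HS} requires $\varpi$ to be $N$-regular, i.e.\ $R^{\inv}_{\infty}(\sigma)$ to be $\OO$-torsion free, and this is \emph{not} available at this point. It is not part of the patching construction: $R'_{\infty}(\sigma)$ is merely a quotient of the $\OO$-flat ring $R^{\psi,\square}_S(\sigma)\br{x_1,\ldots,x_g}$, and quotients of $\OO$-flat rings need not be $\OO$-flat. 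Indeed, the $\OO$-flatness (and reducedness) of $R^{\inv}_{\infty}(\sigma)$ is precisely what Lemma~\ref{nail}(ii) establishes \emph{under the extra hypothesis} that the support of $S_{\sigma,\psi}(U,\OO)_{\mm}$ meets every irreducible component of $R^{\psi,\square}_S(\sigma)$; Lemma~\ref{still_to_2} must hold unconditionally, since it feeds into the equivalence of (a)--(d) in Proposition~\ref{equivalent_cond_new}. Your proposed justification --- that $R_{\infty}(\sigma)$ acts faithfully on the $\OO$-flat module $M_{\infty}(\sigma)$ --- is circular: faithfulness of this action would force the support of $M_{\infty}(\sigma)$ to be all of $\Spec R_{\infty}(\sigma)$, which is condition (c) of Proposition~\ref{equivalent_cond_new}, exactly the kind of statement this lemma is being used to prove.

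The repair is the paper's one small extra move: let $\mathbb T^{\inv}_{\infty}(\sigma)$ be the image of $R^{\inv}_{\infty}(\sigma)$ in $\End_{\OO}(M_{\infty}(\sigma))$ and take $N=\mathbb T^{\inv}_{\infty}(\sigma)^{\oplus 2^{t}r}$. Since $M_{\infty}(\sigma)$ is $\OO$-flat, $\End_{\OO}(M_{\infty}(\sigma))$ and hence $\mathbb T^{\inv}_{\infty}(\sigma)$ are $\OO$-torsion free, so $\varpi$ is $N$-regular for free; moreover $\mathbb T^{\inv}_{\infty}(\sigma)$ has the same support (hence dimension $d$) as $M_{\infty}(\sigma)$, and your Nakayama argument still yields surjections $\mathbb T^{\inv}_{\infty}(\sigma)_{\qq}^{\oplus 2^{t}r}\twoheadrightarrow M_{\infty}(\sigma)_{\qq}$ at the relevant minimal primes, because the $A_{\qq}$-action on $M_{\infty}(\sigma)_{\qq}$ factors through $\mathbb T^{\inv}_{\infty}(\sigma)_{\qq}$. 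Lemma~\ref{compare_HS} then gives $e(M_{\infty}(\sigma)/\varpi, A/\varpi)\le 2^{t}r\, e(\mathbb T^{\inv}_{\infty}(\sigma)/\varpi, A/\varpi)$, and one concludes with the elementary bound $e(\mathbb T^{\inv}_{\infty}(\sigma)/\varpi, A/\varpi)\le e(A/\varpi)$, valid because $\mathbb T^{\inv}_{\infty}(\sigma)/\varpi$ is a cyclic $A/\varpi$-module. The rest of your bookkeeping (dimensions, equidimensionality of the support, $M$-regularity of $\varpi$ from (P3)) is correct.
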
 
\begin{proof} Let $\mathbb T_{\infty}^{\inv}(\sigma)$ be the image of $R_{\infty}^{\inv}(\sigma)$ in $\End_{\OO}(M_{\infty}(\sigma))$. Then 
$$  e(\mathbb T^{\inv}_{\infty}(\sigma)/\varpi, R^{\inv}_{\infty}(\sigma)/\varpi)\le e(R^{\inv}_{\infty}(\sigma)/\varpi).$$
If $\qq$ is a minimal prime of $R_{\infty}^{\inv}(\sigma)$ in the support of $M_{\infty}(\sigma)$ then it follows from 
Lemma \ref{fibre_inv} that there are surjections $\mathbb T_{\infty}^{\inv}(\sigma)_{\qq}^{\oplus 2^tr} \twoheadrightarrow M_{\infty}(\sigma)_{\qq}$. Thus $\ell(M_{\infty}(\sigma)_{\qq})\le 2^t r \ell(\mathbb T_{\infty}^{\inv}(\sigma)_{\qq})$. The assertion follows from 
Lemma \ref{compare_HS} applied with $x=\varpi$, $M=M_{\infty}(\sigma)$ and $N=\mathbb T_{\infty}^{\inv}(\sigma)^{\oplus 2^t r}$. 
\end{proof}

\begin{lem}\label{nail} If the support of $S_{\sigma, \psi}(U, \OO)_{\mm}$ meets every irreducible component of $R^{\psi, \square}_{S}(\sigma)$ then  
the following hold:
\begin{itemize} 
\item[(i)] $R^{\psi, \square}_S(\sigma)\br{x_1, \ldots, x_g}\twoheadrightarrow R'_{\infty}(\sigma)$ is an isomorphism;
\item[(ii)] $R_{\infty}^{\inv}(\sigma)$ is reduced, equidimensional and $\OO$-flat;
\item[(iii)] $R_{\infty}(\sigma)$ is reduced, equidimensional and $\OO$-flat;
\item[(iv)] the support of $M_{\infty}(\sigma)$ meets every irreducible component of $R_{\infty}(\sigma)$;
\item[(v)]  $2^t r e( R^{\psi, \square}_S(\sigma)/\varpi)=e(M_{\infty}(\sigma)/\varpi, R_{\infty}^{\inv}(\sigma)/\varpi)$.
\end{itemize} 
\end{lem}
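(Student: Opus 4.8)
The plan is to establish (i) first and to derive (ii)--(v) from it, together with the multiplicity inequalities already proved (Lemmas \ref{go_local} and \ref{still_to_2}, with \eqref{dimension_fix}) and the regularity results of Lemmas \ref{regular_point}, \ref{regular_point2}. I would begin by recording that $R^{\psi, \square}_S(\sigma)$ is reduced, $\OO$-flat and equidimensional: each of its local tensor factors has these properties with, moreover, a regular generic fibre --- by Corollary \ref{exist_pst} and Proposition \ref{reduced} at the places over $2$, and by the quoted results of \cite{kisin_serre2}, \cite{KW} at the other places --- and ``reduced, $\OO$-flat, with geometrically regular generic fibre'' is preserved under $\wtimes_{\OO}$, cf.\ \cite[\S2.5]{kisin_serre2}. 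Hence $P := R^{\psi, \square}_S(\sigma)\br{x_1,\dots,x_g}$ is reduced, $\OO$-flat and equidimensional of dimension $3|S|+1+g$, which by the proof of Lemma \ref{go_local} equals $\dim R'_{\infty}(\sigma)$.

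For (i) I would show that no irreducible component of $P$ is lost under the surjection $q\colon P\twoheadrightarrow R'_{\infty}(\sigma)$ of (P1); since $P$ is reduced this forces $q$ to be an isomorphism, because the image of $\Spec q$ is a closed subset of $\Spec P$ containing all the generic points, hence all of $\Spec P$, so $\ker q$ lies in every minimal prime of $P$ and is therefore zero. To see that a component $\mathcal C$ of $R^{\psi,\square}_S(\sigma)$ survives, I would use the hypothesis --- in the form in which it is verified globally (Remark \ref{remark_reduced_ring}), and using that the support of $S_{\sigma,\psi}(U,\OO)_{\mm}$, being a quotient of the reduced, $\OO$-torsion free Hecke algebra $\mathbb{T}_{\sigma,\psi}(U)_{\mm}$ (cf.\ \eqref{auto_form} and the proof of Lemma \ref{still_to_do}), is $\OO$-flat of dimension one --- to produce a closed point $\nn_S$ of $\Spec R^{\psi,\square}_S(\sigma)[1/2]$ lying on $\mathcal C$ with $S_{\sigma,\psi}(U,\OO)_{\nn_S}\neq 0$. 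Lemma \ref{regular_point} then shows $R^{\psi,\square}_S(\sigma)_{\nn_S}$ is regular, so $\mathcal C$ is the unique component through $\nn_S$. Identifying $S_{\sigma,\psi}(U,\OO)_{\mm}$ with $M_{\infty}(\sigma)/(y_1,\dots,y_{h+j})M_{\infty}(\sigma)$ via (P2) and the presentation $R^{\psi,\square}_{F,S}(\sigma)=R^{\psi}_{F,S}(\sigma)\br{y_{h+1},\dots,y_{h+j}}$, and using (P3) (so that $M_{\infty}(\sigma)$ is Cohen--Macaulay and each component of $\supp M_{\infty}(\sigma)$ is finite and surjective onto $\Spec\OO\br{y_1,\dots,y_{h+j}}$), I would lift $\nn_S$ to a closed point $\nn$ of $\Spec R_{\infty}(\sigma)[1/2]$ with $S_{\sigma,\psi}(U,\OO)_{\nn}\neq 0$, apply Lemma \ref{regular_point2} to get that $R_{\infty}(\sigma)_{\nn}$ is regular, and conclude --- by a dimension count using $\dim R_{\infty}(\sigma)=h+j+1$ and the codimensions recorded in \eqref{r_inv_1} and Lemma \ref{torsor_fix} --- that the unique component of $R_{\infty}(\sigma)$ through $\nn$ dominates $\mathcal C$. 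Pulling this component back through $R_{\infty}(\sigma)\twoheadleftarrow R'_{\infty}(\sigma)\twoheadleftarrow P$ places the generic point of $\mathcal C$ in the image of $\Spec q$, as required.

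Granting (i), the remaining assertions are formal. For (ii): $R'_{\infty}(\sigma)=P$ is reduced, $\OO$-flat and equidimensional, and by \eqref{r_inv_1} the subring $R^{\inv}_{\infty}(\sigma)$ is an $\OO$-algebra retract of it ($z_i\mapsto 0$), hence reduced, $\OO$-torsion free, and equidimensional since $R'_{\infty}(\sigma)=R^{\inv}_{\infty}(\sigma)\br{z_1,\dots,z_t}$. For (iii): by Lemma \ref{torsor_fix}, $R_{\infty}(\sigma)$ is finite free over $R^{\inv}_{\infty}(\sigma)$, hence $\OO$-flat and equidimensional of the same Krull dimension; and $R^{\inv}_{\infty}(\sigma)[1/2]\to R_{\infty}(\sigma)[1/2]$ is \'etale (the $a_i$ lie in $\mm^{\inv}_{\sigma}$, so $1+a_i$ and $2$ are units), so $R_{\infty}(\sigma)[1/2]$ is reduced, and $R_{\infty}(\sigma)\hookrightarrow R_{\infty}(\sigma)[1/2]$ by $\OO$-flatness. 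For (iv): by (i) the components of $R_{\infty}(\sigma)$ correspond to those of $R^{\psi,\square}_S(\sigma)$ (through $R^{\inv}_{\infty}(\sigma)$ and the finite, generically \'etale extension $R^{\inv}_{\infty}(\sigma)\to R_{\infty}(\sigma)$), and by the construction in the previous paragraph each of them carries a closed point of $\Spec R_{\infty}(\sigma)[1/2]$ in $\supp M_{\infty}(\sigma)$; since $\supp M_{\infty}(\sigma)$ is a union of components of $R_{\infty}(\sigma)$ (Lemma \ref{support}), it is all of $\Spec R_{\infty}(\sigma)$. For (v): $\varpi$ is regular on $M_{\infty}(\sigma)$ (by (P3)) and on $R^{\inv}_{\infty}(\sigma)$ (by (ii)), so by \cite[Prop.2.2.13]{eg},
$$ e\bigl(M_{\infty}(\sigma)/\varpi,\ R^{\inv}_{\infty}(\sigma)/\varpi\bigr)=\sum_{\qq}\ell_{R^{\inv}_{\infty}(\sigma)_{\qq}}\bigl(M_{\infty}(\sigma)_{\qq}\bigr)\, e\bigl(R^{\inv}_{\infty}(\sigma)/(\qq,\varpi)\bigr), $$
the sum being over the minimal primes $\qq$ of $R^{\inv}_{\infty}(\sigma)$ (equidimensional by (ii)). By (iv) each such $\qq$ lies in $\supp M_{\infty}(\sigma)$, so Lemma \ref{fibre_inv} gives $\dim_{\kappa(\qq)}M_{\infty}(\sigma)\otimes_{R^{\inv}_{\infty}(\sigma)}\kappa(\qq)=2^t r$; as $R^{\inv}_{\infty}(\sigma)$ is reduced, $R^{\inv}_{\infty}(\sigma)_{\qq}=\kappa(\qq)$, whence $\ell_{R^{\inv}_{\infty}(\sigma)_{\qq}}(M_{\infty}(\sigma)_{\qq})=2^t r$. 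Applying the same formula with $M_{\infty}(\sigma)$ replaced by $R^{\inv}_{\infty}(\sigma)$ gives $\sum_{\qq}e(R^{\inv}_{\infty}(\sigma)/(\qq,\varpi))=e(R^{\inv}_{\infty}(\sigma)/\varpi)$, hence $e(M_{\infty}(\sigma)/\varpi,R^{\inv}_{\infty}(\sigma)/\varpi)=2^t r\, e(R^{\inv}_{\infty}(\sigma)/\varpi)$; and $e(R^{\inv}_{\infty}(\sigma)/\varpi)=e(R'_{\infty}(\sigma)/\varpi)=e(R^{\psi,\square}_S(\sigma)/\varpi)$ by \eqref{dimension_fix} and (i), which is (v).

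The main obstacle is (i). The global hypothesis bears only on $R^{\psi,\square}_S(\sigma)$, whereas the patched module $M_{\infty}(\sigma)$ and the regularity input of Lemmas \ref{regular_point}, \ref{regular_point2} live over $R_{\infty}(\sigma)$, which is tied to $R^{\psi,\square}_S(\sigma)$ only through $R'_{\infty}(\sigma)$, $R^{\inv}_{\infty}(\sigma)$ and the $\gm$-torsor structure of Lemma \ref{torsor_fix}. Making the bookkeeping of irreducible components and their codimensions precise across this tower --- in particular checking that the automorphic point on each component of the local deformation ring really lifts to a regular point of $R_{\infty}(\sigma)$ whose component dominates it, so that no component is dropped in passing to $R'_{\infty}(\sigma)$ --- is the delicate step, and it is exactly where the purity (Weight--Monodromy) input behind Lemmas \ref{regular_point}, \ref{regular_point2} is indispensable. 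Everything else reduces to the multiplicity computation above.
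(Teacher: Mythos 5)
Your overall strategy is the paper's: reduce everything to (i), prove (i) by producing on each irreducible component of $R^{\psi,\square}_S(\sigma)\br{x_1,\ldots,x_g}$ an automorphic point at which the localized surjection \eqref{localize_p1} is an isomorphism, and then deduce (ii), (iii), (v) from the torsor structure and the multiplicity formula of \cite[Prop.~2.2.13]{eg}. One remark on (i): the phrase ``the unique component of $R_{\infty}(\sigma)$ through $\nn$ dominates $\mathcal C$'' cannot be right as stated, since every component of $R_{\infty}(\sigma)$ has dimension $h+j+1 < h+j+1+t = \dim \mathcal C\br{x_1,\ldots,x_g}$ and so cannot contain the generic point of $\mathcal C\br{x_1,\ldots,x_g}$. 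The correct intermediate object is $R'_{\infty}(\sigma)$: the Cohen--Macaulay/dimension argument in the proof of Lemma \ref{regular_point2} shows that \eqref{localize_p1} is an isomorphism at your point $\nn_S$, so $\ker q$ localizes to zero at $\nn_S$ and hence is contained in the minimal prime of $\mathcal C\br{x_1,\ldots,x_g}$. This is recoverable from what you cite, but it is the step that must be made explicit.

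The genuine gap is in (iv). You assert that ``the components of $R_{\infty}(\sigma)$ correspond to those of $R^{\psi,\square}_S(\sigma)$''. They do not: by Lemma \ref{torsor_fix}, $R_{\infty}(\sigma)$ is finite free of rank $2^t$ over $R^{\inv}_{\infty}(\sigma)$ and only generically \'etale, so over a single irreducible component of $R^{\inv}_{\infty}(\sigma)$ there may be anywhere from $1$ to $2^t$ irreducible components of $R_{\infty}(\sigma)$, according to whether the elements $1+a_i$ become squares on that component. Your construction exhibits one point of $\supp M_{\infty}(\sigma)$, hence one component of $R_{\infty}(\sigma)$ in the support, over each component of $R^{\inv}_{\infty}(\sigma)$; it says nothing about the remaining components in the same fibre. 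Closing this requires exactly the input you omitted: Lemma \ref{transitive_action}, which says $\gm(\OO)$ acts transitively on the set of components of $R_{\infty}(\sigma)$ over a fixed component of $R^{\inv}_{\infty}(\sigma)$, combined with the fact coming from (P5) that $\supp M_{\infty}(\sigma)$ is stable under this action (proved as in \cite[Lem.~9.6]{KW}). Note that your proof of (v) silently relies on this same mechanism through Lemma \ref{fibre_inv}, whose proof invokes Lemma \ref{transitive_action} and (P5); so (v) survives, but (iv) as you argue it does not.
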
 
\begin{proof} Since $R^{\psi, \square}_S(\sigma)\br{x_1, \ldots, x_g}$ is reduced, equidimensional and has the same dimension
as $R'_{\infty}(\sigma)$, to prove (i) it is enough to show that $R'_{\infty}(\sigma)_{\qq}\neq 0$ for every irreducible 
component $V(\qq)$ of $\Spec R^{\psi, \square}_S(\sigma)\br{x_1, \ldots, x_g}$. Since 
the diagram 
\begin{displaymath}
\xymatrix@1{R^{\psi,\square}_S(\sigma)\br{x_1,\ldots, x_g}\ar[r] & R_{\infty}(\sigma)\ar[d]\\
R^{\psi,\square}_S(\sigma)\ar[r]\ar[u]&R^{\psi}_{F, S}(\sigma)}
\end{displaymath}
commutes and the support of $S_{\sigma, \psi}(U, \OO)_{\mm}$ meets every irreducible component of $\Spec R^{\psi,\square}_S$,
$V(\qq)$ will contain 
a maximal ideal $\nn_S$ of $R^{\psi, \square}_S(\sigma)\br{x_1, \ldots, x_g}[1/2]$, which lies in the support of $S_{\sigma, \psi}(U, \OO)_{\mm}$. 
It follows from the proof of Lemma \ref{regular_point2} that \eqref{localize_p1} is an isomomorphism in this case. Thus $R'_{\infty}(\sigma)_{\qq}\neq 0$. 
 
 From part (i) we deduce that $R'_{\infty}(\sigma)$ is reduced, equidimensional and $\OO$-flat. It follows from \eqref{r_inv_1} that the same holds for $R^{\inv}_{\infty}(\sigma)$. Since $R_{\infty}(\sigma)$ is a free $R^{\inv}_{\infty}(\sigma)$-module by Lemma \ref{torsor_fix} it is $\OO$-flat. Hence, it is enough to show that $R_{\infty}(\sigma)[1/2]$
 is reduced and equidimensional. It follows from Lemma \ref{torsor_fix} that  $R_{\infty}(\sigma)[1/2]$ is \'etale over $R^{\inv}_{\infty}(\sigma)[1/2]$, which implies the assertion.  We also note that it follows from (i) that the inequality in  Lemma \ref{go_local} is an equality, and  \eqref{r_inv_1} implies that 
\begin{equation}\label{good1}
  e(R_{\infty}^{\inv}(\sigma)/\varpi)=e(R^{\psi, \square}_S/\varpi).
  \end{equation}
 It follows from our assumption that the support of $M_{\infty}(\sigma)$ meets every irreducible component of $R^{\psi, \square}_S(\sigma)\br{x_1, \ldots, x_g}$. Part (i) and \eqref{r_inv_1} imply that  the support of $M_{\infty}(\sigma)$ meets every irreducible 
component of $R^{\inv}_{\infty}(\sigma)$. It follows from Lemma \ref{transitive_action} that the group $\gm(\OO)$ acts transitively on the set of 
of irreducible components of $R_{\infty}(\sigma)$ lying above a given irreducible component of $R^{\inv}_{\infty}(\sigma)$. Thus for part 
(iii) it is enough to show that the support of $M_{\infty}(\sigma)$ in $\Spec R_{\infty}(\sigma)$ is stable under the action of 
 $\gm(\OO)$. This is given by (P5) and can be proved in the same way as \cite[Lem. 9.6]{KW}.
 
 Let $V(\qq)$ be an irreducible component of $\Spec R_{\infty}(\sigma)$. It follows from (iii) that the localization $R_{\infty}(\sigma)_{\qq}$ 
 is a reduced artinian ring, and hence is equal to the quotient field $\kappa(\qq)$. Thus $M_{\infty}(\sigma)_{\qq}\cong 
 M_{\infty}(\sigma)\otimes_{R_{\infty}(\sigma)} \kappa(\qq)$. It follows from  Lemma \ref{still_to_do} that
 $M_{\infty}(\sigma)_{\qq}$ has length $r$ as an $R_{\infty}(\sigma)_{\qq}$-module. By part (iv) $M_{\infty}(\sigma)$ is supported on every irreducible component of $R_{\infty}(\sigma)$, and thus the cycle of $M_{\infty}(\sigma)$ is equal to $r$ times the cycle of 
 $R_{\infty}(\sigma)$. Since both are $\OO$-torsion free, we deduce that the cycle of $M_{\infty}(\sigma)/\varpi$ is equal to 
 $r$ times the cycle of $R_{\infty}(\sigma)/\varpi$, which implies that 
 \begin{equation}\label{good2}
 e(M_{\infty}(\sigma)/\varpi, R_{\infty}^{\inv}(\sigma)/\varpi)= r e(R_{\infty}(\sigma)/\varpi, R_{\infty}^{\inv}(\sigma)/\varpi)= 2^t r e(R_{\infty}^{\inv}(\sigma)/\varpi).
 \end{equation}
 Part (v) follows from \eqref{good1} and \eqref{good2}.
 \end{proof}

 \begin{prop}\label{present_unrestricted} For some $s\ge 0$ there is an isomorphism of $R^{\psi, \square}_{S}$-algebras
 $$ R^{\psi, \square}_{F, S}\cong R^{\psi, \square}_{S}\br{x_1, \ldots, x_{s+|S|-1}}/(f_1, \ldots, f_s).$$
 \end{prop}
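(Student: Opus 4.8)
The plan is to present $R^{\psi,\square}_{F,S}$ as an $R^{\psi,\square}_S$-algebra by the usual Galois-cohomological method, following \cite[\S3]{kisinfm} (compare also \cite{bockle}) and incorporating the adjustments needed for $p=2$ from \cite[\S3]{KW}. Since $\rhobar$ has non-solvable, hence absolutely irreducible, image, the deformation functor of $\rhobar$ with determinant $\psi\chi_{\cyc}$, unramified outside $S$, and its framed variant, are pro-representable, and the homomorphism constructed in \S\ref{subsection_residual} endows $R^{\psi,\square}_{F,S}$ with the structure of an $R^{\psi,\square}_S$-algebra. The first step is to let $g$ be the $k$-dimension of the relative cotangent space $\mm_{R^{\psi,\square}_{F,S}}/\bigl(\mm_{R^{\psi,\square}_S}R^{\psi,\square}_{F,S}+\mm_{R^{\psi,\square}_{F,S}}^2\bigr)$ (note that $\varpi$ lies in $\mm_{R^{\psi,\square}_S}$, so this is already the reduction mod $\varpi$) and to choose $x_1,\dots,x_g\in\mm_{R^{\psi,\square}_{F,S}}$ lifting a basis of it. By the topological Nakayama lemma these topologically generate $R^{\psi,\square}_{F,S}$ over $R^{\psi,\square}_S$, yielding a surjection of $R^{\psi,\square}_S$-algebras $R^{\psi,\square}_S\br{x_1,\dots,x_g}\twoheadrightarrow R^{\psi,\square}_{F,S}$; write $J$ for its kernel. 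The standard presentation argument then shows that $J$ is generated by $c$ elements, where $c$ is the $k$-dimension of the relative obstruction module of the deformation problem.

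Next I would compute $g$ and $c$ in terms of the Galois cohomology of $\ad^0\rhobar$: the $k$-dual of the relative cotangent space is the kernel of the localisation map from framed global deformations of $\rhobar$ to the product over $v\in S$ of framed local deformations, all to the dual numbers over $k$, and the relative obstruction is controlled by the cokernel of $H^2(G_{F,S},\ad^0\rhobar)\to\bigoplus_{v\in S}H^2(G_{F_v},\ad^0\rhobar)$. Feeding the Poitou--Tate exact sequence, together with the local and global Euler characteristic formulas for $\ad^0\rhobar$, into this comparison should give $g-c=|S|-1$. This is precisely the computation performed in \cite[\S3]{KW}, and I expect this identity to be the main obstacle: the subtlety is that, because $p=2$, the archimedean places of $F$ now contribute to the Galois cohomology and have to be carried through the bookkeeping (this is what forces one to invoke the $p=2$ version of the argument rather than the one in \cite{kisinfm}).

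Granting $g-c=|S|-1$, the proof finishes formally. Since $c\ge0$ we set $s:=c$, so that $s\ge0$ and $g=s+|S|-1$. As $J$ is generated by at most $c=s$ elements, we choose $f_1,\dots,f_s\in J$ with $J=(f_1,\dots,f_s)$ — repeating a generator, or adjoining copies of $0$, if strictly fewer than $s$ elements already suffice — to obtain the desired isomorphism of $R^{\psi,\square}_S$-algebras
$$R^{\psi,\square}_{F,S}\cong R^{\psi,\square}_S\br{x_1,\dots,x_{s+|S|-1}}/(f_1,\dots,f_s).$$
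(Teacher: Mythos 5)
Your proposal takes the same route as the paper, whose proof of this proposition is simply a citation to the proof of Proposition 4.5 and Lemma 4.6 of \cite{KW}: present $R^{\psi,\square}_{F,S}$ over $R^{\psi,\square}_{S}$ with (number of variables) $-$ (number of relations) $=|S|-1$, the count coming from Poitou--Tate and the local and global Euler characteristic formulas with the archimedean contributions that appear because $p=2$, and with $s=\dim_k H^1_{\{L_v^{\perp}\}}(S,(\Ad^0)^*(1))$ in the notation of that paper. One small correction to your sketch: the space bounding the number of relations is the \emph{kernel} of the localisation map $H^2(G_{F,S},\ad^0\rhobar)\rightarrow\bigoplus_{v\in S}H^2(G_{F_v},\ad^0\rhobar)$ (which is Poitou--Tate dual to the dual Selmer group just mentioned), not its cokernel; with that adjustment your accounting is exactly the computation carried out in \cite{KW}.
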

 \begin{proof} The assertion follows from the proof of Proposition 4.5 in \cite{KW}, where $s=\dim_k H^1_{\{L_v^{\perp}\}}(S, (\Ad^0)^*(1)))$ in the notation of that paper, 
 see Lemma 4.6 in \cite{KW} and the displayed equation above it. 
 \end{proof}

 \begin{cor}\label{present_restricted} For some $s\ge 0$ there is an isomorphism of $R^{\psi, \square}_{S}(\sigma)$-algebras
 $$ R^{\psi, \square}_{F, S}(\sigma)\cong R^{\psi, \square}_{S}(\sigma)\br{x_1, \ldots, x_{s+|S|-1}}/(f_1, \ldots, f_s).$$
In particular, $\dim R^{\psi, \square}_{F, S}(\sigma)\ge 4|S|$ and $\dim R^{\psi}_{F, S}(\sigma)\ge 1$.
\end{cor}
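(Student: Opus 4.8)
The plan is to obtain the presentation by base change from Proposition \ref{present_unrestricted}, and then to read off the two dimension estimates.

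For the first assertion, I would start from the fact that $R^{\psi,\square}_S(\sigma)$ is a quotient of $R^{\psi,\square}_S$ — each of $R^{\psi,\square}_v(\sigma_v)$ for $v\mid 2$ and each $\bar R^{\psi,\square}_v$ for $v\in\Sigma$ being by construction a quotient of $R^{\psi,\square}_v$. For complete local noetherian $\OO$-algebras the completed base change functor $(-)\wtimes_{R^{\psi,\square}_S}R^{\psi,\square}_S(\sigma)$ commutes with forming quotients and with adjoining finitely many formal variables, so applying it to the isomorphism of Proposition \ref{present_unrestricted} and invoking the definition $R^{\psi,\square}_{F,S}(\sigma)=R^{\psi,\square}_S(\sigma)\wtimes_{R^{\psi,\square}_S}R^{\psi,\square}_{F,S}$ gives
$$ R^{\psi,\square}_{F,S}(\sigma)\cong R^{\psi,\square}_S(\sigma)\br{x_1,\ldots,x_{s+|S|-1}}/(\bar f_1,\ldots,\bar f_s),$$
where $\bar f_i$ is the image of $f_i$.

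For the dimension of the framed global ring, recall from \eqref{dim_S_loc} that $R^{\psi,\square}_S(\sigma)$ is equidimensional of Krull dimension $1+3|S|$; hence $R^{\psi,\square}_S(\sigma)\br{x_1,\ldots,x_{s+|S|-1}}$ has Krull dimension $1+3|S|+(s+|S|-1)=4|S|+s$. Since the $\bar f_i$ lie in the maximal ideal, killing the $s$ of them lowers the Krull dimension by at most $s$, so $\dim R^{\psi,\square}_{F,S}(\sigma)\ge 4|S|$.

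Finally, to pass to the unframed ring I would argue that $R^{\psi,\square}_{F,S}(\sigma)$ is a power series ring over $R^{\psi}_{F,S}(\sigma)$ in $j=4|S|-1$ variables, exactly as $R^{\psi,\square}_{F,S}=R^{\psi}_{F,S}\br{y_{h+1},\ldots,y_{h+j}}$: the morphism forgetting the frames, from the framed to the unframed global deformation problem, is a torsor under a smooth group of relative dimension $4|S|-1$, and it remains so after imposing the conditions that define $R^{\psi,\square}_S(\sigma)$, because those conditions (prescribed Hodge--Tate weights and inertial type at the places above $2$, and reducibility with unramified character at the places in $\Sigma$) are conditions on the restrictions of the deformation to the decomposition groups and do not involve the auxiliary choices of bases; a torsor under a smooth group over a complete local ring is split. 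From $\dim R^{\psi,\square}_{F,S}(\sigma)=\dim R^{\psi}_{F,S}(\sigma)+(4|S|-1)$ and the previous paragraph we get $\dim R^{\psi}_{F,S}(\sigma)\ge 1$. I expect this last point to be the only delicate one: verifying that base-changing the framing identification $R^{\psi,\square}_{F,S}=R^{\psi}_{F,S}\br{y_{h+1},\ldots,y_{h+j}}$ along $R^{\psi,\square}_S\twoheadrightarrow R^{\psi,\square}_S(\sigma)$ again yields a polynomial ring over the correspondingly cut-down unframed ring, the subtlety being that the structure map $R^{\psi,\square}_S\to R^{\psi,\square}_{F,S}$ does not literally factor through $R^{\psi}_{F,S}$.
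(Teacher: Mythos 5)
Your proposal is correct and follows the paper's argument exactly: base-change the presentation of Proposition \ref{present_unrestricted} along $R^{\psi,\square}_S\twoheadrightarrow R^{\psi,\square}_S(\sigma)$, bound the dimension via Krull's principal ideal theorem using \eqref{dim_S_loc}, and pass to the unframed ring using that $R^{\psi,\square}_{F,S}(\sigma)$ is formally smooth over $R^{\psi}_{F,S}(\sigma)$ of relative dimension $4|S|-1$. The paper simply asserts this last formal smoothness; your torsor/framing-independence justification is the correct way to fill in that (genuinely present but routine) point.
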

\begin{proof} Since $$R^{\psi, \square}_{F, S}(\sigma)\cong R^{\psi, \square}_{F, S}\otimes_{ R^{\psi, \square}_{S}}  R^{\psi, \square}_{S}(\sigma)$$
 the assertion follows from Proposition \ref{present_unrestricted}. Since $\dim R^{\psi, \square}_{S}(\sigma)=3|S|+1$ by \eqref{dim_S_loc}, the isomorphism implies that 
$$\dim R^{\psi, \square}_{F, S}(\sigma)\ge 3|S|+1 +s+|S|-1 -s=4|S|.$$
Since $R^{\psi, \square}_{F, S}(\sigma)$ is formally smooth over $R^{\psi}_{F, S}(\sigma)$ of relative dimension $4|S|-1$, we conclude that $\dim R^{\psi}_{F, S}(\sigma)\ge 1$.
\end{proof}

\begin{prop}\label{equivalent_cond_new} If $S_{\sigma, \psi}(U, \OO)_{\mm}\neq 0$ then the following are equivalent:
\begin{itemize} 
\item[(a)]  $2^t r e( R^{\psi, \square}_S(\sigma)/\varpi)=e(M_{\infty}(\sigma)/\varpi, R_{\infty}^{\inv}(\sigma)/\varpi)$;
\item[(b)] $2^t r e( R^{\psi, \square}_S(\sigma)/\varpi)\le  e(M_{\infty}(\sigma)/\varpi, R_{\infty}^{\inv}(\sigma)/\varpi)$;
\item[(c)] the support of $M_{\infty}(\sigma)$ meets every irreducible component of $R_{\infty}(\sigma)$; 
\item[(d)] $R^{\psi}_{F, S}(\sigma)$ is a finitely generated $\OO$-module of rank at least $1$  and 
$$S_{\sigma, \psi}(U, \OO)_{\nn}\neq 0, \quad \forall \nn\in \mSpec R^{\psi}_{F, S}(\sigma)[1/2].$$
\end{itemize} 
In this case any representation $\rho: G_{F, S}\rightarrow \GL_2(\OO)$ corresponding to a maximal ideal of $R^{\psi}_{F, S}(\sigma)[1/2]$ is modular. 
\end{prop}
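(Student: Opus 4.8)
The plan is to establish $(a)\Leftrightarrow(b)$ together with the cycle $(a)\Rightarrow(c)\Rightarrow(d)\Rightarrow(a)$, and then to read modularity off $(d)$. The references will all be to the patching lemmas just proved (Lemmas \ref{torsor_fix}--\ref{still_to_do}, \ref{nail}) and to properties (P1)--(P5).

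First I would dispose of $(a)\Leftrightarrow(b)$. Stacking Lemma \ref{still_to_2}, Lemma \ref{go_local} and \eqref{dimension_fix} gives the unconditional chain
\begin{equation*}
e(M_{\infty}(\sigma)/\varpi,R_{\infty}^{\inv}(\sigma)/\varpi)\ \le\ 2^{t}r\,e(R_{\infty}^{\inv}(\sigma)/\varpi)\ =\ 2^{t}r\,e(R'_{\infty}(\sigma)/\varpi)\ \le\ 2^{t}r\,e(R_{S}^{\psi,\square}(\sigma)/\varpi),
\end{equation*}
so $(b)$ forces every step here to be an equality, giving $(a)$, while $(a)\Rightarrow(b)$ is trivial. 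Tracing the proof of Lemma \ref{still_to_2} one moreover gets, in this case, $e(\mathbb T^{\inv}_{\infty}(\sigma)/\varpi,R_{\infty}^{\inv}(\sigma)/\varpi)=e(R_{\infty}^{\inv}(\sigma)/\varpi)$ where $\mathbb T^{\inv}_{\infty}(\sigma)$ is the image of $R_{\infty}^{\inv}(\sigma)$ in $\End_{\OO}M_{\infty}(\sigma)$. For $(a)\Rightarrow(c)$ I would feed this last equality into \eqref{emerton_gee}: it says that $R_{\infty}^{\inv}(\sigma)\twoheadrightarrow\mathbb T^{\inv}_{\infty}(\sigma)$ becomes an isomorphism after localising at every minimal prime of $R_{\infty}^{\inv}(\sigma)$, and once one checks these rings are $\OO$-flat this means $M_{\infty}(\sigma)$ is supported on every irreducible component of $\Spec R_{\infty}^{\inv}(\sigma)$. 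Since $R_{\infty}(\sigma)$ is free over $R_{\infty}^{\inv}(\sigma)$ of the same Krull dimension (Lemmas \ref{torsor_fix}, \ref{support}), every component of $\Spec R_{\infty}(\sigma)$ lies over a component of $\Spec R_{\infty}^{\inv}(\sigma)$, and the transitivity of the $\gm(\OO)$-action (Lemma \ref{transitive_action}) combined with the $\gm(\OO)$-stability of $\supp M_{\infty}(\sigma)$ given by (P5) upgrades this to: $M_{\infty}(\sigma)$ is supported on every component of $\Spec R_{\infty}(\sigma)$, i.e.\ $(c)$.

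For $(c)\Rightarrow(d)$ I would reduce modulo $\mathfrak a_{\infty}$. By (P3), $M_{\infty}(\sigma)$ is Cohen--Macaulay with $\varpi,y_{1},\dots,y_{h+j}$ a regular system of parameters, so $y_{1},\dots,y_{h}$ is $M_{\infty}(\sigma)$-regular and, using (P2), $\supp M_{0}(\sigma)=\supp M_{\infty}(\sigma)\cap V(\mathfrak a_{\infty})=\Spec R_{F,S}^{\psi,\square}(\sigma)$ under $(c)$; formal smoothness of $R_{F,S}^{\psi,\square}(\sigma)$ over $R_{F,S}^{\psi}(\sigma)$ then forces $S_{\sigma,\psi}(U,\OO)_{\mm}$ to be supported on all of $\Spec R_{F,S}^{\psi}(\sigma)$. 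Hence the annihilator of $S_{\sigma,\psi}(U,\OO)_{\mm}$ in $R_{F,S}^{\psi}(\sigma)$ is nilpotent, so $R_{F,S}^{\psi}(\sigma)$ is an extension of the $\OO$-finite Hecke algebra $\TT_{\sigma,\psi}(U)$ by a nilpotent ideal, hence $\OO$-finite, of rank $\ge1$ by Corollary \ref{present_restricted} and $S_{\sigma,\psi}(U,\OO)_{\mm}\neq0$; the remaining clause of $(d)$ is immediate from the support computation. For $(d)\Rightarrow(a)$ I would note $(d)$ makes $R_{F,S}^{\psi}(\sigma)/\varpi$ Artinian, so $\varpi,y_{1},\dots,y_{h+j}$ is a system of parameters for $R_{\infty}(\sigma)$, and Lemma \ref{trivial} puts a characteristic-$0$ point $\nn$ of $R_{F,S}^{\psi}(\sigma)[1/2]$ on each component of $\Spec R_{\infty}(\sigma)$; by $(d)$, $S_{\sigma,\psi}(U,\OO)_{\nn}\neq0$, by Lemma \ref{regular_point2} the ring $R_{\infty}(\sigma)_{\nn}$ is regular, and an Auslander--Buchsbaum argument as in the proof of Lemma \ref{still_to_do} makes $M_{\infty}(\sigma)_{\nn}$ free, so $M_{\infty}(\sigma)$ meets that component; chasing this back through the surjection in (P1) as in the proof of Lemma \ref{nail}(i) recovers the hypothesis of Lemma \ref{nail}, and part (v) of that lemma is exactly the equality $(a)$.

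The main obstacle is the bookkeeping forced by working with the two patched rings $R_{\infty}(\sigma)$ and $R_{\infty}^{\inv}(\sigma)$ joined by a $\gm$-torsor: this replaces the classical situation of \cite{kisinfm}, where the patched module is generically free of rank one, and the real work is transferring the statement ``$\supp M_{\infty}(\sigma)$ meets every irreducible component'' back and forth between $R_{\infty}(\sigma)$, $R_{\infty}^{\inv}(\sigma)$ and $R_{S}^{\psi,\square}(\sigma)$ while keeping $\OO$-flatness and equidimensionality under control. Once the equivalence is in place, modularity is a formality: if $\rho:G_{F,S}\to\GL_{2}(\OO)$ corresponds to a maximal ideal $\nn$ of $R_{F,S}^{\psi}(\sigma)[1/2]$, then $S_{\sigma,\psi}(U,\OO)_{\nn}\neq0$ by $(d)$, and by the dictionary of \cite[(3.1.14)]{kisin_moduli} between $S_{\sigma,\psi}(U,L)$ and classical automorphic forms on $(D\otimes_{F}\AfF)^{\times}$, together with local--global compatibility of the Jacquet--Langlands and Langlands correspondences, $\rho$ arises from an automorphic form of the weight and level prescribed by $\sigma$ and $U$, i.e.\ $\rho$ is modular.
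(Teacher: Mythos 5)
Your treatment of $(a)\Leftrightarrow(b)$, $(a)\Rightarrow(c)$ and $(c)\Rightarrow(d)$ matches the paper's argument: the detour through $\mathbb T^{\inv}_{\infty}(\sigma)$ in $(a)\Rightarrow(c)$ is only a cosmetic variant of the cycle comparison \eqref{multiplicity1}--\eqref{multiplicity3}, and, like the paper, it still requires first deducing from the forced equality in Lemma \ref{go_local} that $R'_{\infty}(\sigma)\cong R^{\psi,\square}_S(\sigma)\br{x_1,\ldots,x_g}$, so that $R^{\inv}_{\infty}(\sigma)$ and $R_{\infty}(\sigma)$ are reduced and equidimensional before \eqref{emerton_gee} can be read off minimal primes.

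The step $(d)\Rightarrow(a)$, however, has a genuine gap. You apply Lemma \ref{trivial} to $R_{\infty}(\sigma)$ with the system of parameters $\varpi,y_1,\ldots,y_{h+j}$. First, Lemma \ref{trivial} requires the ring to be equidimensional, and $R_{\infty}(\sigma)$ is not known to be equidimensional at this stage --- that is part of what is being proved (it is conclusion (iii) of Lemma \ref{nail}, available only once the hypothesis of that lemma has been established). Second, and more seriously, even granting that every irreducible component of $\Spec R_{\infty}(\sigma)$ carries a characteristic-zero modular point, this only yields $(c)$; it gives no control over the irreducible components of $R^{\psi,\square}_S(\sigma)\br{x_1,\ldots,x_g}$ that may have been killed by the surjection in (P1). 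One cannot ``chase back'' through that surjection: the proof of Lemma \ref{nail}(i) runs in the opposite direction, using the existence of a modular point on every component of the \emph{local} ring to show that no component dies. The paper instead invokes the presentation of Corollary \ref{present_restricted}: $R^{\psi,\square}_{F,S}(\sigma)$ is cut out of the equidimensional ring $R^{\psi,\square}_S(\sigma)\br{x_1,\ldots,x_{s+|S|-1}}$ by only $s$ relations, so that under $(d)$ the elements $f_1,\ldots,f_s,\varpi$ form part of a system of parameters, and Lemma \ref{trivial}, applied to \emph{this} ring, places a closed point of $\mSpec R^{\psi}_{F,S}(\sigma)[1/2]$ on every irreducible component of $R^{\psi,\square}_S(\sigma)$. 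Combined with the second clause of $(d)$ this is exactly the hypothesis of Lemma \ref{nail}, whose part (v) is $(a)$. Without this global-over-local presentation the implication $(d)\Rightarrow(a)$ does not close.
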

\begin{proof} Lemmas \ref{still_to_2}, \ref{go_local} and \eqref{r_inv_1} imply that 
\begin{equation}\label{fortyone}
   e(M_{\infty}(\sigma)/\varpi, R^{\inv}_{\infty}(\sigma)/\varpi)\le 2^t r e( R^{\psi, \square}_S(\sigma)/\varpi).
\end{equation}   
Thus (a) is equivalent to (b). Moreover, if (a) holds then the inequalities in the Lemmas cited above have to be equalities. Since $R^{\psi, \square}_S(\sigma)$
is reduced and $\OO$-torsion free, we deduce that $R'_{\infty}(\sigma)\cong  R^{\psi, \square}_S(\sigma)\br{x_1, \ldots, x_g}$. Hence, 
$R_{\infty}'(\sigma)$ is reduced, equidimensional and $\OO$-torsion free. The isomorphism \eqref{r_inv_1} implies that the same holds for $R^{\inv}_{\infty}(\sigma)$, which implies that $R_{\infty}(\sigma)$ is reduced,  equidimensional, $\OO$-torsion free, see the proof of Lemma \ref{nail}.  Since we have assumed (a), we have 
\begin{equation}\label{multiplicity1}
2^tr e(R_{\infty}^{\inv}(\sigma)/\varpi)= e(M_{\infty}(\sigma)/\varpi, R^{\inv}_{\infty}(\sigma)/\varpi).
\end{equation}
Let  $V(\qq_1), \ldots, V(\qq_m)$ be the  irreducible components of the support of $M_{\infty}(\sigma)$ in $\Spec R_{\infty}(\sigma)$. 
Since $R_{\infty}(\sigma)$ is reduced, if $V(\qq)$ is an irreducible component of $\Spec R_{\infty}(\sigma)$ then $\ell(R_{\infty}(\sigma)_{\qq})=1$.
 It follows from Lemma \ref{still_to_do} that if $V(\qq)$ is an irreducible component of $\Spec R_{\infty}(\sigma)$ in the support of $M_{\infty}(\sigma)$ then
  $\ell(M_{\infty}(\sigma)_{\qq})=r$. It follows from \eqref{emerton_gee} that 
\begin{equation}\label{multiplicity2}
e(M_{\infty}(\sigma)/\varpi, R^{\inv}_{\infty}(\sigma)/\varpi)=r \sum_{i=1}^m e(R_\infty(\sigma)/(\varpi,\qq_i), R^{\inv}_{\infty}(\sigma)/\varpi),
\end{equation}
\begin{equation}\label{multiplicity3}
e( R_{\infty}(\sigma)/\varpi, R^{\inv}_{\infty}(\sigma)/\varpi)= \sum_{\qq} e(R_\infty(\sigma)/(\varpi,\qq), R^{\inv}_{\infty}(\sigma)/\varpi),
\end{equation}
where the last sum is taken over all the irreducible components $V(\qq)$. Since 
$e(R_\infty(\sigma)/(\varpi,\qq), R^{\inv}_{\infty}(\sigma)/\varpi)\neq 0$ we deduce from \eqref{multiplicity1}, \eqref{multiplicity2}  and \eqref{multiplicity3}  that (b) implies (c). We have 
$$ R_{\infty}(\sigma)/(y_1, \ldots, y_{h+j})\cong R^{\psi}_{F,S}(\sigma), \quad M_{\infty}(\sigma)/ (y_1, \ldots, y_{h+j})M_{\infty}(\sigma)\cong S_{\sigma, \psi}(U, \OO)_{\mm}.$$
Thus, if $M_{\infty}(\sigma)$ is supported on the whole of $\Spec R_{\infty}(\sigma)$ then $S_{\sigma, \psi}(U, \OO)_{\mm}$ is supported on the 
whole of $\Spec R^{\psi}_{F,S}(\sigma)$. Since  $S_{\sigma, \psi}(U, \OO)_{\mm}$ is a free $\OO$-module of finite rank, we deduce that (c) implies (d). 

If (d) holds then it follows from Corollary \ref{present_restricted} that $f_1, \ldots, f_s, \varpi$ is a part of a system of parameters of $R^{\psi, \square}_{S}(\sigma)\br{x_1, \ldots, x_{s+|S|-1}}$, and Lemma \ref{trivial} implies that every irreducible component 
of that ring contains 
a closed point of $R^{\psi}_{F, S}(\sigma)[1/2]$. Since every such component is of the form $\qq\br{x_1, \ldots, x_{s+|S|-1}}$, we deduce that 
every irreducible component of $R^{\psi, \square}_{S}(\sigma)$ contains a closed point of $R^{\psi}_{F, S}(\sigma)[1/2]$. It follows from the second part of (d) that 
the support of $S_{\sigma, \psi}(U, \OO)_{\mm}$ meets every irreducible component of $R^{\psi, \square}_{S}(\sigma)$. It follows from Lemma \ref{nail} that (d) implies (a). Since $S_{\sigma, \psi}(U, \OO)[1/2]$ is a finite dimensional $L$-vector space, the last assertion is a direct consequence of (d).
\end{proof}

\subsection{Small weights}\label{small_weights} Let $\tilde{\Eins}$ be the trivial representation of $\GL_2(\ZZ_2)$ on a free $\OO$-module of rank $1$. We let $\tilde{\st}$ be the 
space of functions $f: \mathbb P^1(\mathbb{F}_2)\rightarrow \OO$, such that $\sum_{x\in \mathbb P^1(\mathbb{F}_2)} f(x)=0$ equipped of the natural 
action of $\GL_2(\ZZ_2)$. The reduction of $\tilde{\Eins}$ modulo $\varpi$ is the trivial representation, the reduction of 
$\tilde{\st}$ modulo $\varpi$ is isomorphic to $k^2$, which we will also denote by $\st$. These are the only smooth irreducible $k$-representations
of $\GL_2(\ZZ_2)$.

The purpose of this subsection is to verify that the equivalent conditions of Proposition \ref{equivalent_cond_new} hold, when 
for all $v\mid 2$, $\sigma_v$ is either $\tEins$ or $\tilde{\st}$, under the assumption that $\rhobar|_{G_v}$ does not have scalar semi-simplification at any place $v\mid  2$. If $\sigma$ is the trivial representation then the result will follow from the modularity 
lifting theorem of \cite{KW}, \cite{kisin_serre2}. In the general case, our assumption implies that any semi-stable lift of 
$\rhobar|_{G_{F_v}}$ with Hodge--Tate weights $(0,1)$ is crystalline, see Corollary \ref{pot_crys}, this implies that $S_{\tEins, \psi}(U,\OO)_{\mm}$ and $S_{\sigma, \psi}(U, \OO)_{\mm}$ and $R^{\psi}_{F,S}(\tEins)$ and $R^{\psi}_{F, S}(\sigma)$ coincide. 

If $p>2$ then the results of this section are proved by Gee in  \cite{gee_prescribed} by a characteristic $p$ argument. 
  
\begin{prop}\label{main_small} Assume that $\psi$ is trivial on $U\cap (\AfF)^{\times}$, $\sigma_v=\tEins$ for all $v\mid  2$ and 
$\rhobar|_{G_v}$ does not have scalar semi-simplification for any $v\mid  2$. Then $R^{\psi}_{F, S}(\sigma)$ is a finite $\OO$-module of rank at least $1$. 
\end{prop}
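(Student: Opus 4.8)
The plan is to deduce the statement from Proposition \ref{equivalent_cond_new} by verifying the hypothesis of Lemma \ref{nail}: that the support of $S_{\sigma,\psi}(U,\OO)_{\mm}$ meets every irreducible component of $R^{\psi,\square}_S(\sigma)$. Once that is known, Lemma \ref{nail} gives condition (v), i.e.\ condition (a) of Proposition \ref{equivalent_cond_new}, and hence condition (d), which is precisely the assertion that $R^{\psi}_{F,S}(\sigma)$ is a finitely generated $\OO$-module of rank at least $1$. The essential new input is the modularity lifting theorem of Khare--Wintenberger \cite{KW} and Kisin \cite{kisin_serre2} for $p=2$ in the semi-stable case with Hodge--Tate weights $(0,1)$; everything else is local deformation theory, dimension counting, and the formalism already set up.

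The first thing I would record is that, since $\psi$ is unramified above $2$ and $\rhobar|_{G_v}$ has non-scalar semisimplification for all $v\mid 2$, Corollary \ref{pot_crys} shows that every semi-stable lift of $\rhobar|_{G_{F_v}}$ with Hodge--Tate weights $(0,1)$ is crystalline; thus $\sigma_v=\tEins$ corresponds to $\wt_v=(0,1)$ and trivial inertial type, and $R^{\psi,\square}_v(\sigma_v)$ is exactly the crystalline (Barsotti--Tate up to twist) deformation ring occurring in \cite{KW}, \cite{kisin_serre2}. Next I would pin down the components of $R^{\psi,\square}_S(\sigma)$. For $v\mid 2$, Theorem \ref{weak_bm} together with Remark \ref{socle} and Proposition \ref{reduce_cycle} shows that $z_1$ of the special fibre of $R^{\psi,\cris}_v((0,1),\triv)$ equals the single cycle $\mathcal C_{\triv}$, which is nonzero of Hilbert--Samuel multiplicity $1$ and supported at one prime; since by Corollary \ref{exist_pst} and Proposition \ref{reduced} this ring is reduced, $\OO$-flat and two-dimensional, it must be a domain, so $R^{\psi,\square}_v(\sigma_v)$ is irreducible. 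The remaining local factors are nonzero (and, apart from the two-branch rings $\bar R^{\psi,\square}_v$ for $v\in\Sigma$, domains), so by \eqref{dim_S_loc} the ring $R^{\psi,\square}_S(\sigma)$ is equidimensional of dimension $3|S|+1$, and its irreducible components are the completed tensor products obtained by choosing, at each $v\in\Sigma$, one of the two branches $R^{\psi,\square}_v(\gamma_{v,1})$, $R^{\psi,\square}_v(\gamma_{v,2})$.

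For a fixed such choice I would then run the characteristic-zero argument: the corresponding quotient of $R^{\psi,\square}_S(\sigma)$ (replacing each $\bar R^{\psi,\square}_v$ by the chosen domain) is again equidimensional of dimension $3|S|+1$, and applying the purely Galois-cohomological presentation of Proposition \ref{present_unrestricted} exactly as in Corollary \ref{present_restricted} yields that the corresponding global deformation ring has dimension at least $1$. Hence it has a closed point of its generic fibre, giving a lift $\rho$ of $\rhobar$ that is unramified outside $S$, has determinant $\psi\chi_{\cyc}$, is semi-stable with Hodge--Tate weights $(0,1)$ at every place above $2$, is of the prescribed type at the auxiliary finite places, and lies on the chosen branch at $\Sigma$. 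Since $\rhobar$ is modular with non-solvable image (hypothesis (i) of Theorem \ref{intro1}, preserved by the reductions of \S\ref{modularity}) and $\rho$ is semi-stable of weight $(0,1)$ above $2$, the modularity lifting theorem of \cite{KW}, \cite{kisin_serre2} applies and shows $\rho$ is modular; concretely it arises from an automorphic form on $D$ of weight $\tEins$ and level $U$, so $S_{\sigma,\psi}(U,\OO)_{\mm}\neq 0$ and the point lies in its support. Letting the branch vary, the support of $S_{\sigma,\psi}(U,\OO)_{\mm}$ meets every irreducible component of $R^{\psi,\square}_S(\sigma)$, and Lemma \ref{nail} combined with Proposition \ref{equivalent_cond_new} gives the conclusion.

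The step I expect to be the main obstacle is reaching \emph{every} irreducible component of the local ring $R^{\psi,\square}_S(\sigma)$ while keeping the argument non-circular: the dimension bound "$\dim\ge 1$" for the branchwise global deformation rings must come purely from Proposition \ref{present_unrestricted} and the local dimension computations, so that it does not secretly use the finiteness we are out to prove; and one must check that the modularity lifting theorem of \cite{KW}, \cite{kisin_serre2} genuinely covers all the local conditions that arise, in particular that after Corollary \ref{pot_crys} there is no semi-stable-but-not-crystalline possibility at $v\mid 2$, and that a Steinberg twist by either square root $\gamma_{v,i}$ of $\psi|_{G_{F_v}}$ at a place $v\in\Sigma$ is of the type treated there. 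Identifying the components of $R^{\psi,\square}_S(\sigma)$ — chiefly the irreducibility of $R^{\psi,\square}_v(\sigma_v)$ for $v\mid 2$, extracted from Theorem \ref{weak_bm} — is the other point that needs a short but genuine argument.
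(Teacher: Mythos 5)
Your strategy inverts the paper's logical order, and in doing so it hits a genuine gap at the step ``the corresponding global deformation ring has dimension at least $1$, hence it has a closed point of its generic fibre.'' A complete local noetherian $\OO$-algebra of Krull dimension $\ge 1$ need not have any characteristic-zero points: it could be killed by $\varpi$ (e.g.\ $k\br{x}$), and the presentation of Corollary \ref{present_restricted} gives only a lower bound on the total Krull dimension, with no $\OO$-flatness. Producing points of $\mSpec R^{\psi}_{F,S}(\sigma)[1/2]$ on every branch is exactly as hard as the finiteness you are trying to prove: once $R^{\psi}_{F,S}(\sigma)$ is known to be a finite $\OO$-module of positive rank, $\varpi$ is a system of parameters and Lemma \ref{trivial} places characteristic-zero points on every component; without it you have nothing to feed into the modularity lifting theorem. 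So your plan to verify the hypothesis of Lemma \ref{nail} first and then read off condition (d) of Proposition \ref{equivalent_cond_new} is circular — indeed the argument you sketch is essentially the content of Corollary \ref{D_unram_1}, which the paper proves \emph{after}, and using, Proposition \ref{main_small}.

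The paper supplies the missing input by a different mechanism. By Lemma 2.2 of \cite{taylor_ico2} there is a solvable totally real base change $F'/F$ arranging the local situation at $2$ so that Proposition 9.3(II) of \cite{KW} applies and gives that $R^{\psi}_{F',S'}(\sigma)$ is a finite $\OO$-module. One then descends: finiteness of $R^{\psi}_{F',S'}(\sigma)/\varpi$ forces the image of $G_{F',S'}$, hence of $G_{F,S}$, in $\GL_2(R^{\psi}_{F,S}(\sigma)/\varpi)$ to be finite, and Lemma 3.6 of \cite{KW_annals} then shows $R^{\psi}_{F,S}(\sigma)/\varpi$ is finite. Combined with $\dim R^{\psi}_{F,S}(\sigma)\ge 1$ from Corollary \ref{present_restricted} — the one ingredient you did identify correctly — this forces $\dim R^{\psi}_{F,S}(\sigma)=1$ with $\varpi$ a system of parameters, hence the statement. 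Any repair of your approach must import this finiteness (resting ultimately on the Khare--Wintenberger potential-modularity machinery) rather than attempting to extract characteristic-zero points from a dimension bound.
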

\begin{proof} It follows from Lemma 2.2 in \cite{taylor_ico2} that there is a finite solvable, totally real  extension $F'$ of $F$, such that 
for all places $w$ of $F'$ above a place $v\in S$, $F'_w=F_v$, except if $v\mid  2$ and $\rhobar|_{G_v}$ is unramified, in which case 
$F'_w$ is an unramified extension of $\QQ_2$ and $\rhobar|_{G_{F'_{w}}}$ is trivial. Let $S'$ be the places of $F'$ above the places $S$ of $F$. 
By changing $F$ by $F'$ we are in position to apply Proposition 9.3 of \cite{KW}, part (II) of which says that the ring $R^{\psi}_{F', S'}(\sigma)$ is a 
finite $\OO$-module. We now argue as in the last paragraph of the proof of Theorem 10.1 of \cite{KW}. The restriction to $G_{F',S'}$ induces 
a map between the deformation functors and hence a homomorphism $R^{\psi}_{F', S'}(\sigma)\rightarrow R^{\psi}_{F, S}(\sigma)$. Let 
$\rho^{\psi}_{F, S}: G_{F, S}\rightarrow \GL_2(R^{\psi}_{F, S}(\sigma))$ be the universal deformation. Since $R^{\psi}_{F', S'}(\sigma)/\varpi$ is finite, we conclude that 
the image of $G_{F', S'}$ in $\GL_2(R^{\psi}_{F, S}(\sigma)/\varpi)$ under $\rho^{\psi}_{F,S}$ is a finite group. Since $F'/F$ is finite we conclude 
that the image of $G_{F, S}$ in $\GL_2(R^{\psi}_{F, S}(\sigma)/\varpi)$ is a finite group. Lemma 3.6 in \cite{KW_annals} implies that 
$R^{\psi}_{F, S}(\sigma)/\varpi$ is finite. Since $\dim R^{\psi}_{F, S}(\sigma) \ge 1$ by Corollary \ref{present_restricted}, we conclude that 
 $\dim R^{\psi}_{F, S}(\sigma) = 1$ and $\varpi$ is system of parameters for $R^{\psi}_{F, S}(\sigma)$, which implies that $R^{\psi}_{F, S}(\sigma)$ is a finite $\OO$-module 
 of rank at least $1$.
\end{proof}

\begin{cor}\label{D_unram_1} Assume  that $\psi$ is trivial on $U\cap (\AfF)^{\times}$, $\sigma_v=\tEins$ for all $v\mid  2$ and 
$\rhobar|_{G_v}$ does not have scalar semi-simplification for any $v\mid  2$. If $S_{\sigma, \psi}(U, \OO)_{\mm}\neq 0$ then the equivalent conditions of Proposition \ref{equivalent_cond_new} hold.
\end{cor}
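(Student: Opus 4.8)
The plan is to verify condition (d) of Proposition \ref{equivalent_cond_new}; once it holds, the remaining conditions and the asserted modularity follow automatically. The first half of (d) — that $R^{\psi}_{F,S}(\sigma)$ is a finite $\OO$-module of rank at least $1$ — is exactly Proposition \ref{main_small}. Since a finite $\OO$-algebra becomes a finite (hence Artinian) $L$-algebra after inverting $2$, the set $\mSpec R^{\psi}_{F,S}(\sigma)[1/2]$ is finite, so it remains to show that $S_{\sigma,\psi}(U,\OO)_{\nn}\neq 0$ for every $\nn$ in it.

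So I would fix $\nn\in\mSpec R^{\psi}_{F,S}(\sigma)[1/2]$ and let $\rho_{\nn}\colon G_{F,S}\to\GL_2(\OO_{\kappa(\nn)})$ represent the corresponding deformation of $\rhobar$. By the way $R^{\psi}_{F,S}(\sigma)$ is cut out of $R^{\psi,\square}_S(\sigma)$, the representation $\rho_{\nn}$ has determinant $\psi\chi_{\cyc}$, is unramified outside $S$, is semi-stable of Hodge--Tate weights $(0,1)$ at each $v\mid 2$ (with $F_v=\QQ_2$, as $2$ splits completely in $F$), and restricts at each $v\in\Sigma$ to a representation of the form $\bigl(\begin{smallmatrix}\gamma\chi_{\cyc}&\ast\\0&\gamma\end{smallmatrix}\bigr)$ with $\gamma$ unramified. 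Because $\rhobar|_{G_v}$ has non-scalar semi-simplification for all $v\mid 2$ and $\psi$ is unramified at these places (it is trivial on $U\cap(\AfF)^{\times}$ and $U_v=\GL_2(\ZZ_2)$ there), Corollary \ref{pot_crys} upgrades ``semi-stable'' to ``crystalline'' at the places above $2$, so $\rho_{\nn}$ is Barsotti--Tate there. Now I would invoke the $p=2$ modularity lifting theorem of Khare--Wintenberger \cite{KW} and Kisin \cite{kisin_serre2}: its hypotheses hold, since $\rhobar$ is modular with non-solvable image (it is attached to the eigensystem $\mm$ and $S_{\sigma,\psi}(U,\OO)_{\mm}\neq0$), $\det\rho_{\nn}=\psi\chi_{\cyc}$ is totally odd, and $\rho_{\nn}|_{G_{F_v}}$ is crystalline of Hodge--Tate weights $(0,1)$ with $F_v=\QQ_2$ for all $v\mid 2$. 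If one wishes to quote the theorem only when $\rhobar|_{G_v}$ is trivial at $v\mid 2$, one first base changes to the finite solvable totally real $F'/F$ produced in the proof of Proposition \ref{main_small}, applies the theorem over $F'$, and descends by solvable base change. Either way, $\rho_{\nn}$ is modular.

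Finally I would translate modularity into the required non-vanishing. Let $\pi=\bigotimes'_v\pi_v$ be the automorphic representation of $(D\otimes_F\AfF)^{\times}$ attached to $\rho_{\nn}$ via Jacquet--Langlands (enlarging $L$ if needed); it is cuspidal since $\rho_{\nn}$ is irreducible. Using local--global compatibility together with the choice of $U$ made in \S\ref{quaternions} and \S\ref{subsection_residual}: at $v\mid 2$, $\rho_{\nn}|_{G_{F_v}}$ crystalline of Hodge--Tate weights $(0,1)$ forces $\pi_v$ to be an unramified principal series, so $\pi_v^{\GL_2(\ZZ_2)}=\Hom_{U_v}(\tEins,\pi_v)\neq0$; at $v\in\Sigma$, $\rho_{\nn}|_{G_{F_v}}$ is of the special type displayed above, so $\pi_v$ is an unramified character of $D_v^{\times}$ with $\pi_v^{U_v}\neq0$; at a finite $v\in S$ with $v\nmid2\infty$, $v\notin\Sigma$, the representation $\rho_{\nn}|_{G_{F_v}}$ is a tamely ramified lift of the unramified $\rhobar|_{G_{F_v}}$, whose Frobenius eigenvalues are distinct with ratio not congruent to $\chi_{\cyc}^{\pm1}$, so $\pi_v$ is a tamely ramified principal series with $\pi_v^{U_v}$ two-dimensional, exactly as in the proof of Lemma \ref{still_to_do}; and the central character of $\pi$ is $\psi$ since $\det\rho_{\nn}=\psi\chi_{\cyc}$. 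Hence $\pi$ contributes a nonzero eigenvector to $S_{\sigma,\psi}(U,\OO)\otimes_{\OO}L$; as the Galois representation attached to its eigensystem is $\rho_{\nn}$, that eigenvector is killed by $\nn$, so $S_{\sigma,\psi}(U,\OO)_{\nn}\neq0$ and condition (d) holds.

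The hard part is the last paragraph: converting ``$\rho_{\nn}$ is modular'' in the abstract sense output by the lifting theorem into the precise statement for this quaternion algebra $D$ and this tame level $U$ — that is, identifying the inertial types at the auxiliary primes in $S\setminus\Sigma$ and checking that $\rho_{\nn}|_{G_{F_v}}$ for $v\in\Sigma$ lands in the part of $\bar{R}^{\psi,\square}_v[1/2]$ realized by automorphic forms on $D^{\times}$. These compatibilities are, however, built into the quaternionic formulation of the modularity results in \cite{KW} and \cite{kisin_serre2} that are being invoked, so beyond Proposition \ref{main_small} no genuinely new input is needed.
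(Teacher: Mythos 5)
Your overall route is the same as the paper's: verify condition (d) of Proposition \ref{equivalent_cond_new} by combining Proposition \ref{main_small} (finiteness of $R^{\psi}_{F,S}(\sigma)$) with the modularity of each $\rho_{\nn}$ supplied by the lifting theorems of Khare--Wintenberger and Kisin, and then transfer the resulting eigenform to $D$ by Jacquet--Langlands. There is, however, one genuine gap, and it sits exactly where you wave your hands at the end. The cited theorems produce a Hilbert eigenform $f$ on $\GL_2$ with $\rho_f\cong\rho_{\nn}$; to move $f$ to the totally definite quaternion algebra $D$ you must know that $\pi_v$ is a discrete series representation (in fact an unramified twist of Steinberg) at every $v\in\Sigma$. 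Your assertion that ``$\rho_{\nn}|_{G_{F_v}}$ is of the special type displayed above, so $\pi_v$ is an unramified character of $D_v^{\times}$'' does not follow: the quotient $\bar{R}^{\psi,\square}_v$ only forces $\rho_{\nn}|_{G_{F_v}}\cong\bigl(\begin{smallmatrix}\gamma_v\chi_{\cyc}&\ast\\0&\gamma_v\end{smallmatrix}\bigr)$ with $\gamma_v$ unramified, and this includes the split case $\ast=0$, for which $\pi_v$ would be an unramified principal series and $\pi$ would admit no Jacquet--Langlands transfer to $D$ at all. This is not ``built into the quaternionic formulation'' of the cited results; it is the step that occupies most of the paper's proof.

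The paper rules out the split case by purity: since $\rho_{\nn}\cong\rho_f$ comes from a Hilbert modular form, the Weil--Deligne representation attached to $\rho_{\nn}|_{G_{F_v}}$ is pure (\cite[\S 2.2]{blasius}), whereas a split $\gamma_v\chi_{\cyc}\oplus\gamma_v$ with vanishing monodromy has Frobenius eigenvalues of distinct complex absolute values. Hence $\rho_{\nn}|_{G_{F_v}}$ is non-split, $\pi_v$ is an unramified twist of Steinberg, and the transfer to $D$ goes through. With this supplement your argument closes. Two minor remarks: the invocation of Corollary \ref{pot_crys} at $v\mid 2$ is harmless but unnecessary, since $R^{\psi,\square}_v(\tEins)$ parameterizes crystalline lifts of Hodge--Tate weights $(0,1)$ by definition; and the paper also records at the outset that the nonvanishing of $S_{\sigma,\psi}(U,\OO)_{\mm}$ guarantees the hypotheses $(\alpha)$ and $(\beta)$ of \cite[\S 8.2]{KW}, which you should make explicit before quoting their Theorem 9.7.
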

\begin{proof} 
Since $S_{\sigma, \psi}(U, \OO)_{\mm}\neq 0$ and is $\OO$-torsion free, there is a maximal ideal $\nn$ of $R^{\psi}_{F, S}[1/2]$ 
such that  $S_{\sigma, \psi}(U, \OO)_{\nn}\neq 0$. This implies that $\rhobar$ satisfies the hypotheses $(\alpha)$ and $(\beta)$ made
in \S 8.2 of \cite{KW}. 

Let $\nn$ be any maximal ideal of $R^{\psi, \square}_{F,S}(\sigma)[1/2]$, and let $\rho_{\nn}$ be the corresponding representation of $G_{F, S}$. It follows from Theorem 9.7 in \cite{KW}, or 
Theorem (3.3.5) \cite{kisin_serre2} that there is a Hilbert eigenform $f$ over $F$, such that $\rho_{\nn}\cong \rho_f$. Let $\pi=\otimes_v' \pi_v$ be the corresponding automorphic representation 
of $\GL_2(\AfF)$. If $v$ is a finite place, where $D$ ramifies, then by the way we have set up 
our deformation problem $\rho_{\nn}|_{G_{F_v}}$ is isomorphic to $\bigl(\begin{smallmatrix} \gamma_v\chi_{\cyc} & \ast\\ 0 & \gamma_v\end{smallmatrix}\bigr)$, where $\gamma_v$ is an unramified character. 
The restriction of the $2$-adic cyclotomic character to $G_{F_v}$ is an unramified character, which sends the arithmetic Frobenius to $q_v\in \ZZ_2^{\times}$. Since $\rho_{\nn}$ arises from a Hilbert modular form,
the representation $\rho_{\nn}|_{G_{F_v}}$ cannot be split as in this case we would obtain a contradiction to purity of $\rho_{\nn}$, see \cite[\S 2.2]{blasius}. Hence, $\rho_{\nn}|_{G_{F_v}}$ is non-split, 
and this implies that $\pi_v$ is a twist of the Steinberg representation by an unramified character, at all $v$, where $D$ is ramified. By Jacquet--Langlands correspondence there is an eigenform $f^D\in S_{\sigma, \psi}(U, \OO)_{\mm}$
with the same Hecke eigenvalues as $f$. This implies that $S_{\sigma, \psi}(U, \OO)_{\mm}$ is supported on $\nn$.  Proposition \ref{main_small} implies that part (d) of Proposition \ref{equivalent_cond_new} holds. 
\end{proof}

\begin{lem}\label{weight_cycling} Fix a place $w$ of $F$ above $2$. Let $\sigma$ and $\sigma'$ be such that for all $v\mid  2$, $v\neq w$, $\sigma_v=\sigma_v'$ and 
is equal to either $\tilde{\Eins}$ or $\tilde{\st}$, and $\sigma_w=\tilde{\Eins}$ and $\sigma'_w= \tilde{\st}$. Assume that $\psi$ is trivial on $U\cap (\AfF)^{\times}$, 
and $\rhobar|_{G_{F_w}}$ does not have scalar semi-simplification.  Then the rings $R^{\psi}_{F,S}(\sigma)$ and 
$R^{\psi}_{F,S}(\sigma')$ are equal. 
Moreover, if $\nn$ is a maximal ideal of  $R^{\psi}_{F,S}(\sigma)[1/2]$ then $S_{\sigma, \psi}(U, \OO)_{\mm}$ is supported on $\nn$ if and only if 
$S_{\sigma', \psi}(U, \OO)_{\mm}$ is supported on $\nn$.
\end{lem}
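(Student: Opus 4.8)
The plan is to deduce everything from Corollary \ref{pot_crys}, the statement that for a residual representation of $G_{\QQ_2}$ with non-scalar semi-simplification (and unramified determinant twist) every semi-stable lift with Hodge--Tate weights $(0,1)$ is automatically crystalline.

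\textbf{Step 1: identify the local conditions at $w$.} Both $\tEins$ and $\tilde{\st}$ are of the shape appearing in \eqref{define_sigma_cris_v}, resp.\ \eqref{define_sigma_v}, with $\wt_w=(0,1)$ and trivial inertial type $\tau_w=\Eins\oplus\Eins$: one may take $\sigma^{\cris}(\Eins\oplus\Eins)$ to be the trivial representation of $\GL_2(\ZZ_2)$, so that $\sigma^{\cris}(\Eins\oplus\Eins)^0\otimes\Sym^{0}\OO^2\otimes\dt^{0}=\tEins$, and $\sigma(\Eins\oplus\Eins)$ to be the $2$-dimensional representation with $\sigma(\Eins\oplus\Eins)^0\otimes\Sym^{0}\OO^2\otimes\dt^{0}=\tilde{\st}$; moreover the ring $R^{\psi,\square}_v(\sigma_v)$ depends only on $(\wt_v,\tau_v)$ together with the choice of the potentially crystalline versus the potentially semi-stable condition. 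Hence $R^{\psi,\square}_w(\sigma_w)$ (resp.\ $R^{\psi,\square}_w(\sigma'_w)$) is the potentially crystalline (resp.\ potentially semi-stable) quotient of $R^{\psi,\square}_w$ attached to Hodge--Tate weights $(0,1)$ and inertial type $\Eins\oplus\Eins$. Since $\psi$ is trivial on $U\cap(\AfF)^{\times}$, its restriction to $G_{F_w}$ is unramified, and since $\rhobar|_{G_{F_w}}$ has non-scalar semi-simplification, Corollary \ref{pot_crys} gives $R^{\psi,\square}_w(\sigma_w)=R^{\psi,\square}_w(\sigma'_w)$ as quotients of $R^{\psi,\square}_w$.

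\textbf{Step 2: the rings agree.} As $\sigma_v=\sigma'_v$ for all $v\mid 2$ with $v\neq w$, and the remaining local factors entering $R^{\psi,\square}_S(\sigma)$ do not involve $\sigma$, Step 1 shows $R^{\psi,\square}_S(\sigma)=R^{\psi,\square}_S(\sigma')$ as quotients of $R^{\psi,\square}_S$; tensoring over $R^{\psi,\square}_S$ with $R^{\psi}_{F,S}$ yields $R^{\psi}_{F,S}(\sigma)=R^{\psi}_{F,S}(\sigma')$.

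\textbf{Step 3: the supports agree.} Fix $\nn\in\mSpec R^{\psi}_{F,S}(\sigma)[1/2]=\mSpec R^{\psi}_{F,S}(\sigma')[1/2]$ with associated representation $\rho_{\nn}$. If $S_{\sigma,\psi}(U,\OO)_{\nn}\neq 0$ then, exactly as in the proof of Lemma \ref{still_to_do}, there is an automorphic representation $\pi=\otimes'_v\pi_v$ of $(D\otimes_F\AfF)^{\times}$ with Hecke eigensystem $\nn$ and associated Galois representation $\rho_{\nn}$; it contributes to $S_{\sigma,\psi}(U,L)$ because $\pi_v^{U_v}\neq 0$ for $v\in S$, $v\nmid 2\infty$, $\pi_v$ is the prescribed unramified character for $v\in\Sigma$, $\pi_\infty$ has the weight cut out by $(\wt_v)_{v\mid 2}$ (the same for $\sigma$ and $\sigma'$), and $\Hom_{U_v}(\sigma_v,\pi_v)\neq 0$ for $v\mid 2$. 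By Henniart's characterisation \cite{henniart} of $\sigma^{\cris}(\tau_w)$, the condition at $v=w$ forces the monodromy operator of $\LLL(\pi_w)$ to vanish and $\LLL(\pi_w)|_{I_w}\cong\Eins\oplus\Eins$; dropping the vanishing of the monodromy one obtains $\Hom_{U_w}(\sigma'_w,\pi_w)\neq 0$, so $\pi$ contributes to $S_{\sigma',\psi}(U,L)$ with eigensystem $\nn$ and $S_{\sigma',\psi}(U,\OO)_{\nn}\neq 0$. Conversely, if $S_{\sigma',\psi}(U,\OO)_{\nn}\neq 0$ one gets such a $\pi$ contributing to $S_{\sigma',\psi}(U,L)$; here I would use that $\rho_{\nn}|_{G_{F_w}}$, being classified by $R^{\psi}_{F,S}(\sigma')$, factors through $R^{\psi,\square}_w(\sigma'_w)=R^{\psi,\square}_w(\sigma_w)$ by Step 1, hence is potentially crystalline of type $((0,1),\Eins\oplus\Eins)$, i.e.\ crystalline; by local--global compatibility at $w$ (\cite{blasius}, \cite{KM74}, \cite{Sai09}) the monodromy operator of $\LLL(\pi_w)=\WD(\rho_{\nn}|_{G_{F_w}})$ then vanishes, and Henniart's characterisation upgrades $\Hom_{U_w}(\sigma'_w,\pi_w)\neq 0$ to $\Hom_{U_w}(\sigma_w,\pi_w)\neq 0$, so as before $\pi$ contributes to $S_{\sigma,\psi}(U,L)$ and $S_{\sigma,\psi}(U,\OO)_{\nn}\neq 0$.

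The only substantial input is Corollary \ref{pot_crys} (whose proof rests on the Breuil--M\'ezard results of the local part of the paper); the manipulations with Jacquet--Langlands and local--global compatibility in Step 3 are of exactly the same nature as in the proof of Lemma \ref{still_to_do}, so I do not expect a genuine obstacle beyond bookkeeping. If one wanted to be careful, the point deserving most attention is the direction $S_{\sigma',\psi}(U,\OO)_\nn\neq 0\Rightarrow S_{\sigma,\psi}(U,\OO)_\nn\neq 0$, where one must actually invoke that $\rho_\nn|_{G_{F_w}}$ is \emph{crystalline} (not merely semi-stable) in order to kill the monodromy of $\pi_w$.
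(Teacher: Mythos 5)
Your proposal is correct and follows essentially the same route as the paper: the equality of the rings is reduced via Corollary \ref{pot_crys} to the identity of the crystalline and semi-stable local deformation rings at $w$ for Hodge--Tate weights $(0,1)$ and trivial type, and the support statement is proved by passing through Jacquet--Langlands, Henniart's characterisation of the types $\tEins$ and $\tilde{\st}$, and $\ell=p$ local--global compatibility to rule out $\pi_w$ special (the paper phrases this last step as "special would force $\rho_{\nn}|_{G_{F_w}}$ semi-stable non-crystalline, contradiction", which is logically identical to your "crystalline kills the monodromy" formulation).
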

\begin{proof} The ring $R^{\psi, \square}_{w}(\tilde{\Eins})$ parameterizes crystalline lifts of $\rhobar|_{G_{F_w}}$ with Hodge--Tate weights $(0,1)$. 
The ring $R^{\psi, \square}_{w}(\tilde{\st})$ parameterizes semi-stable lifts of $\rhobar|_{G_{F_w}}$ with Hodge--Tate weights $(0,1)$. Since both rings are
reduced and $\OO$-torsion free, we have a surjection $R^{\psi, \square}_{w}(\tilde{\st})\twoheadrightarrow R^{\psi, \square}_{w}(\tilde{\Eins})$. The assumption 
that $\rhobar|_{G_{F_w}}$ does not have scalar semi-simplification implies that every such semi-stable lift is automatically crystalline, hence the map is an isomorphism. This implies that the global deformation rings are equal, see Corollary \ref{pot_crys}.

We will deduce the second assertion from the Jacquet--Langlands correspondence and the compatibility of local and global Langlands correspondence.
Let $\tau$ be either $\sigma$ or $\sigma'$. We fix an isomorphism $i:\Qpbar\cong \CC$ and let $\tau_{\CC}=\tau\otimes_{\OO} \CC$ and let $\tau_{\CC}^*$ be the $\CC$-linear 
dual of $\tau$. Since $U\cap (\AfF)^{\times}$ acts trivially on $\tau$ by assumption, we may consider $\tau^{\ast}_{\CC}$ as a representation of $U  (\AfF)^{\times}$, on which 
$(\AfF)^{\times}$ acts by $\psi$. Let $U'=\prod_v U'_v$ be an open subgroup of $U$, such that $U'_v=U_v$, if $v\nmid  2$ and $U_v'=\{g\in U_v: g\equiv 1\pmod 2\}$ for all $v\mid  2$. Then $U'$ acts trivially on $\tau$. Let $C^{\infty}( D^{\times}\backslash (D\otimes_F \mathbb A_F)^{\times}/ U')$ be the space of smooth functions $\CC$-valued functions 
on  $D^{\times}\backslash (D\otimes_F \mathbb A_F)^{\times}$, which are invariant under $U'$. Since $U'$ is a normal subgroup of $U$, $U$ acts on this space by right translations
It follows from \cite[(3.1.14)]{kisin_moduli}, \cite[Lem. 1.3]{taylor_deg2} that we 
have an isomorphism
$$S_{\tau, \psi}(U, \OO)\otimes_{\OO} \CC\cong \Hom_{U(\AfF)^{\times}}(\tau, C^{\infty}( D^{\times}\backslash (D\otimes_F \mathbb A_F)^{\times}/ U' D_{\infty}^{\times})).$$
This isomorphism is equivariant for the Hecke operators at $v\not\in S$. The action of $R^{\psi, \square}_{F,S}(\tau)$ on $S_{\tau, \psi}(U, \OO)_{\mm}$ factors through the action of the Hecke algebra $\TT_{\tau, \psi}(U).$ Let $\nn$ be a maximal ideal of $\TT(U)_{\tau, \psi}[1/2]$. The isomorphism above implies that $S_{\tau, \psi}(U, \OO)_{\nn}$ is non-zero if and only if there is an automorphic form 
$$f^D\in C^{\infty}( D^{\times}\backslash (D\otimes_F \mathbb A_F)^{\times}/ U' D_{\infty}^{\times}),$$
 on which the Hecke operators for $v\not \in S$ act by the eigenvalues given by the 
map $\TT_{\tau, \psi}(U)\rightarrow \kappa(n)\overset{i}{\rightarrow} \CC$, and $\Hom_{U(\AfF)^{\times}}(\tau^{\ast}_{\CC}, \pi)\neq 0$, where 
 $\pi=\otimes'_v \pi_v$ is the automorphic representation corresponding to $f^{D}$.

If $S_{\sigma, \psi}(U, \OO)_{\nn}$ is non-zero then the above implies that $\Hom_{U_w}(\Eins, \pi_w)\neq 0$, which implies that $\pi_w$ is an unramified principal series representation, 
which implies that $\Hom_{U_w}(\tilde{\st},\pi_w)\neq 0$. Since $\sigma_v=\sigma'_v$ for all $v\neq w$, we conclude that $S_{\sigma', \psi}(U, \OO)_{\nn}$ is non-zero.

 If $S_{\sigma', \psi}(U, \OO)_{\nn}$ is non-zero then the same argument shows that $\Hom_{U_w}(\tilde{\st}, \pi_w)\neq 0$, which implies that $\pi_w$ is 
either  unramified principal series representation, in which case $\Hom_{U_w}(\Eins, \pi_w)\neq 0$ and thus $S_{\sigma, \psi}(U, \OO)_{\nn}\neq 0$, or 
$\pi_{w}$ is a special series. We would like to rule the last case out. By Jacquet--Langlands correspondence  to $\pi$ we may associate an automorphic representation 
$\pi'=\otimes_v' \pi'_v$ of $\GL_2(\mathbb{A}_F)$, such that $\pi_v=\pi'_v$ for all $v$, where $D$ is split. In particular, $\pi'_w=\pi_w$. Let $\rho_{\nn}$ be the representation of $G_{F,S}$ corresponding to 
the maximal ideal $\nn$ of $R^{\psi}_{F, S}[1/2]$. By the compatibility of local and global Langlands correspondence, if $\pi'_w$ is special then $\rho|_{G_{F_w}}$ is semi-stable
non-crystalline. However, this cannot happen as explained above.
\end{proof} 

\begin{cor}\label{small_OK} Assume that $\psi$ is trivial on $U\cap (\AfF)^{\times}$, for all $v\mid  2$,  
$\sigma_v$ is either $\tEins$ or 
$\tilde{\st}$ and $\rhobar|_{G_v}$ does not have scalar semi-simplification for any $v\mid  2$. If $S_{\sigma, \psi}(U, \OO)_{\mm}\neq 0$ 
then the equivalent conditions of Proposition \ref{equivalent_cond_new} hold.
\end{cor}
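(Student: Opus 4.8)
The plan is to argue by induction on the number $n$ of places $v\mid 2$ at which $\sigma_v=\tilde{\st}$. When $n=0$ we have $\sigma_v=\tEins$ for every $v\mid 2$, and the assertion is exactly Corollary \ref{D_unram_1}: its hypotheses hold because we are assuming $S_{\sigma,\psi}(U,\OO)_{\mm}\neq 0$ and that $\rhobar|_{G_v}$ has non-scalar semi-simplification for every $v\mid 2$.

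For the inductive step, suppose $n\ge 1$ and fix a place $w\mid 2$ with $\sigma_w=\tilde{\st}$. Let $\sigma'$ agree with $\sigma$ at every $v\mid 2$, $v\neq w$, and set $\sigma'_w=\tEins$, so that $\sigma'$ carries $\tilde{\st}$ at exactly $n-1$ places. First I would check that $S_{\sigma',\psi}(U,\OO)_{\mm}\neq 0$, so that the inductive hypothesis applies to $\sigma'$. By \eqref{auto_form} the module $S_{\sigma,\psi}(U,\OO)$ is finitely generated over $\OO$, hence $S_{\sigma,\psi}(U,\OO)_{\mm}$ is a nonzero, $\OO$-torsion free, finitely generated $\OO$-module; since the action of $R^{\psi}_{F,S}(\sigma)$ on it factors through this quotient by local--global compatibility, there is a maximal ideal $\nn$ of $R^{\psi}_{F,S}(\sigma)[1/2]$ in its support. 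Now I would apply Lemma \ref{weight_cycling}, with the roles of its $\sigma,\sigma'$ played by our $\sigma',\sigma$; this is legitimate precisely because $\rhobar|_{G_{F_w}}$ has non-scalar semi-simplification. It gives $R^{\psi}_{F,S}(\sigma)=R^{\psi}_{F,S}(\sigma')$, and it tells us that $S_{\sigma,\psi}(U,\OO)_{\mm}$ is supported on $\nn$ if and only if $S_{\sigma',\psi}(U,\OO)_{\mm}$ is supported on $\nn$. Hence $S_{\sigma',\psi}(U,\OO)_{\mm}$ is supported on $\nn$, in particular nonzero.

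By the inductive hypothesis the equivalent conditions of Proposition \ref{equivalent_cond_new} hold for $\sigma'$; I would use condition (d): $R^{\psi}_{F,S}(\sigma')$ is a finitely generated $\OO$-module of rank at least $1$, and $S_{\sigma',\psi}(U,\OO)_{\nn'}\neq 0$ for every $\nn'\in\mSpec R^{\psi}_{F,S}(\sigma')[1/2]$. Since $R^{\psi}_{F,S}(\sigma)=R^{\psi}_{F,S}(\sigma')$, the first statement transfers verbatim to $\sigma$. For the second, given $\nn'\in\mSpec R^{\psi}_{F,S}(\sigma)[1/2]=\mSpec R^{\psi}_{F,S}(\sigma')[1/2]$, we know $S_{\sigma',\psi}(U,\OO)_{\mm}$ is supported on $\nn'$, so by the second assertion of Lemma \ref{weight_cycling} so is $S_{\sigma,\psi}(U,\OO)_{\mm}$; that is, $S_{\sigma,\psi}(U,\OO)_{\nn'}\neq 0$. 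Thus condition (d) of Proposition \ref{equivalent_cond_new} holds for $\sigma$, which closes the induction and proves the last assertion (modularity of the associated representations) via Proposition \ref{equivalent_cond_new}.

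I do not expect a genuine obstacle here, since all the substantive content is already packaged in Corollary \ref{D_unram_1} (the base case, resting on the modularity lifting theorem of Khare--Wintenberger and Kisin), Lemma \ref{weight_cycling} (the weight-cycling step, resting on Corollary \ref{pot_crys} and local--global compatibility), and Proposition \ref{equivalent_cond_new}. The only point demanding care is to maintain the running hypothesis $S_{\bullet,\psi}(U,\OO)_{\mm}\neq 0$ along the induction, which is why at each step one must first exhibit a supporting maximal ideal $\nn$ before invoking Lemma \ref{weight_cycling}.
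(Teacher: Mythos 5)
Your proposal is correct and follows the same route as the paper: the paper's proof is exactly "the case $\sigma_v=\tEins$ for all $v\mid 2$ is Corollary \ref{D_unram_1}, and Lemma \ref{weight_cycling} transfers condition (d) of Proposition \ref{equivalent_cond_new} to general $\sigma$"; you have simply made the implicit induction on the number of places with $\sigma_v=\tilde{\st}$ explicit and correctly tracked the nonvanishing of $S_{\sigma',\psi}(U,\OO)_{\mm}$ at each step.
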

\begin{proof} If $\sigma_v=\tEins$ for all $v\mid  2$ then the assertion is proved in Lemma \ref{D_unram_1}. Using this case and Lemma \ref{weight_cycling} we may show that part (d) of Proposition \ref{equivalent_cond_new} is verified for all $\sigma$ as above.  
\end{proof}

\subsection{Computing Hilbert--Samuel multiplicity}

Let $\sigma=\otimes_{v\mid 2} \sigma_v$ be a continuous representation of $U$ on a finitely generated  $\OO$-module $W_{\sigma}$, where 
$\sigma_v$ are of the form \eqref{define_sigma_v} or \eqref{define_sigma_cris_v}.
Let $\psi: (\AfF)^{\times}/F^{\times}\rightarrow \OO^{\times}$ be a continuous character, such that $U\cap  (\AfF)^{\times}$ acts on $W_{\sigma}$ by the 
character $\psi$. Let $\sigmabar$ and $\psibar$ be representations obtained by reducing $\sigma$ and $\psi$ modulo $\varpi$. 
We assume that $U$ satisfies \eqref{no_isotropy}, which implies that  the subgroups $U_{Q_n}$ also satisfy \eqref{no_isotropy}. 
Hence, the functor $\sigma \mapsto S_{\sigma, \psi}(U_{Q_n}, \OO)$ is exact.
We note that since $R^{\psi, \square}_{F, S}$ is formally smooth over $R^{\psi}_{F, S}$, it is a flat $R^{\psi}_{F, S}$-module, so that the functor 
$\otimes_{R^{\psi}_{F, S}} R^{\psi, \square}_{F, S}$ is exact, and so is the localization at $\mm_{Q_n}$. Hence the functor, 
\begin{equation}\label{exact_functor}
 \sigma\mapsto M_n(\sigma)=R^{\psi, \square}_{F, S_{Q_n}}\otimes_{R^{\psi}_{F, S_{Q_n}}} S_{\sigma, \psi}(U_{Q_n}, \OO)_{\mm_{Q_n}}, \end{equation}
is exact. Following \cite[(2.2.5)]{kisinfm}  we fix a $U$-invariant filtration on $\sigmabar$ by $k$-subspaces
$$ 0=L_0\subset L_1\subset\ldots \subset L_s= W_{\sigma}\otimes_{\OO} k,$$
such that for $i=0,1, \ldots, s-1$, $\sigma_i:=L_{i+1}/L_i$ is absolutely irreducible.  Since the functor in \eqref{exact_functor} is exact, this induces 
a filtration on $M_n(\sigma)\otimes_{\OO} k$, which we denote by 
\begin{equation}\label{filtration}
0=M^0_n(\sigma)\subset M^1_n(\sigma)\subset \ldots \subset  M^s_n(\sigma)= M_n(\sigma)\otimes_{\OO} k,
\end{equation}
such that for $i=0,1, \ldots, s-1$ we have 
\begin{equation}\label{graded_piece}
M^{i+1}_n(\sigma)/ M^i_n(\sigma)\cong M_n(\sigma_i).
\end{equation} 
Each representation $\sigma_i$ is of the form $\otimes_{v\mid 2} \sigma_{i, v}$, where $\sigma_{i,v}$ is either the trivial representation, in which case we let 
$\tilde{\sigma}_{i,v}=\tEins$,   or $\st$, in which case we let $\tilde{\sigma}_{i, v}:= \tilde{\st}$. We let $\tsigma_i:=\otimes_{v\mid 2} \tilde{\sigma}_{i,v}$ and
consider it as a representation of $U$ by letting $U_v$ for $v$ not above $2$ act trivially. We note that since both $\tilde{\Eins}$ and $\tilde{\st}$ have trivial central character,  $U\cap (\AfF)^{\times}$ acts trivially on $\tsigma_i$.
We choose a continuous character $\xi: F^{\times} \backslash (\AfF)^{\times}\rightarrow \OO^{\times}$ such that $\psi\equiv \xi\pmod{\varpi}$ and the restriction of 
$\xi$ to $U\cap (\AfF)^{\times}$ is trivial, for example we could choose $\xi$ to be a Teichm\"uller lift of $\bar{\psi}$.  Let   
$$M_n(\tilde{\sigma}_i)=R^{\xi, \square}_{F, S_{Q_n}}\otimes_{R^{\xi}_{F, S_{Q_n}}} S_{\tilde{\sigma}_i, \xi}(U_{Q_n}, \OO)_{\mm_{Q_n}}.$$
The exactness of functor in \eqref{exact_functor} used with $\tsigma_i$ and $\xi$ instead of $\sigma$ and $\psi$ and \eqref{graded_piece}  gives us an isomorphism:
\begin{equation}\label{graded_piece2}
\alpha_{i,n}: M^{i+1}_n(\sigma)/ M^i_n(\sigma)\cong M_n(\sigma_i)\cong M_n(\tsigma_i)\otimes_{\OO} k.
\end{equation} 
The isomorphism $\alpha_{i,n}$ is equivariant for the action of the Hecke operators outside $S_{Q_n}$, since they act by the same formulas on all the modules. Hence \eqref{graded_piece2} 
is an isomorphism of $R^{\square}_{S}\br{x_1, \ldots, x_g}$-modules. We let $\mathfrak a_{i, n}$ 
be the   $R^{\xi, \square}_{F, S_{Q_n}}(\tsigma_i)$-annihilator of $M_n(\tsigma_i)\otimes_{\OO} k$.
Since the action of $R^{\square}_{S}\br{x_1, \ldots, x_g}$ on $M_n(\sigma)$ and $M_n(\tsigma_i)$ factors through $R^{\psi, \square}_{F, S_{Q_n}}(\sigma)$ and 
$R^{\xi, \square}_{F, S_{Q_n}}(\tsigma_i)$ respectively, we obtain a surjection
\begin{equation}\label{graded_piece3}
\varphi_{i,n} : R^{\psi, \square}_{F, S_{Q_n}}(\sigma)\twoheadrightarrow R^{\xi, \square}_{F, S_{Q_n}}(\tsigma_i)/ \mathfrak a_{i,n}.
\end{equation}

\begin{prop}\label{big_patch} We may patch in such a way that there are:
\begin{itemize}
\item an $R_{\infty}(\sigma)$-module  $M_{\infty}(\sigma)$ as in \S \ref{patching};
\item a filtration $$0=M^0_{\infty}(\sigma)\subset M^1_{\infty}(\sigma)\subset \ldots\subset M^s_{\infty}(\sigma)=M_{\infty}(\sigma)\otimes_{\OO} k$$ by $R_{\infty}(\sigma)$-submodules;
\item for each $1\le i\le s$ there is an $R_{\infty}(\tilde{\sigma}_i)$-module $M_{\infty}(\tilde{\sigma}_i)$ as in \S \ref{patching} and a surjection 
$\varphi_i: R_{\infty}(\sigma)\twoheadrightarrow R_{\infty}(\tilde{\sigma}_i) / \mathfrak a_i$, where $\mathfrak a_i$ is the $R_{\infty}(\tilde{\sigma}_i)$-annihilator 
of $M_{\infty}(\tilde{\sigma}_i)\otimes_{\OO} k$, which allows to consider $M_{\infty}(\tilde{\sigma}_i)\otimes_{\OO} k$ as an $R_{\infty}(\sigma)$-module;
\item for each $1\le i\le s$ there is an isomorphism of $R_{\infty}(\sigma)$-modules 
$$\alpha_i: M^i_{\infty}(\sigma)/ M^{i-1}_{\infty}(\sigma)\cong M_{\infty}(\tilde{\sigma}_i)\otimes_{\OO} k.$$
\end{itemize}
\end{prop}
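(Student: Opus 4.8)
The plan is to carry out the Taylor--Wiles--Kisin patching of \S\ref{patching} (following \cite[Prop. 9.3]{KW}) but applied simultaneously to the $s+1$ towers of rings and modules $\{M_n(\sigma)\}_{n\ge 1}$ and $\{M_n(\tsigma_i)\}_{n\ge 1}$, $1\le i\le s$, together with the extra structure linking them: the filtration \eqref{filtration}, the isomorphisms $\alpha_{i,n}$ of \eqref{graded_piece2}, and the surjections $\varphi_{i,n}$ of \eqref{graded_piece3}. The first point to record is that all the auxiliary data of the patching process --- the Taylor--Wiles primes $Q_n$, the groups $U_{Q_n}$, the local ring $R^{\square}_S$, the integers $h=|Q_n|$, $j=4|S|-1$, $g=2h+1$, $t=2-|S|+h$, and the formal variables $y_1,\dots,y_{h+j}$, $x_1,\dots,x_g$, $z_1,\dots,z_t$ --- depend only on $\rhobar$, $S$ and $n$, not on the weight. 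Hence the towers for $\sigma$ and for all the $\tsigma_i$ can be patched with respect to one and the same set of choices, and the resulting patched rings $R_\infty(\sigma)$, $R_\infty(\tsigma_i)$ are all algebras over the common ring $R^{\square}_S\br{x_1,\dots,x_g}$ lying over $\OO\br{y_1,\dots,y_{h+j}}$, carrying $\gm$-actions as in (P5).

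Recall the mechanism: for each $n$ one truncates all the modules modulo a deep power of $(\varpi)$ and the maximal ideal of $\OO\br{y_1,\dots,y_{h+j}}$, records the resulting finite datum (the truncated modules with their $R^{\square}_S\br{x_1,\dots,x_g}$- and $\gm$-actions), and uses that there are only finitely many such data up to isomorphism to extract, by a pigeonhole/ultrafilter argument, a consistent system whose inverse limit produces the patched objects. I would simply enlarge this datum so as to also record: the submodules $M^i_n(\sigma)\subseteq M_n(\sigma)\otimes_\OO k$; the modules $M_n(\tsigma_i)$ with their $R^{\xi,\square}_{F,S_{Q_n}}$-actions; the isomorphisms $\alpha_{i,n}$; and the surjections $\varphi_{i,n}$ (equivalently the ideals $\mathfrak a_{i,n}$ and the induced maps on truncations). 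Since $s$ is fixed independently of $n$ and all the modules are finitely generated over the relevant noetherian rings, this is still only finitely much extra data, so the same argument goes through and, passing to the limit, yields $M_\infty(\sigma)$ and the $M_\infty(\tsigma_i)$ as patched modules of the type produced in \S\ref{patching} (in particular satisfying (P1)--(P5)), a filtration $0=M^0_\infty(\sigma)\subseteq\dots\subseteq M^s_\infty(\sigma)=M_\infty(\sigma)\otimes_\OO k$ by $R_\infty(\sigma)$-submodules, ring surjections $\varphi_i\colon R_\infty(\sigma)\twoheadrightarrow R_\infty(\tsigma_i)/\mathfrak a_i$, and isomorphisms $\alpha_i$.

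It then remains to verify the listed properties. Exactness of $\sigma\mapsto S_{\sigma,\psi}(U_{Q_n},\OO)$, which holds by \eqref{no_isotropy}, together with flatness of $R^{\psi,\square}_{F,S_{Q_n}}$ over $R^{\psi}_{F,S_{Q_n}}$, make the functor in \eqref{exact_functor} exact, so $M^{i+1}_n(\sigma)/M^i_n(\sigma)\cong M_n(\sigma_i)$ and, via \eqref{graded_piece2}, $\cong M_n(\tsigma_i)\otimes_\OO k$ as $R^{\square}_S\br{x_1,\dots,x_g}$-modules; being inclusions and isomorphisms of finitely generated modules over noetherian rings, these relations survive the limit, giving $M^i_\infty(\sigma)/M^{i-1}_\infty(\sigma)\cong M_\infty(\tsigma_i)\otimes_\OO k$ through the patched $\alpha_i$. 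At finite level the $R^{\psi,\square}_{F,S_{Q_n}}(\sigma)$-action on $M_n(\tsigma_i)\otimes_\OO k$ transported via $\alpha_{i,n}$ factors, by definition of $\varphi_{i,n}$, through $R^{\xi,\square}_{F,S_{Q_n}}(\tsigma_i)/\mathfrak a_{i,n}$; in the limit this gives that the $R_\infty(\sigma)$-action on $M_\infty(\tsigma_i)\otimes_\OO k$ factors through $R_\infty(\tsigma_i)/\mathfrak a_i$, with $\mathfrak a_i$ the $R_\infty(\tsigma_i)$-annihilator of that module. Equivariance of each $\alpha_i$ for the patched ring actions is automatic from that of the $\alpha_{i,n}$, which holds because the Hecke operators outside $S_{Q_n}$ act by identical formulas on $M_n(\sigma)$, $M_n(\sigma_i)$ and $M_n(\tsigma_i)\otimes_\OO k$.

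The main point requiring care is the last one: one must know that the annihilator $\mathfrak a_i$ of the patched module $M_\infty(\tsigma_i)\otimes_\OO k$ is computed by the patched ideals $\mathfrak a_{i,n}$, i.e. that forming annihilators is compatible with the patching limit, so that $\varphi_i$ genuinely has the stated target. This is where a short argument is needed, but it is routine: since $M_\infty(\tsigma_i)$ is finitely generated over the noetherian ring $R_\infty(\tsigma_i)$ and the patching is carried out along a constant diagonal, the annihilator of the limit agrees with the limit of the annihilators, and in any case the ideals $\mathfrak a_{i,n}$ and the maps $\varphi_{i,n}$ can be incorporated directly into the patching datum as above, which avoids the issue entirely. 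Everything else is bookkeeping in the patching construction.
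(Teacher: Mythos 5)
Your proposal is correct and follows essentially the same route as the paper: the paper's proof likewise enlarges the patching datum of \cite[Prop.~9.3]{KW} to include the filtration $\{L(\sigma)_m^i\}$, the data for the $\tsigma_i$, and the maps $\varphi_{i,m}$, $\alpha_{i,m}$, observes that there are still only finitely many isomorphism classes of such enlarged data at each level (a finite module has finitely many filtrations and there are finitely many maps between finite modules), and passes to the limit after extracting a stabilizing subsequence. Your closing remark that recording the $\mathfrak a_{i,n}$ and $\varphi_{i,n}$ in the datum sidesteps any compatibility issue for annihilators is exactly how the paper handles it.
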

\begin{proof} We modify the proof of \cite[Prop. 9.3]{KW}, which in turn is a modification of the proof of \cite[(3.3.1)]{kisin_moduli}. Let 
$\Delta(\sigma)_m:=(D(\sigma)_m, L(\sigma)_m, D'(\sigma)_m)$ be the patching data of level $m$ as in the proof of \cite[Prop. 9.3]{KW}, where $\sigma$ indicates the fixed weight and inertial type we are working with. In particular, $D(\sigma)_m$ and $D'(\sigma)_m$ are finite
$R_S^{\psi, \square}(\sigma)\br{x_1, \ldots, x_g}$-algebras, where $g=h+j+t-d$,  and $L(\sigma)_m$ is a module over $D(\sigma)_m$ satisfying a number of conditions, listed in the proof of \cite[Prop. 9.3]{KW}. Our patching data of level $m$ consists of tuples: 
$$\Delta_m:=(\Delta(\sigma)_m,  \{ L(\sigma)_m^i\}_{i=0}^s,   \{\Delta(\tsigma_i)_m\}_{i=1}^s,  \{\varphi_{i,m}\}_{i=1}^s, \{\alpha_{i,m}\}_{i=1}^s),$$
where $\{ L(\sigma)_m^i\}_{i=0}^s$ is a filtration of $L(\sigma)_m \otimes_{\OO} k$ by $D(\sigma)_m$-submodules, 
$\varphi_{i, m}: D(\sigma)_m \twoheadrightarrow D(\tsigma_i)_m/\mathfrak a_{i, m}$ is a surjection of $R_S^{\square}\br{x_1, \ldots, x_g}$-algebras, where $\mathfrak a_{i, m}$ is the  $D(\tsigma_i)_m$-annihilator of  $L(\tsigma_i)\otimes_{\OO} k$, $\alpha_{i, m}$ is an isomorphism of
$D(\sigma)_m$-modules between $L(\sigma)^i_m/L(\sigma)^{i-1}_m$ and $L(\tsigma_i)\otimes_{\OO} k$, where the action of 
$D(\sigma)_m$ on this last module is given by $\varphi_{i,m }$. 

An isomorphism of patching data between $\Delta_m$ and $\Delta_m'$ is a tuple $(\beta, \{\beta_i\}_{i=1}^s)$, where 
$\beta: \Delta_m(\sigma)\cong \Delta_m'(\sigma)$, $\beta_i: \Delta_m(\tsigma_i)\cong \Delta_m(\tsigma_i)$ are isomorphism of patching data in the 
sense of \cite[Prop. 9.3]{KW}, which respect the filtration and the maps   $\{\varphi_{i,m}\}_{i=1}^s, \{\alpha_{i,m}\}_{i=1}^s$.
There are only finitely many isomorphism classes of patching data of level  $m$, 
since there are only finitely many isomorphism classes of patching data of level $m$ in the sense of \cite[Prop. 9.3]{KW}, and a finite $\OO$-module can 
admit only finitely many filtrations and there are only finitely many maps between two finite modules.

We then proceed as in the proof of  \cite[Prop. 9.3]{KW}, in particular the integers $a$, $r_m$, $n_0$  and ideals $\mathfrak c_m$ and $\mathfrak b_n$ are
those defined in \textit{loc.cit.}  For an integer $n\ge n_0+1$ and for $m$ with $n\ge m\ge 3$ let $\Delta_{n,m}(\sigma)=
(D(\sigma)_{n,m}, L(\sigma)_{n,m}, D'(\sigma)_{n,m})$ be the patching data of level $m$ as in the proof of  \cite[Prop. 9.3]{KW}. 
In particular, 
$$ D(\sigma)_{n,m}= R_{n+a}(\sigma)/ (\mathfrak c_m R_{n+a}(\sigma)+ \mm_{ R_{n+a}(\sigma)}^{(r_m)}),$$
$$ L(\sigma)_{n,m}= M_{n+a}(\sigma)/ \mathfrak c_m M_{n+a}(\sigma),$$
where $R_n(\sigma):=R^{\psi, \square}_{F, S_{Q_n}}(\sigma)$. We define $\Delta_{n,m}(\tsigma_i)$ analogously with $\tsigma_i$ instead of $\sigma$ and 
with $\xi$ instead of $\psi$.  We let $(L(\sigma)^i_{n, m})_{i=1}^s$ be the filtration obtained reducing \eqref{filtration} modulo $\mathfrak c_m$. Simirlarly, 
we let    $\{\varphi_{i,n,m}\}_{i=1}^s, \{\alpha_{i,n,m}\}_{i=1}^s$ be the maps obtained by reducing \eqref{graded_piece2} and \eqref{graded_piece3} modulo $\mathfrak c_m$. Then
$$\Delta_{n,m}:=(\Delta(\sigma)_{n,m},  \{ L(\sigma)_{n,m}^i\}_{i=0}^s,   \{\Delta(\tsigma_i)_{n,m}\}_{i=1}^s,  \{\varphi_{i,n,m}\}_{i=1}^s, \{\alpha_{i,n,m}\}_{i=1}^s\}),$$
is a patching datum of level $m$ in our sense. Since there are only finitely many isomorphism classes of patching data of level $m$, after 
 replacing the sequence $$((R_{n+a}(\sigma),  M_{n+a}(\sigma)), \{(R_{n+a}(\tsigma_i), M_{n+a}(\tsigma_i))\}_{i=1}^s)_{n\ge n_0+1}$$ by a subsequence, we may assume that for each 
 $m\ge n_0+4$  and all $n\ge m$, $\Delta_{m,n}= \Delta_{m, m}$. The patching data $\Delta_{m,m}$ form a projective system, see  \cite[(3.3.1)]{kisin_moduli}.
 We obtain the desired objects by passing to the limit.  
 \end{proof} 
 
We need to control the image of $R_{\infty}^{\inv}(\sigma)$ under $\varphi_i$. Following \cite{KW} we let 
 $\CNL_{\OO}$ be the category of complete local noetherian $\OO$-algebras with a fixed isomorphism of the residue field with $k$, and whose maps are local $\OO$-algebra homomorphisms. If $A\in \CNL_{\OO}$ then 
 we let $\Sp_A: \CNL_{\OO} \rightarrow Sets$ be the functor $\Sp_A(B)=\Hom_{\CNL_{\OO}}(A, B)$. 
 Let $G$ be a finite abelian group. We let $G^*$ be the group scheme defined over $\OO$, such that 
for every $\OO$-algebra $A$, $G^*(A)=\Hom_{\mathrm{Groups}}(G, A^{\times})$. Assume that 
we are given a free $G^*$ action on $\Sp_A$.  This means that for all $B\in \CNL_{\OO}$, 
$G^*(B)$ acts on $\Sp_A(B)$ without fixed points. By Proposition 2.6 (1) in \cite{KW} the quotient 
$G^*\backslash \Sp_A$ exists in $\CNL_{\OO}$ and is represented by $(A^{\inv}, \mm^{\inv}_A)\in \CNL_{\OO}$. 
Moreover,  $\Sp_A$ is $G^*$-torsor over $\Sp_{A^{\inv}}$. 

\begin{lem}\label{torsor} Let $(A, \mm_A)$ and $(B, \mm_B)$ be in $\CNL_{\OO}$. Assume that $G^*$ 
acts freely on $\Sp_A$ and $\Sp_B$ and we are given a $G^*$-equivariant closed immersion 
$\Sp_B\hookrightarrow \Sp_A$. Then the map induces a closed immersion $\Sp_{B^{\inv}}\hookrightarrow 
\Sp_{A^{\inv}}$. 
\end{lem}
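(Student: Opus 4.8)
The plan is to reduce the statement to an elementary fact about graded rings, using the standard dictionary between actions of the diagonalizable group scheme $G^*$ and $G$-gradings. First I would recall that, since $G$ is finite and $\OO[G]$ is free of finite rank over $\OO$, giving a $G^*$-action on the functor $\Sp_A$ is the same as giving $A$ the structure of an $\OO[G]$-comodule $\OO$-algebra, equivalently a decomposition $A=\bigoplus_{g\in G}A_g$ into $\OO$-submodules with $1\in A_e$ and $A_gA_h\subseteq A_{gh}$; no completion subtleties arise because the direct sum is finite. Under this dictionary the invariant subring $A^{G^*}$ is the degree-$e$ piece $A_e$, and by the construction of the quotient in \cite[Prop.~2.6]{KW} (which is available precisely because the action is free) this invariant subring is $A^{\inv}$. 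Applying the same to $B$, we obtain $B=\bigoplus_{g\in G}B_g$ with $B^{\inv}=B_e$.

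Next I would translate the hypothesis. A closed immersion $\Sp_B\hookrightarrow\Sp_A$ in $\CNL_{\OO}$ is the same as a surjection of $\OO$-algebras $\pi\colon A\twoheadrightarrow B$, and $G^*$-equivariance of the immersion is exactly compatibility of $\pi$ with the comodule structures, i.e. $\pi(A_g)\subseteq B_g$ for all $g\in G$. In particular $\pi$ carries $A_e=A^{\inv}$ into $B_e=B^{\inv}$, and this restriction $A^{\inv}\to B^{\inv}$ is the morphism $\Sp_{B^{\inv}}\to\Sp_{A^{\inv}}$ induced on quotients (alternatively one invokes the universal property of the quotient in \cite[Prop.~2.6]{KW}).

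It then remains only to see that $A^{\inv}\to B^{\inv}$ is surjective, for then it is a surjection in $\CNL_{\OO}$ and the induced map $\Sp_{B^{\inv}}\hookrightarrow\Sp_{A^{\inv}}$ is a closed immersion. This is the one genuine (but short) point: given $b\in B_e$, lift it to some $a\in A$ with $\pi(a)=b$ and write $a=\sum_{g\in G}a_g$ with $a_g\in A_g$; since $\pi$ preserves the grading, $b=\sum_g\pi(a_g)$ is the graded decomposition of $b\in B_e$, so comparing degree-$e$ components yields $b=\pi(a_e)$ with $a_e\in A^{\inv}$. I do not expect any serious obstacle; the only place demanding care is the bookkeeping for the equivalence between $G^*$-actions and gradings in the pro-artinian setting and the identification $A^{\inv}=A_e$, both of which are already packaged in \cite[Prop.~2.6]{KW}.
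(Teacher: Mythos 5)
Your argument is correct, but it establishes the surjectivity of $A^{\inv}\rightarrow B^{\inv}$ by a different mechanism than the paper. The paper obtains the commutative square with $A^{\inv}\rightarrow B^{\inv}$ on top and $A\twoheadrightarrow B$ on the bottom from the universal property of the quotient, exactly as you do, but then proves surjectivity of the top arrow by a rank count: by \cite[Exp.\ VIII, Prop.\ 4.1]{sga3} the torsor property makes $A$ free of rank $|G|$ over $A^{\inv}$ and $B$ free of rank $|G|$ over $B^{\inv}$, so the induced surjection $A/\mm^{\inv}_A A\twoheadrightarrow B/\mm^{\inv}_B B$ is a surjection of $k$-vector spaces of equal dimension $|G|$, hence an isomorphism, from which one deduces that the image of $\mm^{\inv}_A$ is $\mm^{\inv}_B$ and therefore that $A^{\inv}$ surjects onto $B^{\inv}$. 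You instead invoke the dictionary between actions of the diagonalizable group scheme $G^*$ and $G$-gradings, identify the quotient ring with the degree-$e$ component, and observe that a graded surjection is surjective in every degree. Your route is more elementary at the decisive step (no freeness, no dimension count, no Nakayama-type descent) and makes transparent that freeness of the action is needed only so that the quotients exist in $\CNL_{\OO}$ and coincide with the invariant subrings; the paper's route stays entirely within the torsor formalism of \cite[Prop.\ 2.6]{KW}, which is the language reused immediately afterwards in Lemma \ref{chunk}. The two identifications you import --- that equivariance of $\pi$ means $\pi(A_g)\subseteq B_g$, and that $A^{\inv}=A_e$ --- are both standard (the latter is in any case forced by faithfully flat descent from the torsor property), so there is no gap.
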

\begin{proof} Since $G^*$ acts trivially on $\Sp_{A^{\inv}}$, by the universal property of the quotient, the map $\Sp_B \rightarrow \Sp_A \rightarrow \Sp_{A^{\inv}}$ factors through $\Sp_{B^{\inv}}\rightarrow \Sp_{A^{\inv}}$. Hence, we obtain a commutative 
diagram in $\CNL_{\OO}$: 
\begin{displaymath}
\xymatrix@1{
A^{\inv} \ar[d]\ar[r] &  B^{\inv} \ar[d]\\
A \ar@{>>}[r] & B  }
\end{displaymath}
Since $\Sp_A$ is $G^*$-torsor over $\Sp_{A^{\inv}}$, it follows from \cite[Exp. VIII, Prop.4.1] {sga3}
that $A$ is a free $A^{\inv}$-module of rank $|G|$. Simirlarly, $B$ is a free $B^{\inv}$-module of rank 
$|G|$. It follows from the commutative diagram that the surjection $A\twoheadrightarrow B$ induces   
a surjection $A/\mm^{\inv}_A A \twoheadrightarrow B/\mm^{\inv}_B B$. Since both $k$-vector 
spaces have dimension $|G|$, the map is an isomorphism and this implies that the image of $\mm^{\inv}_A$ is 
equal to $\mm^{\inv}_B$. Hence, the top horizontal arrow in the diagram is surjective.    
\end{proof} 
  
 Let $\CNL_{\OO}^{[m]}$ be the full subcategory of $\CNL_{\OO}$ consisting of objects $(A, \mm_A)$, such that 
 $\mm_A^m=0$.  We have  an truncation functor $\CNL_{\OO}\rightarrow \CNL_{\OO}^{[m]}$, 
 $A\mapsto A^{[m]}:=A/\mm_A^m$. If $A$ represents the functor $X$ we denote by $X^{[m]}$ the functor represented by $A^{[m]}$. For group chunk actions, we refer the reader to \cite[\S 2.6]{KW}.  
  
\begin{lem}\label{chunk} Let $(A, \mm_A)$ and $(B, \mm_B)$ be in $\CNL_{\OO}$. Assume that $G^*$ 
acts freely on $X:=\Sp_A$ and $Y:=\Sp_B$ and we are given an isomorphism $X^{[m]}\cong Y^{[m]}$ compatible 
with the group chunk $(G^*)^{[m]}$-action. If $m$ is large enough then the image of $\mm^{\inv}_A  A$ in 
$A/\mm_A^m= B/\mm^m_B$ is equal to the image of $\mm^{\inv}_B B$. 
\end{lem}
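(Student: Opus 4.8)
The plan is to reduce Lemma \ref{chunk} to the genuine group scheme situation of Lemma \ref{torsor}, by observing that for $m$ large the group chunk $(G^*)^{[m]}$-action is just the truncation of the honest free $G^*$-action on $X$, and that the ideal $\mm^{\inv}_A A$ is already visible on $A/\mm_A^m$. First I would record the structural input: since $\Sp A\to\Sp A^{\inv}$ is a $G^*$-torsor and $G^*$ is diagonalizable (the Cartier dual of the finite abelian group $G$), the $G^*$-action is the same datum as a $G$-grading $A=\bigoplus_{g\in G}A_g$ with $A_1=A^{\inv}$ and each $A_g$ an invertible $A^{\inv}$-module; compare \cite[Exp.~VIII]{sga3} and the proof of \cite[Prop.~2.6]{KW}. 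As $A^{\inv}\to A$ is a local homomorphism of objects of $\CNL_{\OO}$ with residue field $k$, we have $\mm^{\inv}_A=\mm_A\cap A^{\inv}$ and hence $\mm^{\inv}_A A=\bigoplus_g\mm^{\inv}_A A_g$; the same applies to $B$, yielding a $G$-grading $B=\bigoplus_g B_g$ with $B_1=B^{\inv}$.

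Next I would detect $\mm^{\inv}_A A$ at finite level. Since $A$ is local, the $G^*$-action fixes the unique closed point of $\Sp A$, hence stabilises $\mm_A$ and therefore $\mm_A^m$; so the $G$-grading of $A$ descends to a $G$-grading $A^{[m]}=\bigoplus_g\overline{A_g}$ of $A^{[m]}=A/\mm_A^m$, where $\overline{A_g}$ denotes the image of $A_g$, and likewise $B^{[m]}=\bigoplus_g\overline{B_g}$. An elementary computation, using $\mm_A\cap(A^{\inv}+\mm_A^m)=(\mm_A\cap A^{\inv})+\mm_A^m$, shows that the image of $\mm^{\inv}_A$ in $A^{[m]}$ equals $\mm_{A^{[m]}}\cap\overline{A_1}$ (where $\mm_{A^{[m]}}$ is the maximal ideal of $A^{[m]}$), so the image of $\mm^{\inv}_A A$ in $A^{[m]}$ is the ideal it generates, namely $(\mm_{A^{[m]}}\cap\overline{A_1})\,A^{[m]}$; symmetrically for $B$. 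Thus both ideals are determined purely by the $G$-gradings of the respective truncations.

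Finally I would invoke the group chunk formalism of \cite[\S 2.6]{KW}: for $m$ large, the group chunk $(G^*)^{[m]}$-action on $X^{[m]}$ is induced by the genuine $G^*$-action on $\Spec A^{[m]}$ constructed above — a free group chunk action of sufficiently high level extends, compatibly with truncation, to a group scheme action — and the associated eigenspace decomposition of $A^{[m]}$ is precisely the $G$-grading displayed above. Since the given isomorphism $X^{[m]}\cong Y^{[m]}$ is equivariant for these chunk actions, it carries $\overline{A_g}$ onto $\overline{B_g}$ for every $g\in G$, in particular it identifies $\mm_{A^{[m]}}\cap\overline{A_1}$ with $\mm_{B^{[m]}}\cap\overline{B_1}$, and hence the image of $\mm^{\inv}_A A$ in $A^{[m]}=A/\mm_A^m=B/\mm_B^m=B^{[m]}$ with the image of $\mm^{\inv}_B B$, which is the assertion. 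The main obstacle is precisely the upgrading of the level-$m$ chunk to a genuine $G^*$-action on the truncations: one must check, against the definitions of \cite[\S 2.6]{KW}, that "compatible with the group chunk $(G^*)^{[m]}$-action" for $m$ large really forces the two eigenspace decompositions to agree after truncation; once that is in hand the remaining steps are the torsor and grading bookkeeping already used in the proof of Lemma \ref{torsor}.
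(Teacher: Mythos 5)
Your first two paragraphs are fine: the identification of a free $G^*$-action with a $G$-grading $A=\bigoplus_g A_g$, $A_1=A^{\inv}$, and the computation of the image of $\mm^{\inv}_A A$ in $A/\mm_A^m$ in terms of $\overline{A_1}$ are correct. The gap is exactly where you locate it, in the third paragraph, and it is not a bookkeeping issue that can be deferred: the claim that a chunk-equivariant isomorphism $X^{[m]}\cong Y^{[m]}$ carries $\overline{A_g}$ onto $\overline{B_g}$ does not follow from the definitions, and I do not believe it is true as stated. Chunk-equivariance at level $m$ only says that the two coactions $C\rightarrow \OO[G]\otimes C$ agree after further truncation by $\mm^m$, where $\mm=\mm_A\otimes \OO[G]+A\otimes I$ and $I=(\varpi, g-1)$ is the maximal ideal of $\OO[G]$; this kernel mixes powers of $\mm_A$ with powers of $I$ and is nonzero on $\OO[G]\otimes C$. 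Since $|G|=2^t$ is not invertible in $\OO$, you cannot recover the graded pieces by averaging over characters, and in fact the subset of $C$ that the chunk sees as ``degree $1$'' is only $\overline{A_1}$ up to an error of roughly $\mm_C^{m-1}$ (for $a\in A_g$ with $a\in\mm_A^{m-1}$ one has $\alpha(a)-a\otimes 1=a\otimes(g-1)\in\mm^m$). So the two eigenspace decompositions of $C$ need not literally coincide, and your reduction stalls precisely at the step you flag as ``the main obstacle.''

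The paper's proof avoids recovering the grading altogether. It uses the torsor isomorphism $G^*\times X\cong X\times_{X^{\inv}}X$, $(g,x)\mapsto(x,gx)$: since the action map is determined by the chunk after truncation, the two graph maps agree at level $m$, giving an identification $(C\otimes_{A^{\inv}}C)^{[m]}\cong(C\otimes_{B^{\inv}}C)^{[m]}$ under which $c_1\otimes c_2\mapsto c_1\otimes c_2$. The finiteness input is then $\dim_k A/\mm^{\inv}_A A=|G|$, which gives $\mm_A^m\subseteq\mm^{\inv}_A A$ for $m\ge|G|$ and makes the truncation harmless after base change along $C\twoheadrightarrow A/\mm^{\inv}_A A$ once $m>2|G|$; a linear-independence argument comparing $b\otimes 1$ and $1\otimes b$ then forces the image of $B^{\inv}$ in $A/\mm^{\inv}_A A$ to be $k$, i.e.\ $\mm^{\inv}_B C\subseteq\mm^{\inv}_A C$, and one concludes by symmetry. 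If you want to rescue your approach you would have to prove a quantitative version of your matching claim (the gradings agree modulo $\mm_C^{m-1}$, say) and then absorb the error using $\mm_A^{|G|}\subseteq\mm^{\inv}_A A$; but that absorption step is essentially the same finiteness argument the paper runs, so you would not be saving any work.
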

\begin{proof} Let $X^{\inv}$ and $Y^{\inv}$ denote the quotients of $X$ and $Y$ by $G^*$. Then we have isomorphisms
$$ G^*\times X \cong X\times_{X^{\inv}} X, \quad  G^*\times Y \cong Y\times_{Y^{\inv}} Y,$$
where the map is given $(g, x)\mapsto (x, g x)$. We identify $Z:=X^{[m]}=Y^{[m]}$, $C:= A/\mm_A^m=B/\mm_B^m$. The restriction 
of the  above isomorphism to $\CNL_{\OO}^{[m]}$ gives us an isomorphism: 
$$ (G^*\times Z)^{[m]}\cong (Z\times_{X^{\inv}} Z)^{[m]}, \quad  (G^*\times Z)^{[m]}\cong (Z\times_{Y^{\inv}} Z)^{[m]}.$$
Thus an isomorphism 
$$(Z\times_{X^{\inv}} Z)^{[m]}\cong (Z\times_{Y^{\inv}} Z)^{[m]},$$
where the map is given by $(z_1, z_2)\mapsto (z_1, z_2)$. On rings this isomorphism reads 
$(C\otimes_{A^{\inv}} C)^{[m]}\cong (C\otimes_{B^{\inv}} C)^{[m]}$, $c_1\otimes c_2\mapsto c_1\otimes c_2$.

Both $A/\mm^{\inv}_A A$ and $B/\mm_B^{\inv}B$ are
$k$-vector spaces of dimension $|G|$. In particular, if $m > |G|$ then $\mm_A^m \subset \mm^{\inv}_AA$ and
$\mm_B^m\subset \mm^{\inv}_B B$. So we obtain a map $C\twoheadrightarrow A/\mm^{\inv}_A A$. If $m>2 |G|$ then 
by base changing along this map, we obtain an isomorphism:
$$ A/\mm_A^{\inv} A\otimes_k A/\mm_A^{\inv} A \cong  A/\mm_A^{\inv} A\otimes_{B^{\inv}} A/\mm_A^{\inv} A.$$ 
If the image of $B^{\inv}$ in  $A/\mm^{\inv}_A A$ is not equal to $k$ then for some $b\in B^{\inv}$, 
$1 \otimes b$ and $b \otimes 1$ will be linearly independent over $k$ in the left hand side of the above 
isomorphism and linearly dependent in the right hand side. This implies that the image of $B^{\inv}$ in  $A/\mm^{\inv}_A A$ is equal to $k$. Thus $\mm_B^{\inv} C\subset \mm_A^{\inv} C$ and by symmetry we obtain the other inclusion.
\end{proof}

Let $G_n$ be the Galois group of the maximal abelian extension of $F$ of degree a power of $2$, which is unramified 
outside $Q_n$ and split at primes in $S$. Let $G_{n, 2}=G_n/2 G_n$. It follows from \cite[Lem 5.1 (f)]{KW} that 
$G_{n,2}\cong (\ZZ/2\ZZ)^t$. Let $G^*_{n,2}$ be the group scheme defined over $\OO$, such that 
for every $\OO$-algebra $A$, $G^*_{n,2}(A)=\Hom_{\mathrm{Groups}}(G_{n,2}, A^{\times})$. 
If $A$ is a local artinian 
augmented $\OO$-algebra, $\chi\in G^*_{n,2}(A)$ and $\rho_A$ is a
a $G_{F, S_{Q_n}}$-representation lifting $\rhobar$ to $A$ then so is $\rho_A\otimes \chi$. Moreover, 
since $\chi^2$ is trivial, $\rho_A$ and $\rho_A\otimes \chi$ have the same determinant. This induces an 
action of $G^*_{n,2}$ on $\Spf R^{\square}_{F, S_{Q_n}}$, $\Spf R^{\psi, \square}_{F, S_{Q_n}}(\sigma)$ and 
$\Spf R^{\xi, \square}_{F, S_{Q_n}}(\tsigma_i)$. It follows from \cite[Lem. 5.1]{KW} that this action is free. 
Proposition 2.6 of \cite{KW} implies that the quotient by $G^*_{n,2}$ is represented by a complete local noetherian 
$\OO$-algebra, which we will denote by   $(R^{\square, \inv}_{F, S_{Q_n}}, \mm_n^{\inv})$, $(R^{\psi, \square, \inv}_{F, S_{Q_n}}(\sigma), \mm^{\inv}_{n,\sigma})$ and $(R^{\xi, \square, \inv}_{F, S_{Q_n}}(\tsigma_i), \mm^{\inv}_{n,\tsigma_i})$ respectively.  

\begin{lem}\label{fix1} The map 
$$\Spf R^{\xi, \square}_{F, S_{Q_n}}(\tsigma_i)/ \mathfrak a_{i,n}\rightarrow \Spf R^{\psi, \square}_{F, S_{Q_n}}(\sigma)$$ 
induced by \eqref{graded_piece3} is $G_{n,2}^*$-equivariant. Moreover, $$\varphi_{i, n}(\mm^{\inv}_{n,\sigma} R^{\psi, \square}_{F, S_{Q_n}}(\sigma))= \mm^{\inv}_{n, \tsigma_i}R^{\xi, \square}_{F, S_{Q_n}}(\tsigma_i)/ \mathfrak a_{i,n}.$$
\end{lem}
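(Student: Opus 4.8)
The plan is to reduce both assertions to Lemma \ref{torsor}, the key point being to promote the twisting action of $G^*_{n,2}$ from the deformation rings to the patching modules $M_n(\sigma)$ and $M_n(\tsigma_i)$ and then to check that the isomorphism $\alpha_{i,n}$ of \eqref{graded_piece2} is $G^*_{n,2}$-equivariant. Recall that $G^*_{n,2}$ acts on $\Spf R^{\square}_{F, S_{Q_n}}$ by $\chi\cdot[\rho_A,\{\beta_v\}]=[\rho_A\otimes\tilde\chi,\{\beta_v\}]$, where $\tilde\chi\colon G_{F,S_{Q_n}}\to A^\times$ corresponds to $\chi\in G^*_{n,2}(A)$. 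Since $\tilde\chi^2=1$ this action fixes the determinant, so it preserves the fixed–determinant quotients $R^{\psi,\square}_{F,S_{Q_n}}$ and $R^{\xi,\square}_{F,S_{Q_n}}$; and since, by construction of $G_{n,2}$, the character $\tilde\chi$ is unramified outside $Q_n$ and split at every $v\in S$, its restriction to $G_{F_v}$ is trivial for all $v\in S$, so the action also preserves the local conditions cutting out $R^{\psi,\square}_{F,S_{Q_n}}(\sigma)$ from $R^{\psi,\square}_{F,S_{Q_n}}$ (and similarly $R^{\xi,\square}_{F,S_{Q_n}}(\tsigma_i)$); in particular it commutes with the $R^{\square}_S$-algebra structure with $G^*_{n,2}$ acting trivially on $R^\square_S$. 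Thus $\Spf R^{\psi,\square}_{F,S_{Q_n}}(\sigma)$ and $\Spf R^{\xi,\square}_{F,S_{Q_n}}(\tsigma_i)$ are $G^*_{n,2}$-stable closed subschemes of $\Spf R^{\square}_{F,S_{Q_n}}$, on which the action is free by \cite[Lem.~5.1]{KW}.

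For part (i), the first step is to lift the action to the modular forms: via global class field theory $\tilde\chi$ corresponds to a quadratic Hecke character $\mu$ of $F$ which is unramified outside $Q_n$ and split at $S$, hence trivial on $U\cap(\AfF)^\times$; twisting $f\mapsto (\mu\circ\mathrm{Nm})\cdot f$ then defines an action of $G^*_{n,2}$ on $S_{\sigma,\psi}(U_{Q_n},\OO)_{\mm_{Q_n}}$ compatible with the Taylor--Wiles structure at the primes in $Q_n$ exactly as in the patching of \cite[Prop.~9.3]{KW}, and which is $\TT^{\univ}_{S_{Q_n},\OO}$-semilinear over the twisting action on $R^{\psi}_{F,S_{Q_n}}$ by the local–global compatibility of the Langlands and Jacquet--Langlands correspondences (the Galois representation attached to $f\otimes(\mu\circ\mathrm{Nm})$ is $\rho_f\otimes\tilde\chi$). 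Tensoring over $R^{\psi}_{F,S_{Q_n}}$ with $R^{\psi,\square}_{F,S_{Q_n}}$ yields an action of $G^*_{n,2}$ on $M_n(\sigma)$, semilinear over the twisting action on $R^{\psi,\square}_{F,S_{Q_n}}(\sigma)$, and likewise an action on $M_n(\tsigma_i)$ over $R^{\xi,\square}_{F,S_{Q_n}}(\tsigma_i)$ (the same formula applies since $\mu$ is trivial on $U\cap(\AfF)^\times$ and $\tsigma_i$ has trivial central character). As $\mu\circ\mathrm{Nm}$ acts on the \emph{values} of forms, it respects the $U$-equivariant filtration $L_0\subset\cdots\subset L_s$ of $\sigmabar$, hence preserves the filtration \eqref{filtration} and acts on the graded pieces; and because $\alpha_{i,n}$ is induced by the $k[U]$-isomorphism $\sigma_i\cong\tsigma_i\otimes_\OO k$ together with the functoriality of $\sigma\mapsto S_{\sigma,\psibar}(U_{Q_n},k)$, it intertwines these actions. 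Semilinearity of the action on $M_n(\tsigma_i)\otimes_\OO k$ forces $\mathfrak a_{i,n}$ to be $G^*_{n,2}$-stable, so the quotient $R^{\xi,\square}_{F,S_{Q_n}}(\tsigma_i)/\mathfrak a_{i,n}$ inherits a (free) $G^*_{n,2}$-action; and since $\varphi_{i,n}$ is the unique $R^\square_S$-algebra homomorphism through which the $R^{\psi,\square}_{F,S_{Q_n}}(\sigma)$-action on $M_n(\tsigma_i)\otimes_\OO k$ provided by $\alpha_{i,n}$ factors, equivariance of $\alpha_{i,n}$ forces $\varphi_{i,n}$ to intertwine the twisting actions, which is the assertion of (i).

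For part (ii), observe that $\varphi_{i,n}\colon R^{\psi,\square}_{F,S_{Q_n}}(\sigma)\twoheadrightarrow R^{\xi,\square}_{F,S_{Q_n}}(\tsigma_i)/\mathfrak a_{i,n}$ and the natural projection $R^{\xi,\square}_{F,S_{Q_n}}(\tsigma_i)\twoheadrightarrow R^{\xi,\square}_{F,S_{Q_n}}(\tsigma_i)/\mathfrak a_{i,n}$ are both $G^*_{n,2}$-equivariant surjections between complete local $\OO$-algebras carrying free $G^*_{n,2}$-actions, i.e.\ $G^*_{n,2}$-equivariant closed immersions in the opposite direction. Applying Lemma \ref{torsor} to the projection shows that $R^{\xi,\square,\inv}_{F,S_{Q_n}}(\tsigma_i)\to(R^{\xi,\square}_{F,S_{Q_n}}(\tsigma_i)/\mathfrak a_{i,n})^{\inv}$ is a (local) surjection, so $\mm^{\inv}_{n,\tsigma_i}\cdot\bigl(R^{\xi,\square}_{F,S_{Q_n}}(\tsigma_i)/\mathfrak a_{i,n}\bigr)$ equals the maximal ideal of the invariant ring of the quotient extended to the quotient; applying Lemma \ref{torsor} to $\varphi_{i,n}$ shows $R^{\psi,\square,\inv}_{F,S_{Q_n}}(\sigma)\to(R^{\xi,\square}_{F,S_{Q_n}}(\tsigma_i)/\mathfrak a_{i,n})^{\inv}$ is a local surjection too, so $\varphi_{i,n}(\mm^{\inv}_{n,\sigma}R^{\psi,\square}_{F,S_{Q_n}}(\sigma))$ equals that same ideal. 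Comparing the two identifications gives the stated equality.

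I expect the main obstacle to be the first step of part (i): setting up the $G^*_{n,2}$-action on the patching modules $M_n(\sigma)$, $M_n(\tsigma_i)$ compatibly with the Taylor--Wiles diamond operators at $Q_n$ and with the semilinear structure over the twisting action on the framed deformation rings, and verifying the equivariance of $\alpha_{i,n}$ through the chain of functorial isomorphisms defining it. Everything after equivariance is established is a formal consequence of Lemma \ref{torsor}.
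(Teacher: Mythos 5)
Your proposal is correct and follows essentially the same route as the paper: the equivariance in part (i) comes from the compatibility of quadratic twisting with the Hecke-module structure (the paper simply cites \cite[Lem.~9.1]{KW} for this), and part (ii) is a formal consequence of Lemma \ref{torsor}. The only cosmetic difference is in part (ii): you apply Lemma \ref{torsor} directly to $\varphi_{i,n}$ and to the projection $R^{\xi,\square}_{F,S_{Q_n}}(\tsigma_i)\twoheadrightarrow R^{\xi,\square}_{F,S_{Q_n}}(\tsigma_i)/\mathfrak a_{i,n}$ (which requires the harmless observation that freeness of the $G^*_{n,2}$-action passes to the stable quotient), whereas the paper applies it to the quotient maps $q_{\sigma}$, $q_{\tsigma_i}$ from the unrestricted ring $R^{\square}_{F,S_{Q_n}}$ and uses the identity $\varphi_{i,n}\circ q_{\sigma}=q_{\tsigma_i}\pmod{\mathfrak a_{i,n}}$; both versions extract from the proof of Lemma \ref{torsor} the equality $\mm^{\inv}_A B=\mm^{\inv}_B B$.
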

\begin{proof} The first part follows from \cite[Lem. 9.1]{KW}, see the paragraph after the proof of 
\cite[Prop. 7.6]{KW} and the third paragraph of the proof of \cite[Lem. 9.6]{KW}.  

 Let $q_{\sigma}: R^{\square}_{F, S_{Q_n}}\twoheadrightarrow R^{\psi, \square}_{F, S_{Q_n}}(\sigma)$ and $q_{\tsigma_i}: R^{\square}_{F, S_{Q_n}}\twoheadrightarrow R^{\xi, \square}_{F, S_{Q_n}}(\tsigma_i)$ denote the natural surjections. Since $\varphi_{i,n}\circ q_{\sigma}= q_{\tsigma_i} \pmod{\mathfrak a_{n,i}}$ it is enough to show 
that 
$q_{\sigma}( \mm_n^{\inv}R^{\square}_{F, S_{Q_n}})=\mm^{\inv}_{n,\sigma} R^{\psi, \square}_{F, S_{Q_n}}(\sigma)$
for all $\sigma$ and $\psi$ as above. This follows from Lemma \ref{torsor}.
\end{proof}

Let $\mm^{\inv}_{\sigma}$, $\mm^{\inv}_{\tsigma_i}$ be  the maximal ideals of $R_{\infty}^{\inv}(\sigma)$ and 
$R^{\inv}_{\infty}(\tsigma_i)$ respectively. 

\begin{prop}\label{final_fix} The surjection $\varphi_i: R_{\infty}(\sigma)\twoheadrightarrow R_{\infty}(\tilde{\sigma}_i) / \mathfrak a_i$ maps $\mm^{\inv}_{\sigma}R_{\infty}(\sigma)$ onto the image of 
$\mm^{\inv}_{\tsigma_i}R_{\infty}(\tsigma_i)$. In particular, 
\begin{equation}\label{fix2}
e(M^i_{\infty}(\sigma)/ M^{i-1}_{\infty}(\sigma), R^{\inv}_{\infty}(\sigma)/\varpi)=e(M_{\infty}(\tilde{\sigma}_i)\otimes_{\OO} k, R^{\inv}_{\infty}(\tsigma_i)/\varpi).
\end{equation}
\end{prop}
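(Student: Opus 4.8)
The plan is to equip the objects appearing in the statement with compatible actions of the finite flat $\OO$-group scheme $\gm=(\widehat{\mathbb G}_m[2])^t$ and then to extract the first assertion from Lemma \ref{torsor} (more precisely from the identity of invariant maximal ideals proved along with it), the second assertion then being formal. Recall from (P4)--(P5) and the proof of Lemma \ref{torsor_fix} that $\Spf R_{\infty}(\sigma)$ and $\Spf R_{\infty}(\tsigma_i)$ carry free $\gm$-actions, are $\gm$-torsors over $\Spf R^{\inv}_{\infty}(\sigma)$ and $\Spf R^{\inv}_{\infty}(\tsigma_i)$ respectively, and that these actions lift to $M_{\infty}(\sigma)$ and $M_{\infty}(\tsigma_i)$.

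First I would verify that the patching of Proposition \ref{big_patch} is, and can be, performed $\gm$-equivariantly, so that the surjection $\varphi_i\colon R_{\infty}(\sigma)\twoheadrightarrow R_{\infty}(\tsigma_i)/\mathfrak a_i$ is $\gm$-equivariant. This descends from the finite level: as $G_{n,2}\cong(\ZZ/2\ZZ)^t$ one has $G^*_{n,2}\cong\gm$, the first assertion of Lemma \ref{fix1} says that $\varphi_{i,n}$ is $G^*_{n,2}$-equivariant, and the group-chunk $(G^*_{n,2})^{[m]}$-actions and the maps $\varphi_{i,n,m}$ form part of the patching data $\Delta_{n,m}$ in the proof of Proposition \ref{big_patch}; hence equivariance persists in the limit. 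Since $\gm$ then acts compatibly on $R_{\infty}(\tsigma_i)$ and on $M_{\infty}(\tsigma_i)\otimes_{\OO}k$, and bijectively on the latter, the annihilator ideal $\mathfrak a_i\subset R_{\infty}(\tsigma_i)$ is $\gm$-stable; consequently $R_{\infty}(\tsigma_i)/\mathfrak a_i$ carries a $\gm$-action, and $\Spf(R_{\infty}(\tsigma_i)/\mathfrak a_i)$, being a $\gm$-stable closed formal subscheme of the $\gm$-torsor $\Spf R_{\infty}(\tsigma_i)$, has free $\gm$-action and an $\OO$-algebra $C$ representing its quotient $\gm\backslash\Spf(R_{\infty}(\tsigma_i)/\mathfrak a_i)$; write $\mm_C$ for the maximal ideal of $C$.

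Now I would apply Lemma \ref{torsor} twice. Applied to the $\gm$-equivariant surjection $R_{\infty}(\tsigma_i)\twoheadrightarrow R_{\infty}(\tsigma_i)/\mathfrak a_i$, it yields that the image of $\mm^{\inv}_{\tsigma_i}R_{\infty}(\tsigma_i)$ in $R_{\infty}(\tsigma_i)/\mathfrak a_i$ equals $\mm_C\cdot(R_{\infty}(\tsigma_i)/\mathfrak a_i)$. Applied to $\varphi_i$ (free action on the source $\Spf R_{\infty}(\sigma)$, a $\gm$-torsor), it yields that $\varphi_i(\mm^{\inv}_{\sigma}R_{\infty}(\sigma))$ equals $\mm_C\cdot(R_{\infty}(\tsigma_i)/\mathfrak a_i)$ as well. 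Comparing the two, $\varphi_i(\mm^{\inv}_{\sigma}R_{\infty}(\sigma))$ equals the image of $\mm^{\inv}_{\tsigma_i}R_{\infty}(\tsigma_i)$ in $R_{\infty}(\tsigma_i)/\mathfrak a_i$, which is the first assertion. (One could equally transfer the finite-level identity of the second assertion of Lemma \ref{fix1} to the limit through the truncations of the patching data, using Lemma \ref{chunk}.)

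Then \eqref{fix2} follows formally: via the isomorphism $\alpha_i$ of $R_{\infty}(\sigma)$-modules $M^i_{\infty}(\sigma)/M^{i-1}_{\infty}(\sigma)\cong M_{\infty}(\tsigma_i)\otimes_{\OO}k$, the $R_{\infty}(\sigma)$-action on the right factoring through $\varphi_i$, the first assertion shows that the $\mm^{\inv}_{\sigma}$-adic filtration on the left matches the $\mm^{\inv}_{\tsigma_i}$-adic filtration on the right, so the two modules have the same Hilbert--Samuel function; since $R^{\inv}_{\infty}(\sigma)$ and $R^{\inv}_{\infty}(\tsigma_i)$ both have Krull dimension $h+j+1$ (Lemma \ref{torsor_fix}, Lemma \ref{support}) and $\varpi$ is regular on the patched modules by (P3), the Hilbert--Samuel multiplicities over $R^{\inv}_{\infty}(\sigma)/\varpi$ and $R^{\inv}_{\infty}(\tsigma_i)/\varpi$ agree. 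The main obstacle is the bookkeeping in the second paragraph: organising the patching in Proposition \ref{big_patch} so that the $\gm$-equivariance of $\varphi_i$, and the compatibility of the $\gm$-actions with the filtration on $M_{\infty}(\sigma)$ and with $\alpha_i$, survive to the limit; this is precisely where the $G^*_{n,2}$-equivariance established in Lemma \ref{fix1} enters, and once it is secured the remainder is formal.
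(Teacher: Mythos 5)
Your argument is correct, and it reorganizes the paper's proof rather than reproducing it. Both arguments rest on the same substantive input: the $G^*_{n,2}$-equivariance of $\varphi_{i,n}$ established in Lemma \ref{fix1} must be carried through the patching of Proposition \ref{big_patch}, which works because the group-chunk actions and the maps $\varphi_{i,n,m}$ are part of the patching data and the isomorphisms of patching data respect both. Where you diverge is in how the invariant ideals are then compared. The paper truncates: it identifies $R_{\infty}(\sigma)^{[r]}$ with a finite-level $R_{m+a}(\sigma)^{[r]}$ compatibly with group chunks, invokes Lemma \ref{chunk} to show that the limit-action invariant ideal $\mm^{\inv}_{\sigma}R_{\infty}(\sigma)$ and the finite-level invariant ideal $\mm^{\inv}_{m+a,\sigma}R_{m+a}(\sigma)$ have the same image in that truncation, applies the second assertion of Lemma \ref{fix1} there, and passes to the limit. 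You stay at the infinite level: granting that $\varphi_i$ is $\gm$-equivariant for the honest limit actions (checkable truncation by truncation against the patched chunk actions, as you indicate) and that $\mathfrak a_i$ is $\gm$-stable so that the action on $\Spf(R_{\infty}(\tsigma_i)/\mathfrak a_i)$ is free, two applications of Lemma \ref{torsor} identify both $\varphi_i(\mm^{\inv}_{\sigma}R_{\infty}(\sigma))$ and the image of $\mm^{\inv}_{\tsigma_i}R_{\infty}(\tsigma_i)$ with $\mm_C\cdot(R_{\infty}(\tsigma_i)/\mathfrak a_i)$, where $C$ is the ring of the quotient. What your route buys is that Lemma \ref{chunk} becomes unnecessary; what it costs is precisely the point you flag as the main obstacle, namely that the limit action and the equivariance of $\varphi_i$ for it must be secured from the patching rather than read off at a single finite level --- your parenthetical alternative is in fact the paper's proof. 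The passage to \eqref{fix2} is the same in both (the relevant point being that the two invariant rings have the same Krull dimension, so matching $\mm^{\inv}$-adic filtrations give equal multiplicities).
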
 
\begin{proof}  If $(A,\mm)$ is  a complete local noetherian algebra then by $A^{[r]}$ we denote the ring $A/\mm^r$. We will use the same notation as in the proof of the previous Proposition. It is shown in the course 
of the proof of part (I) of \cite[Prop 9.3]{KW} that $R_{\infty}(\sigma)\cong \varprojlim_m D''_{m, m}(\sigma)$, 
where $D''_{m, n}(\sigma)= R_{n+a}(\sigma)^{[r'_m]}$. Moreover, it is shown that the map is $\gm$-equivariant, by fixing identification of $G_{n+a,2}$ with $(\ZZ/2 \ZZ)^t$. 

For each fixed $r\ge 0$ we have $R_{\infty}(\sigma)^{[r]}\cong \varprojlim_m D''_{m, m}(\sigma)^{[r]}$. Hence, 
by choosing $m$ large enough we may assume that $R_{\infty}(\sigma)^{[r]}=D''_{m, m}(\sigma)^{[r]}$ with $r\le r'_m$. 
Since $\gm$-action on $\Sp_{R_{\infty}(\sigma)}$ and on $\Sp_{R_{n+a}(\sigma)}$ is free by \cite[Lem. 9.4, 5.1]{KW}, 
we are in the situation of Lemma \ref{chunk}. Hence the image of $\mm^{\inv}_{\sigma}R_{\infty}(\sigma)$ in 
  $D''_{m, m}(\sigma)^{[r]}$ is equal to the image of $\mm^{\inv}_{m+a, \sigma} R_{m+a}(\sigma)$.
It follows from Lemma \ref{fix1} that the composition $$R_{\infty}(\sigma)\rightarrow R_{m+a}(\sigma)^{[r]}\overset{\varphi_{i,m}}{\rightarrow}
(R_{m+a}(\tsigma_i)/\mathfrak a_{i,m})^{[r]}$$
maps $\mm^{\inv}_{\sigma} R_{\infty}(\sigma)$ onto the image of 
$\mm^{\inv}_{\tsigma_i} R_{\infty}(\tsigma_i)$. The action of $R_{m+a}(\tsigma_i)$ on 
$L_{m,m}(\tsigma_i)$ factors through $R_{m+a}(\tsigma_i)^{[r'_m]}$. Since by construction 
$$\varphi_i=\varprojlim_m \varphi_{i,m}, \quad R_{\infty}(\tsigma_i)=\varprojlim_m R_{m+a}(\tsigma_i)^{[r'_m]}, \quad M_{\infty}(\tsigma_i)=
 \varprojlim_m L_{m, m}(\tsigma_i),$$ we deduce that $\varphi_i$ maps $\mm^{\inv}_{\sigma} R_{\infty}(\sigma)$ onto the image of $\mm^{\inv}_{\tsigma_i} R_{\infty}(\tsigma_i)$.
  \end{proof}

\begin{cor}\label{modular} Assume that $S_{\sigma, \psi}(U, \OO)_{\mm}\neq 0$ and that   for $v\mid 2$, $\rhobar|_{G_{F_v}}\not\cong \bigl ( \begin{smallmatrix} \chi & \ast\\ 0 & \chi\end{smallmatrix}\bigr )$ for any character $\chi: G_{F_v}\rightarrow k^{\times}$. Then 
the equivalent conditions of Proposition \ref{equivalent_cond_new} hold,  and any $\rho: G_{F, S}\rightarrow \GL_2(\OO)$ 
corresponding to a maximal ideal of $R^{\psi}_{F,S}(\sigma)[1/2]$ is modular.
\end{cor}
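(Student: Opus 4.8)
The plan is to reduce the general $\sigma$ (with each $\sigma_v$ one of the types \eqref{define_sigma_v}, \eqref{define_sigma_cris_v}) to the ``small weight'' case treated in \S\ref{small_weights}, using the multiplicity-one/Breuil--M\'ezard input from the local part and the filtration machinery of \S\ref{patching} just assembled. First I would invoke Corollary \ref{small_OK}: the genericity hypothesis on $\rhobar|_{G_{F_v}}$ for $v\mid 2$ guarantees that $\rhobar|_{G_{F_v}}$ has non-scalar semisimplification at each $v\mid 2$, so the equivalent conditions of Proposition \ref{equivalent_cond_new} hold for every $\tsigma_i$ (whose components are $\tEins$ or $\tilde{\st}$). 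In particular, by condition (c) of Proposition \ref{equivalent_cond_new} applied to each $\tsigma_i$, the support of $M_{\infty}(\tsigma_i)$ meets every irreducible component of $R_{\infty}(\tsigma_i)$, and by part (a), $2^t r\, e(R^{\xi,\square}_S(\tsigma_i)/\varpi)=e(M_{\infty}(\tsigma_i)/\varpi, R^{\inv}_{\infty}(\tsigma_i)/\varpi)$. One also needs $e(M_{\infty}(\tsigma_i)\otimes_{\OO}k, R^{\inv}_{\infty}(\tsigma_i)/\varpi)=e(M_{\infty}(\tsigma_i)/\varpi,R^{\inv}_{\infty}(\tsigma_i)/\varpi)$, which holds since $M_\infty(\tsigma_i)$ is $\OO$-flat (property (P3)) so $\varpi$ is regular on it.

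Next I would add up the graded pieces. From Proposition \ref{big_patch} we have a filtration of $M_{\infty}(\sigma)\otimes_{\OO}k$ by $R_{\infty}(\sigma)$-submodules with $M^i_{\infty}(\sigma)/M^{i-1}_{\infty}(\sigma)\cong M_{\infty}(\tsigma_i)\otimes_{\OO}k$ as $R_{\infty}(\sigma)$-modules, where $R_{\infty}(\sigma)$ acts via $\varphi_i:R_{\infty}(\sigma)\twoheadrightarrow R_{\infty}(\tsigma_i)/\mathfrak a_i$. Additivity of Hilbert--Samuel multiplicities along the filtration, together with Proposition \ref{final_fix} (which gives $e(M^i_{\infty}(\sigma)/M^{i-1}_{\infty}(\sigma),R^{\inv}_{\infty}(\sigma)/\varpi)=e(M_{\infty}(\tsigma_i)\otimes_{\OO}k,R^{\inv}_{\infty}(\tsigma_i)/\varpi)$), yields
\begin{equation*}
e(M_{\infty}(\sigma)/\varpi,R^{\inv}_{\infty}(\sigma)/\varpi)=\sum_{i=1}^{s}e(M_{\infty}(\tsigma_i)\otimes_{\OO}k,R^{\inv}_{\infty}(\tsigma_i)/\varpi)=2^t r\sum_{i=1}^{s}e(R^{\xi,\square}_S(\tsigma_i)/\varpi).
\end{equation*}
On the local side, Remark \ref{need_bound} (whose proof uses Theorem \ref{weak_bm}, Remark \ref{remark_framed_vs_non}, Theorem \ref{split_bm} and the proof of Corollary \ref{pot_crys}) gives, place by place above $2$, an identity of Hilbert--Samuel multiplicities expressing $e(R^{\psi,\square}_v(\sigma_v)/\varpi)$ as $(m_{\Eins,v}+m_{\st,v})\,e(R^{\xi,\square}_v((0,1),\Eins\oplus\Eins)/\varpi)$; multiplying over $v\mid 2$ and using that the multiplicities $m_{\sigma_{i,v}}$ of the Jordan--H\"older factors of $\sigmabar_v$ are exactly the coefficients appearing, together with the fact that the remaining local factors of $R^{\psi,\square}_S(\sigma)$ (at $v\in\Sigma$, at $v\mid\infty$, and at the auxiliary $v\nmid2\infty$) do not depend on the weight, we get $e(R^{\psi,\square}_S(\sigma)/\varpi)=\sum_{i=1}^{s}e(R^{\xi,\square}_S(\tsigma_i)/\varpi)$.

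Combining the two displays gives condition (a) of Proposition \ref{equivalent_cond_new} for $\sigma$, namely $2^t r\,e(R^{\psi,\square}_S(\sigma)/\varpi)=e(M_{\infty}(\sigma)/\varpi,R^{\inv}_{\infty}(\sigma)/\varpi)$. By Proposition \ref{equivalent_cond_new} all the equivalent conditions hold, and its last sentence gives that every $\rho:G_{F,S}\to\GL_2(\OO)$ corresponding to a maximal ideal of $R^{\psi}_{F,S}(\sigma)[1/2]$ is modular, which is the assertion. The main obstacle I expect is bookkeeping rather than conceptual: one has to check carefully that the multiplicities $m_{\sigma_{i,v}}$ produced by the chosen $U$-stable filtration of $\sigmabar$ match the Breuil--M\'ezard multiplicities $m_{\Eins,v}, m_{\st,v}$ of Theorem \ref{weak_bm}/Remark \ref{need_bound} (independence of the choice of $\sigma(\tau_v)$ when $p=2$ is needed here, as noted before Proposition \ref{support_global}), and that the non-$2$-adic local factors genuinely cancel on both sides; a secondary point is ensuring the $\OO$-flatness/$\varpi$-regularity statements needed to pass freely between $e(-\otimes_{\OO}k,\cdot)$ and $e(-/\varpi,\cdot)$ throughout, all of which follow from (P3) and the $\OO$-torsion-freeness of the relevant modules and rings.
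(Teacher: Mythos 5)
Your argument follows the paper's proof essentially step for step: the filtration from Proposition \ref{big_patch}, additivity of Hilbert--Samuel multiplicities together with Proposition \ref{final_fix}, the small-weight case via Corollary \ref{small_OK}, and the local multiplicity identity of Remark \ref{need_bound}. The only cosmetic difference is that you verify condition (a) of Proposition \ref{equivalent_cond_new} (an equality) where the paper is content to verify the equivalent condition (b) (an inequality); since Remark \ref{need_bound} does give an equality of cycles, your stronger claim is justified.

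The one step you omit is the verification of the hypothesis of Corollary \ref{small_OK} for the small weights: that corollary requires $S_{\tsigma_i,\xi}(U,\OO)_{\mm}\neq 0$, and this does not follow formally from $S_{\sigma,\psi}(U,\OO)_{\mm}\neq 0$. The paper supplies it by noting that, since $U$ satisfies \eqref{no_isotropy}, the functor $\sigma\mapsto S_{\sigma,\psi}(U,\cdot)_{\mm}$ is exact, so some graded piece gives $S_{\tsigma_i,\xi}(U,k)_{\mm}\neq 0$ for at least one $i$, whence $S_{\tsigma_i,\xi}(U,\OO)_{\mm}\neq 0$ for that $i$; Lemma \ref{weight_cycling} then propagates the non-vanishing to all $i$ (and to $\tilde{\Eins}$), which also guarantees that the rings $R^{\xi,\square}_S(\tsigma_i)$ are non-zero. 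This is a routine but genuinely necessary supplement; with it added, your proof is complete and matches the paper's.
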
 
\begin{proof} We will verify that part (b) of Proposition \ref{equivalent_cond_new} holds. We first note that since $S_{\sigma, \psi}(U, \OO)_{\mm}\neq 0$ and 
$U$ satisfies \eqref{no_isotropy}, there  is an $i$, such that $S_{\tsigma_i, \xi}(U, k)_{\mm}\neq 0$. This implies that $S_{\tsigma_i, \xi}(U, \OO)_{\mm}\neq 0$,
and it follows from Lemma \ref{weight_cycling} that  $S_{\tsigma_i, \xi}(U, \OO)_{\mm}\neq 0$ for all $1\le i\le s$ and $S_{\tilde{\Eins}, \xi}(U, \OO)_{\mm}\neq 0$. In particular, the rings $R_S^{\xi, \square}(\tsigma_i)$ are non-zero and equal to $R_S^{\xi, \square}(\tilde{\Eins})$. Corollary \ref{small_OK} implies that
for all $1\le i\le s$ the equality 
\begin{equation} 
 2^t r e( R^{\xi, \square}_S(\tsigma_i)/\varpi)= e(M_{\infty}(\tsigma_i)/\varpi, R^{\inv}_{\infty}(\tsigma_i)/\varpi).
\end{equation}
holds. 
Since the Hilbert--Samuel multiplicity is additive in short exact sequences, we have 
\begin{equation}
e(M_{\infty}(\sigma)/\varpi, R_{\infty}^{\inv}(\sigma)/\varpi)= \sum_{i=1}^s e(M^i_{\infty}(\sigma)/M^{i-1}_{\infty}(\sigma), R_{\infty}^{\inv}(\sigma)/\varpi).
\end{equation}
Proposition \ref{final_fix} implies that for all $1\le i\le s$ we have
\begin{equation}
e(M^i_{\infty}(\sigma)/M^{i-1}_{\infty}(\sigma), R_{\infty}^{\inv}(\sigma)/\varpi)=e(M_{\infty}(\tsigma_i)/\varpi, R_{\infty}^{\inv}(\tsigma_i)/\varpi).
\end{equation}
Thus 
\begin{equation}
e(M_{\infty}(\sigma)/\varpi, R_{\infty}^{\inv}(\sigma)/\varpi)=2^t r \sum_{i=1}^s  e( R^{\xi, \square}_S(\tsigma_i)/\varpi).
\end{equation}
Thus to verify part (b) of Proposition \ref{equivalent_cond_new} it is enough to show that 
\begin{equation}\label{to_show}
e(R^{\psi, \square}_S(\sigma)/\varpi)\le \sum_{i=1}^s  e( R^{\xi, \square}_S(\tsigma_i)/\varpi).
\end{equation}
If $A$ and $B$ are complete local $\kappa$-algebras with residue field $\kappa$ then it is shown in \cite[Prop.1.3.8]{kisinfm} that $e(A\wtimes_{\kappa} B)=e(A)e(B)$. Since $\psi$ is congruent to $\xi$ modulo $\varpi$, the inequality \eqref{to_show} reduces to the inequality on Hilbert--Samuel multiplicities of potentially semi-stable rings at all $v\mid 2$: 
\begin{equation}\label{to_show_local}
e(R^{\psi, \square}_v(\sigma_v)/\varpi) \le \sum_{i=1}^{s_v} e(R^{\xi, \square}_v(\tsigma_{v,i})/\varpi).
\end{equation} 
where $\sigma_{v,i}$ are irreducible $k$-representation of $\GL_2(\mathbb F_2)$, which appear as graded pieces of
a $\GL_2(\ZZ_2)$-invariant filtration on $\sigma_v\otimes_{\OO} k$. The inequality \eqref{to_show_local} is proved in the local part of the paper, see Remark \ref{need_bound}.
\end{proof}
\subsection{Modularity lifting} \label{modularity}

Let $F$ be a totally real field in which $2$ splits completely.

\begin{defi} An allowable base change is a totally real solvable extension $F'$ of $F$ such that $2$ splits completely in $F'$.
\end{defi}

\begin{lem}\label{weight_2} Assume that $[F:\QQ]$ is even. Let $\rhobar: G_F\rightarrow \GL_2(k)$ be a continuous absolutely irreducible representation. 
If there is a Hilbert eigenform $f$ such that $\rhobar\cong \rhobar_f$ then there is a Hilbert eigenform $g$ of parallel weight $2$, such that $\rhobar\cong \rhobar_g$ and 
at $v\mid 2$ the corresponding representation $\pi_v$ of $\GL_2(F_v)$ is either unramified principal series or a twist of Steinberg representation by an unramified character. Moreover, if for all $v\mid 2$, $\rhobar|_{G_{F_v}}\not\cong \bigl ( \begin{smallmatrix} \chi & \ast\\ 0 & \chi\end{smallmatrix}\bigr )$ for any character $\chi: G_{F_v}\rightarrow k^{\times}$ then we may assume that $\pi_v$ is an unramified principal series representation for all $v\mid 2$.  
\end{lem}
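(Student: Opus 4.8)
The plan is to reduce the assertion about $\pi_v$ at places above $2$ to the local Breuil--M\'ezard / weight-existence results established earlier, using a standard ``weight lowering'' argument of Skinner--Wiles / Kisin type together with solvable base change. First I would use a suitable allowable base change (solvable, totally real, in which $2$ splits completely) so that, after replacing $F$ by $F'$, the representation $\rhobar$ becomes modular by a form on a definite quaternion algebra $D$ ramified at all infinite places and at an even set of finite places away from $2$, and so that $\rhobar$ satisfies all the hypotheses of \S\ref{subsection_residual}: in particular we may arrange that $\rhobar$ is unramified outside $2$, that an auxiliary place of the type required by Lemma \ref{find_prime} exists, and that $\det\rhobar \equiv \psi\chi_{\cyc}$ for an appropriate $\psi$. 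Parallel weight $2$ corresponds to taking $\wt_v=(0,1)$ for every $v\mid 2$; the issue is to control the inertial type $\tau_v$, equivalently $\sigma_v$, equivalently $\pi_v$.

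Next I would invoke the Serre weight / modularity input: since $\rhobar\cong\rhobar_f$ for some Hilbert eigenform, $S_{\sigma,\psi}(U,\OO)_{\mm}\neq 0$ for at least one choice of $\sigma=\otimes_{v\mid 2}\sigma_v$ with each $\sigma_v$ of the form \eqref{define_sigma_v}. By twisting I would reduce to the situation where at each $v\mid 2$ one is looking at the small weights: the relevant $\sigma_v$ reduces mod $\varpi$ to a sum of the two irreducible $k$-representations $\tEins$ and $\tilde{\st}$ of $\GL_2(\ZZ_2)$, as discussed in \S\ref{small_weights}. Using the exactness of $\sigma\mapsto S_{\sigma,\psi}(U_{Q_n},\OO)$ under \eqref{no_isotropy}, the filtration of $\sigma\otimes_{\OO}k$ into irreducible graded pieces forces $S_{\tsigma_i,\xi}(U,\OO)_{\mm}\neq 0$ for some $i$; then Lemma \ref{weight_cycling} propagates this to $S_{\tEins,\xi}(U,\OO)_{\mm}\neq 0$ and to all graded pieces. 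The key point is that $S_{\tEins,\xi}(U,\OO)_{\mm}\neq 0$ (with $\tEins$ the trivial $\GL_2(\ZZ_2)$-representation at each $v\mid 2$) means precisely that there is an eigenform $g$ of parallel weight $2$ with $\rhobar\cong\rhobar_g$ which is unramified (on the $\GL_2$ side: $\pi_v^{\GL_2(\ZZ_2)}\neq 0$) at all $v\mid 2$; by Jacquet--Langlands this transports back to the desired $g$ on $D^{\times}$, and local-global compatibility identifies $\pi_v$ (unramified principal series, or twist of Steinberg by an unramified character, according to whether $\rho_g|_{G_{F_v}}$ is crystalline or semistable-noncrystalline with Hodge--Tate weights $(0,1)$). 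Finally, descending the base change (the form $g$ over $F'$ descends to one over $F$, as in \cite{KW}, \cite{kisin_serre2}) gives the statement over the original $F$.

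For the last sentence of the lemma: if $\rhobar|_{G_{F_v}}\not\cong\bigl(\begin{smallmatrix}\chi&\ast\\0&\chi\end{smallmatrix}\bigr)$ for any character $\chi$, then $\rhobar|_{G_{F_v}}$ does not have scalar semi-simplification, so by Corollary \ref{pot_crys} every semistable lift of $\rhobar|_{G_{F_v}}$ with Hodge--Tate weights $(0,1)$ is automatically crystalline. Hence $\rho_g|_{G_{F_v}}$ is crystalline of weight $(0,1)$, which on the automorphic side forces $\pi_v$ to be an unramified principal series representation (it cannot be a ramified principal series, nor a twist of Steinberg, since those correspond to non-crystalline points). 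This is exactly the content of Lemma \ref{weight_cycling}, whose second assertion lets one ``cycle'' from the weight $\tilde{\st}$ to $\tEins$ at $v$ without losing modularity. The main obstacle is the bookkeeping to ensure that the various auxiliary base changes are compatible and that after descent one still has a form over $F$ of parallel weight $2$ with the asserted local behaviour; this is where I would lean most heavily on the results of \cite{KW} and \cite{kisin_serre2} already cited in the excerpt, together with Corollaries \ref{small_OK} and \ref{modular}.
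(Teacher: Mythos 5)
Your second paragraph contains the correct core mechanism, which is also the paper's: exactness of $\sigma\mapsto S_{\sigma,\psi}(U,A)$ under \eqref{no_isotropy} gives a Jordan--H\"older factor $\sigma=\otimes_{v\mid 2}\sigma_v$ of $\bar\tau$ with $S_{\sigma,\bar\psi}(U,k)_{\mm}\neq 0$, each $\sigma_v$ is $\Eins$ or $\st$, and lifting to $\tilde\sigma_v\in\{\tEins,\tilde{\st}\}$ (with a character lift $\xi$ of $\bar\psi$) gives $S_{\tilde\sigma,\xi}(U,\OO)_{\mm}\neq 0$ because its reduction mod $\varpi$ is $S_{\sigma,\xi}(U,k)_{\mm}$; Jacquet--Langlands then produces $g$. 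The ``Moreover'' is indeed exactly Lemma \ref{weight_cycling}.

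However, your framing introduces a genuine gap. You begin by passing to an allowable base change $F'$ and end with ``descending the base change \dots gives the statement over the original $F$.'' This descent is not justified and does not follow from solvable base change: the eigenform $g$ you construct over $F'$ need not be $\Gal(F'/F)$-invariant, its Galois representation is only a representation of $G_{F'}$, and even when descent of the automorphic representation is available one must still control the weight and the local components at $v\mid 2$ of the descended form. The paper avoids this entirely: the hypothesis that $[F:\QQ]$ is even is there precisely so that one can take $D$ to be the totally definite quaternion algebra over $F$ split at \emph{all finite} places and run the whole argument over $F$ itself --- no base change, no auxiliary-prime or minimal-ramification setup, and no appeal to \S\ref{subsection_residual}. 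Relatedly, your appeal to Corollaries \ref{small_OK} and \ref{modular} and the Breuil--M\'ezard input is both unnecessary for the first assertion and problematic: those results require hypothesis (iv) (no scalar semi-simplification at $v\mid 2$), which is \emph{not} assumed in the first part of Lemma \ref{weight_2}; the first part must hold without it, and it does, by the elementary exactness argument alone. Strip out the base change and the $R=\TT$ machinery, keep your second paragraph's argument over $F$ itself, and reserve Lemma \ref{weight_cycling} (via Corollary \ref{pot_crys}) for the final sentence, and you have the paper's proof.
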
 
\begin{proof} Let $D$ be the totally definite quaternion algebra with centre $F$ split at all the finite places. Let $f^D\in S_{\tau, \psi}(U, \OO)$ be the eigenform on $D$ associated to $f$ by the  Jacquet--Langlands correspondence, where $U=\prod_{v} U_v$ is a compact  open subgroup of $(D\otimes_F \AfF)^{\times}$, 
such that $U_v=\GL_2(\OO_{F_v})$ for all $v\mid 2$, and $U$ is sufficiently small, so that  \eqref{no_isotropy} holds, and $\tau=\otimes_{v\mid 2} \tau_v$ 
a locally algebraic representation of $U$. Let $\mm$ be the maximal ideal of the Hecke algebra $\mathbb{T}^{\univ}_{S, \OO}$ corresponding to $\rhobar$, then 
$f^D\in S_{\tau, \psi}(U, \OO)_{\mm}$, and hence $S_{\tau, \psi}(U, \OO)_{\mm}$ is non-zero. 

 Let $\bar{\tau}$ denote the reduction
of a $U$-invariant lattice in $\tau$, and let $\bar{\psi}$ denote $\psi$ modulo $\varpi$. Since $U$ satisfies \eqref{no_isotropy} the functor $\sigma\mapsto S_{\sigma, \psi}(U, \OO)$ is exact. The localization functor is also exact. Hence there is an irreducible subquotient $\sigma$ of $\bar{\tau}$ such that $S_{\sigma, \bar{\psi}}(U, k)_{\mm}$ is non-zero. Such $\sigma$ is of the form $\otimes_{v\mid 2} \sigma_v$, where $\sigma_v$ is an representation of $\GL_2(\mathbb{F}_2)$.
Thus $\sigma_v$ is either trivial, in which case we let $\tilde{\sigma}_v=\tilde{\Eins}$, or $k^2$, in which case we let $\tilde{\sigma}_v=\tilde{\st}$. 
Then the reduction of $\tilde{\sigma}_v$ modulo $\varpi_v$ is isomorphic to $\sigma_v$ and $F_v^{\times}\cap U_v$ acts trivially on $\tilde{\sigma}_v$.
 Let 
$\tilde{\sigma}:=\otimes_{v\mid 2} \tilde{\sigma}_v$. Choose a lift $\xi: (\AfF)^{\times}/F^{\times}\rightarrow \OO^{\times}$ of $\bar{\psi}$, which is trivial on 
$U\cap (\AfF)^{\times}$. The exactness of the functor $\sigma\mapsto S_{\sigma, \xi}(U, \OO)$ implies that  $S_{\tilde{\sigma}, \xi}(U, \OO)_{\mm}$ is non-zero, since its reduction modulo $\varpi$ is  equal to $S_{\sigma, \xi}(U, k)_{\mm}$. We may take any eigenform 
$g^D\in S_{\tilde{\sigma}, \tilde{\psi}}(U, \OO)_{\mm}$ and then using Jacquet--Langlands transfer it to a Hilbert modular form, which will have the prescribed properties. The last part follows from Lemma \ref{weight_cycling}.
\end{proof}

\begin{thm} Let $F$ be a totally real field where $2$ is totally split, and let 
$$\rho:G_{F,S}\rightarrow \GL_2(\OO)$$
be a continuous representation. Suppose that 
\begin{itemize}
\item[(i)] $\rhobar: G_{F,S}\overset{\rho}{\rightarrow} \GL_2(\OO)\rightarrow \GL_2(k)$ is modular with non-solvable image;
\item[(ii)] For $v\mid 2$, $\rho|_{G_{F_v}}$ is potentially semi-stable with distinct Hodge--Tate weights;
\item[(iii)] $\det \rho$ is totally odd; 
\item[(iv)] for $v\mid 2$, $\rhobar|_{G_{F_v}}\not\cong \bigl ( \begin{smallmatrix} \chi & \ast\\ 0 & \chi\end{smallmatrix}\bigr )$ for any character $\chi: G_{F_v}\rightarrow k^{\times}$.
\end{itemize}
Then $\rho$ is modular. 
\end{thm}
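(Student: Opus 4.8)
The plan is to deduce this from the patching machinery of \S\ref{patching}, and ultimately from Corollary \ref{modular}, after reducing to the minimal situation set up in \S\S\ref{quaternions}--\ref{subsection_residual}. First I would make the standard preliminary reductions. Twisting $\rho$ by a finite order character I may assume $\det\rho=\psi\chi_{\cyc}$ with $\psi\colon G_{F,S}\to\OO^{\times}$ of finite order; hypothesis (iii) fixes the parity of $\psi$ to be that required for the quaternionic forms of \S\ref{quaternions}. Since $\rhobar$ has non-solvable image it is absolutely irreducible, and modularity of $\rho$ descends through allowable base changes by solvable base change for $\GL_2$, so it suffices to prove the theorem after replacing $F$ by such an extension. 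I would then invoke the base-change arguments of \cite{KW}, \cite{kisin_moduli}, \cite{kisin_serre2} (cf.\ \S\ref{modularity}) to arrange that $[F:\QQ]$ is even; that $\rhobar$ is modular via an eigenform on a definite quaternion algebra $D/F$ ramified exactly at the infinite places and at a finite set $\Sigma$ of places not above $2$ where $\rhobar(\Frob_v)$ has equal eigenvalues; that the ramification of $\rho$ and of $\rhobar$ away from $2$ is minimal, so in particular $\rhobar$ is unramified with distinct Frobenius eigenvalues outside $\Sigma\cup\{v\mid 2\infty\}$ and the auxiliary prime condition of Lemma \ref{find_prime} can be imposed; and that $\rho|_{G_{F_v}}\cong\bigl(\begin{smallmatrix}\gamma_v\chi_{\cyc}&\ast\\0&\gamma_v\end{smallmatrix}\bigr)$ with $\gamma_v$ unramified for $v\in\Sigma$. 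Fixing $D$, a tame level $U=\prod_v U_v$ small enough that \eqref{no_isotropy} holds, and the Hecke maximal ideal $\mm$, all hypotheses of \S\ref{subsection_residual} then hold for $\rhobar$.

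Next I would read off the local data at $2$. By (ii), for $v\mid 2$ the representation $\rho|_{G_{F_v}}$ is potentially semi-stable with distinct Hodge--Tate weights, hence has a $p$-adic Hodge type $(\wt_v,\tau_v,\psi)$ (using the potentially crystalline variant when the monodromy operator vanishes); I set $\sigma_v$ as in \eqref{define_sigma_v} or \eqref{define_sigma_cris_v}, and $\sigma=\otimes_{v\mid 2}\sigma_v$. By construction $\rho|_{G_{F_v}}$ defines a closed point of $\Spec R^{\psi,\square}_v(\sigma_v)[1/2]$ for $v\mid 2$, of $\Spec\bar R^{\psi,\square}_v[1/2]$ for $v\in\Sigma$, and of $\Spec R^{\psi,\square}_v[1/2]$ at the remaining places of $S$; hence $\rho$ determines an $\OO$-algebra homomorphism $R^\psi_{F,S}(\sigma)\to\OO$, i.e.\ a maximal ideal $\nn$ of $R^\psi_{F,S}(\sigma)[1/2]$ with residue field $L$.

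I would then verify the remaining input to Corollary \ref{modular}, namely $S_{\sigma,\psi}(U,\OO)_\mm\neq0$. Since $\rhobar$ is modular, Lemma \ref{weight_2} produces (using (iv)) a parallel weight $2$ Hilbert eigenform $g$ with $\rhobar\cong\rhobar_g$ whose local component at every $v\mid 2$ is an unramified principal series; transferring $g$ to $D$ by Jacquet--Langlands — legitimate because at each $v\in\Sigma$ the local component is a twist of Steinberg by the arrangement above — yields $S_{\tEins,\xi}(U,\OO)_\mm\neq0$ for $\xi$ a Teichm\"uller lift of $\bar\psi$. Because $p=2$ the only irreducible smooth $k$-representations of $\GL_2(\ZZ_2)$ are $\Eins$ and $\st$, so every irreducible subquotient of $\sigma\otimes_\OO k$ is a tensor product $\otimes_{v\mid 2}\sigma_{i,v}$ with $\sigma_{i,v}\in\{\Eins,\st\}$; by weight cycling (Lemma \ref{weight_cycling}, Corollary \ref{small_OK}) each such small weight is modular for $\rhobar$, so $S_{\bar\sigma,\bar\psi}(U,k)_\mm\neq0$, and since $U$ satisfies \eqref{no_isotropy} the $\OO$-module $S_{\sigma,\psi}(U,\OO)$ is free with reduction $S_{\bar\sigma,\bar\psi}(U,k)$, whence $S_{\sigma,\psi}(U,\OO)_\mm\neq0$. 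Finally, hypothesis (iv) is exactly the statement that $\rhobar|_{G_{F_v}}$ has no scalar semi-simplification for $v\mid 2$, so Corollary \ref{modular} applies: the equivalent conditions of Proposition \ref{equivalent_cond_new} hold, and every representation corresponding to a maximal ideal of $R^\psi_{F,S}(\sigma)[1/2]$ is modular; specializing at $\nn$ shows $\rho$ is modular over the base-changed field, and descending along the solvable base change of the first step gives modularity of the original $\rho$.

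The step I expect to be the main obstacle is the base-change reduction of the first paragraph: one must simultaneously achieve even degree, modularity of $\rhobar$ through the chosen definite quaternion algebra with the prescribed ramification set $\Sigma$, minimal ramification of $\rho$ and $\rhobar$ outside $2$, the auxiliary-prime hypothesis of Lemma \ref{find_prime}, and the Steinberg shape of $\rho|_{G_{F_v}}$ at the places of $\Sigma$, all while keeping $F$ totally real and $2$ totally split, and one must check that each property survives the base changes used. This is precisely the bookkeeping carried out in \cite{KW} and \cite{kisin_serre2}; nothing in it is conceptually new here, the new content of the argument being entirely in the local Breuil--M\'ezard statements (Theorems \ref{weak_bm}, \ref{split_bm}) and in Corollary \ref{modular}, which are already available.
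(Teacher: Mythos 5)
Your proposal follows essentially the same route as the paper: allowable/solvable base change following Khare--Wintenberger and Kisin to reach the minimal quaternionic setting of \S\ref{subsection_residual}, Lemma \ref{weight_2} together with weight cycling to produce the nonvanishing $S_{\sigma,\psi}(U,\OO)_{\mm}\neq 0$ for the type $\sigma$ determined by $\rho$ at the places above $2$, and then Corollary \ref{modular} applied to the maximal ideal of $R^{\psi}_{F,S}(\sigma)[1/2]$ defined by $\rho$. The only inaccuracy is your opening normalization: twisting $\rho$ by a finite order character cannot make $\psi=\chi_{\cyc}^{-1}\det\rho$ of finite order (such a twist multiplies $\psi$ by the square of a finite order character and does not change Hodge--Tate weights), but this step is superfluous, since the local and global framework of the paper is set up for an arbitrary continuous $\psi$.
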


\begin{proof} Let $\psi=\chi^{-1}_{\cyc} \det \rho$, where $\chi_{\cyc}$ is the $2$-adic cyclotomic character. By solvable base change it is enough to prove the assertion for the restriction of $\rho$ to $G_{F'}$,
where $F'$ is a totally real solvable extension of $F$. Using Lemma 2.2 of \cite{taylor_ico2} we may find an allowable base change $F'$ of $F$ such that 
$[F':\QQ]$ is even and $\rhobar|_{G_{F'}}$ is unramified outside places above $2$. We may further assume that if $\rho$ is ramified at $v\nmid 2$ then 
the image of inertia is unipotent. Let $\Sigma$ be the set of places outside $2$, where $\rho$ is ramified. If $v\in \Sigma$ then 
$$\rho|_{G_{F'_v}}\cong   \begin{pmatrix} \gamma_v \chi_{\cyc}& \ast\\ 0 & \gamma_v\end{pmatrix}$$
where $\gamma_v$ is an unramified character, such that $\gamma_v^2=\psi|_{G_{F'_v}}$.

Since $\rhobar$ is assumed to be modular, Lemma \ref{weight_2} implies that $\rhobar\cong \rhobar_f$, where $f$ is a Hilbert eigenform of parallel weight $2$, and unramified principal series at $v\mid 2$. Using Lemma 3.5.3 of \cite{kisin_moduli}, see also Theorem 8.4 of \cite{KW}, there is an admissible base change $F''/F'$, such that $\rho|_{G_{F''}}$ is ramified at even number of places outside $2$, we still denote this set by $\Sigma$, and there is a
Hilbert eigenform $g$ over $F''$, such that $\rhobar|_{G_{F''}}\cong \rhobar_g$, $g$ has parallel weight $2$, is special of conductor $1$ at $v\in \Sigma$, and is unramified otherwise. 

Let $D$ be the quaternion algebra with centre $F''$ ramified exactly at all infinite places and all $v\in \Sigma$.
Choose a place $v_1$ of $F''$ as in Lemma \ref{find_prime} and such that $\rhobar$ is unramified at $v_1$ and  $\rhobar(\Frob_{v_1})$ has distinct eigenvalues. Let $S$ be the union of infinite places, $\Sigma$, places above $2$ and $v_1$. Let $U=\prod_v U_v$ be an open subgroup of $(D\otimes_{F''} \mathbb A_{F''}^f)^{\times}$, such that $U_v= \OO_{D_v}^{\times}$, if $v\neq v_1$ and $U_{v_1}$ is unipotent upper triangular modulo $\varpi_{v_1}$. 
We note that Lemma \ref{find_prime} implies that $U$ satisfies \eqref{no_isotropy}.
Let $\mm$ be the maximal ideal in the Hecke algebra $\mathbb T^{\univ}_{S, \OO}$ corresponding to $\rhobar$. 

Let $g^D$ be the eigenform on $D$ corresponding to $g$ via the Jacquet--Langlands correspondence. Then $g^D\in S_{\sigma, \psi'}(U, \OO)_{\mm}$, where $\sigma$ is the trivial representation of $U$ and $\psi':(\mathbb A_{F''}^f)^{\times}\rightarrow \OO^{\times}$ is a suitable character congruent to $\psi$ modulo $\varpi$. In particular, $S_{\sigma, \psi'}(U, \OO)_{\mm}\neq 0$. It follows from Lemma \ref{weight_cycling} that $S_{\sigma, \psi'}(U, \OO)_{\mm}\neq 0$, for all $\sigma=\otimes_{v\mid 2} \sigma_v$, 
where $\sigma_v$ is either $\tEins$ or $\tilde{\st}$. Since $U$ satisfies \eqref{no_isotropy}, we deduce that $S_{\sigma, \psi}(U, k)_{\mm}\neq 0$ for 
any irreducible smooth $k$-representation $\sigma$ of $\prod_{v\mid 2} \GL_2(\ZZ_2)$. Since $U$ satisfies \eqref{no_isotropy}, we deduce via Lemma 3.1.4 of 
\cite{kisin_moduli} that $S_{\sigma, \psi}(U, \OO)_{\mm}\neq 0$ for any continuous finite dimensional representation $\sigma$ of $\prod_{v\mid 2} \GL_2(\ZZ_2)$ on which $U\cap (\mathbb A_{F''}^f)^{\times}$ acts by $\psi$.

For $v\mid 2$ suppose that $\rho|_{G_{F''_v}}$ has Hodge--Tate weights  $\mathbf w_v=(a_v, b_v)$ with $b_v > a_v$ and intertial type $\tau_v$. Let 
$\sigma_v$ be defined by \eqref{define_sigma_v} and let $\sigma=\otimes_{v\mid 2} \sigma_v$. The above implies that $S_{\sigma, \psi}(U, \OO)_{\mm}\neq 0$
and, since $\rho|_{G_{F''}}$ defines a maximal ideal of $R^{\psi}_{F'', S}[1/2]$, the assertion follows from Corollary \ref{modular}. 
\end{proof}

\end{document}